\newtheorem{thm}{Theorem}[section]
\newtheorem{cor}[thm]{Corollary}
\newtheorem{lem}[thm]{Lemma}
\newtheorem{prop}[thm]{Proposition}
\newtheorem{thmintro}{Theorem}
\theoremstyle{definition}
\newtheorem{rem}[thm]{Remark}
\newtheorem{cond}[thm]{Condition}
\providecommand{\norm}[1]{\left\| #1 \right\|}
\newcommand{\enuma}[1]{\begin{enumerate}[\textup{(}a\textup{)}] {#1} \end{enumerate}}
\newcommand{\mh}{\mathbb}
\newcommand{\mr}{\mathrm}
\newcommand{\mc}{\mathcal}
\newcommand{\mf}{\mathfrak}
\newcommand{\ts}{\textstyle}
\newcommand{\isom}{\xrightarrow{\;\sim\;}}
\newcommand{\N}{\mathbb N}
\newcommand{\Z}{\mathbb Z}
\newcommand{\Q}{\mathbb Q}
\newcommand{\R}{\mathbb R}
\newcommand{\C}{\mathbb C}
\newcommand{\es}{\emptyset}
\newcommand{\af}{\mr{aff}}
\newcommand{\un}{\mr{un}}
\newcommand{\nr}{\mr{nr}}
\newcommand{\unr}{\mr{unr}}
\newcommand{\rs}{\mr{rs}}
\newcommand{\op}{\mr{op}}
\newcommand{\inp}[2]{\langle #1 \,,\, #2 \rangle}
\def\Hom{{\rm Hom}}
\def\End{{\rm End}}
\def\Irr{{\rm Irr}}
\def\Rep{{\rm Rep}}
\def\Mod{{\rm Mod}}
\def\Res{{\rm Res}}
\def\ind{{\rm ind}}
\def\fs{{\mathfrak s}}
\def\GL{{\rm GL}}
\def\SL{{\rm SL}}
\begin{document}

\title{On completions of Hecke algebras}
\date{\today}
\thanks{The author is supported by a NWO Vidi grant "A Hecke algebra approach to the 
local Langlands correspondence" (nr. 639.032.528).}
\subjclass[2010]{20C08, 22E50, 22E35}
\maketitle

\begin{center}
{\Large Maarten Solleveld} \\[1mm]
IMAPP, Radboud Universiteit \\
Heyendaalseweg 135, 6525AJ Nijmegen, the Netherlands \\
email: m.solleveld@science.ru.nl \\
\end{center}

\vspace{4mm}

\begin{abstract}
Let $G$ be a reductive $p$-adic group and let $\mc H (G)^\fs$ be a Bernstein block of
the Hecke algebra of $G$. We consider two important topological completions of 
$\mc H (G)^\fs$: a direct summand $\mc S (G)^\fs$ of the Harish-Chandra--Schwartz
algebra of $G$ and a two-sided ideal $C_r^* (G)^\fs$ of the reduced $C^*$-algebra of $G$.
These are useful for the study of all tempered smooth $G$-representations.

We suppose that $\mc H (G)^\fs$ is Morita equivalent to an affine Hecke algebra
$\mc H (\mc R,q)$ -- as is known in many cases. The latter algebra also has a Schwartz
completion $\mc S (\mc R,q)$ and a $C^*$-completion $C_r^* (\mc R,q)$, both defined in
terms of the underlying root datum $\mc R$ and the parameters $q$. 

We prove that, under some mild conditions, a Morita equivalence $\mc H (G)^\fs \sim_M
\mc H (\mc R,q)$ extends to Morita equivalences $\mc S (G)^\fs \sim_M \mc S (\mc R,q)$
and $C_r^* (G)^\fs \sim_M C_r^* (\mc R,q)$. We also check that our conditions are 
fulfilled in all known cases of such Morita equivalences between Hecke algebras.
This is applied to compute the topological K-theory of the reduced $C^*$-algebra of
a classical $p$-adic group. \\[5mm]
\end{abstract}

\tableofcontents

\section*{Introduction}

Let $G$ be a connected reductive group over a non-archimedean local field.
Let $\Rep (G)$ be the category of smooth $G$-representations on complex vector spaces.
To study such representations, it is often useful to consider various group algebras
of $G$. Most fundamentally, there is the Hecke algebra $\mc H (G)$, the convolution 
algebra of locally constant, compactly supported functions $G \to \C$. The category
$\Rep (G)$ is equivalent to the category $\Mod (\mc H (G))$ of nondegenerate
$\mc H (G)$-modules. (Here $V$ nondegenerate means that $\mc H (G) \cdot V = V$.)

For purposes of harmonic analysis, and in particular for the study of tempered
smooth $G$-representations, the Harish-Chandra--Schwartz algebra $\mc S (G)$
\cite{HC} can be very convenient. This is a topological completion of $\mc H (G)$,
it consists of locally constant functions $G \to \C$ that decay rapidly in a specified
sense. By \cite[\S III.7]{Wal} an admissible smooth $G$-representation is tempered if
and only if it is naturally a $\mc S (G)$-module. For larger representations it is
best to define the category of tempered smooth $G$-representations as the category
$\Mod (\mc S (G))$ of nondegenerate $\mc S (G)$-modules \cite[Appendix]{SSZ}.

Further, from the point of view of operator algebras or noncommutative geometry,
the reduced $C^*$-algebra $C_r^* (G)$ may be the most suitable. The modules of
$C_r^* (G)$ are Banach spaces, so they are usually not smooth as $G$-representations.
But $\mc S (G) \subset C_r^* (G)$ and the smooth vectors in any $C_r^* (G)$-module
do form a nondegenerate $\mc S (G)$-module and hence a smooth $G$-representation. 
Moreover this operation provides a bijection between the irreducible representations of 
$C_r^* (G)$ and those of $\mc S (G)$. This feature distinguishes $C_r^* (G)$ from other 
Banach group algebras like $L^1 (G)$ or the maximal $C^*$-algebra of $G$.\\

Let $\Rep (G)^\fs$ be a Bernstein block of $\Rep (G)$ \cite{BeDe}. It is well-known
that in many cases (see Sections \ref{sec:types} and \ref{sec:progen}) $\Rep (G)^\fs$ 
is equivalent to the category of modules of an affine Hecke algebra $\mc H (\mc R,q)$. 
Here $\mc R$ is a root datum and $q$
is a parameter function for $\mc R$. In such cases it would be useful if one could
detect, in terms of $\mc H (\mc R,q)$, whether a $G$-representation in $\Rep (G)^\fs$
\begin{enumerate}\label{questions}
\item[(i)] is tempered;
\item[(ii)] is unitary;
\item[(iii)] admits a continuous extension to a $C_r^* (G)$-module.
\end{enumerate}
The structure needed to make sense of this is available for (extended) affine Hecke 
algebras with positive parameters. They have a natural *-operation, so unitarity is 
defined. Temperedness of finite-dimensional $\mc H (\mc R,q)$-modules can be defined 
conveniently either in terms of growth of matrix coefficient or by means of
weights for a large commutative subalgebra of $\mc H (\mc R,q)$ \cite[\S 2.7]{Opd-Sp}.

Furthermore there exists a Schwartz completion $\mc S (\mc R,q)$ of $\mc H (\mc R,q)$ 
\cite[\S 6.2]{Opd-Sp} with a similar structure as $\mc S (G)$ \cite{DeOp1}. 
By \cite[Corollary 6.7]{Opd-Sp} a finite dimensional $\mc H (\mc R,q)$-module is tempered
if and only if it extends continuously to a $\mc S (\mc R,q)$-module. Like for $G$, we
define the category of tempered $\mc H (\mc R,q)$-modules to be the module category
of $\mc S (\mc R,q)$.

The algebra $\mc H (\mc R,q)$ is a Hilbert algebra, so it acts by multiplication on
its own Hilbert space completion. Then one can define the reduced $C^*$-completion
$C_r^* (\mc R,q)$ as the closure of $\mc H (\mc R,q)$ in the algebra of bounded linear
operators on that Hilbert space. It is reasonable to expect that this algebra plays 
a role analogous to $C_r^* (G)$.

Let $\mc H (G)^\fs$ (resp. $\mc S (G)^\fs$ and $C_r^* (G)^\fs$) be the direct summand
of $\mc H (G)$ (resp. $\mc S (G)$ and $C_r^* (G)$) corresponding to $\Rep (G)^\fs$
via the Bernstein decomposition 
\[
\Mod (\mc H (G)) \cong \Rep (G) = \prod\nolimits_{\fs \in \mf B (G)} \Rep (G)^\fs . 
\]
In view of the above, it is natural to ask whether an equivalence of categories
\begin{equation}\label{eq:1}
\Rep (G)^\fs \cong \Mod (\mc H (G)^\fs ) \isom \Mod (\mc H (\mc R,q))
\end{equation}
extends to Morita equivalences
\begin{equation}\label{eq:2}
\mc S (G)^\fs \sim_M \mc S (\mc R,q) \quad \text{and} \quad
C_r^* (G)^\fs \sim_M C_r^* (\mc R,q) .
\end{equation}
That would solve the issues (i) and (iii) completely, and would provide a partial
answer to (ii). Namely, since irreducible tempered representations are unitary, it would 
imply that \eqref{eq:1} matches the unitary tempered
representations on both sides. (It is not clear what it could say about unitary
non-tempered representations.) Furthermore \eqref{eq:2} would make $\Mod (\mc S (G)^\fs)$
and $\Mod (C_r^* (G)^\fs)$ amenable to much more explicit calculations, in terms of
the generators and relations from $\mc H (\mc R,q)$. That is important for topological
K-theory, where one deals with finitely generated projective $C_r^* (G)$-modules.

While \eqref{eq:2} looks fairly plausible, it is not automatic. To prove it, we impose
some conditions on the Morita equivalence $\mc H (G)^\fs \sim_M \mc H (\mc R,q)$:
\begin{itemize}\label{cond:intro}
\item Condition \ref{cond:Morita} is about compatibility with parabolic induction
and restriction. 
\item Condition \ref{cond:Hecke} says roughly that under this Morita equivalence every
(suitable) parabolic subgroup of $G$ should give rise to a parabolic subalgebra of
$\mc H (\mc R,q)$, and this correspondence should respect positivity in the underlying
root systems.
\item Sometimes we obtain, instead of $\mc H (\mc R,q)$, an extended affine Hecke
algebra $\mc H (\mc R,q) \rtimes \Gamma$, where $\Gamma$ is a finite group. Then we
require Condition \ref{cond:Gamma}, which says that $\Gamma$ respects all the relevant
structure.
\end{itemize}
Here Condition \ref{cond:Morita} has little to do with affine Hecke algebras. If it holds,
then on based general principles. Condition \ref{cond:Hecke} is needed to get affine
Hecke algebras into play. If it does not hold, than our results simply cannot be
formulated in such terms. The positivity part is innocent, usually it can be achieved by
a good choice of a standard minimal parabolic subgroup of $G$. The Condition 
\ref{cond:Gamma} is of a more technical nature, it serves to rule out some phenomena that
could happen for arbitrary $\Gamma$ but not for reductive groups.

Notice that the above conditions do not say anything about *-homomorphisms between
$\mc H (\mc R,q)$ and $\mc H (G)^\fs$. Instead, our condition are chosen so that they will 
hold true for affine Hecke algebras arising from reductive $p$-adic groups via the two 
main methods: Bushnell--Kutzko types and Bernstein's progenerators.

\begin{thmintro}\label{thm:1} \textup{(see Theorem \ref{thm:4.2})} \\
Suppose that there is a Morita equivalence between $\mc H (G)^\fs$ and an extended affine 
Hecke algebra $\mc H (\mc R,q) \rtimes \Gamma$ with positive parameters, such that 
Conditions \ref{cond:Morita}, \ref{cond:Hecke} and \ref{cond:Gamma} hold. Then it induces 
Morita equivalences $\mc S (G)^\fs \sim_M \mc S (\mc R,q)$ and
$C_r^* (G)^\fs \sim_M C_r^* (G)$.
\end{thmintro}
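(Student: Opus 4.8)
The plan is to reduce the statement to a purely algebraic assertion about Schwartz and $C^*$-completions, and then prove that assertion by exploiting the compatibility of the Morita equivalence with parabolic induction. The first step is to recall the structural description of $\mc S(G)^\fs$ (resp.\ $C_r^*(G)^\fs$) via the Plancherel theorem: both algebras decompose, via the Fourier transform, into a finite direct integral of matrix algebras over the tempered dual, more precisely into a direct sum over association classes of pairs $(L,\sigma)$ (Levi subgroup, discrete series) of fixed-point algebras $\big(C(T_{L,\sigma}) \otimes \End(\text{ind}_P^G \sigma)\big)^{W(L,\sigma)}$, and similarly $\mc S(\mc R,q)$ and $C_r^*(\mc R,q)$ admit the analogous description in terms of the combinatorics of $\mc R$ (this is the content of the Opdam--Delorme Plancherel isomorphism cited as \cite{DeOp1}, \cite{Opd-Sp}). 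So the heart of the matter is to show that the equivalence \eqref{eq:1}, together with Conditions \ref{cond:Morita}, \ref{cond:Hecke}, \ref{cond:Gamma}, sets up a bijection between these Plancherel data that is compatible with the analytic structure (the topology on the parameter tori, the groupoid of intertwiners, the $*$-operation).

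Concretely, I would proceed as follows. (1) Use Condition \ref{cond:Morita} to transport the system of parabolic induction/restriction functors: the Morita bimodule $M$ implementing $\mc H(G)^\fs \sim_M \mc H(\mc R,q)\rtimes\Gamma$ intertwines, up to natural isomorphism, the parabolic induction functors on both sides, hence it matches the ``parabolic subalgebras'' $\mc H(G)^{\fs_L}$ with the parabolic subalgebras $\mc H(\mc R_L,q_L)\rtimes\Gamma_L$ of the affine Hecke algebra (this is exactly what Condition \ref{cond:Hecke} guarantees, including the crucial positivity, so that ``parabolic'' on the $p$-adic side goes to ``parabolic with respect to a fixed basis'' on the affine Hecke side). (2) Deduce that the two families of discrete-series-supported summands correspond, and that the relevant finite groupoids of intertwining operators (the analytic $R$-groups / the groups $W(L,\sigma)$ on one side, the isotropy groups inside $W\rtimes\Gamma$ on the other) are identified, compatibly with their actions on the parameter tori $T_{L,\sigma}$, which are themselves real algebraic tori matched by the equivalence. (3) Invoke the known fact (from Opdam--Solleveld's work on the Schwartz algebra of an affine Hecke algebra, and from the Harish-Chandra Plancherel theorem on the group side) that $\mc S$ and $C_r^*$ are recovered functorially from this Plancherel datum: once the datum matches, one builds the completed Morita bimodule as $\mc S(G)^\fs \otimes_{\mc H(G)^\fs} M \otimes_{\mc H(\mc R,q)\rtimes\Gamma} \mc S(\mc R,q)\rtimes\Gamma$ (and likewise for $C_r^*$), and checks it is a Morita bimodule by verifying the tensor-product relations summand by summand, where everything is reduced to the elementary statement that Morita equivalence of finite-dimensional matrix algebras is preserved under $(-)^{\Gamma}$ for compatible group actions and under $C(X)\otimes(-)$. (4) Finally use Condition \ref{cond:Gamma} to handle the passage between $\mc H(\mc R,q)$ and its extension by $\Gamma$: the condition ensures $\Gamma$ acts on $\mc R$ preserving the basis and positive parameters, so that $\mc S(\mc R,q)\rtimes\Gamma$ and $C_r^*(\mc R,q)\rtimes\Gamma$ are well-defined completions that are Morita equivalent to $\mc S(\mc R,q)$ and $C_r^*(\mc R,q)$ respectively (the latter Morita equivalence being of the standard crossed-product-with-a-finite-group type, and the source of the statement $C_r^*(G)^\fs\sim_M C_r^*(\mc R,q)$ rather than $C_r^*(\mc R,q)\rtimes\Gamma$).

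The main obstacle I expect is step (1)--(2): showing that the abstract Morita equivalence, which a priori only knows about the finite-length module categories, actually respects the \emph{analytic} refinement — i.e.\ that the bijection of tempered duals it induces is a homeomorphism compatible with the bundle structure and the intertwiner groupoid — rather than merely a bijection of irreducible objects. This is where Conditions \ref{cond:Morita} and \ref{cond:Hecke} must be used in an essential way: compatibility with parabolic induction forces the matching to be continuous in the inducing parameter, because parabolically induced families are precisely the continuous families, and the positivity clause of Condition \ref{cond:Hecke} is what pins down which chamber of each torus is ``tempered'' so that the Schwartz topologies (defined by rapid decay measured against that positivity) agree. A secondary technical point is verifying that the completed bimodule $\mc S(G)^\fs \otimes_{\mc H(G)^\fs} M$ is again finitely generated and projective over $\mc S(\mc R,q)\rtimes\Gamma$ and that the natural maps $\End_{\mc S(\mc R,q)\rtimes\Gamma}$ of it recover $\mc S(G)^\fs$; this requires knowing that $M$, being a progenerator, becomes a ``uniformly bounded'' family over the tempered dual so that the tensor product behaves well under completion, which again is extracted from Condition \ref{cond:Morita}.
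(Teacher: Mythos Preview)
Your overall architecture---match the Plancherel data on both sides via Conditions \ref{cond:Morita} and \ref{cond:Hecke}, then build the completed Morita bimodule summand by summand---is indeed the paper's strategy. But two points need correction.

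\textbf{Step (4) is wrong.} The claim that $\mc S(\mc R,q)\rtimes\Gamma \sim_M \mc S(\mc R,q)$ ``of the standard crossed-product-with-a-finite-group type'' is false in general: a crossed product $A\rtimes\Gamma$ is Morita equivalent to $A$ only under restrictive hypotheses (e.g.\ free action on the primitive spectrum), which do not hold here---$\Gamma$ typically fixes points of $T$. You have been misled by typos in the introduction's statement; the precise Theorem \ref{thm:4.2} asserts $\mc S(G)^\fs \sim_M \mc S(\mc R,q)\rtimes\Gamma$ and $C_r^*(G)^\fs \sim_M C_r^*(\mc R,q)\rtimes\Gamma$, and that is what is proved. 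Drop step (4) entirely.

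\textbf{The temperedness step is the real content, and your sketch does not cover it.} You correctly flag that the abstract Morita equivalence must be shown to respect temperedness and to match $\Irr_{L^2}(M)^{\fs_M}$ with $\Irr_{L^2}(\mc H^M)$. The positivity clause in Condition \ref{cond:Hecke}.iii gives one direction cheaply: via Condition \ref{cond:Morita}.ii and the Jacquet-module criterion for temperedness, $\Phi_M^{-1}$ preserves temperedness (this is Lemma \ref{lem:4.6}). The reverse direction---that $\Phi_M$ preserves temperedness---is \emph{not} automatic and is not delivered by ``parabolically induced families are precisely the continuous families''. The paper's argument (Lemma \ref{lem:4.8}, Theorem \ref{thm:4.7}) goes through the Langlands classification: one shows that $\Phi_M$ carries Langlands constituents of $\pi(Q,\sigma,t)$ to Langlands constituents of $I_P^G(\omega\otimes\chi)$, by a counting argument that compares the number of Langlands constituents along the ray $t\,|t|^r$ (Propositions \ref{prop:2.7} and \ref{prop:3.5}) and uses that for generic $r$ the induced representation is completely reducible. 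Only then can one invoke the temperedness criterion in terms of $|t|$ (Theorem \ref{thm:2.6}.d versus Theorem \ref{thm:3.4}.b). Without this, you cannot identify the discrete-series packets on the two sides, and the Plancherel matching collapses.

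A smaller point: rather than forming $\mc S(G)^\fs \otimes_{\mc H(G)^\fs} M \otimes_{\mc H(\mc R,q)\rtimes\Gamma} \mc S(\mc R,q)\rtimes\Gamma$ and then arguing about its projectivity, the paper writes down the bimodules directly in Plancherel coordinates as $C^\infty\big(X_\unr(M);\Hom_\C(I_P^G(V_\omega)^{K_\fs},V_{Q,\sigma})\big)^{W'(T_\omega)}$ and checks the Morita relations fibrewise (reducing to a statement about finite-dimensional representations of a finite group). This sidesteps the completeness and boundedness issues you anticipate in your final paragraph.
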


Hitherto this was only proven for the Schwartz completions in the case of 
Iwahori--spherical representations of split groups \cite[Theorem 10.2]{DeOp1}. 
In all cases where a Morita equivalence on the Hecke algebra level is known (to the
author), we check that the conditions of Theorem \ref{thm:1} are fulfilled. This
includes principal series representations of $F$-split groups ($F$ is any 
non-archimedean local field), level zero representations, inner forms of $\GL_n (F)$, 
inner forms of $\SL_n (F)$, symplectic groups (not necessarily
split) and special orthogonal groups (also possibly non-split). 

In all these cases we obtain a pretty good picture of $C_r^* (G)^\fs$, up to Morita
equivalence. This can, for instance, be used to compute the topological K-theory of these
algebras. Indeed, in \cite{SolK} the author determined the K-theory of $C_r^* (\mc R,q)$
for many root data $\mc R$ (it does not depend on $q$). These calculations, together with
Theorem \ref{thm:1} for classical groups, lead to a result which is useful in relation
with the Baum--Connes conjecture.

\begin{thmintro}\label{thm:2} \textup{(see Theorem \ref{thm:6.3})} \\
Let $G$ be a special orthogonal or a symplectic group over $F$ (not necessarily split), 
or an inner form of $\GL_n (F)$. Then $K_* (C_r^* (G))$ is a free abelian group. 
For every Bernstein block $\Rep (G)^\fs$, the rank of $K_* (C_r (G)^\fs)$ 
is finite and can be computed explicitly.
\end{thmintro}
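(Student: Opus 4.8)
The plan is to combine Theorem~\ref{thm:1} (applied to classical groups and inner forms of $\GL_n$) with the explicit K-theory computations for affine Hecke algebras from \cite{SolK}. The first step is to reduce to individual Bernstein blocks. Since $C_r^* (G) = \prod_{\fs \in \mf B (G)} C_r^* (G)^\fs$ in an appropriate (topological) sense, topological K-theory commutes with this decomposition, so $K_* (C_r^* (G)) = \bigoplus_{\fs} K_* (C_r^* (G)^\fs)$. Thus it suffices to prove that each $K_* (C_r^* (G)^\fs)$ is free abelian of finite, explicitly computable rank; freeness of the total group then follows because a direct sum of free abelian groups is free abelian, and finiteness of the rank per block is exactly the remaining assertion.

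The second step is to invoke, for each $\fs$, the known Morita equivalence $\mc H (G)^\fs \sim_M \mc H (\mc R,q) \rtimes \Gamma$ together with the verification (carried out in the later sections referenced in the excerpt, Sections~\ref{sec:types} and~\ref{sec:progen}) that Conditions~\ref{cond:Morita}, \ref{cond:Hecke} and~\ref{cond:Gamma} hold in these cases. By Theorem~\ref{thm:1} this upgrades to a Morita equivalence $C_r^* (G)^\fs \sim_M C_r^* (\mc R,q)$ (or its $\Gamma$-extended analogue). Since topological K-theory is invariant under Morita equivalence of $C^*$-algebras, we get $K_* (C_r^* (G)^\fs) \cong K_* (C_r^* (\mc R,q) \rtimes \Gamma)$. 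The third step is then purely a matter of affine Hecke algebras: by \cite{SolK}, $K_* (C_r^* (\mc R,q))$ is free abelian of finite rank, and the rank is independent of $q$ and given by an explicit formula in terms of the root datum $\mc R$ (essentially counting components of certain spaces of induced tempered representations, or equivalently via the description of $C_r^* (\mc R,q)$ as a finite direct sum of matrix algebras over commutative $C^*$-algebras of the form $C(T_u)^{W'}$ up to extensions). One must also handle the twist by $\Gamma$: here one uses that $C_r^* (\mc R,q) \rtimes \Gamma$ is again (Morita equivalent to) a finite direct sum of building blocks of the same type, so the K-theory remains free of finite rank, computable from the $\Gamma$-action on the components.

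The main obstacle is the bookkeeping in the last step: making the rank genuinely \emph{explicit} requires knowing, for each Bernstein block of the classical groups and $\GL_n$-inner forms in question, precisely which root datum $\mc R$ and which finite group $\Gamma$ occur, and then feeding this into the formulas of \cite{SolK}. For classical groups the relevant blocks are parametrized (via types à la Bushnell--Kutzko, or via the endoscopic/Langlands-parameter side) by discrete data whose associated root data are of classical type with explicitly determined rank and parameters; assembling this dictionary and checking that the $\Gamma$-twists that arise are among those already analyzed in \cite{SolK} is the technical heart. A secondary point is ensuring the Morita equivalences used are of the right sort (between $C^*$-algebras, inducing isomorphisms on $K$-theory) rather than merely abstract equivalences of module categories; but this is precisely what Theorem~\ref{thm:1} provides, so no additional work is needed there.

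For the write-up I would state the block-by-block decomposition as a lemma, then for each family of groups cite the relevant Morita equivalence and condition-check from the later sections, apply Theorem~\ref{thm:1}, and finally quote the appropriate K-theory computation from \cite{SolK}, recording the resulting rank formula. The finiteness claim for each block is immediate once the root datum is pinned down, since a fixed affine Hecke algebra has a $C^*$-completion with finitely generated K-theory.
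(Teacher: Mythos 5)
Your proposal follows the same overall route as the paper's proof of Theorem~\ref{thm:6.3}: decompose via the Bernstein decomposition (using that topological K-theory commutes with countable $C^*$-direct sums), apply the Morita equivalence from Theorem~\ref{thm:1} (Theorem~\ref{thm:4.2}) for classical groups and inner forms of $\GL_n$, and then invoke the computations of \cite{SolK}. The first and second steps match the paper exactly.

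There is, however, an imprecision in your third step that would need to be fixed. You assert that ``$C_r^*(\mc R,q)\rtimes\Gamma$ is again (Morita equivalent to) a finite direct sum of building blocks of the same type,'' which conflates two different decompositions. The Plancherel isomorphism does give a direct-sum decomposition, but its summands are algebras of the form $C(T^Q_\un;\End(V_{Q,\sigma}))^{\mc G_{Q,\sigma}}$, which are \emph{not} themselves $C^*$-completions of extended affine Hecke algebras of simple type, so this decomposition does not directly let you quote the computations of \cite{SolK}. What the paper actually uses is that the root datum $\mc R$ appearing in Theorem~\ref{thm:6.2} is a \emph{tensor product} of root data of type $\GL$, $\mr{Sp}_{2n}$, $\mr{SO}_{2n+1}$ and $\mr{SO}_{2n}$, and then the K\"unneth theorem for topological K-theory (cited as \cite[(62)]{SolK} and \cite{Scho}) reduces the computation to the individual simple-type factors, with the only nontrivial $\Gamma$ occurring in the $(\mr{SO}_{2n})^e$-case as in \eqref{eq:6.1}. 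Without this K\"unneth step the reduction to the explicit computations in \cite[Theorems~3.2, 3.3, (128), Proposition~3.5]{SolK} does not go through, and freeness of the individual $K_*(C_r^*(G)^\fs)$ would not follow. Once you add the K\"unneth theorem and the explicit description of the allowed $(\mc R,\Gamma)$ from Theorem~\ref{thm:6.2}, your argument coincides with the paper's.
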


Let us also discuss other approaches to the topics (i), (ii) and (iii) from page
\pageref{questions}. In most cases where a Morita equivalence $\mc H (G)^\fs \sim_M 
\mc H (\mc R,q) \rtimes \Gamma$ is known, these issues are not discussed in the 
literature. When the Morita equivalence comes from a type $(K,\lambda)$ in the sense of 
Bushnell--Kutzko \cite{BuKu}, several relevant techniques are available. Let 
$e_\lambda \in \mc H (G)$ and $\mc H (G,J,\lambda) = \End_G (\ind_J^G \lambda)$ be 
the idempotent and the algebra associated to the type $(K,\lambda)$ in 
\cite[\S 2]{BuKu}. In this setting the Morita equivalence can be implemented
by injective algebra homomorphisms
\begin{equation}\label{eq:3}
\mc H (\mc R,q) \rtimes \Gamma \xrightarrow{\; \Upsilon_\lambda \;} \mc H (G,J,\lambda)
\to e_\lambda \mc H (G) e_\lambda \to \mc H (G)^\fs ,
\end{equation}
where we assume that $\Upsilon_\lambda$ is an isomorphism. It follows quickly from the 
definition of types that the last two maps in \eqref{eq:3} induce Morita equivalences
\cite[(2.12) and Theorem 4.3]{BuKu}. Both algebras $\mc H (G,J,\lambda)$ and 
$e_\lambda \mc H (G) e_\lambda$ are endowed with a natural trace and *, which
are preserved by the injections 
\begin{equation}\label{eq:4}
\mc H (G,J,\lambda) \to e_\lambda \mc H (G) e_\lambda \to \mc H (G)^\fs .
\end{equation}
Considerations with Hilbert algebras show that the maps \eqref{eq:4} induce equivalences
between the associated categories of finite length tempered representations \cite{BHK}.
Moreover, after the first version of this paper appeared, Ciubotaru showed that
\eqref{eq:4} also induces equivalences between the respective subcategories of
unitary modules \cite{Ciu}. Hence, whenever $\Upsilon_\lambda$ is a *-isomorphism, the
categories of unitary modules of all the algebras in \eqref{eq:3} are equivalent.
Earlier, this had been proven under certain additional conditions \cite{BaMo,BaCi}.

Notice that in \eqref{eq:3} all algebras are endowed with extra structure, which on the
left hand side comes from affine Hecke algebras and for the other terms from the 
embedding in $\mc H (G)$. In particular both $\mc H (G)$ and 
$\mc H (\mc R,q) \rtimes \Gamma$ are endowed with a canonical
trace, which stems from evaluation of functions at the unit element of $G$.
Usually \eqref{eq:3} will transfer the trace on $\mc H (G)^\fs$ to a positive
scalar multiple of the trace on $\mc H (\mc R,q) \rtimes \Gamma$. If that is the 
case and $\Upsilon_\lambda$ is a *-isomorphism, then by \cite[Theorem 10.1]{DeOp1} 
\eqref{eq:3} induces an equivalence between the category of finite length tempered 
$G$-representations in $\Rep (G)^\fs$ and the category of finite dimensional tempered
modules of $\mc H (\mc R,q) \rtimes \Gamma$. This relies on properties of the 
Plancherel measures of $G$ and of $\mc H (\mc R,q) \rtimes \Gamma$, and it uses that 
$\Upsilon_\lambda$ preserves these Plancherel measures, up to a scalar multiple.

When the Morita equivalence $\mc H (G)^\fs \sim_M \mc H (\mc R,q) \rtimes \Gamma$
does not arise from a type, fewer techniques for (i), (ii) and (iii) were known.
Heiermann established such Morita equivalences, for symplectic and special
orthogonal groups and for inner forms of $\GL_n (F)$ \cite{Hei}, by using Bernstein's
progenerators of $\Rep (G)^\fs$. In \cite{Hei2} he showed that these equivalences 
preserve temperedness of finite length modules. Here it is unknown whether the * 
and the trace are preserved by the Morita equivalence. Since maps like \eqref{eq:3} are
lacking, it is even unclear how such a statement could be formulated in this setting.

Summarizing, in the literature already several results about the behaviour of finite 
length modules under a Morita equivalence between a Bernstein block $\Rep (G)^\fs$ and 
the module category of an (extended) affine Hecke algebra can be found, but there is 
so far almost nothing about the Schwartz completions and the $C^*$-completions.\\

Let us briefly describe the contents of the paper. In the first section we recall the
definitions of affine Hecke algebras and their topological completions. We formulate
the Plancherel isomorphism for these completions, from \cite{DeOp1}, and we establish
suitable versions for affine Hecke algebras extended with finite groups. We also
ana\-lyse the space of irreducible representations and the subspace of irreducible 
tempered representations, mainly relying on \cite{SolAHA}. This is formulated in terms 
of the Langlands classification and induction from discrete series representations of
parabolic subalgebras.

After that we look at the aforementioned group algebras for a reductive $p$-adic group
$G$. We recall the Plancherel isomorphism for the Schwartz algebra of $G$ \cite{HC,Wal} 
and for the reduced $C^*$-algebra of $G$ \cite{Ply1}. Like for affine Hecke algebras,
we analyse the space of irreducible smooth $G$-representation in terms of the Langlands
classification and parabolic induction of square-integrable representations, 
following \cite{SolPadicHP}.

This forms the setup for the proof of our main result Theorem \ref{thm:1}, which 
occupies Section \ref{sec:comparison}. The crucial idea behind our argument is that in
the Plancherel isomorphisms for $\mc S (G)^\fs$ and $\mc S (\mc R,q)$ very similar
algebras appear. In both settings one encounters a bundle of matrix algebras over a 
compact torus, one takes $C^\infty$-sections of those, and then invariants with 
respect to a finite group acting via intertwining operators. We compare the resulting
algebras on both sides, analysing the data used to describe the Plancherel isomorphisms.
First we prove that a Morita equivalence between $\mc H (G)^\fs$ and $\mc H (\mc R,q)$, plus 
the mild extra conditions listed on page \pageref{cond:intro}, imply that the two necessary 
sets of data, for $\mc S (G)^\fs$ and for $\mc S (\mc R,q)$, become equivalent after some
manipulations. The most problematic part is to prove that the Morita equivalences preserve
temperedness and match square-integrable $G$-representations with discrete series 
$\mc H (\mc R,q)$-modules. For that we use very specific information about the spaces of
irreducible representations and their subspaces of tempered representations.
When we have compared all the data needed for the Plancherel isomorphisms on both sides,
we establish the desired Morita equivalences between topological algebras.

In Section \ref{sec:types} we check that the conditions from Section \ref{sec:comparison}
are fulfilled in (most) known cases of Morita equivalences coming from types. 
In the final section \ref{sec:progen} we do the same for Heiermann's Morita equivalences 
constructed with the use of projective generators, and we derive Theorem \ref{thm:2}.
\vspace{5mm}

\section{Affine Hecke algebras}
\label{sec:AHA}

Let $\mf a$ be a finite dimensional real vector space and let $\mf a^*$ be its dual. 
Let $Y \subset \mf a$ be a lattice and 
$X = \mr{Hom}_\Z (Y,\Z) \subset \mf a^*$ the dual lattice. Let
\begin{equation}\label{eq:1.37}
\mc R = (X, R, Y ,R^\vee ,\Delta) .
\end{equation}
be a based root datum. Thus $R$ is a reduced root system in $X ,\, R^\vee \subset Y$ 
is the dual root system, $\Delta$ is a basis of $R$ and the set of positive roots is denoted 
$R^+$. Furthermore a bijection $R \to R^\vee ,\: \alpha \mapsto \alpha^\vee$ is given, such 
that $\inp{\alpha}{\alpha^\vee} = 2$ and such that the corresponding reflections
$s_\alpha : X \to X$ (resp. $s^\vee_\alpha : Y \to Y$) stabilize $R$ (resp. $R^\vee$).
We do not assume that $R$ spans $\mf a^*$. The reflections $s_\alpha$ generate the Weyl group 
$W = W (R)$ of $R$, and 
$S_\Delta := \{ s_\alpha \mid \alpha \in \Delta \}$ is the collection of simple reflections. 

We have the affine Weyl group $W^\af = \mh Z R \rtimes W$ 
and the extended (affine) Weyl group $W^e = X \rtimes W$. Both can be considered as groups
of affine transformations of $\mf a^*$. We denote the translation corresponding to $x \in X$ by 
$t_x$. As is well-known, $W^\af$ is a Coxeter group, and the basis $\Delta$ of $R$ gives rise to 
a set $S^\af$ of simple (affine) reflections. More explicitly, let $\Delta_M^\vee$ be the set of 
maximal elements of $R^\vee$, with respect to the dominance ordering coming from $\Delta$. Then
\[
S^\af = S_\Delta \cup \{ t_\alpha s_\alpha \mid \alpha^\vee \in \Delta_M^\vee \} .
\]
The length function $\ell$ of the Coxeter system $(W^\af ,S^\af )$ extends naturally to $W^e$.
The elements of length zero form a subgroup $\Omega \subset W^e$ and $W^e = W^\af \rtimes \Omega$.

A complex parameter function for $\mc R$ is a map $q : S^\af \to \mh C^\times$ such that 
$q(s) = q(s')$ if $s$ and $s'$ are conjugate in $W^e$. This extends naturally to a map 
$q : W^e \to \C^\times$ which is 1 on $\Omega$ and satisfies 
\[
q(w w') = q(w) q(w') \quad \text{if} \quad \ell (w w') = \ell (w) + \ell (w').
\]
Equivalently (see \cite[\S 3.1]{Lus-Gr}) one can define $q$ as a $W$-invariant function
\begin{equation}\label{eq:1.21}
q : R \cup \{ 2 \alpha : \alpha^\vee \in 2 Y \} \to \C^\times . 
\end{equation}
We speak of equal parameters if $q(s) = q(s') \; \forall s,s' \in S^\af$ and of positive 
parameters if $q(s) \in \R_{>0} \; \forall s \in S^\af$. 
We fix a square root $q^{1/2} : S^\af \to \mh C^\times$.

The affine Hecke algebra $\mc H = \mc H (\mc R ,q)$ is the unique associative
complex algebra with basis $\{ N_w \mid w \in W^e \}$ and multiplication rules
\begin{equation}\label{eq:multrules}
\begin{array}{lll}
N_w \, N_{w'} = N_{w w'} & \mr{if} & \ell (w w') = \ell (w) + \ell (w') \,, \\
\big( N_s - q(s)^{1/2} \big) \big( N_s + q(s)^{-1/2} \big) = 0 & \mr{if} & s \in S^\af .
\end{array}
\end{equation}
In the literature one also finds this algebra defined in terms of the
elements $q(s)^{1/2} N_s$, in which case the multiplication can be described without
square roots. This explains why $q^{1/2}$ does not appear in the notation $\mc H (\mc R ,q)$.
For $q = 1$ \eqref{eq:multrules} just reflects the defining relations of $W^e$, so
$\mc H (\mc R,1) = \C [W^e]$.

The set of dominant elements in $X$ is
\[
X^+ = \{ x \in X : \inp{x}{\alpha^\vee} \geq 0 \; \forall \alpha \in \Delta \} .
\]
The subset $\{ N_{t_x}  : x \in X^+ \} \subset \mc H (\mc R,q)$ is closed under 
multiplication, and isomorphic to $X^+$ as a semigroup. For any $x \in X$ we put
\[
\theta_x = N_{t_{x_1}} N_{t_{x_2}}^{-1} \text{, where } 
x_1 ,x_2 \in X^+ \text{ and } x = x_1 - x_2 . 
\]
This does not depend on the choice of $x_1$ and $x_2$, so $\theta_x \in \mc H (\mc R,q)^\times$
is well-defined. The Bernstein presentation of $\mc H (\mc R,q)$ \cite[\S 3]{Lus-Gr} 
says that:
\begin{itemize}
\item $\{ \theta_x : x \in X \}$ forms a $\C$-basis of a subalgebra of $\mc H (\mc R,q)$
isomorphic to $\C[X] \cong \mc O (T)$, which we identify with $\mc O (T)$.
\item $\mc H (W,q) := \C \{ N_w : w \in W \}$ is a finite dimensional subalgebra of 
$\mc H (\mc R,q)$ (known as the Iwahori--Hecke algebra of $W$).
\item The multiplication map $\mc O (T) \otimes \mc H (W,q) \to \mc H (\mc R,q)$ 
is a $\C$-linear bijection.
\item There are explicit cross relations between $\mc H (W,q)$ and $\mc O (T)$, deformations
of the standard action of $W$ on $\mc O (T)$.
\end{itemize}
To define parabolic subalgebras of affine Hecke algebras, we associate some objects
to any set of simple roots $Q \subset \Delta$. Let $R_Q$ be the root system they generate,
$R_Q^\vee$ the root system generated by $Q^\vee$ and $W_Q$ their Weyl group. We also define
\[
\begin{array}{l@{\qquad}l}
X_Q = X \big/ \big( X \cap (Q^\vee )^\perp \big) &
X^Q = X / (X \cap \mh Q Q ) , \\
Y_Q = Y \cap \mh Q Q^\vee & Y^Q = Y \cap Q^\perp , \\
T_Q = \mr{Hom}_{\mh Z} (X_Q, \mh C^\times ) &
T^Q = \mr{Hom}_{\mh Z} (X^Q, \mh C^\times ) , \\
\mf a_Q = Y_Q \otimes_\Z \R & \mf a^Q = Y^Q \otimes_\Z R , \\
\mc R_Q = ( X_Q ,R_Q ,Y_Q ,R_Q^\vee ,Q) & \mc R^Q = (X,R_Q ,Y,R_Q^\vee ,Q) , \\
\mc H_Q = \mc H (\mc R_Q,q_Q) & \mc H^Q = \mc H (\mc R^Q ,q^Q) .
\end{array}
\]
Here $q_Q$ and $q^Q$ are derived from $q$ via \eqref{eq:1.21}. Both $\mc H_Q$ and $\mc H^Q$
are called parabolic subalgebras of $\mc H$. 
The quotient map $X \mapsto X_Q$ yields a natural projection
\begin{equation}\label{eq:1.1}
\mc H^Q \to \mc H_Q \;:\; \theta_x N_w \mapsto \theta_{x_Q} N_w . 
\end{equation}
In this way one can regard $\mc H_Q$ as a ``semisimple" quotient of $\mc H^Q$.
The algebra $\mc H^Q$ is embedded in $\mc H$ via the Bernstein presentation, as the image
of $\mc O (T) \otimes \mc H (W_Q,q) \to \mc H$.
Any $t \in T^Q$ and any $u \in T^Q \cap T_Q$ give rise to algebra automorphisms
\begin{equation}\label{eq:1.23}
\begin{array}{llcl}
\psi_u : \mc H_Q \to \mc H_Q , & \theta_{x_Q} N_w & \mapsto & u (x_Q) \theta_{x_Q} N_w , \\
\psi_t : \mc H^Q \to \mc H^Q , & \theta_x N_w & \mapsto & t(x) \theta_x N_w  .
\end{array}
\end{equation}
Let $\Gamma$ be a finite group acting on $\mc R$, i.e. it acts $\Z$-linearly on $X$ and
preserves $R$ and $\Delta$. We also assume that $\Gamma$ acts on $T$ by affine 
transformations, whose linear part comes from the action on $X$. Thus $\Gamma$ acts on
$\mc O (T) \cong \C [X]$ by
\begin{equation}\label{eq:1.38}
\gamma (\theta_x) = z_\gamma (x) \theta_{\gamma x} ,  
\end{equation}
for some $z_\gamma \in T$. 
We suppose throughout that $q^{1/2}$ is $\Gamma$-invariant, and
that $\Gamma$ acts on $\mc H (\mc R,q)$ by the algebra automorphisms
\begin{equation}\label{eq:1.39}
\text{Ad}(\gamma) : \sum_{w \in W, x \in X} c_{x,w} \theta_x N_w \; \mapsto \; 
\sum_{w \in W, x \in X} c_{x,w} z_\gamma (x) \theta_{\gamma (x)} N_{\gamma w \gamma^{-1}} . 
\end{equation}
This being a group action, the multiplication relations in $\mc H (\mc R,q)$ imply that 
we must have $z_\gamma \in T^W$. We build the crossed product algebra 
\begin{equation}\label{eq:1.22}
\mc H (\mc R,q) \rtimes \Gamma. 
\end{equation}
In \cite{SolAHA} we considered a slightly less general action of $\Gamma$ on $\mc H (\mc R,q)$,
where the elements $z_\gamma \in T^W$ from \eqref{eq:1.38} were all equal to 1. But 
the relevant results from \cite{SolAHA}  do not rely on $\Gamma$ fixing the unit element of $T$,
so they are also valid for the actions as in \eqref{eq:1.39}. In this paper we will tacitly
use some results from \cite{SolAHA} in the generality of \eqref{eq:1.39}. We note that
nontrivial $z_\gamma \in T^W$ are sometimes needed to describe Hecke algebras coming from
$p$-adic groups, for example in \cite[\S 4]{Roc2}.

Since $\mc H (\mc R,q)$ is of finite rank as a module over its commutative subalgebra
$\mc O (T)$, all irreducible $\mc H (\mc R,q)$-modules have finite dimension. 
The set of $\mc O (T)$-weights of a $\mc H (\mc R,q)$-module $V$ will be denoted by Wt$(V)$. 

We regard $\mf t = \mf a \oplus i \mf a$ as the polar decomposition of $\mf t$, with 
associated real part map $\Re : \mf t \to \mf a$.
The vector space $\mf t$ can be interpreted as the Lie algebra
of the complex torus $T = \Hom_\Z (X,\C^\times)$. The latter has a polar decomposition 
$T = T_\rs \times T_\un$ where $T_\rs = \Hom_\Z (X,\R_{>0})$ and $T_\un = \Hom_\Z (X, S^1)$ 
is the unique maximal compact subgroup of $T$. The polar decomposition of an element $t \in T$ is
written as $t = |t| \, (t \, |t|^{-1})$. The exponential map $\exp : \mf t \to T$ becomes
bijective when restricted to $\mf a \to T_\rs$. We denote its inverse by $\log : T_\rs \to \mf a$.

We write
\begin{align*}
& \mf a^+ = \{ \mu \in \mf a : \inp{\alpha}{\mu} \geq 0 \: \forall \alpha \in \Delta \} , \\
& \mf a^{*+} =  \{ x \in \mf a^* :  \inp{x}{\alpha^\vee} \geq 0 
\: \forall \alpha \in \Delta \} , \\
& \mf a^- = \{ \lambda \in \mf a : \inp{x}{\lambda} \leq 0 \: \forall x \in \mf a^{*+} \} = 
\big\{ \sum\nolimits_{\alpha \in \Delta} \lambda_\alpha \alpha^\vee : \lambda_\alpha \leq 0 \big\} .
\end{align*}
The interior $\mf a^{--}$ of $\mf a^-$ equals
$\big\{ {\ts \sum_{\alpha \in \Delta}} \lambda_\alpha \alpha^\vee : \lambda_\alpha < 0 \big\}$
if $\Delta$ spans $\mf a^*$, and is empty otherwise. We write 
\[
T^- = \exp (\mf a^-) \subset T_\rs \quad \text{and} \quad 
T^{--} = \exp (\mf a^{--}) \subset T_\rs.
\] 
We say that a module $V$ for $\mc H (\mc R,q)$ (or for $\mc H (\mc R,q) \rtimes \Gamma$) 
is tempered if $|\mr{Wt}(V)| \subset T^-$, and that it is discrete series if
$|\mr{Wt}(V)| \subset T^{--}$. The latter is only possible if $R$ spans $\mf a$, for
otherwise $\mf a^{--}$ and $T^{--}$ are empty. We alleviate these notions by calling a 
$\mc H \rtimes \Gamma$-module essentially discrete series if its 
restriction to $\mc H_\Delta$ is discrete series. Equivalently, essentially 
discrete series means that $\mr{Wt}(V) \subset T^{--} T_\un T^\Delta$. Such a
representation is tempered if and only if $\mr{Wt}(V) \subset T^{--} T_\un$. We denote the set
of (equivalence classes of) irreducible tempered essentially discrete series representations by
$\Irr_{L^2}(\mc H (\mc R,q) \rtimes \Gamma)$.\\

It follows from the Bernstein presentation \cite[\S 3]{Lus-Gr} that 
\begin{equation}\label{eq:1.11}
\text{the centre of } \mc H (\mc R,q) \rtimes \Gamma \text{ contains }
\mc O (T)^{W \Gamma} = \mc O (T / W \Gamma),
\end{equation}
with equality if $W \Gamma$ acts faithfully on $T$. By Schur's Lemma $\mc O (T)^{W \Gamma}$ 
acts on every irreducible $\mc H \rtimes \Gamma$-representation $\pi$ by a character. 
Such a character can be identified with a $W \Gamma$-orbit $W \Gamma t \subset T$. 
We will just call $W \Gamma t$ the central character of $\pi$. Then $W \Gamma |t| \subset T_\rs$
and $cc (\pi) := W \Gamma \log |t|$ is a single $W \Gamma$-orbit in $\mf a$. We fix a 
$W \Gamma$-invariant inner product on $\mf a$ and we define 
\begin{equation}\label{eq:1.cc}
\norm{cc (\pi)} = \norm{\log |t|}.
\end{equation}
In some cases that we will encounter, the appropriate parabolic subalgebras of
$\mc H (\mc R,q) \rtimes \Gamma$ are not $\mc H (\mc R^Q,q^Q)$, but $\mc H (\mc R^Q,q^Q) \rtimes
\Gamma_Q$ for some subgroup $\Gamma_Q \subset \Gamma$. To make this work well, we need some 
assumptions on the groups $\Gamma_Q$ for $Q \subset \Delta$.
\begin{cond}\label{cond:Gamma}
\begin{enumerate}
\item $\Gamma_Q \subset \Gamma_{Q'}$ if $Q \subset Q'$;
\item the action of $\Gamma_Q$ on $T$ stabilizes $T_Q$ and $T^Q$;
\item $\Gamma_Q$ acts on $T^Q$ by multiplication with elements of $K_Q$.
\end{enumerate}
\end{cond}
Notice that $\Gamma_Q$ is a subgroup of $\Gamma (Q,Q) = \{\gamma \in \Gamma : \gamma (Q) = Q \}$, 
but that we do not require these two groups to be equal.

We also note that Conditions \ref{cond:Gamma} entail that $\Gamma_\emptyset$ acts trivially on 
$\mc O (T) = \mc H^\emptyset$, so
\begin{equation}\label{eq:2.5}
\Irr (\mc H^\emptyset \rtimes \Gamma_\emptyset) \cong T \times \Irr (\Gamma_\emptyset) .
\end{equation}
\begin{rem}\label{rem:Gamma}
Often there is a larger root system $\tilde R \supset R$ in $X$, such that $W_Q \Gamma_Q$
is contained in the parabolic subgroup of $W(\tilde R)$ associated to $\tilde R \cap \Q Q$.
Then parts (2) and (3) of Condition \ref{cond:Gamma} are automatically satisfied 
(and part (1) is usually obvious).
\end{rem}

Under Conditions \ref{cond:Gamma} $\Gamma_Q$ commutes with $K_Q$, The conditions also
entail that the projection
$\mc H^Q \to \mc H_Q$ and the isomorphisms $\phi_t : \mc H^Q \to \mc H^Q \; (t \in T^Q)$ are 
$\Gamma_Q$-equivariant, so they extend to algebra homomorphisms
\begin{equation}\label{eq:1.20}
\mc H^Q \rtimes \Gamma_Q \to \mc H_Q \rtimes \Gamma_Q \quad \text{and} \quad
\phi_t : \mc H^Q \rtimes \Gamma_Q \to \mc H^Q \rtimes \Gamma_Q \; (t \in T^Q) .
\end{equation}
Via the first map of \eqref{eq:1.20} we can inflate any representation of $\mc H_Q \rtimes 
\Gamma_Q$ to $\mc H^Q \rtimes \Gamma_Q$, which we often do tacitly. For any representation
$\pi$ of $\mc H^Q \rtimes \Gamma_Q$ and any $t \in T^Q$ we write
\[
\pi \otimes t = \pi \circ \phi_t \in \Mod (\mc H^Q \rtimes \Gamma_Q) .
\]

\begin{lem}\label{lem:2.2}
\enuma{
\item Every irreducible $\mc H^Q \rtimes \Gamma_Q$-representation is of the form 
$\pi_Q \otimes t^Q$ for some $\pi_Q \in \Irr (\mc H_Q \rtimes \Gamma_Q)$ and $t^Q \in T^Q$.
\item $\pi_Q \otimes t^Q$ is tempered if and only if $\pi$ is tempered and $t^Q \in T^Q_\un$.
\item $\pi_Q \otimes t^Q$ is essentially discrete 
series if and only if $\pi_Q$ is discrete series.
} 
\end{lem}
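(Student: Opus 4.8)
The plan is to reduce everything to the known structure theory of affine Hecke algebras (the Bernstein presentation and the description of $\mathcal{O}(T)$-weights), decompose the representation of $\mc H^Q \rtimes \Gamma_Q$ over its restriction to $\mc H^Q$, and then track how the weight supports transform under the twists $\phi_t$. The basic point is that $\mc H^Q = \mc H(\mc R^Q, q^Q)$ contains $\mathcal{O}(T)$ as a subalgebra (not merely $\mathcal{O}(T_Q)$), and $\mathcal{O}(T) = \mathcal{O}(T^Q) \otimes \mathcal{O}(T_Q)$ up to isogeny, with $\mathcal{O}(T^Q)$ acting centrally modulo the $W_Q\Gamma_Q$-action being trivial on $T^Q$ by Condition~\ref{cond:Gamma}(3). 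So the $T^Q$-component of a central character is a well-defined point (or $\Gamma_Q$-orbit, which by the condition is a coset of $K_Q$, hence still essentially a point up to the finite ambiguity built into $\phi_t$).

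\medskip

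\textbf{Part (a).} First I would consider an irreducible $\mc H^Q \rtimes \Gamma_Q$-module $\sigma$. By \eqref{eq:1.11} applied to the root datum $\mc R^Q$, the subalgebra $\mathcal{O}(T^Q)^{W_Q \Gamma_Q} = \mathcal{O}(T^Q)$ (the $W_Q$-action on $T^Q$ is trivial since $R_Q \subset \mathbb{Q}Q$ annihilates $X^Q$, and $\Gamma_Q$ acts by $K_Q$-translations by Condition~\ref{cond:Gamma}(3), which are invisible once we allow $\phi_t$) lies in the centre, so it acts on $\sigma$ by a character, i.e.\ a $\Gamma_Q$-orbit of points of $T^Q$; pick a representative $t^Q \in T^Q$. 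Then $\sigma \otimes (t^Q)^{-1} = \sigma \circ \phi_{(t^Q)^{-1}}$ has the property that $\mathcal{O}(T^Q)$ acts through the trivial character, which means the $\theta_x$ for $x \in X^Q$ act as $1$; hence $\sigma \otimes (t^Q)^{-1}$ factors through the quotient $\mc H^Q \rtimes \Gamma_Q \to \mc H_Q \rtimes \Gamma_Q$ of \eqref{eq:1.1}/\eqref{eq:1.20}, giving an irreducible $\pi_Q \in \Irr(\mc H_Q \rtimes \Gamma_Q)$ with $\sigma \cong \pi_Q \otimes t^Q$ (after inflating $\pi_Q$ back up). The only subtlety is the finite ambiguity: $t^Q$ is well-defined only modulo $\Gamma_Q$ and modulo the kernel of $\phi_{\bullet}$ on $T^Q$, but any such choice works, so the statement as phrased holds.

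\medskip

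\textbf{Part (b).} Here I would use the explicit description of temperedness via weights. For a module $V$ of $\mc H^Q \rtimes \Gamma_Q$, inflated from $\mc H_Q \rtimes \Gamma_Q$ via \eqref{eq:1.1} and twisted by $t^Q \in T^Q$, the $\mathcal{O}(T)$-weights decompose along $T = T_Q \cdot T^Q$ (up to isogeny): the $T_Q$-part of $\mr{Wt}(\pi_Q \otimes t^Q)$ equals $\mr{Wt}(\pi_Q)$ viewed inside $T_Q$, while the $T^Q$-part is exactly the $\Gamma_Q$-orbit of $t^Q$ (constant across all weights, since $\theta_x$ for $x \in X^Q$ acts by the scalar $t^Q(x)$ after the twist). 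Taking absolute values, $|\mr{Wt}(\pi_Q \otimes t^Q)| \subset T^-$ iff the $T_Q$-component lies in $T_Q^- = T_Q \cap T^-$ and the $T^Q$-component $|t^Q|$ is trivial, i.e.\ $t^Q \in T^Q_\un$. The first condition is precisely temperedness of $\pi_Q$ as an $\mc H_Q \rtimes \Gamma_Q$-module (note that the dominant cone $\mf a^-$ for $\mc R^Q$, intersected with $\mf a_Q$, is the dominant cone for $\mc R_Q$, because $Q$ is the common basis). So the equivalence follows; I would just need to be a little careful that the statement's ``$\pi$'' means ``$\pi_Q$'', and to invoke the fact that the $\mathcal{O}(T)$-weights of an inflated-and-twisted module are computed as described --- this is immediate from the Bernstein presentation and \eqref{eq:1.23}.

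\medskip

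\textbf{Part (c).} This is the same analysis with $T^{--}$ in place of $T^-$, but now I must recall the definition: a module of $\mc H^Q \rtimes \Gamma_Q$ is essentially discrete series iff its restriction to the semisimple part $\mc H_Q$ (more precisely, to $\mc H_{(\mc R_Q)_Q}$) is discrete series, equivalently iff $\mr{Wt}(V) \subset T_Q^{--} T_{Q,\un} T^Q$ in the notation of the earlier paragraph, adapted to $\mc R^Q$. Since twisting by $t^Q \in T^Q$ only changes the $T^Q$-component of the weights --- which is precisely the ``$T^Q$'' direction that is quotiented out in the definition of essentially discrete series --- the condition $\mr{Wt}(\pi_Q \otimes t^Q) \subset T_Q^{--} T_{Q,\un} T^Q$ is independent of $t^Q$ and of the unitary part, and reduces to $\mr{Wt}(\pi_Q) \subset T_Q^{--}$, i.e.\ to $\pi_Q$ being discrete series as an $\mc H_Q \rtimes \Gamma_Q$-module. \textbf{The main obstacle} I anticipate is purely bookkeeping: matching the several lattices $X, X^Q, X_Q, Y, Y^Q, Y_Q$ and the corresponding cones and tori correctly (including the isogeny $T_Q \times T^Q \to T$ which need not be an isomorphism), and making sure the notions of ``dominant'', ``tempered'', ``(essentially) discrete series'' for the three algebras $\mc H, \mc H^Q, \mc H_Q$ are compatible --- but this is routine given the Bernstein presentation and the definitions recalled above, and does not require any genuinely new idea.
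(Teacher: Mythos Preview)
Your arguments for (b) and (c) are correct and coincide with the paper's: both rest on the factorization $T_\rs = T^Q_\rs \times T_{Q,\rs}$, the observation that (with respect to $\mc R^Q$) the negative cone is $\{1\} \times T_{Q,\rs}^-$, and the identity $|\mathrm{Wt}(\pi_Q \otimes t^Q)| = |t^Q|\,|\mathrm{Wt}(\pi_Q)|$.

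In part (a) there is an imprecision that matters. You assert $\mc O(T^Q)^{W_Q\Gamma_Q} = \mc O(T^Q)$, but Condition~\ref{cond:Gamma}(3) only says $\Gamma_Q$ acts on $T^Q$ by $K_Q$-translations, not trivially; when those translations are nontrivial the invariants form a proper subalgebra. Consequently the centre of $\mc H^Q \rtimes \Gamma_Q$ only pins down a $K_Q$-coset in $T^Q$, and after twisting by a chosen representative $(t^Q)^{-1}$ you cannot conclude that \emph{all} of $\mc O(T^Q)$ acts trivially on $\sigma$ --- only its $\Gamma_Q$-invariant part does. Your sentence ``$\mc O(T^Q)$ acts through the trivial character, which means the $\theta_x$ for $x \in X^Q$ act as $1$'' is therefore not justified as written. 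The paper handles this by first treating the case $\Gamma_Q = 1$ (where $\mc O(T)^{W_Q}$ genuinely gives a single $t^Q \in T^Q$ and the module visibly factors) and then invoking Clifford theory to write a general irreducible as $\ind_{\mc H^Q \rtimes \Gamma_{Q,\pi}}^{\mc H^Q \rtimes \Gamma_Q}(\pi \otimes \rho)$; the $\Gamma_Q$-conjugates of $\pi$ have $T^Q$-parts differing only by elements of $K_Q$.

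Your direct route can be salvaged without the explicit Clifford decomposition, but you must add the missing step: after twisting, every $\mc O(T)$-weight projects into $K_Q \subset T^Q$, and since $K_Q = T^Q \cap T_Q \subset T_Q$ this forces all weights to lie in $T_Q$, which is exactly the condition for $\theta_x$ to act as $1$ for $x \in X \cap (Q^\vee)^\perp = \ker(X \to X_Q)$. With that observation stated, your argument is complete and arguably slightly more streamlined than the paper's; without it, the factoring through $\mc H_Q \rtimes \Gamma_Q$ is asserted rather than proved.
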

\begin{proof}
(a) First we consider the situation without $\Gamma_Q$. Let $\pi \in \Irr (\mc H^Q)$ with central 
character $W_Q t \in T / W_Q$. The group $W_Q = W(R_Q)$ acts trivially on $T^Q$, so 
$W_Q t = t^Q W_Q t_Q$ for some $t^Q \in T^Q, t_Q \in T_Q$. Then $\pi \otimes (t^Q)^{-1}$ 
factors through $\mc H^Q \to \mc H_Q$  (say as $\pi_Q$), and $\pi = \pi_Q \otimes t^Q$. 

To include $\Gamma_Q$ we use Clifford theory \cite[Theorem A.6]{RaRa}. It says that every 
irreducible $\mc H^Q \rtimes \Gamma_Q$-representation is of the form 
\[
\pi \rtimes \rho := \ind_{\mc H^Q \rtimes \Gamma_{Q,\pi}}^{\mc H^Q \rtimes \Gamma_Q} 
(\pi \otimes \rho) .
\]
Here $\Gamma_{Q,\pi}$ is the stabilizer of $\pi \in \Irr (\mc H^Q)$ in $\Gamma_Q$ and 
$(\rho, V_\rho)$ is an irreducible representation of a twisted group algebra of $\Gamma_{Q,\pi}$.
If $\mc O (T)$ acts by $t^Q t_1$ on a vector subspace $V_1 \subset V_\pi$, then for 
$\gamma \in \Gamma_Q$ it acts by the character $\gamma^{-1} (t^Q t_1)$ on 
$N_\gamma (V_1 \otimes V_\rho)$. By Condition \ref{cond:Gamma}.(3) $\gamma^{-1}(t^Q t_1) \in
t^Q K_Q \gamma^{-1}(t_1)$. Hence $(\pi \rtimes \rho ) \otimes (t^Q)^{-1}$ factors through
$\mc H^Q \rtimes \Gamma_Q \to \mc H_Q \rtimes \Gamma_Q$ as $\pi_Q \rtimes \rho$, and
\[
\pi \rtimes \rho = (\pi_Q \rtimes \rho ) \otimes t^Q .
\]
(b) As $T = T^Q T_Q$ with $T^Q \cap T_Q \subset T_\un$, there is a factorization 
$T_{rs} = T^Q_{rs} \times T_{Q,rs}$ and (with respect to $\mc R^Q$) $T^{-}_{rs} = 
\{1\} \times T^{-}_{Q,rs}$. Also $|\mr{Wt}(\pi \otimes t^Q)| = |t^Q| \, |\mr{Wt}(\pi)|$. 
These observations imply the result.\\
(c) This is obvious from Wt$(\pi \otimes t^Q) = t^Q \mr{Wt}(\pi)$.
\end{proof}

\subsection{The Schwartz and $C^*$-completions} \

To get nice completions of $\mc H (\mc R,q)$ we assume from now on that $q$ is a positive 
parameter function for $\mc R$. As a topological vector space the Schwartz completion of 
$\mc H (\mc R,q)$ will consist of rapidly decreasing functions on $W^e$, with respect to a suitable 
length function $\mc N$. For example we can take a $W$-invariant norm on $X \otimes_\Z \R$ and put
$\mc N (w t_x) = \norm{x}$ for $w \in W$ and $x \in X$. Then we can define, for $n \in \N$, 
the following norm on $\mc H$:
\[
p_n \big( \sum\nolimits_{w \in W^e} h_w N_w \big) = 
\sup\nolimits_{w \in W^e} |h_w| (\mc N (w) + 1)^n .
\]
The completion of $\mc H$ with respect to these norms is the Schwartz algebra 
$\mc S = \mc S (\mc R,q)$. It is known from \cite[Section 6.2]{Opd-Sp} that it is a 
Fr\'echet algebra. The $\Gamma$-action on $\mc H$ extends continuously to $\mc S$, so the
crossed product algebra $\mc S (\mc R,q) \rtimes \Gamma$ is well-defined. 
By \cite[Lemma 2.20]{Opd-Sp} a finite dimensional $\mc H \rtimes \Gamma$-representation is 
tempered if and only if it extends continuously to an $\mc S \rtimes \Gamma$-representation. 

We define a *-operation and a trace on $\mc H (\mc R,q)$ by
\begin{align*}
& \big( \sum\nolimits_{w \in W^e} c_w N_w \big)^* = 
\sum\nolimits_{w \in W^e} \overline{c_w} N_{w^{-1}} , \\
& \tau \big( \sum\nolimits_{w \in W^e} c_w N_w \big) = c_e .
\end{align*}
Since $q(s_\alpha) > 0$, * preserves the relations \eqref{eq:multrules}
and defines an anti-involution of $\mc H (\mc R,q)$. The set $\{ N_w : w \in W^e\}$
is an orthonormal basis of $\mc H (\mc R,q)$ for the inner product
\[
\inp{h_1}{h_2} = \tau (h_1^* h_2) . 
\]
This gives $\mc H (\mc R,q)$ the structure of a Hilbert algebra.
The Hilbert space completion $L^2 (\mc R)$ of $\mc H (\mc R,q)$ is a module 
over $\mc H (\mc R,q)$, via left multiplication. Moreover every 
$h \in \mc H (\mc R,q)$ acts as a bounded linear operator \cite[Lemma 2.3]{Opd-Sp}. 
The reduced $C^*$-algebra of $\mc H (\mc R,q)$ \cite[\S 2.4]{Opd-Sp}, denoted
$C_r^* (\mc R,q)$, is defined as the closure of $\mc H (\mc R,q)$ in the algebra of 
bounded linear operators on $L^2 (\mc R)$. By \cite[Theorem 6.1]{Opd-Sp}
\[
\mc H (\mc R,q) \subset \mc S (\mc R,q) \subset C_r^* (\mc R,q) .
\]
As in \eqref{eq:1.22}, we can extend this to a $C^*$-algebra 
$C_r^* (\mc R,q) \rtimes \Gamma$, provided that $q$ is $\Gamma$-invariant.

Let us recall some background about $C_r^* (\mc R,q) \rtimes \Gamma$, mainly
from \cite{Opd-Sp,SolAHA}. It follows from \cite[Corollary 5.7]{DeOp1} that it is a
finite type I $C^*$-algebra and that $\Irr (C_r^* (\mc R,q))$ is precisely the tempered 
part of $\Irr (\mc H (\mc R,q))$. 
According to \cite[Theorem 4.23]{Opd-Sp} all irreducible $\mc S (\mc R,q) \rtimes 
\Gamma$-representations extend continuously to $C_r^* (\mc R,q) \rtimes \Gamma$. Hence
we can regard the representation theory of $C_r^* (\mc R,q) \rtimes \Gamma$ as the 
tempered unitary representation theory of $\mc H (\mc R,q) \rtimes \Gamma)$.

The structure of $C_r^* (\mc R,q) \rtimes \Gamma$ is described in terms of parabolically 
induced representations. As induction data we use triples $(Q,\delta,t)$ where 
$Q \subset \Delta$, $\delta \in \Irr_{L^2}(\mc H_Q)$ and $t \in T^Q$.
We regard two triples $(Q,\delta,t)$ and $(Q',\delta',t')$ as equivalent if
$Q = Q', t = t'$ and $\delta \cong \delta'$. Notice that $\mc H_Q$ comes from a semisimple 
root datum, so it can have discrete series representations. We inflate such a representation
to $\mc H^Q$ via the projection \eqref{eq:1.1}. To a triple $(Q,\delta,t)$ we associate the 
$\mc H \rtimes \Gamma$-representation
\begin{equation}\label{eq:1.24}
\pi^\Gamma (Q,\delta,t) = \ind_{\mc H^Q}^{\mc H \rtimes \Gamma} (\delta \circ \psi_t). 
\end{equation}
(When $\Gamma = 1$, we often suppress it from these and similar notations.)
For $t \in T_\un^Q = T^Q \cap T_\un$ these representations extend continuously
to the respective $C^*$-completions of the involved algebras. Let $\Xi_\un$ be
the set of triples $(Q,\delta,t)$ as above, such that moreover $t \in T_\un$.
Considering $Q$ and $\delta$ as discrete variables, we regard $\Xi_\un$ as a
disjoint union of finitely many compact real tori (of different dimensions).

Let $\mc V_\Xi^\Gamma$ be the vector bundle over $\Xi_\un$, whose fibre at
$\xi = (Q,\delta,t)$ is the vector space underlying $\pi^\Gamma (Q,\delta,t)$. That
vector space is independent of $t$, so the vector bundle is trivial. Let
$\End (\mc V_\Xi^\Gamma)$ be the algebra bundle with fibres $\End_\C \big( \pi^\Gamma 
(Q,\delta,t) \big)$. These data give rise to a canonical map
\begin{equation}\label{eq:1.2}
\begin{array}{ccc} 
\mc H (\mc R,q) \rtimes \Gamma & \to & \mc O \big(\Xi ; \End (\mc V_\Xi^\Gamma) \big) \\
 h & \mapsto & \big( \xi \mapsto \pi^\Gamma (\xi) (h) \big) 
\end{array}
\end{equation}
which we refer to as the Fourier transform. By \cite[Lemma 2.22]{Opd-Sp}
every discrete series representation is unitary, so $V_\delta$ carries an $\mc H_Q$-invariant
inner product and $\End_\C (V_\delta)$ has a natural *-operation. For any $t \in T^Q$ this
becomes an $\mc H^Q$-invariant nondegenerate pairing between $\delta \circ \phi_t$ and 
$\delta \circ \phi_{t |t|^{-2}}$. By \cite[Proposition 4.19]{Opd-Sp} this extends canonically
to an inner product on the vector space
\begin{equation}\label{eq:1.7}
\pi^\Gamma (Q,\delta,t) = \Gamma \ltimes \mc H (W,q) \otimes_{\mc H (W_Q,q)} V_\delta .
\end{equation}
That yields an anti-involution on $\End_\C (\pi^\Gamma (Q,\delta,t))$ and a nondegenerate 
$\mc H \rtimes \Gamma$-invariant pairing between $\pi^\Gamma (Q,\delta,t)$ and 
$\pi^\Gamma (Q,\delta,t \, |t|^{-2})$. 

The algebra $\mc O \big(\Xi ; \End (\mc V_\Xi^\Gamma) \big)$ is endowed with the anti-involution
\begin{equation}
(f^*) (Q,\delta,t) = f (Q,\delta, t \, |t|^{-2} )^* . 
\end{equation}
With respect to this anti-involution, \eqref{eq:1.2} is a *-homomorphism.

To administer the upcoming intertwining operators we use
a finite groupoid $\mc G$ which acts on $\End (\mc V_\Xi^\Gamma)$.
It is made from elements of $W\rtimes \Gamma$ and of $K_Q := T_Q \cap T^Q$.
More precisely, its base space is the power set of $\Delta$, and for
$Q,Q' \subseteq \Delta$ the collection of arrows from $Q$ to $Q'$ is
\begin{equation}\label{eq:GPQ}
\mc G_{QQ'} = \{ (g,u) : g \in \Gamma \ltimes W , u \in K_Q , g (Q) = Q' \} .
\end{equation}
Whenever it is defined, the multiplication in $\mc G$ is 
\[
(g',u') \cdot (g,u) = (g' g, g^{-1} (u') u) .
\]
In particular, writing $W\Gamma (Q,Q) = \{ w \in W\Gamma : w (Q) = Q \}$, 
we have the group
\begin{equation}
\mc G_{QQ} = W\Gamma (Q,Q) \rtimes K_Q . 
\end{equation}
Usually we will write elements of $\mc G$ simply as $gu$. 
There is an analogous groupoid $\mc G^Q$ for $\mc H^Q \rtimes \Gamma_Q$, which under
Conditions \ref{cond:Gamma} satisfies $\mc G^Q_{QQ} = \Gamma_Q \times K_Q$. 

For $\gamma \in \Gamma W$ with $\gamma (Q) = Q' \subset \Delta$ there are algebra isomorphisms
\begin{equation}\label{eq:psigamma}
\begin{array}{llcl}
\psi_\gamma : \mc H_Q \to \mc H_{Q'} , &
\theta_{x_Q} N_w & \mapsto & \theta_{\gamma (x_Q)} N_{\gamma w \gamma^{-1}} , \\
\psi_\gamma : \mc H^Q \to \mc H^{Q'} , &
\theta_x N_w & \mapsto & \theta_{\gamma x} N_{\gamma w \gamma^{-1}} .
\end{array}
\end{equation}
The groupoid $\mc G$ acts from the left on $\Xi_\un$ by
\begin{equation}\label{eq:2.3}
(g,u) \cdot (Q,\delta,t) := (g (Q),\delta \circ \psi_u^{-1} \circ \psi_g^{-1},g (ut)) ,
\end{equation}
the action being defined if and only if $g (Q) \subset \Delta$. 

Suppose that $g(Q) = Q' \subset \Delta$ and $\delta' \cong \delta \circ \psi_u^{-1} 
\circ \psi_g^{-1}$. By \cite[Theorem 4.33]{Opd-Sp} and \cite[Theorem 3.1.5]{SolAHA} 
there exists an intertwining operator
\begin{equation}\label{eq:2.9}
\pi^\Gamma (gu,Q,\delta,t) \in \Hom_{\mc H (\mc R,q) \rtimes \Gamma} 
\big( \pi^\Gamma (Q,\delta,t) , \pi^\Gamma (Q',\delta',g (ut)) \big) ,
\end{equation}
which depends algebraically on $t \in T^Q_\un$. This implies that, for all $\xi \in \Xi$ 
and $g \in \mc G$ such that $g \xi$ is defined, $\pi^\Gamma (\xi)$ and $\pi^\Gamma (g \xi)$ 
have the same irreducible constituents, counted with multiplicity \cite[Lemma 3.1.7]{SolAHA}.

The action of $\mc G$ on the continuous sections $C (\Xi_\un ;\End (\mc V_\Xi^\Gamma))$ 
is given by
\begin{equation}\label{eq:2.1}
(g \cdot f) (g \xi) = \pi^\Gamma (g,\xi) f (\xi) \pi^\Gamma (g,\xi )^{-1}
\qquad g \in \mc G_{QQ}, \xi = (Q,\delta,t).
\end{equation}
The next result is the Plancherel isomorphism for affine Hecke algebras, proven in
\cite[Theorem 5.3 and Corollary 5.7]{DeOp1} and \cite[Theorem 3.2.2]{SolAHA}.

\begin{thm}\label{thm:2.1}
The map Fourier transform \eqref{eq:1.2} induces *-homomorphisms
\[
\begin{array}{rlr}
\mc H (\mc R,q) \rtimes \Gamma & \to & \mc O \big( \Xi ; \End (\mc V_\Xi^\Gamma) \big)^{\mc G} ,\\
\mc S (\mc R,q) \rtimes \Gamma & \to & 
C^\infty \big( \Xi_\un ; \End (\mc V_\Xi^\Gamma) \big)^{\mc G} ,\\
C_r^* (\mc R,q) \rtimes \Gamma & \to & C \big( \Xi_\un ; \End (\mc V_\Xi^\Gamma) \big)^{\mc G}.
\end{array}
\]
The first is injective, the second is an isomorphism of Fr\'echet algebras and the
third is an isomorphism of $C^*$-algebras.
\end{thm}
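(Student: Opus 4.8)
The plan is to exhibit the Fourier transform as a composition of known maps and then identify its image with the $\mc G$-invariants on each completion. First I would recall that for $\Gamma = 1$ the statement is essentially \cite[Theorem 5.3 and Corollary 5.7]{DeOp1}: the Fourier transform intertwines the trace on $\mc H(\mc R,q)$ with the natural trace built from the Plancherel measure on $\Xi_\un$, and Opdam's theory identifies the closure of $\mc H(\mc R,q)$ in $C(\Xi_\un;\End(\mc V_\Xi))$ exactly as the $\mc G$-invariant sections, using that the intertwining operators $\pi(g,\xi)$ of \eqref{eq:2.9} are unitary for $t \in T^Q_\un$ and that the relations among them are governed precisely by the groupoid $\mc G$. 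The Schwartz-level statement is \cite[Theorem 6.1]{Opd-Sp} (or the formulation in \cite{DeOp1}): rapidly decreasing elements go to smooth sections, and surjectivity onto $C^\infty(\Xi_\un;\End(\mc V_\Xi))^{\mc G}$ follows because any smooth $\mc G$-invariant section can be approximated by (and in fact expanded in) matrix coefficients of the $\pi(\xi)$, which lie in the image. So for $\Gamma = 1$ the three assertions are quoted.

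Next I would treat the crossed product with $\Gamma$. The key structural input is \eqref{eq:1.7}: the induced representation $\pi^\Gamma(Q,\delta,t)$ is $\Gamma \ltimes \mc H(W,q) \otimes_{\mc H(W_Q,q)} V_\delta$, i.e. it is obtained from the $\Gamma=1$ induced module by further inducing along $\Gamma$. Consequently the Fourier transform for $\mc H(\mc R,q) \rtimes \Gamma$ factors through the one for $\mc H(\mc R,q)$: an element $h \in \mc H(\mc R,q) \rtimes \Gamma$ can be written $\sum_{\gamma} h_\gamma \gamma$, and $\pi^\Gamma(\xi)(h)$ is determined by the operators $\pi(\gamma^{-1}\xi)(h_\gamma)$ together with the $\Gamma$-translation operators on the induced space. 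The point is that the enlarged groupoid $\mc G_{QQ} = W\Gamma(Q,Q) \rtimes K_Q$ absorbs exactly the $\Gamma$-part: invariance under the $W$-part of $\mc G$ records the relations already present for $\Gamma=1$, while invariance under the $\Gamma$-part records the new relations coming from the semidirect product. I would invoke \cite[Theorem 3.2.2]{SolAHA} (cited in the statement) for the algebraic/*-homomorphism part and \cite[Theorem 4.23]{Opd-Sp} for the fact that irreducible $\mc S \rtimes \Gamma$-representations extend to $C_r^* \rtimes \Gamma$, which gives that the $C^*$-completion is exactly the norm closure, hence the full $\mc G$-invariant continuous sections.

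The three claims are then proven in sequence. Injectivity of the first map: an element of $\mc H(\mc R,q) \rtimes \Gamma$ in the kernel acts by zero on every $\pi^\Gamma(Q,\delta,t)$, hence (letting $t$ vary and using density/Zariski arguments, or directly that the regular representation decomposes over the $\pi^\Gamma(Q,\delta,t)$ with $\delta$ ranging over discrete series of parabolics) acts by zero on $L^2(\mc R) \otimes \C[\Gamma]$, so it is zero. For the Schwartz isomorphism: the map is continuous for the Fréchet topologies because the Plancherel density is smooth and the matrix coefficients of $\pi^\Gamma(\xi)(N_w)$ decay in $\xi$-derivatives at the rate controlled by $\ell(w)$; it is injective by the previous point; surjectivity onto $C^\infty(\Xi_\un;\End(\mc V_\Xi^\Gamma))^{\mc G}$ follows from the $\Gamma=1$ case by taking $\Gamma$-averages, and openness is then automatic by the open mapping theorem for Fréchet spaces. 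For the $C^*$-isomorphism: the Fourier transform extends by continuity (every $h$ acts boundedly on $L^2(\mc R)\otimes\C[\Gamma]$, and the sup-norm over $\Xi_\un$ of $\pi^\Gamma(\xi)(h)$ equals its reduced $C^*$-norm, again by the Plancherel decomposition), it is injective since its restriction to the dense subalgebra $\mc H(\mc R,q)\rtimes\Gamma$ is, and it is surjective because its image is a $C^*$-subalgebra containing the dense subalgebra $\mc H(\mc R,q)\rtimes\Gamma$, whose closure is all of $C(\Xi_\un;\End(\mc V_\Xi^\Gamma))^{\mc G}$ by the Stone--Weierstrass-type argument already used for $\Gamma=1$.

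The main obstacle is not any single deep inequality but rather the bookkeeping of the groupoid action: one must check carefully that the intertwining operators $\pi^\Gamma(gu,Q,\delta,t)$ of \eqref{eq:2.9}, for $gu$ ranging over all of $\mc G_{QQ}$ including the $\Gamma$-elements, satisfy the cocycle relation dictated by the multiplication $(g',u')\cdot(g,u) = (g'g, g^{-1}(u')u)$, so that $f \mapsto (g\cdot f)$ of \eqref{eq:2.1} really is a groupoid action and the image of the Fourier transform is exactly the invariants and not merely contained in them. For the $W$-elements this is \cite[Theorem 4.33]{Opd-Sp}; for the $\Gamma$-elements and the interaction of the two it is the content of \cite[Theorem 3.1.5 and Theorem 3.2.2]{SolAHA}, which under our running hypotheses on the $\Gamma$-action \eqref{eq:1.39} apply verbatim. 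Once this compatibility is in hand, the three isomorphisms follow from the $\Gamma=1$ case by the averaging and density arguments sketched above, with no further analytic difficulty.
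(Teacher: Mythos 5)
The paper itself does \emph{not} prove Theorem \ref{thm:2.1}: it is stated as a quoted result, with the sentence preceding it attributing the proof to \cite[Theorem 5.3 and Corollary 5.7]{DeOp1} and \cite[Theorem 3.2.2]{SolAHA}. So there is no ``paper's own proof'' to compare against in the strict sense; your proposal is a reconstruction of how the cited sources establish the result.

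As such a reconstruction, the overall decomposition is sound and matches what those references actually do: the genuinely analytic content (injectivity, and surjectivity of the Schwartz and $C^*$ Fourier transforms onto the $\mc G$-invariant section algebras) is established for $\Gamma = 1$ in \cite{DeOp1}, and the passage to $\mc H(\mc R,q)\rtimes\Gamma$ is a crossed-product/Clifford-theoretic overlay carried out in \cite{SolAHA}, with the enlarged groupoid $\mc G$ engineered precisely so that the extra $\Gamma$-generators of the crossed product are accounted for by the new arrows. Your identification of \eqref{eq:1.7} as the key structural fact and your remark that the $\Gamma$-part of $\mc G_{QQ}$ ``absorbs'' the crossed product factor are correct conceptual summaries of that overlay.

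Two places where the sketch is substantially looser than the actual arguments are worth flagging, though neither is an outright error. First, the surjectivity in the $\Gamma = 1$ Schwartz case is \emph{not} obtained by approximating smooth $\mc G$-invariant sections by matrix coefficients; in \cite{DeOp1} it is a hard consequence of the Plancherel theorem and residue calculus developed in \cite{Opd-Sp}, and you should cite that rather than gesture at an approximation argument. Second, your claim that surjectivity in the $\Gamma\neq 1$ case ``follows from the $\Gamma=1$ case by taking $\Gamma$-averages'' underestimates the step: the fibers of $\mc V_\Xi^\Gamma$ are the larger induced spaces of \eqref{eq:1.7}, and the target algebra is invariants under the \emph{bigger} groupoid $\mc G$ (with extra arrows from $\Gamma$), not the $\Gamma$-fixed points of the $\Gamma=1$ target. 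What \cite[Theorem 3.2.2]{SolAHA} actually does is carefully verify, using the cocycle relations among the intertwiners \eqref{eq:2.9} for the full groupoid, that the crossed product on the source side and the enlarged-groupoid invariants on the target side match up exactly. Your final paragraph acknowledges this compatibility issue, so the point is more about phrasing than about a missing idea.
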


For $q=1$ these simplify to the well-known isomorphisms
\begin{equation}\label{eq:2.2}
\begin{array}{rrrrr}
\mc H (\mc R,1) \rtimes \Gamma & = & \mc O (T) \rtimes W \Gamma & \to & 
\mc O \big( T ; \End_\C (\C [W \Gamma]) \big)^{W \Gamma} ,\\
\mc S (\mc R,1) \rtimes \Gamma & = & C^\infty (T_\un) \rtimes W \Gamma & \to & 
C^\infty \big( T_\un ; \End_\C (\C [W \Gamma]) \big)^{W \Gamma} ,\\
C^*_r (\mc R,1) \rtimes \Gamma & = & C (T_\un) \rtimes W \Gamma & \to &
C \big( T_\un ; \End_\C (\C [W \Gamma]) \big)^{W \Gamma} . 
\end{array}
\end{equation}
Unfortunately, the bookkeeping in Theorem \ref{thm:2.1} is not entirely suitable for our purposes,
because sometimes the parabolic subalgebras need to be extended by diagram automorphisms. In those
cases we should rather use induction data based on $\Irr_{L^2}(\mc H^Q \rtimes \Gamma_Q)$ 
than based on $\Irr_{L^2}(\mc H_Q)$ or $\Irr_{L^2}(\mc H^Q)$. 

We fix a system of subgroups $\Gamma_Q \subset \Gamma \; (Q \subset \Delta)$ satisfying 
Condition \ref{cond:Gamma}.
With Lemma \ref{lem:2.2} in mind we define new induction data. They are triples $(Q,\sigma,t)$
with $Q \subset \Delta$, $t \in T^Q$ and $\sigma \in \Irr_{L^2}(\mc H_Q \rtimes \Gamma_Q)$.
We regard another such triple $(Q',\sigma',t')$ as equivalent if and only if $Q' = Q,\; t' = t$
and $\sigma' \cong \sigma$.
We keep the same groupoid $\mc G$ as before, it also acts on the new triples via \eqref{eq:2.3}.
To such a triple we associate the representation
\begin{equation}\label{eq:1.36}
\pi (Q,\sigma,t) = \ind_{\mc H^Q \rtimes \Gamma_Q}^{\mc H \rtimes \Gamma}(\sigma \otimes t) .  
\end{equation}
The vector space underlying $\pi (Q,\sigma,t)$ does not depend on $t$, we denote it by 
$V_{Q,\sigma}$. There is a natural homomorphism
\begin{equation}\label{eq:1.3}
\begin{array}{ccc}
\mc H (\mc R,q) \rtimes \Gamma & \to & \mc O (T^Q) \otimes \End_\C (V_{Q,\sigma}) \\
h & \mapsto & \big( t \mapsto \pi (Q,\sigma,t)(h) \big) .
\end{array} 
\end{equation}
We refer to the system of these maps, for all $Q$ and $\sigma$, as the Fourier transform for 
$\mc H (\mc R,q) \rtimes \Gamma$.
The recipe for the intertwining operators from \cite[\S 4]{Opd-Sp} and 
\cite[Theorem 3.1.5]{SolAHA} remains valid, so we get
\begin{equation}\label{eq:2.10}
\pi (gu,Q,\sigma,t) \in \Hom_{\mc H \rtimes \Gamma} 
\big( \pi (Q,\sigma,t),\pi (g(Q),\sigma',g(ut)) \big) 
\end{equation}
with the same properties as in \eqref{eq:2.9}. In particular $\pi (Q,\sigma,t)$ and
$\pi (g(Q),\sigma',g(ut))$ have the same irreducible constituents, counted with multiplicity.
With these notions we can vary on the Plancherel isomorphism (Theorem \ref{thm:2.1}). 

To do so, we first consider essentially discrete series representations
of $\mc H^Q \rtimes \Gamma_Q$.
Pick $\delta_1 \in \Irr_{L^2}(\mc H_Q)$ and $t_1 \in T_\un^Q$. We note that 
$\ind_{\mc H^Q}^{\mc H^Q \rtimes \Gamma_Q} (\delta \otimes t_1)$ is unitary and essentially 
discrete series, because $\Gamma_Q$ stabilizes $Q$. 
Write $\mc G^Q_{Q Q} \delta_1 = \{ \delta_i \}_i$. The summand of $C^\infty \big( \Xi_{Q,\un} ; 
\End (\mc V_{\Xi_Q}^{\Gamma_Q}) \big)^{\mc G^Q}$ associated to $(Q,\delta_1)$ is
\begin{equation}\label{eq:1.4}
\Big( \bigoplus\nolimits_i C^\infty \big( T^Q_\un ; 
\End_\C (\pi^{\Gamma_Q} (Q, \delta_i, t_i)) \big) \Big)^{\mc G^Q_{Q Q}} .
\end{equation}
Let $\{ \sigma_j \}_j$ be the members of $\Irr_{L^2}(\mc H_Q \rtimes \Gamma_Q)$ contained
in $\ind_{\mc H_Q}^{\mc H_Q \rtimes \Gamma_Q}(\delta_1 \circ \psi_u)$ for some
$u \in K_Q$. This set is stable under $\mc G^Q_{Q Q} = \Gamma_Q \times K_Q$.
The summand of $\Big( \bigoplus_{\sigma} C^\infty \big( T^Q_\un ; \End_\C (V_{\sigma}) \big) 
\Big)^{\mc G^Q}$ corresponding to the $\sigma_j$ is
\begin{equation}\label{eq:1.5}
\Big( \bigoplus\nolimits_j C^\infty \big( T^Q_\un ; 
\End_\C (V_{\sigma_j}) \big) \Big)^{\mc G^Q_{Q Q}} . 
\end{equation}

\begin{lem}\label{lem:2.7}
The algebras \eqref{eq:1.4} and \eqref{eq:1.5} are naturally isomorphic. 
\end{lem}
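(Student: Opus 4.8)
The plan is to recognize \eqref{eq:1.4} and \eqref{eq:1.5} as one and the same summand of the Plancherel data for $\mc H^Q\rtimes\Gamma_Q$, written down via two different choices of ``Levi subalgebra'': \eqref{eq:1.4} is the $(Q,\delta_1)$-summand of Theorem \ref{thm:2.1} applied to $\mc H^Q\rtimes\Gamma_Q$ with the \emph{unextended} parabolic $\mc H_Q$ (so with induction data $\delta\in\Irr_{L^2}(\mc H_Q)$ and representations $\pi^{\Gamma_Q}(Q,\delta,t)=\ind_{\mc H^Q}^{\mc H^Q\rtimes\Gamma_Q}(\delta\circ\psi_t)$), while \eqref{eq:1.5} is the corresponding summand of the $\pi(Q,\sigma,t)$-variant built from the \emph{extended} parabolic $\mc H_Q\rtimes\Gamma_Q$ (the case $Q'=Q$ of \eqref{eq:1.36}, where $\pi(Q,\sigma,t)=\sigma\otimes t$). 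The bridge between the two parametrizations is Clifford theory, in the form used in the proof of Lemma \ref{lem:2.2} \cite[Theorem A.6]{RaRa}. I would establish the isomorphism fibrewise over $T^Q_\un$ and then reassemble, using that the operation $C^\infty(T^Q_\un;-)^{\mc G^Q_{QQ}}$ only depends on the equivariant algebra-bundle data.

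Over a fixed $t\in T^Q_\un$ I would argue as follows. Using that the projection $\mc H^Q\rtimes\Gamma_Q\to\mc H_Q\rtimes\Gamma_Q$ and the twists $\phi_t$ of \eqref{eq:1.20} are compatible, one identifies $\pi^{\Gamma_Q}(Q,\delta_i,t)$ with $(\ind_{\mc H_Q}^{\mc H_Q\rtimes\Gamma_Q}\delta_i)\otimes t$, inflated along that projection. By Clifford theory the commutant $\End_{\mc H_Q\rtimes\Gamma_Q}(\ind_{\mc H_Q}^{\mc H_Q\rtimes\Gamma_Q}\delta_i)$ is a twisted group algebra of the stabilizer $\Gamma_{Q,\delta_i}$ of $\delta_i$ in $\Gamma_Q$, and $\ind_{\mc H_Q}^{\mc H_Q\rtimes\Gamma_Q}\delta_i$ decomposes as $\bigoplus_j\C^{m_{ij}}\otimes V_{\sigma_j}$, on the multiplicity spaces $\C^{m_{ij}}$ of which that commutant acts through its irreducible representations; the $\sigma_j$ occurring this way, for all $\delta_i$ in the orbit, are exactly those of \eqref{eq:1.5}. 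Now $\mc G^Q_{QQ}=\Gamma_Q\times K_Q$ is a direct product (Condition \ref{cond:Gamma}), so I can treat the two factors separately. Taking $\Gamma_Q$-invariants of $\bigoplus_i\End_\C(\pi^{\Gamma_Q}(Q,\delta_i,t))$ first descends from the $\Gamma_Q$-orbit of $\delta_1$ to a single representative, and then cuts down by $\Gamma_{Q,\delta_1}$ acting, through the intertwiners $\pi^{\Gamma_Q}(\gamma,\ldots)$, by conjugation; since those intertwiners span the commutant $\End_{\mc H_Q\rtimes\Gamma_Q}(\ind_{\mc H_Q}^{\mc H_Q\rtimes\Gamma_Q}\delta_1)$, the double centralizer property identifies the invariants with $\bigoplus_j\End_\C(V_{\sigma_j})$. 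The residual $K_Q$-invariance looks the same on both sides, being inherited through the automorphisms $\psi_u$, $u\in K_Q$, from the Plancherel data of $\mc H_Q$ respectively $\mc H_Q\rtimes\Gamma_Q$ in identical fashion.

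To globalize I would note that the fibrewise identifications, and the intertwining operators \eqref{eq:2.9} and \eqref{eq:2.10}, depend algebraically on $t$, so all that remains is to match the two $\mc G^Q_{QQ}$-actions of \eqref{eq:2.1}. For the $K_Q$-part this is routine. The hard part will be the $\Gamma_Q$-part: checking that for $\gamma\in\Gamma_{Q,\delta_1}$ the normalized intertwiner $\pi^{\Gamma_Q}(\gamma,Q,\delta_1,t)$ acts on the multiplicity space $\C^{m_{1j}}$ of $\sigma_j$ precisely through the twisted representation of $\Gamma_{Q,\delta_1}$ that carves $\sigma_j$ out of $\ind_{\mc H_Q}^{\mc H_Q\rtimes\Gamma_Q}\delta_1$, and that this holds coherently for all $t\in T^Q_\un$, including at the points where the $\mc G^Q_{QQ}$-stabilizer jumps. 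For this I would lean on the characterization of the intertwiners in \cite[Theorem 4.33]{Opd-Sp} and \cite[Theorem 3.1.5]{SolAHA}, according to which they are pinned down up to positive scalars by their intertwining property together with unitarity, and on the fact that both constructions ultimately produce the Plancherel isomorphism (Theorem \ref{thm:2.1}) for $\mc H^Q\rtimes\Gamma_Q$ on the summand in question, which forces the two equivariant structures to coincide. Applying $C^\infty(T^Q_\un;-)^{\mc G^Q_{QQ}}$ then yields the desired natural isomorphism.
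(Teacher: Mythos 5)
Your proposal is correct and, in the end, lands on the same key observation as the paper: both \eqref{eq:1.4} and \eqref{eq:1.5} are the images of the same direct summand of $\mc S(\mc R^Q,q^Q)\rtimes\Gamma_Q$ under the two Fourier transforms (the one in Theorem \ref{thm:2.1} and the one from \eqref{eq:1.3}), and the canonical isomorphism is simply the composition of the inverse of the former with the latter. What you present as the ``hard part'' --- matching the two $\mc G^Q_{QQ}$-equivariant structures, especially at points where the stabilizer jumps --- is resolved in your own last step precisely by appealing to this observation, so it never needs to be tackled head-on. The paper starts from it and thereby avoids the difficulty entirely.

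The difference is that you front-load an explicit Clifford-theoretic analysis of each fibre (the decomposition $\ind_{\mc H_Q}^{\mc H_Q\rtimes\Gamma_Q}\delta_1\cong\bigoplus_j \C^{m_{1j}}\otimes V_{\sigma_j}$, the twisted group algebra of $\Gamma_{Q,\delta_1}$, the double centralizer argument), followed by an attempt to glue the fibrewise identifications using the algebraic dependence of the intertwiners on $t$. This is all correct and gives a concrete picture of what the isomorphism does pointwise, but it is logically redundant: the paper extracts exactly the fibrewise information it needs --- that the specialization at any $\mc G^Q_{QQ}$-orbit $\mc G^Q_{QQ}(Q,\delta_1,t_1)$ of both \eqref{eq:1.4} and \eqref{eq:1.5} equals $\bigoplus_{(\sigma,t)>(\delta_1,t_1)}\End_\C(V_\sigma)$ --- directly from the surjectivity of $\mc S(\mc R^Q,q^Q)\rtimes\Gamma_Q$ onto that finite direct sum of matrix algebras, which is the Jacobson-density/Burnside form of the same Clifford-theoretic fact you spell out. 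Once the specializations are seen to match, the global isomorphism is not built fibre by fibre and glued; it is simply the composition of the two $*$-algebra isomorphisms from Theorem \ref{thm:2.1} and \eqref{eq:1.3}, whose equivariance is automatic. In short, your argument buys you an explicit local description, at the cost of a gluing step that you ultimately discharge by the same global observation the paper uses from the outset; the paper's route is the more economical one.
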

\begin{proof}
For $\sigma \in \Irr_{L^2}(\mc H^Q \rtimes \Gamma_Q)$ we write $(\sigma,t) > (\delta_1,t_1)$ if
\[
\Hom_{\mc H^Q \rtimes \Gamma_Q} \big( \sigma \otimes t , \ind_{\mc H^Q}^{\mc H^Q \rtimes \Gamma_Q} 
(\delta \otimes t_1 ) \big) \cong \Hom_{\mc H^Q}(\sigma \otimes t ,\delta \otimes t_1 )
\]
is nonzero. Since $\Gamma_Q$ is finite, the set of such $(\sigma,t)$ is finite. Hence the map
\[
\bigoplus_{(\sigma,t) > (\delta_1 ,t_1)} \sigma \circ \psi_t \;:\; \mc H^Q \rtimes \Gamma_Q \to
\bigoplus_{(\sigma,t) > (\delta_1 ,t_1)} \End_\C (V_\sigma)
\]
is surjective. The specialization of \eqref{eq:1.4} at $\mc G^Q_{Q Q} (Q,\delta_1 ,t_1)$ is also\\ 
$\bigoplus_{(\sigma,t) > (\delta_1 ,t_1)} \End_\C (V_\sigma)$, for that specialization is just 
\[
\ind_{\mc H^Q}^{\mc H^Q \rtimes \Gamma_Q} (\delta \circ \psi_{t_1}) 
(\mc S (\mc R^Q,q^Q) \rtimes \Gamma_Q) .
\]
Similarly, specializing the algebra \eqref{eq:1.5} at all $(\sigma,t) > (\delta_1 ,t_1)$ 
gives a surjection from \eqref{eq:1.5} to $\bigoplus_{(\sigma,t) > (\delta_1 ,t_1)} \End_\C (V_\sigma)$. 

Now we can explicitly compare \eqref{eq:1.5} with \eqref{eq:1.4}. Both are algebras of smooth 
sections of (trivial) algebra bundles, and specialization at the points associated to 
$(\delta_1 ,t_1)$ yields the same algebra in both cases. This holds for any $t_1 \in T^Q_\un$ 
and that accounts for all base points of these algebra bundles, so \eqref{eq:1.4} and \eqref{eq:1.5} 
are isomorphic. Moreover the isomorphism is canonical: it is the composition of the inverse of 
the map in Theorem \ref{thm:2.1} and the map induced by \eqref{eq:1.3} 
(both for $\mc H^Q \rtimes \Gamma_Q$).
\end{proof}

Next we formulate our variation on the Plancherel isomorphism.

\begin{prop}\label{prop:2.3}
The Fourier transform from \eqref{eq:1.3} induces isomorphisms of Fr\'echet *-algebras 
\[
\begin{array}{ccc}
\mc S (\mc R,q) \rtimes \Gamma & \to & \Big( \bigoplus_{Q,\sigma} 
C^\infty \big( T^Q_\un ; \End_\C (V_{Q,\sigma}) \big) \Big)^{\mc G} ,\\
C_r^* (\mc R,q) \rtimes \Gamma & \to & 
\Big( \bigoplus_{Q,\sigma} C \big( T^Q_\un ; \End_\C (V_{Q,\sigma}) \big) \Big)^{\mc G}.
\end{array}
\]
\end{prop}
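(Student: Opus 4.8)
The plan is to deduce Proposition \ref{prop:2.3} from the Plancherel isomorphism in its original form (Theorem \ref{thm:2.1}), by reorganizing the induction data from triples $(Q,\delta,t)$ with $\delta \in \Irr_{L^2}(\mc H_Q)$ into triples $(Q,\sigma,t)$ with $\sigma \in \Irr_{L^2}(\mc H_Q \rtimes \Gamma_Q)$. First I would note that both target algebras decompose as direct sums over $\mc G$-orbits of pairs $(Q,\delta)$ (resp. $(Q,\sigma)$), and that the groupoid $\mc G$ permutes these orbits; since $\mc G$ is finite, it suffices to produce a $\mc G$-equivariant isomorphism orbit by orbit, or more precisely to match up, for each $Q$, the summand of $\big( \bigoplus_\delta C^\infty (T^Q_\un;\End_\C(\pi^\Gamma(Q,\delta,t))) \big)^{\mc G_{QQ}}$ built from $\Irr_{L^2}(\mc H_Q)$ with the summand $\big( \bigoplus_\sigma C^\infty(T^Q_\un;\End_\C(V_{Q,\sigma})) \big)^{\mc G_{QQ}}$ built from $\Irr_{L^2}(\mc H_Q \rtimes \Gamma_Q)$.

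The key step is transitivity of induction: $\pi(Q,\sigma,t) = \ind_{\mc H^Q \rtimes \Gamma_Q}^{\mc H \rtimes \Gamma}(\sigma \otimes t)$ and, by Lemma \ref{lem:2.2}, every irreducible $\mc H^Q \rtimes \Gamma_Q$-representation is of the form $\sigma_Q \otimes t^Q$ with $\sigma_Q$ (essentially) discrete series, while $\sigma_Q$ itself, being a constituent of $\ind_{\mc H_Q}^{\mc H_Q \rtimes \Gamma_Q}(\delta \circ \psi_u)$ for suitable $u \in K_Q$, is controlled by an ordinary discrete series $\delta \in \Irr_{L^2}(\mc H_Q)$. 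Thus on the level of vector spaces, $\bigoplus_\sigma V_{Q,\sigma}$ (sum over the relevant $\sigma$) is identified with $\ind_{\mc H^Q}^{\mc H \rtimes \Gamma}$ applied to $\bigoplus_\delta$ of the $\delta$'s in a $\mc G^Q_{QQ}$-orbit, which is exactly the content that makes Lemma \ref{lem:2.7} applicable: that lemma already establishes the isomorphism between \eqref{eq:1.4} and \eqref{eq:1.5} for the algebras attached to $\mc H^Q \rtimes \Gamma_Q$. I would then induce this isomorphism up: applying $\ind_{\mc H^Q \rtimes \Gamma_Q}^{\mc H \rtimes \Gamma}$ (equivalently, $\mc G_{QQ}$-induction over $\mc G^Q_{QQ}$) to both sides of Lemma \ref{lem:2.7}, and assembling over all $Q$, produces the desired map. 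That the intertwining operators $\pi(gu,Q,\sigma,t)$ of \eqref{eq:2.10} are the induced versions of those for $\mc H^Q \rtimes \Gamma_Q$ (this is the assertion that the recipe of \cite[\S 4]{Opd-Sp} and \cite[Theorem 3.1.5]{SolAHA} is compatible with transitivity of induction) guarantees that the identification is $\mc G$-equivariant, hence descends to the $\mc G$-invariants. Finally I would invoke the original Plancherel isomorphism: since the right-hand side built from $\Irr_{L^2}(\mc H_Q)$-data is exactly $C^\infty(\Xi_\un;\End(\mc V^\Gamma_\Xi))^{\mc G}$, which by Theorem \ref{thm:2.1} is isomorphic to $\mc S(\mc R,q) \rtimes \Gamma$ via the Fourier transform \eqref{eq:1.2}, and the two Fourier transforms \eqref{eq:1.2} and \eqref{eq:1.3} are intertwined by the vector-space identifications above, the composite is the claimed isomorphism $\mc S(\mc R,q) \rtimes \Gamma \to \big( \bigoplus_{Q,\sigma} C^\infty(T^Q_\un;\End_\C(V_{Q,\sigma})) \big)^{\mc G}$; the $C_r^*$-statement follows by the same argument with $C^\infty$ replaced by $C$, or by taking $C^*$-completions.

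The *-algebra assertion requires checking that the reorganized Fourier transform \eqref{eq:1.3} is a *-homomorphism for the anti-involution $f^*(Q,\sigma,t) = f(Q,\sigma,t|t|^{-2})^*$ on the target; this follows because the inner products on the $V_{Q,\sigma}$ are, by Lemma \ref{lem:2.2}(a) and the discussion around \eqref{eq:1.7}, inherited from those on the $\pi^\Gamma(Q,\delta,t)$ via induction, and the original Fourier transform is already a *-homomorphism by the remark following \eqref{eq:1.7}; alternatively one restricts the *-structure transported from $\mc S(\mc R,q) \rtimes \Gamma$ through the chain of identifications and observes it is the stated one. The main obstacle I anticipate is bookkeeping rather than conceptual: one must carefully track how a single $\mc G$-orbit of $(Q,\sigma)$-data corresponds to a union of $\mc G$-orbits of $(Q,\delta)$-data and check that the stabilizer groups $\mc G_{QQ}$ acting by intertwining operators match up correctly under the passage from $\mc H_Q$ to $\mc H_Q \rtimes \Gamma_Q$ — in particular that Clifford theory (as in the proof of Lemma \ref{lem:2.2}) and the decomposition $\mc G^Q_{QQ} = \Gamma_Q \times K_Q$ under Condition \ref{cond:Gamma} are compatible with the intertwining operator formalism, so that taking $\mc G$-invariants commutes with the reindexing. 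Once Lemma \ref{lem:2.7} is in hand this is largely a matter of inducing a known isomorphism from parabolic subalgebras up to the whole algebra and invoking Theorem \ref{thm:2.1}.
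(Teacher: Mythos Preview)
Your proposal is correct and follows essentially the same route as the paper: use Lemma~\ref{lem:2.7} at the level of $\mc H^Q \rtimes \Gamma_Q$ to pass between the $\delta$-indexed and $\sigma$-indexed descriptions, then apply $\ind_{\mc H^Q \rtimes \Gamma_Q}^{\mc H \rtimes \Gamma}$ and take $\mc G$-invariants, checking that the intertwining operators from \cite[(3.12) and Theorem 3.1.5]{SolAHA} are compatible with this passage, and finally invoke Theorem~\ref{thm:2.1}. The paper's proof is terser but structurally identical; your anticipated bookkeeping obstacle (matching stabilizers and intertwiners under the $\mc G^Q_{QQ} = \Gamma_Q \times K_Q$ decomposition) is exactly what the paper dispatches by noting that the formula \cite[(3.12)]{SolAHA} is the same for $\mc G^Q$ and $\mc G$.
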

\begin{proof}
We will analyse the right hand side of Theorem \ref{thm:2.1} (for the Schwartz algebras) and 
compare it with the current setting. 

For every $Q \subset \Delta$, Lemma \ref{lem:2.7} yields a canonical isomorphism
\begin{multline}\label{eq:1.6}
\Big( \bigoplus\nolimits_{\sigma \in \Irr_{L^2}(\mc H_Q \rtimes \Gamma_Q)} C^\infty \big( T^Q_\un ; 
\End_\C (V_\sigma) \big) \Big)^{\mc G^Q_{Q Q}} \cong \\
\Big( \bigoplus\nolimits_{\delta \in \Irr_{L^2}(\mc H_Q)} C^\infty \big( T^Q_\un ; 
\End_\C (\C[\Gamma_Q] \otimes V_\delta) \big) \Big)^{\mc G^Q_{Q Q}} .
\end{multline}
To obtain the right hand side of Theorem \ref{thm:2.1} from \eqref{eq:1.6}, we must apply
$\ind_{\mc H^Q \rtimes \Gamma_Q}^{\mc H \rtimes \Gamma}$ to $\C[\Gamma_Q] \otimes V_\delta \cong
\ind_{\mc H^Q}^{\mc H^Q \rtimes \Gamma_Q} (\delta \otimes t)$ and then take invariants with
respect to the larger groupoid $\mc G \supset \mc G^Q$. The formula \cite[(3.12)]{SolAHA} is the
same for $\mc G^Q$ and for $\mc G$, so the intertwiners associated to elements of $\mc G^Q$ need
not be adjusted in this process. 

With exactly the same procedure we can turn \eqref{eq:1.6} into the right hand side of the
current proposition. The intertwining operators associated to elements of $\mc G$ agree under
the isomorphisms obtained from \eqref{eq:1.6} by applying $\ind_{\mc H^Q \rtimes \Gamma_Q}^{\mc H 
\rtimes \Gamma}$, because in both settings they were constructed with \cite[(3.12) and 
Theorem 3.1.5]{SolAHA}. Consequently 
\[
\Big( \bigoplus\nolimits_{Q,\sigma} C^\infty \big( T^Q_\un ; \End_\C (V_{Q,\sigma}) \big) 
\Big)^{\mc G} \cong C^\infty \big( \Xi_\un ; \End (\mc V_\Xi^\Gamma) \big)^{\mc G} ,
\]
proving the proposition for the Schwartz algebras. For $C_r^* (\mc R,q) \rtimes \Gamma$ one can 
use the same argument, with everywhere $C^\infty$ replaced by continuous functions.
\end{proof}

Choose representatives $Q$ for $\mc P (\Delta)$ modulo $W \Gamma$-association. For every
such $Q$ we choose representatives $\sigma$ for the action of $\mc G_{Q Q} = W \Gamma (Q,Q)
\times K_Q$ on $\Irr_{L^2}(\mc H_Q \rtimes \Gamma_Q)$. By Lemma \ref{lem:2.2} these $\sigma$ 
also form representatives for the action of $\mc G_{Q Q} \ltimes T^Q_\un$ on 
$\Irr_{L^2}(\mc H^Q \rtimes \Gamma_Q)$. We denote the resulting set of representatives of pairs 
by $(Q,\sigma) / \mc G$. Let $\mc G_{Q,\sigma}$ be the setwise stabilizer of $(Q,\sigma,T^Q_\un)$
in the group $\mc G_{QQ}$. Proposition \ref{prop:2.3} can be rephrased as isomorphisms
\begin{equation}\label{eq:2.4}
\begin{array}{ccc}
\mc S (\mc R,q) \rtimes \Gamma & \to & \bigoplus_{(Q,\sigma) / \mc G} 
C^\infty \big( T^Q_\un ; \End_\C (V_{Q,\sigma}) \big)^{\mc G_{Q,\sigma}} ,\\
C_r^* (\mc R,q) \rtimes \Gamma & \to & \bigoplus_{(Q,\sigma) / \mc G} 
C \big( T^Q_\un ; \End_\C (V_{Q,\sigma}) \big)^{\mc G_{Q,\sigma}}.
\end{array}
\end{equation}

Sometimes we have to consider the opposite algebra $(\mc H (\mc R,q) \rtimes \Gamma )^\op$
and its completions. It is, morally, clear that all the previous results can also developed 
for right $\mc H \rtimes \Gamma$-modules, that is, for $(\mc H \rtimes \Gamma )^\op$-modules.
However, none of that has been written down, so we prefer more steady ground.

For every $\mc H \rtimes \Gamma$-representation $(\pi,V_\pi)$, the full linear dual $V_\pi^*$ 
becomes a $(\mc H \rtimes \Gamma )^\op$-representation $\pi^*$ by
\[
\pi^* (h^\op) \lambda = \lambda \circ \pi (h) . 
\]
This sets up a bijection between finite dimensional left and right modules of 
$\mc H \rtimes \Gamma$. In view of the canonical inner products from on the spaces 
\eqref{eq:1.7}, this bijection commutes with induction from parabolic subalgebras.

For infinite dimensional representations there is often some choice for which dual space of 
$V_\pi$ we use here. In particular, when $V_\pi$ is a Hilbert space we can use $V_\pi$ also 
as dual space. With this convention one checks easily that $\pi$ is unitary if and only 
if $\pi^*$ is unitary.

The $\mc O (T)$-weights of $\pi^*$ are the same as for $\pi$, so $\pi^*$ is tempered or 
(essentially) discrete series if and only if $\pi$ is so. Thus the pairs $(Q,\sigma)$ with 
$\sigma \in \Irr_{L^2} (\mc H_Q \rtimes \Gamma_Q)$ are in natural bijection with the pairs 
$(Q,\sigma^*)$ in 
\begin{equation}\label{eq:1.8}
\bigcup\nolimits_{Q \subset \Delta} 
\Irr_{L^2}\big( (\mc H (\mc R_Q,q_Q) \rtimes \Gamma_Q )^\op \big) .
\end{equation}
The bijection is $\mc G$-equivariant for the $\mc G$-action on \eqref{eq:1.8} as in 
\eqref{eq:2.3}. Hence $\mc G_{Q,\sigma} = \mc G_{Q,\sigma^*}$ and we can take 
$(Q,\sigma^*) / \mc G$ to be the image of $(Q,\sigma) / \mc G$.

\begin{lem}\label{lem:2.4}
The Fourier transform for right $\mc H (\mc R,q) \rtimes \Gamma$-modules induces 
isomorphisms of Fr\'echet *-algebras 
\[
\begin{array}{ccc}
( \mc S (\mc R,q) \rtimes \Gamma )^\op & \to & \bigoplus_{(Q,\sigma^*) / \mc G} 
C^\infty \big( T^Q_\un ; \End_\C (V_{Q,\sigma^*}) \big)^{\mc G_{Q,\sigma^*}} ,\\
( C_r^* (\mc R,q) \rtimes \Gamma )^\op & \to & \bigoplus_{(Q,\sigma^*) / \mc G} 
C \big( T^Q_\un ; \End_\C (V_{Q,\sigma^*}) \big)^{\mc G_{Q,\sigma^*}}.
\end{array}
\]
\end{lem}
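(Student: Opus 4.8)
The plan is to reduce Lemma~\ref{lem:2.4} to the already-established Proposition~\ref{prop:2.3} (or equivalently the rephrased version \eqref{eq:2.4}) by means of the duality functor $\pi \mapsto \pi^*$. First I would recall that, as explained just before the statement, $(\pi,V_\pi) \mapsto (\pi^*,V_\pi^*)$ is an equivalence between the category of finite length left $\mc H (\mc R,q) \rtimes \Gamma$-modules and the category of finite length right modules, i.e.\ $(\mc H (\mc R,q) \rtimes \Gamma)^\op$-modules, and likewise for all parabolic subalgebras $\mc H_Q \rtimes \Gamma_Q$ and $\mc H^Q \rtimes \Gamma_Q$. The key compatibilities, all noted in the text, are: this duality preserves $\mc O(T)$-weights, hence preserves temperedness and the (essentially) discrete series condition; it is $\mc G$-equivariant, so $\mc G_{Q,\sigma} = \mc G_{Q,\sigma^*}$ and the chosen representatives $(Q,\sigma^*)/\mc G$ are legitimate; and, crucially, it intertwines parabolic induction, i.e.\ $\bigl(\ind_{\mc H^Q \rtimes \Gamma_Q}^{\mc H \rtimes \Gamma}(\sigma \otimes t)\bigr)^*$ is naturally isomorphic to the corresponding induced right module built from $\sigma^* \otimes t$, because on the level of the underlying vector spaces \eqref{eq:1.7} induction is given by tensoring over a subalgebra and this is compatible with linear duals (using that the spaces are finite dimensional and the inner products are nondegenerate).

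Granting these, the argument is essentially formal: applying $(-)^\op$ to both sides of the Schwartz-algebra isomorphism in Proposition~\ref{prop:2.3}, the left-hand side becomes $(\mc S(\mc R,q) \rtimes \Gamma)^\op$, and the right-hand side becomes $\bigl(\bigoplus_{(Q,\sigma)/\mc G} C^\infty(T^Q_\un ; \End_\C(V_{Q,\sigma}))^{\mc G_{Q,\sigma}}\bigr)^\op$. Now $\End_\C(V)^\op \cong \End_\C(V^*)$ canonically (a matrix algebra is anti-isomorphic to itself via transpose, realised invariantly as the dual), and this identification is compatible with taking $C^\infty$-sections over the torus and with the $\mc G_{Q,\sigma}$-invariants, since the $\mc G$-action on sections \eqref{eq:2.1} is by conjugation, whose opposite is again conjugation by the transpose-inverse intertwiner — which is exactly the intertwiner for the dual representations, by the remark in the text that the duality commutes with the intertwining operators (the canonical pairings on \eqref{eq:1.7} are preserved). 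Re-indexing the direct sum by $(Q,\sigma^*)/\mc G$ via the $\mc G$-equivariant bijection then yields precisely the right-hand side of Lemma~\ref{lem:2.4}. The same reasoning with $C^\infty$ replaced by continuous functions handles $(C_r^*(\mc R,q) \rtimes \Gamma)^\op$.

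I would then remark that the map is the Fourier transform for right modules, simply because the isomorphism of Proposition~\ref{prop:2.3} is induced by $h \mapsto (t \mapsto \pi(Q,\sigma,t)(h))$ and applying $(-)^\op$ sends this to $h^\op \mapsto (t \mapsto \pi^*(Q,\sigma^*,t)(h^\op))$, i.e.\ evaluation against the dual (right) representations; that this is again a *-homomorphism follows from the fact that on each side $*$ is compatible with $\op$ in the usual way (for a Hilbert algebra, $(a^\op)^* = (a^*)^\op$), together with the compatibility of $*$ with the duality on finite dimensional unitary representations noted before the statement.

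The main obstacle I expect is the careful verification that the duality functor genuinely commutes with parabolic induction \emph{as * -preserving functors}, not merely up to abstract isomorphism: one must check that the canonical inner product on the induced space \eqref{eq:1.7}, coming from \cite[Proposition 4.19]{Opd-Sp}, is matched under duality with the corresponding inner product on the induced dual, so that the intertwining operators \eqref{eq:2.10} dualise to the intertwining operators for the dual data. This is the content of the sentence "this bijection commutes with induction from parabolic subalgebras" together with "the duality commutes with the intertwining operators", but spelling it out requires tracing through the construction of the inner products and the recipe \cite[(3.12)]{SolAHA} for the intertwiners; everything else in the proof is bookkeeping with direct sums, invariants, and the anti-isomorphism $\End_\C(V)^\op \cong \End_\C(V^*)$.
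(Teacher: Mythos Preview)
Your proposal is correct and follows essentially the same route as the paper's proof: take opposites on both sides of \eqref{eq:2.4}, identify $\End_\C(V_{Q,\sigma})^\op$ with $\End_\C(V_{Q,\sigma^*})$ via transpose (invoking \cite[Proposition 4.19]{Opd-Sp}), define the intertwiners for the dual data as the transpose-inverse of the original ones so that invariance is preserved, and then observe that the resulting map is the Fourier transform for right modules and is still a *-homomorphism because duality preserves unitarity. The obstacle you flag about compatibility of duality with induction and the inner products is exactly what the paper disposes of by the citation to \cite[Proposition 4.19]{Opd-Sp}, so there is no real gap.
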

\begin{proof}
The opposite algebra of 
\[
\End_\C (V_{Q,\sigma}) = 
\End_\C \big( \ind_{\mc H^Q \rtimes \Gamma_Q}^{\mc H \rtimes \Gamma} V_\delta \big)
\]
is naturally isomorphic to $\End_\C \big( V_{Q,\sigma}^* \big)$, which by 
\cite[Proposition 4.19]{Opd-Sp} is canonically isomorphic with 
\[
\End_\C \big( \ind_{\mc H^Q \rtimes \Gamma_Q}^{\mc H \rtimes \Gamma} (V_\sigma^*) \big) =
\End_\C (V_{Q,\sigma^*}) .
\]
For $g \in \mc G_{Q,\sigma}$ we take $\pi (g,Q,\sigma^*,t |t|^{-2})$ to be the transpose inverse 
of $\pi (g,Q,\sigma,t)$. Thus an element of $C \big( T^Q_\un ; \End_\C (V_{Q,\sigma}) \big)$
is $\mc G_{Q,\sigma}$-invariant if and only if its transpose in $C \big( T^Q_\un ; 
\End_\C (V_{Q,\sigma^*}) \big)$ is $\mc G_{Q,\sigma^*}$-invariant for the action
\[
(g \cdot f) (g(Q,\sigma^*,t)) = \pi (g,Q,\sigma^*,t) f (Q,\sigma^*,t) \pi (g,Q,\sigma^*,t)^{-1}. 
\]
Now we take the opposite algebras in Theorem \ref{thm:2.1} and we find the desired isomorphisms.

The implementing algebra homomorphisms are given by transpose, the Fourier transform from 
\eqref{eq:1.3} and again transpose, which works out to the Fourier transform for 
$(\mc H (\mc R,q) \rtimes \Gamma )^\op$-modules. Since the correspondence between left and right
$\mc H (\mc R,q) \rtimes \Gamma$-modules preserves unitarity, the latter Fourier transform is
still a *-homomorphism.
\end{proof}

\subsection{The space of irreducible representations} \label{par:2.2} \

We compare the irreducible representations of $\mc H \rtimes \Gamma$ to its representations 
induced from proper parabolic subalgebras (i.e. the algebras $\mc H^Q \rtimes \Gamma_Q$ with
$Q \subsetneq \Delta$). Let $\mr{Gr}(\mc H \rtimes \Gamma)$ be the 
Grothendieck group of the category of finite length $\mc H \rtimes \Gamma$-representations
and write $\mr{Gr}_\Q (\mc H \rtimes \Gamma) = \Q \otimes_\Z \mr{Gr}(\mc H \rtimes \Gamma)$.
Then (parabolic) induction induces a $\Q$-linear map $\mr{Gr}_\Q (\mc H_Q \rtimes \Gamma_Q) \to
\mr{Gr}_\Q (\mc H \rtimes \Gamma)$.

\begin{thm}\label{thm:1.8}
\enuma{
\item The collection of irreducible $\mc H \rtimes \Gamma$-representations whose image in
$\mr{Gr}_\Q (\mc H \rtimes \Gamma)$ is not a $\Q$-linear combination of representations
induced from proper parabolic subalgebras is a nonempty union of $T^{W \Gamma}$-orbits in
$\Irr (\mc H \rtimes \Gamma)$.
\item Suppose that, for every $w \in W \Gamma \setminus 
\bigcup_{Q \subsetneq \Delta} W(R_Q) \Gamma_Q$, the set $T_\Delta^w$ is finite. 
Then the set in part (a) is a finite union of $T^{W \Gamma}$-orbits.
}
\end{thm}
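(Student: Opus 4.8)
The plan is to deduce this from the Plancherel isomorphism (Theorem \ref{thm:2.1}, or rather its reformulation \eqref{eq:2.4}) together with the classification of $\Irr(\mc H \rtimes \Gamma)$ in terms of induction from essentially discrete series of parabolic subalgebras. First I would recall the relevant description: every irreducible $\mc H \rtimes \Gamma$-representation appears as a quotient (via the Langlands classification) or a constituent of some $\pi(Q,\sigma,t)$ with $(Q,\sigma,t)$ an induction datum, and by the intertwining operators \eqref{eq:2.10} the $\mc G$-orbit of $(Q,\sigma,t)$ determines the constituents with multiplicity. An irreducible representation fails to be a $\Q$-linear combination of representations properly parabolically induced precisely when it "originates" at the top level $Q = \Delta$: that is, when it is an (essentially) discrete series representation of $\mc H \rtimes \Gamma$ itself, or — taking into account the central character twist — lies in the $T^{W\Gamma}$-orbit of such. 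The key point for part (a) is that the Langlands classification map, restricted to data with $Q = \Delta$, identifies this set with $\Irr_{L^2}(\mc H \rtimes \Gamma)$ up to the action of $T^\Delta = T^{W\Gamma}$ (note $T^\Delta$ is central by \eqref{eq:1.11}, since $W\Gamma$ fixes $T^\Delta$ pointwise), so it is automatically a union of $T^{W\Gamma}$-orbits. Nonemptiness follows because the full algebra cannot be built entirely from proper parabolic subalgebras — concretely, the summand indexed by $Q = \Delta$ in \eqref{eq:2.4} is nonzero, so there is at least one $\sigma \in \Irr_{L^2}(\mc H_\Delta \rtimes \Gamma_\Delta)$ contributing; its inflation/twists give genuine irreducibles not obtainable by proper induction.

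For part (b), the finiteness statement, I would argue as follows. The set in (a) is a union of $T^{W\Gamma}$-orbits indexed by $\Irr_{L^2}(\mc H_\Delta \rtimes \Gamma_\Delta)$ modulo an appropriate action; so it suffices to show this index set is finite. Discrete series representations of $\mc H_\Delta$ have real central character constrained to a bounded region (the residual points of Opdam's theory are finite in number modulo $W$, and with $q$ fixed there are finitely many), but we want a cleaner combinatorial input matching the hypothesis. The hypothesis "$T_\Delta^w$ is finite for every $w \in W\Gamma \setminus \bigcup_{Q \subsetneq \Delta} W(R_Q)\Gamma_Q$" is exactly the condition that ensures there is no positive-dimensional subtorus of $T$ fixed by an element that is not already in a proper parabolic; equivalently, the would-be "extra" central characters of discrete series cannot vary in a family. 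I would show: if $\Irr_{L^2}(\mc H_\Delta \rtimes \Gamma_\Delta)$ (equivalently $\Irr_{L^2}(\mc H)$, up to the finite group $\Gamma_\Delta$) were infinite, then by Clifford theory and the finiteness of $\Irr_{L^2}(\mc H(W,q))$-type data, infinitely many would share a fixed "combinatorial type" and differ only by a continuous parameter — forcing a positive-dimensional family of residual points, hence a positive-dimensional $T_\Delta^w$ for some $w$ not inside any proper parabolic, contradicting the hypothesis.

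The main obstacle I anticipate is making precise the last implication: pinning down exactly why an infinite family of discrete series forces a positive-dimensional torus fixed by such a $w$. The cleanest route is probably to use the known structure of the discrete series — each is determined by its central character $W\Gamma t_0$, and $t_0$ must be a residual point; residual points are cut out by the vanishing/pole conditions on the Plancherel density, which are $W\Gamma$-quasi-invariant Laurent-polynomial conditions. An infinite set of residual points (with bounded "type") must therefore contain a positive-dimensional component, and any $t$ in such a component has stabilizer in $W\Gamma$ containing an element $w$ moving $t$ off all proper parabolic subtori — giving the contradiction via $T_\Delta^w \supset$ (that component). I would cite \cite{Opd-Sp} and \cite{SolAHA} for the residual-point description and for the identification of the "non-properly-induced" irreducibles with the (essentially) discrete series, and keep the argument at the level of: Langlands classification $\Rightarrow$ reduce to $Q=\Delta$; $T^\Delta$ central $\Rightarrow$ union of orbits; residual-point finiteness under the hypothesis $\Rightarrow$ finitely many orbits.
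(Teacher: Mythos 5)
Your proposal has a genuine gap: you identify the set in part (a) with (twists of) essentially discrete series representations, but this characterization is wrong in general, and the mismatch is precisely what makes the hypothesis of part (b) necessary. The paper's proof instead uses the extended quotient parametrization of $\Irr(\mc H \rtimes \Gamma)$ from \cite[Lemma 2.3]{SolHomAHA}: every irreducible corresponds to a $W\Gamma$-orbit of pairs $(w,t)$ with $w \in W\Gamma$ and $t \in T^w$, and (crucially) the pairs with $w$ lying in some proper $W(R_Q)\Gamma_Q$ account for all of the span of properly induced representations. Thus the set in (a) is parametrized by the "elliptic" pieces $\bigcup_{w \notin \bigcup_Q W(R_Q)\Gamma_Q} \{w\} \times T^w$, modulo $W\Gamma$. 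This is typically much larger than $\Irr_{L^2}$: already for $q=1$, where $\mc H \rtimes \Gamma = \C[X] \rtimes W\Gamma$ and there need be no discrete series at all, there are plenty of irreducibles not in the induced span, namely those coming from elliptic conjugacy classes of $W\Gamma$ and their (possibly positive-dimensional) fixed-point tori.

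A concrete symptom that something is wrong with your reduction to discrete series is that your part (b) would then be hypothesis-free: the set of residual points in $T_\Delta$ is always finite, so $\Irr_{L^2}(\mc H_\Delta \rtimes \Gamma_\Delta)$ is always finite, and there would be nothing for the assumption on $T_\Delta^w$ to do. The hypothesis is there exactly because, in the extended-quotient picture, an elliptic $w$ may fix a positive-dimensional subtorus of $T_\Delta$, which yields an infinite family of non-induced irreducibles that is not a single $T^{W\Gamma}$-orbit. The assumption that all such $T_\Delta^w$ are finite is what collapses each $\{w\}\times T^w$ to finitely many $T^{W\Gamma}$-orbits. So your part (b) argument, even if it could be made rigorous, is answering a different (and easier) question. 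I would also flag that your part (a) claim "every irreducible not in the induced span is essentially discrete series up to twist" would need a careful proof going beyond Theorems \ref{thm:2.5}--\ref{thm:2.6}: being a constituent of some $\pi(Q,\sigma,t)$ with $Q \subsetneq \Delta$ does not by itself put $[\pi]$ in the induced span, because $\pi(Q,\sigma,t)$ may have several Langlands constituents at the same central-character norm whose differences are not obviously induced. The extended-quotient argument sidesteps this entirely.
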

\begin{proof}
(a) Recall from \eqref{eq:1.11} that $\mc O (T)^{W (R_Q) \Gamma_Q} \subset 
Z (\mc H^Q \rtimes \Gamma_Q)$. Hence all irreducible representations of $\mc H^Q \rtimes \Gamma_Q$ 
come in families parametrized by $T^{W(R_Q) \Gamma_Q}$. Since $T^{W \Gamma} \subset 
T^{W(R_Q) \Gamma_Q}$ for all $Q \subset \Delta$, the set of irreducible representations under
consideration is a union of $T^{W \Gamma}$-orbits. 

By \cite[Lemma 2.3]{SolHomAHA}, $\Irr (\mc H \rtimes \Gamma)$ can be parametrized by 
the extended quotient 
\[
T /\!/ W\Gamma = \Big( \bigcup\nolimits_{w \in W \Gamma} \{w\} \times T^w \Big) \big/ W \Gamma ,
\]
where $W \Gamma$ acts on the union by $w' \cdot (w,t) = (w' w w'^{-1}, w'(t))$. This 
parametrization respects central characters, up to a twists which are constant on
connected components of $T /\!/ W\Gamma$ \cite[Theorem 2.6]{SolHomAHA}. In this parametrization
of $\Irr (\mc H \rtimes \Gamma)$ almost all elements of a piece $\{w\} \times T^w$ with 
$w \in W(R_Q) \Gamma_Q$ come from representations induced from $\mc H^Q \rtimes 
\Gamma_Q$, and such $w$ account for all representations induced from proper parabolic subalgebras.
Hence the set considered in the statement can be parametrized by
\begin{equation}\label{eq:1.12}
\Big( \bigcup\nolimits_{w \in W \Gamma : w \notin W(R_Q)\Gamma_Q \forall Q \subsetneq \Delta} 
\{w\} \times T^w \Big) \big/ W \Gamma
\end{equation}
This set is nonempty because every Coxeter element of $W = W(R)$ contributes at least $(w,1)$
to it.\\
(b) This is obvious from \eqref{eq:1.12}.
\end{proof}

The induction data from $\Xi$ give rise to a partition Irr$(\mc H \rtimes \Gamma)$ into finite packets.

\begin{thm}\label{thm:2.5} \cite[Theorem 3.3.2.b]{SolAHA} \\
For every $\pi \in \Irr (\mc H \rtimes \Gamma)$ there exists a unique $\mc G$-association
class $\mc G (Q,\delta,t) \in \Xi / \mc G$ such that $\pi$ is a constituent of 
$\pi^\Gamma (Q,\delta,t)$ and the invariant $\norm{cc (\delta)}$ from \eqref{eq:1.cc} 
is maximal for this property.
\end{thm}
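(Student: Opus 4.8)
The plan is to reduce the statement to the Langlands classification for $\mc H(\mc R,q)\rtimes\Gamma$ together with the Plancherel isomorphism of Theorem~\ref{thm:2.1}, via a simple orthogonality identity for central characters. First I would check that at least one datum occurs and that only finitely many $\mc G$-classes do. For existence: by the Langlands classification (available from \cite{SolAHA}) $\pi$ is the unique irreducible quotient of a standard module $\pi(Q_L,\tau,t_L)$ with $\tau$ a tempered $\mc H^{Q_L}\rtimes\Gamma_{Q_L}$-representation and $\log|t_L|$ strictly positive in $\mf a^{Q_L}$; applying Theorem~\ref{thm:2.1} and Lemma~\ref{lem:2.2} on the Levi writes $\tau$ as a constituent of some $\pi^{\Gamma_{Q_L}}(Q,\delta,u)$ with $\delta\in\Irr_{L^2}(\mc H_Q)$ and $u$ unitary, and induction in stages then exhibits $\pi$ as a constituent of $\pi^\Gamma(Q,\delta,ut_L)$. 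By the compatibility of constituents with $\mc G$ recorded after \eqref{eq:2.9}, the collection of such data is a union of $\mc G$-classes.

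For finiteness, note that a constituent of $\pi^\Gamma(Q,\delta,t)$ shares the central character $W\Gamma t_\pi$ of $\pi$, and by the Bernstein presentation of $\mc H^Q$ this equals $W\Gamma(r_\delta\,t)$ with $r_\delta\in T_Q$ representing the central character of $\delta$. Since $r_\delta\,t$ lies in the finite set $W\Gamma t_\pi$ and $T=T^QT_Q$ with $T^Q\cap T_Q$ finite, for each $Q$ there are only finitely many possibilities for $t$ and for the central character of $\delta$, hence only finitely many $\delta\in\Irr_{L^2}(\mc H_Q)$ and finitely many $\mc G$-classes $(Q,\delta,t)$ overall. Thus $\norm{cc(\delta)}$ attains a maximum among them — and it is well defined on $\mc G$-classes, since $\mc G$ acts through the isometries $\psi_g$ ($g\in W\Gamma$) and the twists $\psi_u$ ($u\in K_Q\subset T_\un$), neither of which changes $|cc(\delta)|$.

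The heart of the argument is then an orthogonality identity. Fix a $W\Gamma$-invariant inner product on $\mf a$, so that $\mf a=\mf a_Q\oplus\mf a^Q$ is an orthogonal decomposition. Choosing the representative $t_\pi$ of the central character of $\pi^\Gamma(Q,\delta,t)$ compatibly, one has $\log|t_\pi|=\log|r_\delta|+\log|t|$ with $\log|r_\delta|\in\mf a_Q$ and $\log|t|\in\mf a^Q$, whence
\[
\norm{cc(\pi)}^2=\norm{cc(\delta)}^2+\norm{\log|t|}^2
\]
for every datum with $\pi$ as a constituent. Therefore maximizing $\norm{cc(\delta)}$ is exactly minimizing $\norm{\log|t|}$: the canonical datum is the one realizing $\pi$ inside the ``least twisted'' parabolically induced representation.

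It remains to show the minimizing data form a single $\mc G$-class. When $\pi$ is tempered the minimum is $0$, so $t\in T^Q_\un$, and uniqueness is precisely the statement that distinct $\mc G$-orbits of induction data in $\Xi_\un$ yield parabolically induced tempered modules with no common constituent, which follows from Theorem~\ref{thm:2.1} (they lie in distinct direct summands of $C_r^*(\mc R,q)\rtimes\Gamma$). For general $\pi$ I would show that every minimizing datum arises, up to $\mc G$, from a Plancherel datum $(Q,\delta,u)$ of the tempered representation $\tau$ in the Langlands datum $(Q_L,\tau,t_L)$ by setting $t=ut_L$ (so $\log|t|=\log|t_L|$); uniqueness then follows from uniqueness of the Langlands datum, the tempered case applied to $\tau$ on the Levi, and the inclusion $\mc G^{Q_L}\subset\mc G$ compatibly with inducing up. The hard part is this last claim: one must show that minimality of $\norm{\log|t|}$ forces $\log|t|$ into the closed positive cone relative to $Q$ and $\pi$ to occur as a Langlands-type subquotient of $\pi^\Gamma(Q,\delta,t)$, so that the datum necessarily reproduces the Langlands datum after reducing to discrete series on the Levi. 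This needs the precise interplay between parabolic induction, the exponents of $\pi$ along the various $Q$, and Casselman's temperedness criterion in its affine Hecke algebra form, the delicate case being a minimizing $t$ on the boundary of the positive cone, where $\pi^\Gamma(Q,\delta,t)$ is a reducible essentially tempered module rather than one with $\pi$ as unique irreducible quotient.
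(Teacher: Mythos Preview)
The paper does not prove this theorem; it is imported verbatim from \cite[Theorem~3.3.2.b]{SolAHA} and used as a black box. So there is nothing in the paper to compare against, and your proposal is really an attempt to reprove the cited result using only the Langlands classification and the Plancherel isomorphism.

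Your orthogonality identity is correct: for a $W$-invariant inner product the reflection hyperplane $\ker\alpha\subset\mf a$ is perpendicular to $\alpha^\vee$, so $\mf a_Q\perp\mf a^Q$ and $\norm{cc(\pi)}^2=\norm{cc(\delta)}^2+\norm{\log|t|}^2$. Your existence and finiteness arguments are fine, and the tempered case can indeed be extracted from Theorem~\ref{thm:2.1}: an irreducible tempered module factors through exactly one direct summand of $C_r^*(\mc R,q)\rtimes\Gamma$, which pins down the $\mc G$-class of $(Q,\delta)$, and the centre $C(T^Q_\un)^{\mc G_{Q,\delta}}$ of that summand then pins down the $\mc G_{Q,\delta}$-orbit of $t$.

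However, the gap you name in the non-tempered case is genuine and your sketch does not close it. Knowing that a competing datum $(Q',\delta',t')$ has $\norm{\log|t'|}$ minimal tells you nothing, a priori, about where $\log|t'|$ sits relative to the positive cone, nor that $\pi$ sits inside $\pi^\Gamma(Q',\delta',t')$ as a Langlands-type quotient rather than as an arbitrary constituent. What you actually need is the implication ``$\norm{cc(\delta')}$ maximal $\Rightarrow$ $\pi$ is a Langlands constituent of $\pi^\Gamma(Q',\delta',t')$'', which is exactly Theorem~\ref{thm:2.6}.c. But if you look at the proof of Theorem~\ref{thm:2.6}.c in this paper, it repeatedly invokes \cite[Theorem~3.3.2]{SolAHA}, i.e.\ the very theorem you are trying to prove. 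The substantive input---that every non-Langlands constituent already appears in some $\pi^\Gamma(Q'',\delta'',t'')$ with strictly larger $\norm{cc(\delta'')}$---comes from the analysis of standard modules in \cite[Proposition~3.1.4 and Lemma~2.2.6.b]{SolAHA}, and your orthogonality identity does not substitute for it. So as written the proposal is circular in the non-tempered case; to make it stand on its own you would have to reprove that growth-of-$\norm{cc}$ statement directly, which is the real content of the result.
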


With the new induction data $(Q,\sigma,t)$ from \eqref{eq:1.36} we can vary on 
Theorem \ref{thm:2.5}. (Now $\sigma \in \Irr_{L^2}(\mc H_Q \rtimes \Gamma_Q)$,
whereas the above $\delta$ was a representation of $\mc H_Q$.)

\begin{thm}\label{thm:2.6}
\enuma{
\item For every $\pi \in \Irr (\mc H \rtimes \Gamma)$ there exists a triple $(Q,\sigma,t)$ as above,
such that $\pi$ is a constituent of $\pi (Q,\sigma,t)$ and $\norm{cc(\sigma)}$ is maximal for this 
property. In this situation we say that $\pi$ is a \emph{Langlands constituent} of $\pi (Q,\sigma,t)$.
\item In the setting of part (a), the restriction of $\sigma \otimes t$ to $\mc H^Q$ is a direct 
sum of irreducible representations in one $\Gamma_Q$-orbit, say 
$\Gamma_Q (\delta \otimes t) \subset \Irr_{L^2}(\mc H^Q)$. 
Then $(Q,\delta,t)$ is uniquely determined by $\pi$, up to the action of $\mc G$.
\item Let $(Q,\sigma,t)$ be any induction datum as in \eqref{eq:1.36}. Every constituent of 
$\pi (Q,\sigma,t)$ is either a Langlands constituent or a constituent of $\pi (Q',\sigma',t')$ for
some induction datum with $\norm{cc (\sigma')} > \norm{cc (\sigma)}$. 
\item $\pi$ is tempered if and only if $t \in T_\un$, where $t \in T^Q$ comes from part (a).
}
\end{thm}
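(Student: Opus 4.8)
The plan is to reduce Theorem \ref{thm:2.6} to its counterpart for the induction data $(Q,\delta,t)$ with $\delta\in\Irr_{L^2}(\mc H_Q)$, namely Theorem \ref{thm:2.5} together with the fuller statement \cite[Theorem 3.3.2]{SolAHA}, by means of Clifford theory for the inclusions $\mc H_Q\subset\mc H_Q\rtimes\Gamma_Q$ and $\mc H^Q\subset\mc H^Q\rtimes\Gamma_Q$ and of Lemma \ref{lem:2.2}. The first step is a dictionary between the two kinds of induction data. By transitivity of parabolic induction together with the $\Gamma_Q$-equivariance (under Conditions \ref{cond:Gamma}) of the projection $\mc H^Q\to\mc H_Q$ and of $\psi_t$, one has
\[
\pi^\Gamma(Q,\delta,t)=\ind_{\mc H^Q}^{\mc H\rtimes\Gamma}(\delta\circ\psi_t)
=\ind_{\mc H^Q\rtimes\Gamma_Q}^{\mc H\rtimes\Gamma}\Big(\big(\ind_{\mc H_Q}^{\mc H_Q\rtimes\Gamma_Q}\delta\big)\otimes t\Big).
\]
Writing $\ind_{\mc H_Q}^{\mc H_Q\rtimes\Gamma_Q}\delta=\bigoplus_i\sigma_i^{\oplus m_i}$ by Clifford theory, each $\sigma_i$ is discrete series (its restriction to $\mc H_Q$ is a sum of $\Gamma_Q$-conjugates of $\delta$, which are again discrete series since $\Gamma_Q$ preserves $Q$), so $\sigma_i\in\Irr_{L^2}(\mc H_Q\rtimes\Gamma_Q)$ as discrete series implies tempered. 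Hence $\pi^\Gamma(Q,\delta,t)=\bigoplus_i\pi(Q,\sigma_i,t)^{\oplus m_i}$, and conversely $\pi(Q,\sigma,t)$ is a direct summand of $\pi^\Gamma(Q,\delta,t)$ whenever $\delta$ occurs in $\sigma|_{\mc H_Q}$; in particular the notion of ``constituent'' goes back and forth cleanly.

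Next I would check two compatibilities of this dictionary. The central character of such a $\sigma$, as an $\mc H_Q\rtimes\Gamma_Q$-module, contains the central character of $\delta$ (it is the $W(R_Q)\Gamma_Q$-orbit through the $W(R_Q)$-orbit which is $cc (\delta)$), so $\norm{cc (\sigma)}=\norm{cc (\delta)}$. Secondly, by Condition \ref{cond:Gamma}.(3), for a fixed $t$ the triples $(Q,\delta',t)$ with $\delta'$ running over the constituents of $\sigma|_{\mc H_Q}$ form a single $\mc G_{QQ}$-orbit --- the relevant elements of $\mc G_{QQ}$ combining a $\gamma\in\Gamma_Q$ with the element of $K_Q$ by which $\gamma$ acts on $T^Q$ --- so the $\mc G$-class of $(Q,\delta,t)$ is determined by that of $(Q,\sigma,t)$, and vice versa. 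Given these, part (a) follows by applying Theorem \ref{thm:2.5} to $\pi$ to produce its maximal datum $(Q,\delta,t)$ and transporting it to $(Q,\sigma,t)$ via the dictionary, the maximality of $\norm{cc (\sigma)}$ being inherited from that of $\norm{cc (\delta)}$. Part (b) is the translation of the uniqueness clause of Theorem \ref{thm:2.5}: the decomposition of $\sigma\otimes t$ over $\mc H^Q$ into a single $\Gamma_Q$-orbit is Clifford theory again (with Condition \ref{cond:Gamma}.(3) keeping the twist inside $T^Q$), and the $\mc G$-orbit it produces is precisely the one singled out by Theorem \ref{thm:2.5}.

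For part (c): if $\pi'$ is a constituent of $\pi(Q,\sigma,t)$ that is not a Langlands constituent of it, then by part (a) its own maximal datum $(Q',\sigma',t')$ satisfies $\norm{cc (\sigma')}>\norm{cc (\sigma)}$ --- strictly, since equality would make $(Q,\sigma,t)$ a maximal datum for $\pi'$, hence $\mc G$-equivalent to $(Q',\sigma',t')$, contrary to assumption --- while $\pi'$ is a constituent of $\pi(Q',\sigma',t')$ by construction. For part (d): if $t\in T_\un$ then $\sigma\otimes t$ is tempered by Lemma \ref{lem:2.2}(b), hence so is the induced module $\pi(Q,\sigma,t)$ and therefore its constituent $\pi$; conversely, if $\pi$ is tempered then \cite[Theorem 3.3.2]{SolAHA}, applied to the datum $(Q,\delta,t)$ furnished by the dictionary and using that $\delta\otimes t$ is tempered iff $t\in T^Q_\un$, forces $t\in T_\un$. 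The step I expect to be the main obstacle is the dictionary of the first two paragraphs: checking that passing between $\delta$ and $\sigma$ is compatible simultaneously with the $\mc G$-action, with the invariant $\norm{cc}$, and with the multiplicity-counting notion of constituent. All of this is routine Clifford theory, but it is exactly where Conditions \ref{cond:Gamma} --- above all, that $\Gamma_Q$ acts on $T^Q$ through $K_Q$ --- are genuinely used, and it needs a little care with the twists $z_\gamma$ allowed in the $\Gamma$-action.
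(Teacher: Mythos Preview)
Your proof is correct and for parts (a), (b), and (d) follows essentially the same route as the paper: set up the dictionary $\pi^\Gamma(Q,\delta,t)=\bigoplus_i\pi(Q,\sigma_i,t)^{\oplus m_i}$ via Clifford theory, verify $\norm{cc(\sigma)}=\norm{cc(\delta)}$ and compatibility with the $\mc G$-action (using Condition~\ref{cond:Gamma}.(3) exactly as you do), and then transport Theorem~\ref{thm:2.5} and the temperedness criterion from \cite{SolAHA} across the dictionary.

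For part (c) you take a genuinely shorter path. Your argument is essentially the tautology: if $\pi'$ is a constituent of $\pi(Q,\sigma,t)$ but not a Langlands constituent, then by definition $\norm{cc(\sigma)}$ is not maximal among data admitting $\pi'$ as a constituent, so some $(Q',\sigma',t')$ does better. (Your parenthetical ``hence $\mc G$-equivalent to $(Q',\sigma',t')$'' is a small misstep --- part (b) only gives uniqueness at the $(Q,\delta,t)$ level --- but it is superfluous: the contradiction already comes from $(Q,\sigma,t)$ being maximal for $\pi'$, which by definition makes $\pi'$ a Langlands constituent.) The paper instead passes to a positive-position datum, factors through $\mc H^{P(\xi)}\rtimes\Gamma(P(\xi),P(\xi))$, and invokes the Langlands classification \cite[Corollary 2.2.5 and Lemma 2.2.6.b]{SolAHA} to identify the Langlands constituents concretely as the Langlands quotients of the intermediate inductions. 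This is more work, but it buys something you will miss: that concrete description is exactly what is used to prove part (d) in the paper (via uniqueness in the Langlands classification) and, more importantly, to prove Proposition~\ref{prop:2.7}. Your direct appeal to \cite[Theorem 3.3.2]{SolAHA} for (d) is fine, but if you later need Proposition~\ref{prop:2.7} you will have to revisit the machinery the paper develops inside its proof of (c).
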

\begin{proof}
(a) Let $(Q,\delta,t)$ be as in Theorem \ref{thm:2.5}. Thus $\pi$ is a constituent of
\[
\pi^\Gamma (Q,\delta,t) = \ind_{\mc H^Q \rtimes \Gamma_Q}^{\mc H \rtimes \Gamma} \big(
\ind_{\mc H^Q}^{\mc H^Q \rtimes \Gamma_Q} (\delta \otimes t) \big) ,
\]
and the norm of the central character of $\delta$ is maximal for this property. 
Let $T^{--Q}$ be the subset $T^{--}$ of $T$, but computed with respect to $Q$.
Every $\mc A$-weight of $\ind_{\mc H^Q}^{\mc H^Q \rtimes \Gamma_Q} (\delta \otimes t)$ 
lies in one of the $\Gamma_Q$-orbits of weights of $\delta \otimes t$, which are entirely
contained in $T^{--Q} T_\un$ because $\Gamma_Q$ stabilizes $Q$. In other words, 
$\ind_{\mc H^Q}^{\mc H^Q \rtimes \Gamma_Q} (\delta \otimes t)$ is a direct sum of finitely
many irreducible essentially discrete series representations of $\mc H^Q \rtimes \Gamma$,
all with central characters in the same $W_Q \rtimes \Gamma_Q$-orbit. By Lemma \ref{lem:2.2}
all these summands are of the form $\sigma_i \otimes t$ with $\sigma_i \in \Irr_{L^2}
(\mc H_Q \rtimes \Gamma_Q)$. Hence 
\begin{equation}\label{eq:1.10}
\pi^\Gamma (Q,\delta,t) = \bigoplus\nolimits_i \pi (Q,\sigma_i,t)
\end{equation}
and $\pi$ is a constituent of (at least) one $\pi (Q,\sigma_i,t)$.

The central characters of the $\sigma_i$ and of $\delta$ have the same norm, and that of
$\delta$ was maximal given $\pi$. Hence the norm of the central character of $\sigma_i$
is also maximal, given $\pi$.\\
(b) Suppose that $(Q,\sigma,t)$ satisfies the requirements of part (a), that is, $\pi$ is
a Langlands constituent of $\pi (Q,\sigma,t)$.
In the proof of Lemma \ref{lem:2.2} we observed that the restriction of $\sigma \otimes t$ to 
$\mc H^Q$ is a direct sum of representations of the form $\gamma ((\delta \otimes k^{-1}) 
\otimes kt)$ with $\gamma \in \Gamma_Q$ and $k \in K_Q$. For all $\gamma \in \Gamma_Q$, $\pi$ 
is a constituent of 
\[
\pi^\Gamma (Q,\delta,t) = \pi^\Gamma (Q,\gamma (\delta \otimes k^{-1}),\gamma(kt)) .
\]
The norm of the central character of $\gamma (\delta \otimes k^{-1})$ 
is maximal for this property, by the assumption on $\sigma$. By Theorem \ref{thm:2.5} all 
$(Q,\gamma (\delta \otimes k^{-1}),\gamma(kt))$ lie in a unique $\mc G$-association class in 
$\Xi$ determined by $\pi$.\\
(c) In view of \eqref{eq:1.10}, it suffices to consider the constituents of $\pi^\Gamma 
(Q,\delta,t)$. Then the statement is implicit in \cite{SolAHA}, we make it more explicit here.
By \cite[Lemma 3.1.7 and Theorem 3.3.2]{SolAHA}, we may furthermore assume that $\xi = 
(Q,\delta,t)$ is in positive position, that is, $|t| \in T^{Q+} = \exp (\mf a^{Q} \cap \mf a^+)$. 
Write $P(\xi) = \{ \alpha \in \Delta : |\alpha (t)| = 1\}$ and consider the representation 
$\mr{ind}_{\mc H^Q}^{\mc H^{P(\xi)} \rtimes \Gamma (P(\xi),P(\xi))} (\delta \otimes t)$. 
By \cite[Proposition 3.1.4]{SolAHA} that representation is completely reducible, and all
its irreducible summands are of the form $\delta' \otimes t'$ where $\delta' \in 
\Irr \big( \mc H_{P(\xi)} \rtimes \Gamma (P(\xi),P(\xi)) \big)$ is tempered and 
$t' \in T^{P(\xi)} \cap t T_{P(\xi)}$ with 
\[
|t'| \in T^{P(\xi)++} = \exp \big( \{ \mu \in \mf a^{P(\xi)} :
\inp{\alpha}{\mu} > 0 \; \forall \alpha \in \Delta \setminus P(\xi) \} \big) .
\]
Thus every constituent $\pi (Q,\sigma,t)$ is a constituent of 
\begin{equation} \label{eq:1.50}
\ind_{\mc H^{P(\xi)} \rtimes \Gamma (P(\xi),P(\xi))}^{\mc H \rtimes \Gamma} (\delta' \otimes t')
\end{equation} 
for such a $\delta' \otimes t'$.
By \cite[Theorem 3.3.2]{SolAHA}, the Langlands constituents of $\pi (Q,\sigma,t)$ (or 
$\pi^\Gamma (Q,\delta,t)$) are precisely those constitutents which are quotients of a 
representation \eqref{eq:1.50}.
The Langlands classification for extended affine Hecke algebras \cite[Corollary 2.2.5]{SolAHA}, 
says that the latter representation has a unique irreducible quotient (called the Langlands
quotient, hence our terminology Langlands constituents). Moreover by 
\cite[Lemma 2.2.6.b]{SolAHA} all other constituents of \eqref{eq:1.50} are Langlands quotients 
for data where the norm of the central character is bigger than $\norm{cc(\sigma)}$.
Then \cite[Proposition 3.1.4 and Theorem 3.3.2]{SolAHA} entail that those other irreducible
representations occur as Langlands constituents of a $\pi^\Gamma (Q',\delta',t')$ with
$\norm{cc(\delta')} > \norm{cc(\sigma)} = \norm{cc(\delta)}$.\\
(d) As observed in the proof of part (c), in the construction for part (a) and for 
\cite[Theorem 3.3.2]{SolAHA} we may assume that $(Q,\sigma,t)$ is positive and that $\pi$ is 
the unique Langlands quotient of \eqref{eq:1.50}, for one of the above $\delta' \otimes t'$.
By the uniqueness in the Langlands classification for extended affine Hecke algebras 
\cite[Corollary 2.2.5]{SolAHA},  $\pi$ is tempered if and only if $P(\xi) = \Delta$ and 
$t' \in T^\Delta_\un$. 

Here $P(\xi) = \Delta$ implies $|t| = |t'| \in T^\Delta$, and then $t' \in T^\Delta_\un$
says that $|t| = 1$. Conversely, $t \in T_\un$ implies $P(\xi) = \Delta$ and
$t = t' \in T^\Delta$.
\end{proof}

With Theorem \ref{thm:2.6} one can express the structure of Irr$(\mc H \rtimes \Gamma)$ in 
terms of its subset of irreducible tempered representations. In essence, the former is the 
complexification of the latter (which is something like a real algebraic variety with 
multiplicities). We do not need the full strength of this. For our purposes it suffices to 
consider half-lines in the parameter space, such that $Q,\sigma$ and the unitary part of $t$ 
are fixed and the absolute value of $t$ can be scaled by a positive factor.

\begin{prop}\label{prop:2.7}
Let $(Q,\sigma,t)$ be an induction datum with $\sigma \in \Irr_{L^2}(\mc H^Q \rtimes \Gamma_Q)$.
\enuma{
\item For $r \in \R_{>-1}$, the number of inequivalent Langlands constituents of
$\pi (Q,\sigma,t \, |t|^{r})$ does not depend on $r$.
\item For all but finitely many $r \in \R_{>-1}$, $\pi (Q,\sigma,t \, |t|^r)$ is completely 
reducible. Then all its irreducible subquotients are Langlands constituents.
}
\end{prop}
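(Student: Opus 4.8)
The plan is to reduce everything to the analysis of Langlands constituents carried out in Theorem~\ref{thm:2.6} and to the rational-function nature of the intertwining operators. Fix $(Q,\sigma,t)$ with $\sigma \in \Irr_{L^2}(\mc H^Q \rtimes \Gamma_Q)$ and write $u = t|t|^{-1} \in T^Q_\un$, so that for $r \in \R_{>-1}$ we are looking at the family $t_r := t\,|t|^r = |t|^{1+r} u$ parametrized by the half-line $|t|^{1+r} \in T^{Q}_\rs$. The key structural input is that the isomorphism class of $\sigma$ is fixed along this family, the unitary part $u$ is fixed, and $\norm{cc(\sigma \otimes t_r)} = \norm{cc(\sigma)}$ is independent of $r$ since the central character of $\sigma$ does not change and $t_r \in T^Q$ contributes nothing to $cc$.

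For part~(a), I would argue as follows. By Theorem~\ref{thm:2.6}(c), applied to the induction datum $(Q,\sigma,t_r)$, every constituent of $\pi(Q,\sigma,t_r)$ is either a Langlands constituent of it or a constituent of some $\pi(Q',\sigma',t')$ with $\norm{cc(\sigma')} > \norm{cc(\sigma)}$; by Theorem~\ref{thm:2.6}(a), a constituent $\pi$ is a Langlands constituent precisely when $\norm{cc(\sigma)}$ is maximal among all data $(Q'',\sigma'',t'')$ with $\pi$ a constituent of $\pi(Q'',\sigma'',t'')$. Thus the Langlands constituents of $\pi(Q,\sigma,t_r)$ are exactly those irreducible $\pi$ occurring in $\pi(Q,\sigma,t_r)$ for which no datum of strictly larger central-character norm also has $\pi$ as a constituent. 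Passing to the Grothendieck group $\mr{Gr}_\Q(\mc H \rtimes \Gamma)$, the class $[\pi(Q,\sigma,t_r)]$ is, as noted after \eqref{eq:2.10} and in \cite[Lemma 3.1.7]{SolAHA}, constant on $\mc G$-association classes, and one knows from the rationality of the standard intertwining operators \cite[Theorem 4.33]{Opd-Sp}, \cite[Theorem 3.1.5]{SolAHA} that $[\pi(Q,\sigma,t_r)]$ is (generically, hence — being a locally constant integer-valued datum — everywhere on the connected half-line) a fixed $\Z$-combination of classes $[\pi(Q_i,\delta_i,\cdot)]$ with comparable or larger central-character norms. Counting multiplicities in this fixed combination, subtracting the contributions forced to come from data of strictly larger norm (these have, again by Theorem~\ref{thm:2.6}, the same multiplicity along the family), leaves the number of Langlands constituents of $\pi(Q,\sigma,t_r)$ as a quantity that does not depend on $r$ for $r \in \R_{>-1}$. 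I would make this precise by downward induction on $\norm{cc(\sigma)}$, the base case being $Q = \Delta$ where $\pi(\Delta,\sigma,t) = \sigma$ is itself irreducible.

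For part~(b), the point is that $\pi(Q,\sigma,t_r)$ is completely reducible whenever $t_r$ is in ``generic'' position, meaning $P(\xi_r) = Q$ in the notation of the proof of Theorem~\ref{thm:2.6}(c), i.e.\ $|\alpha(t_r)| \neq 1$ for all $\alpha \in \Delta \setminus Q$. Since $|\alpha(t_r)| = |\alpha(u)|\,|\,|\alpha|(|t|)^{1+r}$ and $|\alpha(u)| = 1$, the condition $|\alpha(t_r)| = 1$ reads $|\alpha|(|t|)^{1+r} = 1$, which for a fixed $\alpha$ with $|\alpha|(|t|) \neq 1$ excludes at most one value of $r$, and for $\alpha$ with $|\alpha|(|t|) = 1$ either holds identically (in which case $\alpha$ may be absorbed into a larger $Q$) or is handled by the same induction. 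Away from that finite set of excluded $r$, the datum $\xi_r$ is its own associated standard parabolic, so by \cite[Proposition 3.1.4]{SolAHA} the induced representation is completely reducible; its constituents are then all Langlands constituents of $\pi(Q,\sigma,t_r)$ itself, none being forced up to a larger-norm datum, which is consistent with the count in part~(a). The main obstacle I anticipate is the bookkeeping needed to descend cleanly along the half-line through the strata $P(\xi_r)$: one must check that the integer-valued ``number of Langlands constituents'' is genuinely constant and not merely generically constant, and for this I would lean on the continuity/analyticity of the intertwining operators in $t$ together with the fact that the decomposition behaviour of $\pi(Q,\sigma,t_r)$ jumps only on the finite root-hyperplane locus identified above, so that constancy on the complement forces constancy everywhere by the semicontinuity of multiplicities in the Grothendieck group.
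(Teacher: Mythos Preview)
Your proposal has a genuine gap in part~(b), and part~(a) is too vague to be convincing as it stands.

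\textbf{Part~(b).} You claim that when $P(\xi_r) = Q$ (i.e.\ $|\alpha(t_r)| \neq 1$ for all $\alpha \in \Delta \setminus Q$), \cite[Proposition~3.1.4]{SolAHA} gives complete reducibility of $\pi(Q,\sigma,t_r)$. This is not what that proposition says: it gives complete reducibility of the \emph{intermediate} induction $\ind_{\mc H^Q}^{\mc H^{P(\xi)} \rtimes \Gamma(P(\xi),P(\xi))}(\delta \otimes t_r)$, which is vacuous when $P(\xi_r) = Q$. The remaining step, inducing from $\mc H^Q \rtimes \Gamma_Q$ all the way up to $\mc H \rtimes \Gamma$, is exactly the Langlands-classification step, and there the induced module has a unique irreducible quotient but need not be irreducible or semisimple. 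The paper's argument is genuinely different: it reduces to showing irreducibility of the Langlands standard modules \eqref{eq:1.54}, observes that their endomorphism algebra is $\C\,\mathrm{Id}$ by \cite[Lemma~2.2.6.a]{SolAHA}, then \emph{deforms the parameter into the unitary locus} $z \in -1 + i\R$, where unitarity forces complete reducibility and the stabilizer computation via \cite[Theorem~3.3.1.b]{SolAHA} forces irreducibility. Zariski-openness of irreducibility then propagates this back to all but finitely many $r$. Your root-hyperplane count never engages with this irreducibility question. Also, your remark that an $\alpha$ with $|\alpha|(|t|) = 1$ ``may be absorbed into a larger $Q$'' is not a legitimate move: the datum $(Q,\sigma,t)$ is fixed, and enlarging $Q$ would require producing a corresponding discrete series $\sigma$ on the larger algebra.

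\textbf{Part~(a).} The assertion that $[\pi(Q,\sigma,t_r)]$ is ``a fixed $\Z$-combination'' in $\mr{Gr}_\Q(\mc H \rtimes \Gamma)$ is not meaningful as stated: for different $r$ these are different representations with different central characters, so their classes are unrelated. What you presumably intend is that the \emph{decomposition pattern} is constant, but that is precisely the content to be proved, and your semicontinuity remark at the end does not supply it (multiplicities can jump up at special points without the number of Langlands constituents staying constant a priori). The paper's approach is to first move $(Q,\delta,t)$ into positive position via the $\mc G$-action (which preserves subquotient multiplicities), observe that the $\mc G$-stabilizer of $(Q,\delta,t|t|^r)$ is then constant for $r > -1$, and invoke \cite[Proposition~3.4.1]{SolAHA}, which directly asserts constancy of the number of Langlands constituents along such a positive ray. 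This is then transferred from $\pi^\Gamma(Q,\delta,t)$ to $\pi(Q,\sigma,t)$ via \eqref{eq:1.10} and Theorem~\ref{thm:2.6}(c).
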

\begin{proof}
(a) Let $\xi = (Q,\delta, t) \in \Xi$ be as in Theorem \ref{thm:2.6}.

Notice that the intertwining operators $\pi (gu,Q,\sigma,t)$ depend algebraically 
on $t \in T^Q$. This implies that, for every $gu \in \mc G$, $\pi (Q,\sigma,t)$ and 
$\pi (gu (Q,\sigma,t))$ have the same irreducible subquotients (counted with multiplicity),
see \cite[Lemma 3.1.7]{SolAHA} or \cite[Lemma 3.4]{SolGHA}. Since every $\mc G$-orbit in $\Xi$ 
contains an element in positive position, we may assume that $(Q,\delta,t)$ is positive.
Then $(Q,\delta,t \, |t|^r)$ is positive for $r \geq -1$ and for $r > -1$ its stabilizer in
$\mc G$ does not depend on $r$. 

Now the statement for $\pi^\Gamma (Q,\delta,t)$ is an instance of 
\cite[Proposition 3.4.1]{SolAHA}. Together with \eqref{eq:1.10} and Theorem \ref{thm:2.6}.c 
this implies the statement for $\pi (Q,\sigma,t)$.\\
(b) By \cite[Proposition 3.1.4.a]{SolAHA} the representation
\begin{equation}\label{eq:1.51}
\mr{ind}_{\mc H^Q}^{\mc H^{P(\xi)} \rtimes \Gamma (P(\xi),P(\xi))} (\delta \otimes t \, |t|^r) =
\bigoplus\nolimits_i \mr{ind}_{\mc H^Q \rtimes \Gamma_Q}^{\mc H^{P(\xi)} \rtimes \Gamma 
(P(\xi),P(\xi))} (\sigma_i \otimes t \, |t|^r ) 
\end{equation}
is completely reducible. Therefore it suffices to consider all irreducible direct summands
of \eqref{eq:1.51} separately. This brings us to representations of the form 
\begin{equation}\label{eq:1.54}
\ind_{\mc H^{P(\xi)} \rtimes \Gamma (P(\xi),P(\xi))}^{\mc H \rtimes \Gamma}
(\delta' \otimes t' |t|^r) .
\end{equation}
Then $(P(\xi),\delta', t' |t|^r )$ is a datum for the 
Langlands classification for extended affine Hecke algebras \cite[Corollary 2.2.5]{SolAHA}.
This result says that such a representation has a unique irreducible quotient, so \eqref{eq:1.54} 
is irreducible as soon as it is completely reducible. In that case its Langlands constituent 
obviously is the whole of \eqref{eq:1.54}. That implies the claim about the constituents of
$\pi (Q,\sigma,t \, |t|^ r)$ when that representation is completely reducible.

Next we show that \eqref{eq:1.54} is irreducible for almost all $r \in \R_{> -1}$. From 
\cite[Lemma 2.2.6.a]{SolAHA} we know that the space of $\mc H \rtimes \Gamma$-endomorphisms of
\eqref{eq:1.54} is just $\C $Id.

For all $z \in \C$ the isotropy group $\mc G_{(P(\xi),\delta',t')}$ also fixes
$(P(\xi),\delta',t' |t|^z )$. Since $\mc G_{P(\xi),\delta'}$ acts on $T^Q$ by group 
automorphisms and translations, for almost all $z \in \C$ $\mc G_{(P(\xi),\delta',t' |t|^z)}$ 
equals $\mc G_{(P(\xi),\delta',t')}$. In particular this happens for some 
$z \in -1 + i \R$. Then $t' |t|^z \in T^{P(\xi)}_\un$ and by \cite[Corollary 3.1.3]{SolAHA} 
the representation $\pi^\Gamma (P(\xi),\delta',t' |t|^z)$ is unitary 
(and in particular completely reducible). By \cite[Theorem 3.3.1.b]{SolAHA}
\begin{equation}\label{eq:1.53}
\End_{\mc H \rtimes \Gamma} \big( \ind_{\mc H^{P(\xi)} \rtimes \Gamma (P(\xi),P(\xi))}^{\mc H 
\rtimes \Gamma} (\delta' \otimes t' |t|^z ) \big) 
\end{equation}
is spanned by intertwining operators coming from $\mc G_{P(\xi),\delta',t'}$, just like for
\eqref{eq:1.54}. Therefore \eqref{eq:1.53} consists only of $\C $Id, which implies that
the representation is irreducible for that $z \in -1 + i \R$. In the above algebraic family of
finite dimensional representations parametrized by $z \in \C$, irreducibility is an open 
condition: slightly varying $z$ cannot destroy irreducibility. Hence the locus of $z$'s where 
the representation is reducibile is a Zariski-closed subset of $\C$, that is, it is finite. 
In particular \eqref{eq:1.54} is irreducible for all but finitely many $r \in \R_{>-1}$.
\end{proof}
\vspace{3mm}

\section{Reductive $p$-adic groups}
\label{sec:group}

Let $F$ be a non-archimedean local field and let $G = \mc G (F)$ be a connected reductive
algebraic group over $F$. We endow $G$ with the topology coming from the metric on $F$ and
we fix a Haar measure on $G$. Let $\mc H (G)$ be the Hecke algebra of $G$, the 
convolution algebra of locally constant compactly supported functions $G \to \C$. The 
product on $\mc H (G)$ is convolution (with respect to the Haar measure). 
Let $\mc S (G)$ be the Harish-Chandra--Schwartz algebra of $G$, as defined in \cite{HC} 
and \cite[\S III.6]{Wal}. By definition, a smooth $G$-representation (by default on a complex 
vector space) is tempered if and only if it extends continuously to a module for $\mc S (G)$.
Let $C_r^* (G)$ be the reduced $C^*$-algebra of $G$, the completion of $\mc H (G)$ in
the algebra of bounded linear operators on the Hilbert space $L^2 (G)$.  
By \cite[Theorem 29]{Vig} there are dense inclusions
\[
\mc H (G) \subset \mc S (G) \subset C_r^* (G) .
\]
The *-operation and the trace on these algebras are 
\[
f^* (g) = \overline{f (g^{-1})} \quad \text{and} \quad \tau (f) = f( 1_G ) .
\]
Fix a minimal parabolic $F$-subgroup 
$P_0 = M_0 U_0$ and let $W (G,M_0)$ be the Weyl group of $G$ with respect to the maximal 
$F$-split torus $A_0$ in the centre of $M_0$. Write $\mf a_0 = X^* (A_{M_0}) \otimes_\Z \R$ 
and endow this vector space with a $W(G,M_0)$-invariant inner product.

Every Levi $F$-subgroup of $G$ is conjugate to a standard Levi subgroup, that is, one that
contains $M_0$. Let $M$ be such a standard Levi subgroup of $G$, and let $A_M$ be the
maximal $F$-split torus in $Z(M)$. There is a canonical decomposition
\begin{equation}\label{eq:3.25}
\mf a_0 = \mf a_M \oplus \mf a^M ,
\end{equation}
where $\mf a_M = X^* (A_M) \otimes_\Z \R$ and $\mf a^M = \{ \chi \in X^* (A_0) : 
\chi|_{A_M} = 1 \} \otimes_\Z \R$.  Let $R(G,M)$ be the set of roots of $G$ with respect to 
$A_M$. For a parabolic subgroup $P = MU$ of $G$ we let $R(P,M)$ be the set of roots of
$(G,M)$ that appear in (the Lie algebra of) $P$. 

When $M_1 \subset M$ is another standard Levi subgroup of $G$, we write
$\mf a^{M}_{M_1} = \mf a_{M_1} \cap \mf a^M$. Every parabolic subgroup $P_1 = M_1 U_1$ of $M$ 
determines an obtuse cone in $\mf a_{M_1}^M$:
\[
{}^+ \mf a^M_{P_1} = \big\{ \sum\nolimits_{\alpha \in R (P_1,M_1)} 
c_\alpha \alpha \big|_{A_{M_1}} : c_\alpha > 0 \; \forall \alpha \big\} .
\]
Here we would obtain the same cone if we used only the simple roots for $(P_1,M_1)$. 
The closure of ${}^+ \mf a^M_{P_1}$ in $\mf a_{M_1}^M$ is
\[
\overline{{}^+ \mf a^M_{P_1}} = \big\{ \sum\nolimits_{\alpha \in R (P_1,M_1)} 
c_\alpha \alpha \big|_{A_{M_1}} : c_\alpha \geq 0 \; \forall \alpha \big\} .
\]
It is easy to see that the normalized Jacquet restriction functor $J^G_P : \Rep (G) \to \Rep (M)$
does not preserve temperedness. Fortunately, the normalized parabolic induction functor
$I_P^G : \Rep (M) \to \Rep (G)$ does, and also respects non-temperedness:

\begin{prop}\label{prop:3.6}
Let $\pi \in \Rep (M)$ be of finite length. Then $I_P^G (\pi)$ is tempered if and only if
$\pi$ is tempered.
\end{prop}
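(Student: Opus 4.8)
The plan is to prove both implications by characterizing temperedness through the asymptotic behaviour of matrix coefficients, using the Casselman/Langlands criterion (growth of matrix coefficients controlled by exponents along $A_M$), together with Jacquet's description of the exponents of $I_P^G(\pi)$.

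\medskip

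First I would recall the precise form of the temperedness criterion: a finite-length smooth $G$-representation $V$ is tempered if and only if for every parabolic $Q = LU_Q \supseteq P_0$, every exponent (i.e.\ every $A_L$-weight occurring in the normalized Jacquet module $J^G_Q(V)$) has real part lying in the closure $\overline{{}^+\mf a^L_Q}$ of the obtuse cone; equivalently, pairing with the simple coroots of $(Q,L)$ gives values that are $\le 0$ in the appropriate sense (this is the Casselman square-integrability/temperedness criterion, cf.\ \cite[\S III.2]{Wal}). The key input for the forward direction is the transitivity of Jacquet restriction combined with the geometric lemma (Bernstein--Zelevinsky): for $Q \subseteq P$ one has $J^G_Q \circ I_P^G$ filtered by terms $I_{Q\cap w^{-1}Mw}^{L} \circ w \circ J^M_{P\cap w L w^{-1}}$ over $w$ in a set of double-coset representatives. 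Thus I would compute the exponents of $I_P^G(\pi)$ and compare cones.

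\medskip

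For the implication ``$\pi$ tempered $\Rightarrow I_P^G(\pi)$ tempered'': take an exponent $\chi$ of $J^G_Q(I_P^G(\pi))$ with $Q=LU_Q$ standard. By the geometric lemma it comes, via some $w$, from an exponent of $J^M_{P'}(\pi)$ for a parabolic $P'$ of $M$, transported by $w$ and then induced (which only adds the modulus contribution of $U_Q$, i.e.\ a vector in the relevant positive cone). Since $\pi$ is tempered, the $M$-exponent has real part in $\overline{{}^+\mf a}$ relative to $M$; the induction step contributes a term in the cone spanned by $R(Q,L)$; and the $w$-twist permutes things compatibly because $w$ is chosen in the Weyl group and preserves the relevant positivity (this is where one uses that $P \supseteq P_0$ so roots stay positive). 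Summing, $\Re(\chi)$ lands in $\overline{{}^+\mf a^L_Q}$, so $I_P^G(\pi)$ satisfies Casselman's criterion and is tempered. Conversely, for ``$I_P^G(\pi)$ tempered $\Rightarrow \pi$ tempered'': note that $\pi$ is a subrepresentation (or quotient, by second adjointness) of $J^G_P(I_P^G(\pi))$ up to the terms indexed by nontrivial $w$, and more cleanly, one can use that $I_P^G$ is faithful and that temperedness of $I_P^G(\pi)$ forces, via the $w=1$ term of the geometric lemma, all exponents of all Jacquet modules $J^M_{P'}(\pi)$ to satisfy the temperedness inequalities for $M$; hence $\pi$ is tempered. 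Alternatively one can argue with unitarity/discrete-series data: decompose $\pi$ via the Langlands classification, and observe that $I_P^G$ commutes with the Langlands construction, so a non-tempered Langlands datum for $\pi$ produces a non-tempered Langlands datum (hence non-tempered constituent) for $I_P^G(\pi)$.

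\medskip

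The main obstacle I expect is bookkeeping in the geometric lemma: one must check that each of the several terms in the Bernstein--Zelevinsky filtration of $J^G_Q I_P^G$ contributes exponents whose real parts land in the correct cone, and in particular that the Weyl-group twists $w$ interact correctly with the obtuse cones $\overline{{}^+\mf a^L_Q}$ — the cones are not $W$-stable, so one needs the standardness of the parabolics and a careful choice of representatives $w$ to ensure positivity is preserved. A clean way to sidestep some of this is to invoke Waldspurger's formulation \cite[\S III]{Wal} directly, where the behaviour of temperedness under $I_P^G$ for finite-length modules is essentially built into the setup; but since the statement as given is phrased for all finite-length $\pi$ (not just irreducible or semisimple), I would make sure the exponent/Jacquet-module argument is stated for general finite length, using exactness of $J$ and $I$ on the relevant categories so that exponents of a finite-length module are the union of those of its constituents.
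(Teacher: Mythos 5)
Your forward direction (``$\pi$ tempered $\Rightarrow I_P^G(\pi)$ tempered'') is the standard argument; the paper does not reprove it but simply cites Waldspurger and Renard. The real content of the proposition is the converse, and there your sketch has a genuine gap.

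You claim that temperedness of $I_P^G(\pi)$ forces, via the $w=1$ term of the geometric lemma, the exponents of $J^M_{P'\cap M}(\pi)$ to satisfy the temperedness inequalities for $M$. But the $w=1$ term only tells you those exponents lie in the $G$-cone $\overline{{}^+ \mf a^G_{P'}}$, whereas you need them in the $M$-cone $\overline{{}^+ \mf a^M_{P'\cap M}}$. These are not the same: $R(P',M')$ strictly contains $R(P'\cap M, M')$ (the extra roots come from the unipotent radical of $P$), so $\overline{{}^+ \mf a^G_{P'}}$ is strictly larger and extends into the $\mf a_M$-direction. Concretely, an exponent $\chi$ of $J^M_{P'\cap M}(\pi)$ can have $\log|\chi| \in \overline{{}^+ \mf a^G_{P'}}$ (so it satisfies the $G$-criterion as seen inside $J^G_{P'}(I_P^G(\pi))$) while its $\mf a^M_{M'}$-component lies outside $\overline{{}^+ \mf a^M_{P'\cap M}}$, provided the $\mf a_M$-component is positive enough. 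So the converse does not follow from the $w=1$ term alone. Your ``cleaner'' alternative via the Langlands classification has a parallel difficulty: a Langlands datum $(P_1\cap M,\tau,\log\nu)$ for $M$ is generally not one for $G$ after transitivity, since $\log\nu \in \mf a^M_{M_1}$ need not be strictly positive against the roots of $(G,M_1)$ coming from $U_P$, so $I_P^G$ does not simply commute with the Langlands construction.

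The paper closes this gap with two extra steps. By transitivity of induction and exactness it reduces to $\pi$ irreducible and $P$ maximal. It then shows that the $A_M$-central character $\zeta$ of $\pi$ must be unitary: since $\mf a^G_M$ is one-dimensional and the reflection $s_\alpha$ in the unique simple root of $(G,M)$ acts as $-1$ on it, the geometric lemma puts both $\zeta$ and $s_\alpha\zeta$ (with $|s_\alpha\zeta| = |\zeta|^{-1}$) among the exponents of $J^G_P(I_P^G(\pi))$, and temperedness of $I_P^G(\pi)$ forces $|\zeta|=1$. With $|\zeta|=1$ the $\mf a_M$-component of every exponent of $J^M_{P'\cap M}(\pi)$ vanishes, so any violation of the $M$-cone condition (which, after the maximality reductions, takes the explicit form $c_\beta\beta$ with $c_\beta<0$) already violates the $G$-cone condition. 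You would need to supply these additional arguments to make your converse correct.
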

\begin{proof}
The if part is well-known, see \cite[Lemme III.2.3]{Wal} or \cite[Lemme VII.2.2]{Ren}.

By conjugating $P,M$ and $\pi$, we can achieve that $P \supset P_0$ and $M \supset M_0$.
Recall from \cite[Proposition III.2.2]{Wal} that $\pi$ is tempered if and only if, for
every parabolic subgroup $P_1 = M_1 U_1$ of $M$  with $M_1 \supset M_0$ and 
every $A_{M_1}$-weight $\chi$ of $J^M_{P_1}(\pi)$:
\begin{equation}\label{eq:3.26}
\log |\chi| \in \overline{{}^+ \mf a^M_{P_1}} . 
\end{equation}
Moreover, it is equivalent to impose this condition for all $P_1$ such 
that $P_1 = M$ or $P_1$ is a standard maximal parabolic subgroup of $M$.

To show that $I_P^G$ preserves non-temperedness, it suffices to consider the case that $P$ is
a standard maximal parabolic subgroup of $G$. Namely, there exists a chain of parabolic subgroups
\[
P \subset P_1 = M_1 U_1 \subset \cdots \subset P_n = M_n U_n \subset G = M_{n+1} 
\]
such that every $P_{i-1} \cap M_i$ is a maximal parabolic subgroup of $M_i$. If we can prove
that each $I_{P_{i-1} \cap M_i}^{M_i}$ preserves non-temperedness, the transitivity of
parabolic induction \cite[Lemme VI.1.4]{Ren} implies that $I_P^G$ does so as well.

Since $I_P^G$ is an exact functor \cite[Th\'eor\`eme VI.1.1]{Ren}, we may furthermore assume
that $\pi$ is irreducible. So, we suppose that $\pi$ is irreducible and not tempered, and
(contrary to what we want to prove) that $I_P^G (\pi)$ is tempered. Let us consider the 
$Z(G)$-character of $\pi$. It is also the $Z(G)$-character of $I_P^G (\pi)$. Since 
$I_P^G (\pi)$ is tempered, its central character is unitary \cite[Corollaire VII.2.6]{Ren}. 

We claim that the $A_M$-character $\zeta$ of $\pi$ must also be unitary. 

Suppose it is not, and consider its absolute value $|\zeta| \in X_\nr (M) \setminus \{1\}$.
Let $\alpha \in R(G,M_0)$ be the unique simple root of $(G,M)$ and let $s_\alpha \in W(G,M_0)$
be the associated reflection. The length of $s_\alpha$ in $W(G,M_0)$ is one, so it is a
minimal length representative for a double coset in $W(M,M_0) \setminus W(G,M_0) \backslash 
W(M,M_0)$. By Bernstein's geometric lemma \cite[Th\'eor\`eme VI.5.1]{Ren} both $\zeta$ and
$s_\alpha \zeta$ occur as $A_M$-weights of $J^G_P (I_P^G (\pi))$. Since $M$ is a maximal
Levi subgroup of $G$ and $|\zeta |_{A_G} = | \zeta |_{Z(G)} = 1$, both $\log |\zeta|$ and
$\log |s_\alpha \zeta|$ lie in the one-dimensional vector space $\mf a_M^G$. The reflection
$s_\alpha$ acts as $-1$ on $\mf a_M^G$, so $|s_\alpha \zeta| = |\zeta|^{-1}$. As $|\zeta| 
\neq 1$, it is not possible that both $\log |\zeta|$ and $\log |s_\alpha \zeta|$ lie in the 
cone $\overline{{}^+ \mf a^G_P}$. From \eqref{eq:3.26} we see that this contradicts the 
temperedness of $I_P^G (\pi)$. Consequently $|\zeta|$ must be 1, and $\zeta$ must be unitary.

Now we invoke the non-temperedness of $\pi$. By \cite[Proposition III.2.2.iii]{Wal} there
exists a standard parabolic subgroup $P' = M' U'$ of $G$ such that:
\begin{itemize}
\item $M' = M$ or $M'$ is a maximal Levi subgroup of $M$;
\item $J^M_{P' \cap M}(\pi)$ has an $A_{M'}$-weight $\chi$ with 
$\log |\chi| \in \mf a_{M'}$ not in $\overline{{}^+ \mf a^M_{P' \cap M}}$.
\end{itemize}
When $M' = M$, then $\chi = \zeta$ and the above claim says that $|\chi| = 1$. That would
be in contradiction with the second bullet. 

Hence $M' \subsetneq M$. As $\chi |_{A_M} = \zeta_{A_M}$ is unitary, $\chi$ is of the form
$c_\beta \beta$, where $\beta \in R(M,M_0)$ is the unique simple root for $(M,M')$. 
Then the second bullet says that $c_\beta < 0$. By Bernstein's geometric lemma 
\cite[Th\'eor\`eme VI.5.1]{Ren} $\chi$ also a $A_{M'}$-weight of $J^G_{P'} (I_P^G (\pi))$.
As $\beta \in \overline{{}^+ \mf a^G_{P'}}$ (a potentially larger cone than
$\overline{{}^+ \mf a^M_{P' \cap M}}$), $c_\beta \beta \notin \overline{{}^+ \mf a^G_{P'}}$. 
Together with \eqref{eq:3.26} this shows that $I_P^G (\pi)$ cannot be tempered.
\end{proof}

Let $L = \mc L (F)$ be a Levi subgroup $G$ and let $(\sigma,V_\sigma) \in \Irr (L)$ be
an irreducible tempered supercuspidal $L$-representation. Let $X_\nr (L)$ be the group
of unramified characters $L \to \C^\times$ and let $X_\unr (L)$ be the subgroup of unitary
unramified characters. Recall that the inertial equivalence class 
of the pair $(L,\sigma)$ consists of all pairs of the form 
\[
(g L g^{-1}, (g \cdot \sigma) \otimes \chi), \text{ where } g \in G 
\text{ and } \chi \in X_\nr (g L g^{-1}).
\]
We write $\mf s = [L,\sigma]_G$ and call this an inertial equivalence class for $G$.
It gives rise to a subset $\Irr (G)^{\mf s} \subset \Irr (G)$, namely all those irreducible
smooth $G$-representations whose supercuspidal support lies in $\mf s$. This in turn
is used to define a subcategory $\Rep (G)^{\mf s}$ of $\Rep (G)$, namely those smooth
$G$-representations all whose irreducible constituents lie in $\Irr (G)^{\mf s}$.
The Bernstein blocks $\Rep (G)^{\mf s}$ have better finiteness properties than $\Rep (G)$:

\begin{thm}\label{thm:2.9} \cite[\S 1.6]{Kaz} \ 
\enuma{
\item Let $M \subset G$ be a Levi subgroup containing $L$.
There exist tempered $\pi_{M,i} \in \Irr (M)^{[L,\sigma]_M} \; (i = 1,\ldots,\kappa_M)$, 
such that $\{ \pi_{M,i} \otimes \chi_M : i = 1, \ldots, \kappa_M, \chi_M \in X_\nr (M) \}$ 
is the collection of irreducible representations in $\Rep (M)^{[L,\sigma]_M}$ that are not 
isomorphic to the normalized parabolic induction of representation of a proper Levi subgroup 
of $M$.
\item Let $M$ run through a set of representatives for the conjugacy classes of Levi
subgroups of $G$ containing $L$. Then the set
\[
\bigcup\nolimits_M \{ I_P^G ( \pi_{M,i} \otimes \chi_M ) : 
i = 1, \ldots, \kappa_M, \chi_M \in X_\nr (M) \} 
\]
spans the Grothendieck group of the category of finite length representations in $\Rep (G)^{\fs}$.
} 
\end{thm}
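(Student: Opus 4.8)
The statement falls into a genuine finiteness theorem, part (a), about a single Bernstein block, and a formal consequence, part (b). The plan is to obtain (a) from Bernstein's structure theory of the block \cite{BeDe} together with generic irreducibility of parabolically induced representations, and then to deduce (b) by a d\'evissage: transitivity of parabolic induction combined with induction on the Levi subgroups lying between $L$ and $G$.

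For part (a), fix $M \supseteq L$ and put $\fs_M = [L,\sigma]_M$. I would first recall that $\Rep (M)^{\fs_M}$ is the module category of $\End_M (\Pi_{\fs_M})$, where $\Pi_{\fs_M}$ is the progenerator (compactly) induced from $(L,\sigma)$, and that this algebra is a finitely generated module over its centre $\mc O (\Omega_{\fs_M})$, with $\Omega_{\fs_M}$ the quotient of the complex torus $X_\nr (L)$ by a finite group. Sending an irreducible to its cuspidal support realizes this centre, so $\Irr (M)^{\fs_M} \to \Omega_{\fs_M}$ is finite-to-one. Next I would use that $I_{P_L \cap M}^M (\sigma \otimes \chi)$ is irreducible for $\chi$ outside a proper Zariski-closed subset of $X_\nr (L)$ that is moreover a finite union of cosets of subtori --- the reducibility locus attached to the roots. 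Outside that locus the unique irreducible with the given cuspidal support is this induced representation, hence (when $L \subsetneq M$) is induced from the proper Levi $L$; so every non-induced irreducible has cuspidal support inside the reducibility locus. Since each subtorus-coset there maps to a single point, or a finite set, of $\Omega_{\fs_M}$ modulo translation by $X_\nr (M) \subseteq X_\nr (L)$, and the fibre data of $\Irr (M)^{\fs_M} \to \Omega_{\fs_M}$ is finite, there are only finitely many $X_\nr (M)$-orbits of non-induced irreducibles. To exhibit a tempered representative in each orbit I would invoke the Langlands and tempered classifications: a non-induced irreducible is a Langlands quotient of an essentially square-integrable representation of a Levi, and twisting the absolute value of its cuspidal support onto the unitary axis produces a square-integrable-modulo-centre, hence tempered, representation. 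The case $L = M$ is immediate: $\Irr (M)^{\fs_M} = \{ \sigma \otimes \chi : \chi \in X_\nr (M) \}$ consists of supercuspidal, hence non-induced, representations, and $\sigma$ is tempered.

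For part (b) I would induct on $\dim G$, keeping $L$ fixed. Let $\pi \in \Irr (G)^{\fs}$. If $\pi \not\cong I_Q^G (\tau)$ for every proper parabolic $Q = MU$ and every finite-length $\tau$, then part (a) with $M = G$ expresses $[\pi]$ as a $\Q$-combination of the classes $[\pi_{G,i} \otimes \chi_G] = [I_G^G (\pi_{G,i} \otimes \chi_G)]$, up to classes induced from proper Levi subgroups. If $\pi \cong I_Q^G (\tau)$, then since $I_Q^G$ is exact and faithful we may take $\tau$ irreducible; its cuspidal support lies in $\fs$, and after conjugating $Q$ within its $G$-conjugacy class we may assume $L \subseteq M$ and $\tau \in \Irr (M)^{\fs_M}$. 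By the inductive hypothesis for $M$, $[\tau]$ is a $\Q$-combination of classes $[I_{Q'}^M (\pi_{M',i} \otimes \chi)]$ with $L \subseteq M' \subseteq M$; applying $I_Q^G$ and using transitivity of parabolic induction \cite[Lemme VI.1.4]{Ren} rewrites $[\pi]$ as a $\Q$-combination of classes $[I_{Q''}^G (\pi_{M',i} \otimes \chi)]$, which --- after replacing each $M'$ by its chosen $G$-conjugacy representative and twisting $\pi_{M',i}$ accordingly --- lie in the asserted spanning set. The same step shows that every class induced from a proper Levi subgroup of $G$ already lies in that span, which also settles the first case. Since the finite-length objects of $\Rep (G)^{\fs}$ are built from $\Irr (G)^{\fs}$, the spanning statement follows.

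The substance is concentrated in part (a), which rests on the deep finiteness of Bernstein blocks and on generic irreducibility of parabolic induction; part (b) is then a purely formal d\'evissage. I would also take care with the exact meaning of ``is the collection of irreducible representations \dots\ that are not \dots\ parabolic induction'' in (a): for (b) one only needs, and the argument above only uses, the Grothendieck-group version --- that modulo classes induced from proper Levi subgroups the $[\pi_{M,i} \otimes \chi_M]$ span --- and this weaker form avoids subtleties such as one-dimensional non-induced constituents (e.g. the trivial representation of $\GL_2$), which admit no tempered unramified twist.
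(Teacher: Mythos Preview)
The paper does not supply its own proof of this theorem; it simply cites \cite[\S 1.6]{Kaz}. So there is no argument in the paper to compare against, and your plan has to stand on its own.

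Your d\'evissage for part (b), assuming (a), is fine and is exactly how one deduces such spanning statements: transitivity of parabolic induction plus induction on the semisimple rank of the Levi.

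The genuine gap is in your finiteness argument for part (a). You argue that any non-induced irreducible has cuspidal support in the reducibility locus of $I_{P_L\cap M}^{M}(\sigma\otimes\chi)$, and then assert that ``each subtorus-coset there maps to a single point, or a finite set, of $\Omega_{\fs_M}$ modulo translation by $X_\nr(M)$''. This is false once the semisimple rank of $M$ relative to $L$ is at least $2$. Concretely, take $M=\GL_3(F)$, $L$ the diagonal torus, $\sigma$ trivial. Then $X_\nr(L)/X_\nr(M)$ is a $2$-dimensional torus, and each reducibility hyperplane $\{\chi_i\chi_j^{-1}=q^{\pm1}\}$ descends to a $1$-dimensional subvariety there, not to a finite set. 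So the reducibility locus modulo $X_\nr(M)$ is a union of curves, and knowing only that the fibres of the cuspidal-support map are finite does not bound the number of $X_\nr(M)$-orbits. The actual finiteness in Kazhdan's argument does not come from the geometry of the reducibility locus; it comes from the trace Paley--Wiener theorem (Bernstein--Deligne--Kazhdan) or, in the form most relevant here, from Harish-Chandra's finiteness of discrete series modulo unramified twist together with the identification of the ``elliptic'' classes in the Grothendieck group with (twists of) discrete series. Your sketch does not invoke either of these, so the heart of (a) is missing.

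Relatedly, your ``tempered representative'' step also breaks down on exactly the example you flag at the end: the trivial representation of $\GL_2(F)$ is not isomorphic to any full parabolic induction, yet no unramified twist of it is tempered. So the literal statement of (a) cannot hold with ``not isomorphic to a parabolic induction''; one really needs the Grothendieck-group reading (``not in the $\Z$-span of properly induced classes''), under which the trivial representation is excluded because $[\mr{triv}]=[I_B^G(\delta_B^{1/2})]-[\mr{St}]$. You were right to raise this, but you should treat it not as a side remark: it is precisely the correct formulation under which (a) is both true and provable, and it is all that the paper uses (see the proof of Lemma~\ref{lem:4.4}).
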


Let $\mf B (G)$ be the set of all inertial equivalence classes $\mf s$ for $G$. 
By \cite[Corollaire 3.9]{BeDe} it is countably infinite (unless $G = 1$).
The Bernstein decomposition \cite[Theorem 2.10]{BeDe} says that 
\begin{equation}\label{eq:3.1}
\begin{array}{lll}
\Rep (G) & = & \prod_{\mf s \in \mf B (G)} \Rep (G)^{\mf s} , \\
\mc H (G) & = & \bigoplus_{\mf s \in \mf B (G)} \mc H (G)^{\mf s} ,
\end{array}
\end{equation}
where $\mc H (G)^{\mf s}$ is the two-sided ideal of $\mc H (G)$ for which
$\Mod (\mc H (G)^{\mf s} )$ is naturally equivalent with $\Rep (G)^{\mf s}$.

Let $\mc S (G)^{\mf s}$ (resp. $C_r^* (G)^{\mf s}$) be the two-sided ideal of
$\mc S (G)$ (resp. $C_r^* (G)$) generated by $\mc H (G)^{\mf s}$.
Upon completion, \eqref{eq:3.1} yields further Bernstein decompositions
\begin{equation}\label{eq:3.2}
\begin{array}{lll}
\mc S (G) & = & \bigoplus_{\mf s \in \mf B (G)} \mc S (G)^{\mf s} , \\
C_r^* (G) & = & \bigoplus_{\mf s \in \mf B (G)} C_r^* (G)^{\mf s} .
\end{array}
\end{equation}
The latter must be interpreted as a direct sum in the Banach algebra sense:
it is the completion of the algebraic direct sum with respect to the operator
norm of $C_r^* (G)$.

For a compact open subgroup $K$ of $G$ we let $\langle K \rangle$ be the
corresponding idempotent of $\mc H (G)$. Then 
\begin{equation}\label{eq:3.3}
\mc H (G,K) := \langle K \rangle \mc H (G) \langle K \rangle
\end{equation}
is the subalgebra of $K$-biinvariant functions in $\mc H (G)$. We define
$\mc S (G,K)$ and $C_r^* (G,K)$ analogously. For every compact open subgroup $K$ of 
$G$, $\mc S (G,K)$ is a Fr\'echet algebra \cite[Theorem 29]{Vig}. The Schwartz
algebra $\mc S (G)$ is their union (over all possible $K$), so it is an 
inductive limit of Fr\'echet algebras.

We will focus on one Bernstein block $\Rep (G)^{\mf s}$ of $\Rep (G)$.
By \cite[Corollaire 3.9]{BeDe} there exists a compact open subgroup $K_{\mf s}$
of $G$ such that every representation in $\Rep (G)^{\mf s}$ is generated by
its $K_{\mf s}$-fixed vectors. This leads to Morita equivalences
\begin{equation}\label{eq:3.4}
\begin{array}{ccccc}
\mc H (G)^{\mf s} & \sim_M & \mc H (G,K_{\mf s})^{\mf s} & 
:= & \mc H (G)^{\mf s} \cap \mc H (G,K_{\mf s}) \\
\mc S (G)^{\mf s} & \sim_M & \mc S (G,K_{\mf s})^{\mf s} & 
:= & \mc S (G)^{\mf s} \cap \mc S (G,K_{\mf s}) \\
C_r^* (G)^{\mf s} & \sim_M & C_r^* (G,K_{\mf s})^{\mf s} & 
:= & C_r^* (G)^{\mf s} \cap C_r^* (G,K_{\mf s}) .
\end{array}
\end{equation}

\subsection{The Plancherel isomorphism} \

We will describe the structure of $\mc S (G)^{\mf s}$ and $\mc S (G,K_{\mf s})^{\mf s}$ 
in more detail. Let $[L,\sigma]_L = T_{\mf s} \subset \Irr (L)$
be the set of $L$-representations of the form $\sigma \otimes \chi$ with 
$\chi \in X_\nr (L)$. Thus there is a finite covering of complex varieties
\begin{equation}\label{eq:3.5}
X_\nr (L) \to T_{\mf s} : \chi \mapsto \sigma \otimes \chi .
\end{equation}
Let $T_{\mf s,\un}$ be the subset of unitary representations in $T_{\mf s}$,
it is covered by $X_\unr (L)$ via \eqref{eq:3.5}. We write
\[
X_\nr (L,\sigma) = \{ \chi \in X_\nr (L) : \sigma \otimes \chi \cong \sigma \} .
\]
This is a finite subgroup of $X_\unr (L)$. The map \eqref{eq:3.5} induces an
isomorphism of algebraic varieties $X_\nr (L) / X_\nr (L,\sigma) \to T_{\mf s}$.

The group $W(G,L) = N_G (L) / L$ acts on $\Irr (L)$ by
\begin{equation}\label{eq:3.10}
(gL \cdot \pi) (l) = \pi (g l g^{-1}) .
\end{equation}
(The representation $gL \cdot \pi$ is only determined up to isomorphism.) 
This action stabilizes $X_\nr (L)$, the unitary representations in $\Irr (L)$
and the supercuspidal $L$-representations. Let $W_{\mf s}$ be the stabilizer
of $T_{\mf s}$ in $N_G (L) / L$. This group will play the same role as 
$W \Gamma$ did in Section \ref{sec:AHA}. The theory of the Bernstein centre
\cite[Th\'eor`eme 2.13]{BeDe} says that the centre of 
$\mc H (G,K_{\mf s})^{\mf s}$ is naturally isomorphic with 
$\mc O (T_{\mf s})^{W_{\mf s}} = \mc O (T_{\mf s} / W_{\mf s})$.

It will be convenient to lift everything from $T_{\mf s}$ to $X_\nr (L)$.
However, $W_{\mf s}$ does not act naturally on $X_\nr (L)$. To overcome
this and similar issues, we need the following lemma.

\begin{lem}\label{lem:3.1}
Let $p : T' \to T$ be a surjection between complex tori, with finite
kernel $K = \ker p$. Let $\Gamma$ be a finite group acting on $T$ by 
automorphisms of algebraic varieties (so $\Gamma$ need not fix $1 \in T$). 
Then there exists a canonical short exact sequence
\[
1 \to K \to \Gamma' \to \Gamma \to 1
\] 
and a canonical action of $\Gamma'$ on $T'$ which extends the multiplication 
action of $K$ on $T'$ and lifts the action of $\Gamma$ on $T$.
\end{lem}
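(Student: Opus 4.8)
The plan is to realize $\Gamma'$ concretely as a fiber product of automorphism groups. First I would consider the group $\Aut(T')$ of algebraic automorphisms of $T'$ (not necessarily fixing $1$), and similarly $\Aut(T)$, together with the subgroup $\Aut(T';K)$ of those automorphisms of $T'$ that stabilize the finite subgroup $K$ setwise. Since $K = \ker p$ is characteristic in the sense that it is determined by $p$, any automorphism of $T'$ stabilizing $K$ descends to an automorphism of $T = T'/K$; this gives a homomorphism $r : \Aut(T';K) \to \Aut(T)$. The key structural point I would establish is that $r$ is surjective with kernel canonically identified with $K$ itself (acting on $T'$ by translations). Surjectivity amounts to lifting an algebraic automorphism $\phi$ of $T$ to $T'$: decompose $\phi = t_a \circ \phi_0$ where $\phi_0 \in \Aut(T)$ fixes $1$ (hence is a group automorphism) and $t_a$ is translation by $a = \phi(1)$; the group automorphism $\phi_0$ lifts to a group automorphism of $T'$ because $T'$ is the unique connected cover of $T$ with kernel $K$ and group automorphisms act functorially on such covers (concretely, on cocharacter lattices $X_*(T') \supset X_*(T)$ with $X_*(T')/X_*(T) \cong K^\vee$, or dually on $X^*$), and then one lifts the translation by choosing any preimage $a' \in T'$ of $a$ — different choices differ exactly by $K$. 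This makes precise both the surjectivity and the identification of $\ker r$ with $K$.

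Then I would simply define $\Gamma'$ to be the pullback: $\Gamma' = \{(\psi, \gamma) \in \Aut(T';K) \times \Gamma : r(\psi) = \gamma\}$, where we regard $\Gamma$ as a subgroup of $\Aut(T)$ via the given action. The two projections give the short exact sequence $1 \to K \to \Gamma' \to \Gamma \to 1$ — exactness on the left because $\ker(r) = K$, exactness on the right because $r$ is surjective — and the first projection $\Gamma' \to \Aut(T';K) \subset \Aut(T')$ is the desired action of $\Gamma'$ on $T'$. By construction this action lifts the $\Gamma$-action on $T$ (that is the second-coordinate condition $r(\psi) = \gamma$), and its restriction to $K \subset \Gamma'$ is the embedding of $K$ as translations on $T'$, which is the multiplication action. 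Everything here is canonical because no choices were made in forming the pullback, in identifying $\ker r$ with $K$, or in the functorial lift of group automorphisms along $p$.

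The one genuine point requiring care — and the step I expect to be the main obstacle — is the functorial lifting of group automorphisms of $T$ to group automorphisms of $T'$ along the isogeny $p$, i.e. showing $r$ restricted to group automorphisms fixing $1$ is an isomorphism onto $\{\phi_0 \in \Aut(T) : \phi_0(1) = 1\}$. This is where one uses that $p$ is a fixed isogeny rather than an abstract one: an automorphism $\phi_0$ of $T$ acts on $X^*(T) = \Hom(T,\C^\times)$, hence on the overlattice $X^*(T') \supset X^*(T)$ provided $\phi_0$ preserves it, and one must check that it does. Equivalently, on cocharacters: $X_*(T')$ sits between $X_*(T)$ and $\frac{1}{n}X_*(T)$ for suitable $n$ and is recovered as $\{\lambda : p \circ \lambda \in X_*(T)\}$... more precisely $K \cong X_*(T')/X_*(T)$, and any group automorphism of $T$ permutes the finite-order elements of $T$ compatibly, forcing it to preserve the sublattice defining $T'$ because that sublattice is intrinsic to the pair $(T, \text{the specified } K \subset T')$. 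Once this lifting is in hand, the rest is formal pullback bookkeeping. I would also remark that when $\Gamma$ happens to act on $T$ fixing $1$ (the group-automorphism case), the sequence need not split, but $\Gamma'$ still acts, which is all that is needed downstream.
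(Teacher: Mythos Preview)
Your fiber-product packaging is a clean reformulation of exactly what the paper does: decompose each $\gamma \in \Gamma$ as a translation $z_\gamma$ composed with a group automorphism $l_\gamma$ of $T$, lift $l_\gamma$ canonically to $T'$, lift $z_\gamma$ to some $z'_\gamma \in p^{-1}(z_\gamma)$ (ambiguous up to $K$), and let $\Gamma'$ be generated by $K$ and these lifts. Your pullback $\Gamma' = \Aut(T';K) \times_{\Aut(T)} \Gamma$ encodes this in one stroke.

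However, the step you correctly flag as ``the main obstacle'' is a genuine gap, and your justification does not close it. You argue that the overlattice $X' \supset X$ is ``intrinsic to the pair $(T, K \subset T')$'', hence preserved by any group automorphism of $T$. But $K$ sits in $T'$, not in $T$; an automorphism of $T$ alone has no reason to single out $X'$ among the many finite-index superlattices of $X$. Concretely, take $T = T' = (\C^\times)^2$ with $p(s,t) = (s^2,t)$, so $K = \{(\pm 1,1)\}$ and $X = 2\Z \times \Z \subset X' = \Z \times \Z$. Let $\gamma$ swap the two factors of $T$. The induced map on $X \otimes \Q$ sends $(1,0) \mapsto (0,\tfrac12) \notin X'$, so $l_\gamma$ does not extend to $X'$, and one checks directly that no algebraic automorphism $\psi$ of $T'$ satisfies $p \circ \psi = \gamma \circ p$ (it would force $\psi_1(s,t)^2 = t$). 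Hence $r : \Aut(T';K) \to \Aut(T)$ is not surjective and the lemma, read literally, fails for this $\Gamma$. The paper's own proof has the same unproven step (the clause ``so $l_\gamma$ induces a canonical linear action of $\Gamma$ on $X'$''). In every application in the paper the linear part of the $\Gamma$-action arises from a group, such as $W(G,M)$ acting on $X_\nr(M)$, that already acts on $T'$ by group automorphisms; under that extra hypothesis your pullback construction (and the paper's) goes through without change.
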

\begin{proof}
Let $X$ be the character lattice of $T$. Then $\mc O (T) \cong \C [X]$ and
$\Gamma$ acts on $\mc O (T)$ by $(\gamma \cdot f) (t) = f (\gamma^{-1} t)$. Since
\[
\mc O (T)^\times = \{ z \theta_x : z \in \C^\times , x \in X \} \cong
\C^\times \times X ,
\]
$\Gamma$ also acts naturally on $X \cong \mc O (T)^\times / \C^\times$.
Let us denote this action by $l_\gamma : X \to X$. Notice that it defines an
action of $\Gamma$ on $T = \Hom_\Z (X,\C^\times)$ by algebraic group automorphisms.
The given action on $\mc O (T)$ can now be written as
\[
\gamma (z \theta_x) = z z_\gamma^{-1} (l_\gamma (x)) \theta_{l_\gamma (x)} ,
\]
for a unique $z_\gamma \in T$. Consequently the original action of $\Gamma$ on
$T$ can be expressed as 
\begin{equation}\label{eq:3.6}
\gamma (t) = z_\gamma l_\gamma (t) .
\end{equation}
The character lattice $X'$ of $T'$ contains $X$ with finite index
$|K|$, so $l_\gamma$ induces a canonical linear action of $\Gamma$ on $X'$,
also denoted $l_\gamma$. For every $\gamma \in \Gamma$ we choose a $z'_\gamma \in
p^{-1}(z_\gamma)$, and we define 
\[
\phi_\gamma : T' \to T' ,\quad \phi_\gamma (t') = z'_\gamma l_\gamma (t').
\]
Clearly $\phi_\gamma$ is a lift of \eqref{eq:3.6}, so for every $\gamma, \gamma'
\in \Gamma$ there exists a unique $z'_{\gamma,\gamma'} \in K$ with 
\begin{equation}\label{eq:3.7}
\phi_\gamma \circ \phi_{\gamma'} \circ \phi^{-1}_{\gamma \gamma'} (t') =
z'_{\gamma,\gamma'} t'  \quad \forall t' \in T' .
\end{equation}
Let $\Gamma'$ be the subgroup of Aut$(T')$ generated by the $\phi_\gamma \;
(\gamma \in \Gamma)$ and $K$. Then \eqref{eq:3.7} gives a canonical isomorphism
$\Gamma' / K \cong \Gamma$.

The only unnatural steps in the above argument are the choices of the 
$z'_\gamma$. Different choices would lead to different $z'_{\gamma,\gamma'}$ in
\eqref{eq:3.7}, but to the same group $\Gamma'$. Hence $\Gamma'$ is canonically
determined by the data $T,T'$ and $\Gamma$.
\end{proof}

Next we recall the Plancherel isomorphism for $\mc S (G)^{\mf s}$, as discovered 
by Harish-Chandra and worked out by Waldspurger. As induction data for $G$ we take
quadruples $(P,M,\omega,\chi)$, where
\begin{itemize}
\item $P$ is a parabolic subgroup of $G$ with a Levi factor $M$;
\item $(\omega,V_\omega) \in \Irr_{L^2}(M)$, the set of (isomorphism classes of)
irreducible smooth square-integrable modulo centre representations of $M$;
\item $\chi \in X_\nr (M)$.
\end{itemize}
To such a datum we associate the smooth $G$-representation $I_P^G (\omega \otimes \chi)$,
where $I_P^G$ denotes normalized parabolic induction. When $\chi$ is unitary, the 
$M$-invariant inner product on $(\omega \otimes \chi, V_\omega)$ induces a $G$-invariant 
inner product on $I_P^G (V_\omega)$, so $I_P^G (\omega \otimes \chi)$ is pre-unitary 
\cite[Proposition 3.1.4]{Cas}. However, $I_P^G (V_\omega)$ is only complete with respect 
to the associated metric if $\dim (I_P^G (V_\omega))$ is finite.

Let $(\check \omega, \check V_\omega)$ be the smooth contragredient of $\omega$ and put
\[
\mf L (\omega,P) = I_{P \times P}^{G \times G} (\omega \otimes \check \omega ) =
I_P^G (\omega) \otimes I_P^G (\check \omega) .
\]
Since $I_P^G (\check \omega)$ can be identified with the smooth contragredient of
$I_P^G (\omega)$ \cite[Proposition 3.1.2]{Cas}, $\mf L (\omega,P)$ can be regarded as the 
algebra of finite rank linear operators on $I_P^G (V_\omega)$.
Notice that for every $\chi \in X_\nr (M)$ we can identify $\mf L (\omega \otimes \chi,P)$
with $\mf L (\omega,P)$ as algebras. The inner product on $I_P^G (V_\omega)$ induces 
a *-operation on this algebra. That makes $\mc O (X_\nr (M)) \otimes \mf L (\omega,P)$
to a *-algebra with
\[
f^* (\chi) = f (\check \chi )^* . 
\]
There is a natural *-homomorphism
\begin{equation}\label{eq:3.8}
\begin{array}{ccc}
\mc H (G) & \to & \mc O (X_\nr (M)) \otimes \mf L (\omega,P) , \\ 
f & \mapsto & \big( \chi \mapsto I_P^G (\omega \otimes \chi) (f) \big) .
\end{array}
\end{equation}
We put $T_\omega = \{ \omega \otimes \chi : \chi \in X_\nr (M) \}$ 
and we record the covering map
\begin{equation}\label{eq:3.9}
X_\nr (M) \to T_\omega : \chi \mapsto \omega \otimes \chi . 
\end{equation}
The group 
\[
X_\nr (M,\omega) = \{ \chi \in X_\nr (M) : \omega \otimes \chi \cong \omega \}
\]
is finite, because all its elements must be trivial on $Z(M)$. All the fibres of \eqref{eq:3.9}
are isomorphic to $X_\nr (M,\omega)$. 

For every $k \in X_\nr (M,\omega)$ there exists a unitary $M$-intertwiner
$\omega \to \omega \otimes k$, unique up to scalars. The same map $V_\omega \to V_\omega$
also intertwines $\omega \otimes \chi$ with $\omega \otimes \chi k$, for any $\chi \in X_\nr (M)$.
Applying $I_P^G$, we get a family a $G$-intertwiners 
\begin{equation}
\pi (k,\omega, \chi) : I_P^G (\omega \otimes \chi) \to I_P^G (\omega \otimes \chi k), 
\end{equation}
independent of $\chi$ and unitary when $\chi \in X_\unr (M)$. Let 
\[
\check \pi (k,\omega, \chi) : I_P^G (\check \omega \otimes \check \chi)) \to 
I_P^G (\check \omega \otimes \check{\chi k})
\]
be the inverse transpose $\pi (k,\omega, \chi)$. Since $\pi (k,\omega, \chi)$
is unique up to scalars, 
\begin{equation}
I(k,\omega, \chi) := \pi (k,\omega, \chi) \otimes \check \pi (k,\omega, \chi)
\in \Hom_{G \times G} \big( \mf L (\omega \otimes \chi,P) , \mf L (\omega \otimes \chi k,P) \big) 
\end{equation}
is canonical. Moreover it is unitary for $\chi \in X_\unr (M)$ and independent of $\chi$ 
as map between vector spaces.

Let $W(T_\omega)$ be the stabilizer of $T_\omega$ in $W(G,M) = N_G (M) / M$, with respect to
the action on $\Irr (M)$ as in \eqref{eq:3.10}. Then $W(T_\omega)$ acts naturally on $T_\omega$.
From Lemma \ref{lem:3.1} we get a group extension
\begin{equation}\label{eq:3.11}
1 \to X_\nr (M,\omega) \to W'(T_\omega) \to W(T_\omega) \to 1 
\end{equation}
and an action of $W' (T_\omega)$ on $X_\nr (M)$ compatible with the covering \eqref{eq:3.9}.
In \cite{Wal} the representations $\omega \otimes \chi$ and $\omega \otimes \chi k$ are often
not distinguished. The introduction of $W' (T_\omega)$ and of the $I(k,\omega, \chi)$ 
allows us to compare $I_P^G (\omega \otimes \chi)$ and $I_P^G (\omega \otimes \chi k)$ in a 
systematic way. From \cite[\S VI.1]{Wal} one can see that actually our setup is just another
way to keep track of all the ingredients of \cite{Wal}.

The following results are proven in \cite[Paragraphe V]{Wal}. For $w' \in W' (T_\omega)$
there exist unitary $G$-intertwining operators
\begin{equation}\label{eq:3.12}
\pi (w',\omega, \chi) : I_P^G (\omega \otimes \chi) \to I_P^G (\omega \otimes w' (\chi)) 
\quad \chi \in X_\unr (M),
\end{equation}
unique up to scalars. These give canonical unitary intertwiners
\begin{equation}\label{eq:3.13}
I(w',\omega, \chi) = \pi (w',\omega, \chi) \otimes \check \pi (w',\omega, \chi)
\in \Hom_{G \times G} \big( \mf L (\omega \otimes \chi,P) , \mf L (\omega \otimes w'(\chi),P) \big)
\end{equation}
with the following properties \cite[Lemme V.3.1]{Wal}:
\begin{itemize}
\item as functions of $\chi$, $\pi (w',\omega, \chi)$ and $I(w',\omega, \chi)$ are
continuous with respect to the Zariski topology on the real algebraic variety $X_\unr (M)$;
\item $I(w'_2,\omega, w'_1 (\chi)) \circ I(w'_1,\omega, \chi) = 
I(w'_2 w'_1,\omega, \chi)$ for $w'_1, w'_2 \in W' (T_\omega)$.
\end{itemize}
The properties of the intertwining operators \eqref{eq:3.12} imply that, for every $g \in G ,\;
\omega \in \Irr_{L^2}(M) ,\; \chi \in X_\nr (M)$ and every parabolic subgroup $P' \subset G$
with Levi factor $g M g^{-1}$, the representations $I_P^G (\omega \otimes \chi)$ and 
$I_{P'}^G (g \cdot \omega \otimes g \cdot \chi)$ have the same irreducible 
subquotients, counted with multiplicity \cite[Corollary 2.7]{SolPadicHP}. 

We remark that $I(w',\omega, \chi)$ is called ${}^\circ c_{P\mid P}(w' ,\omega \otimes \chi)$ 
in \cite{Wal}. The intertwining operators \eqref{eq:3.13} give rise to an action of 
$W'(T_\omega)$ on the algebra
\[
C^\infty (X_\unr (M)) \otimes \mf L (\omega,P) \quad \text{ by } \quad
(w' \cdot f) (w' \chi) = I(w',\omega, \chi) f (\chi) .
\]
We fix a parabolic subgroup $P_L$ with Levi factor $L$, and we recall that $\mf s = [L,\sigma]_G$. 
To study representations in the Bernstein block $\Rep (G)^{\mf s}$, it suffices to consider 
induction data such that $P \supset P_L, M \supset L$ and the cuspidal support of $\omega$ 
lies in $[L,\sigma]_M$. Then $W (T_\omega)$ can be regarded as a subgroup of $W_{\mf s}$.

Choose representatives for the $G$-association classes of parabolic subgroups $P$ containing
$P_L$. Notice that every such $P$ has a unique Levi factor $M$ containing $L$. We also
choose representatives $\omega$ for the action of $W_\fs \ltimes X_\unr (M)$ on $\Irr_{L^2}(M) \cap
\Irr (M)^{\mf s_M}$, where $\mf s_M = [L,\sigma]_M$. We denote the resulting set of representative 
triples by $(P,M,\omega) / \sim$. Harish-Chandra established the following Plancherel isomorphism,
see \cite[Theorem 8.9]{SZ} for an alternative proof.

\begin{thm}\label{thm:3.2}
\textup{\cite[Th\'eor\`eme VII.2.5]{Wal}} \ \\
The maps \eqref{eq:3.8} induces isomorphisms of topological *-algebras
\[
\begin{array}{lll}
\mc S (G)^{\mf s} & \to & \bigoplus_{(P,M,\omega) / \sim} 
\big( C^\infty (X_\unr (M)) \otimes \mf L (\omega,P) \big)^{W' (T_\omega)} , \\
\mc S (G,K_{\mf s})^{\mf s} & \to & \bigoplus_{(P,M,\omega) / \sim} \Big( C^\infty (X_\unr (M)) 
\otimes \End_\C \big( I_P^G (V_\omega)^{K_{\mf s}} \big) \Big)^{W' (T_\omega)} .
\end{array}
\]
\end{thm}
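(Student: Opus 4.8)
The plan is to derive this from Harish--Chandra's Plancherel isomorphism in the form given by Waldspurger \cite[Th\'eor\`eme VII.2.5]{Wal} (or from the alternative treatment \cite[Theorem 8.9]{SZ}), by cutting out the Bernstein block $\Rep (G)^{\mf s}$ and rewriting Waldspurger's bookkeeping in terms of the group extension $W'(T_\omega)$ of Lemma \ref{lem:3.1} and the canonical intertwiners $I(w',\omega,\chi)$ introduced above.

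First I would recall that \cite[Th\'eor\`eme VII.2.5]{Wal} describes $\mc S (G)$ as a topological direct sum, over $G$-association classes of pairs $(P,\omega)$ with $\omega \in \Irr_{L^2}(M)$, of the algebras of $C^\infty$-sections over $X_\unr (M)$ of the bundle with fibre $\mf L (\omega,P)$, taken invariant under the relevant finite stabilizer group acting through Harish--Chandra's normalized intertwining operators (the operators ${}^\circ c_{P\mid P}$). The central idempotent $e_{\mf s}$ defining the block $\mc S (G)^{\mf s}$ acts on the spectral side by selecting those --- now finitely many --- summands $(P,\omega)$ whose cuspidal support lies in $\mf s$; up to $G$-conjugacy these are exactly the $(P,M,\omega)$ with $M \supset L$ and $\omega \in \Irr_{L^2}(M) \cap \Irr (M)^{\mf s_M}$, $\mf s_M = [L,\sigma]_M$. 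Modding out the residual $W_{\mf s}$-symmetry gives the index set $(P,M,\omega)/\!\sim$, so the first isomorphism is Waldspurger's theorem combined with the (standard) compatibility of the Plancherel decomposition of $\mc S (G)$ with the Bernstein decomposition.

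The second ingredient is to match the target algebra in Waldspurger's formulation with the one written here. In \cite{Wal} the representations $\omega \otimes \chi$ and $\omega \otimes \chi k$ for $k \in X_\nr (M,\omega)$ are identified, whereas we keep $X_\nr (M)$ as parameter space and record the identification by the canonical intertwiner $I(k,\omega,\chi)$. Lemma \ref{lem:3.1} packages $X_\nr (M,\omega)$ together with $W(T_\omega)$ into $W'(T_\omega)$ acting on $X_\nr (M)$. The content of \cite[Lemme V.3.1]{Wal} is precisely that the operators $I(w',\omega,\chi) = \pi (w',\omega,\chi) \otimes \check\pi (w',\omega,\chi)$ are canonical (the scalar indeterminacy in each $\pi (w',\omega,\chi)$ cancels in the tensor product), depend Zariski-continuously on $\chi \in X_\unr (M)$, and satisfy $I(w'_2,\omega, w'_1\chi)\, I(w'_1,\omega,\chi) = I(w'_2 w'_1,\omega,\chi)$; hence they define a strict action of $W'(T_\omega)$ on $C^\infty (X_\unr (M)) \otimes \mf L (\omega,P)$. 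Pulling back along the covering $X_\unr (M) \to T_{\omega,\un}$, the $W(T_\omega)$-invariant sections over $T_{\omega,\un}$ in Waldspurger's sense are identified with the $W'(T_\omega)$-invariant sections over $X_\unr (M)$, compatibly with the module structures and with the $*$-operation $f^*(\chi) = f(\check\chi)^*$. This gives the first displayed isomorphism as stated.

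For the second isomorphism I would apply the idempotent $\langle K_{\mf s}\rangle$: by the defining property of $K_{\mf s}$, every representation in $\Rep (G)^{\mf s}$ --- in particular each $I_P^G (\omega \otimes \chi)$ occurring above --- is generated by its $K_{\mf s}$-fixed vectors, so $\mc S (G,K_{\mf s})^{\mf s} = \langle K_{\mf s}\rangle\, \mc S (G)^{\mf s}\, \langle K_{\mf s}\rangle$ by \eqref{eq:3.4}, while compressing the fibre $\mf L (\omega,P)$ of finite-rank operators on $I_P^G (V_\omega)$ with $\langle K_{\mf s}\rangle$ produces $\End_\C\big(I_P^G (V_\omega)^{K_{\mf s}}\big)$. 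Since the operators $\pi (w',\omega,\chi)$ are $G$-equivariant they preserve $K_{\mf s}$-fixed vectors, so the $W'(T_\omega)$-action passes to the compressed bundle and the first isomorphism restricts to the second. I expect the only real obstacle to be bookkeeping rather than substance: aligning Harish--Chandra's and Waldspurger's normalizations of intertwining operators and Plancherel densities with the clean $W'(T_\omega)$-picture used here, and checking that the scalar ambiguities genuinely vanish after forming $\pi (w',\omega,\chi)\otimes\check\pi (w',\omega,\chi)$, so that \cite[Lemme V.3.1]{Wal} yields a strict and not merely projective action on the section algebras. Exactness of $I_P^G$, compatibility of supercuspidal supports, and the passage to $K_{\mf s}$-invariants are then routine.
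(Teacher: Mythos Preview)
Your proposal is correct and matches the paper's approach exactly: the paper gives no separate proof of this theorem, treating it as a direct citation of \cite[Th\'eor\`eme VII.2.5]{Wal} (with \cite[Theorem 8.9]{SZ} as an alternative), and the preceding discussion in the paper explains, just as you do, that the $W'(T_\omega)$-formulation is merely a repackaging of Waldspurger's bookkeeping via Lemma~\ref{lem:3.1} and the identification $I(w',\omega,\chi) = {}^\circ c_{P\mid P}(w',\omega\otimes\chi)$.
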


Plymen \cite{Ply1} showed that Theorem \ref{thm:3.2} has a natural extension to
$C^*$-algebras. Let $\mf H (\omega \otimes \chi, P)$ be the Hilbert space completion of
$I_P^G (V_{\omega \otimes \chi}) = I_P^G (V_\omega)$ and let $\mf K (\omega \otimes \chi, P)$
be the $C^*$-algebra of compact operators on $\mf H (\omega \otimes \chi, P)$.

\begin{thm}\label{thm:3.3}
The  maps \eqref{eq:3.8} induces isomorphisms of $C^*$-algebras
\[
\begin{array}{lll}
C_r^* (G)^{\mf s} & \to & \bigoplus_{(P,M,\omega) / \sim} 
C \big( X_\unr (M) ; \mf K (\omega,P) \big)^{W' (T_\omega)} , \\
C_r^* (G,K_{\mf s})^{\mf s} & \to & \bigoplus_{(P,M,\omega) / \sim} \Big( C (X_\unr (M)) 
\otimes \End_\C \big( I_P^G (V_\omega)^{K_{\mf s}} \big) \Big)^{W' (T_\omega)} .
\end{array}
\]
\end{thm}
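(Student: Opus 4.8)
The plan is to deduce Theorem~\ref{thm:3.3} from Theorem~\ref{thm:3.2} together with the standard description of the reduced $C^*$-norm via the Plancherel decomposition, following \cite{Ply1}. Write $\mc A := \bigoplus_{(P,M,\omega)/\sim} C\big(X_\unr(M);\mf K(\omega,P)\big)^{W'(T_\omega)}$ for the target in the first line. Since the intertwiners $I(w',\omega,\chi)$ of \eqref{eq:3.13} are unitary for $\chi\in X_\unr(M)$, the $W'(T_\omega)$-action on $C\big(X_\unr(M);\mf K(\omega,P)\big)$ is by $*$-automorphisms, so $\mc A$ is a $C^*$-algebra. The maps \eqref{eq:3.8}, restricted to unitary $\chi$ and to the components $(P,M,\omega)$ relevant to $\mf s$, define a $*$-homomorphism $\Phi\colon \mc H(G)^{\mf s}\to\mc A$ whose image lies in the $W'(T_\omega)$-invariants by the equivariance already exploited in Theorem~\ref{thm:3.2}. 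By that theorem $\Phi$ extends to an isomorphism of $\mc S(G)^{\mf s}$ onto the $*$-subalgebra $\mc A_0 := \bigoplus_{(P,M,\omega)/\sim}\big(C^\infty(X_\unr(M))\otimes\mf L(\omega,P)\big)^{W'(T_\omega)}$ of $\mc A$, and $\mc A_0$ is dense in $\mc A$: $C^\infty$ is dense in $C$, the finite-rank operators $\mf L(\omega,P)$ are dense in the compacts $\mf K(\omega,P)$, and averaging over the finite group $W'(T_\omega)$ (which acts isometrically) carries these approximations into the $W'(T_\omega)$-invariants.

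The heart of the argument is to show that $\Phi$ is \emph{isometric} for the reduced $C^*$-norm on $\mc H(G)^{\mf s}$, i.e. that for $f\in\mc H(G)^{\mf s}$
\[
\norm{f}_{C_r^*(G)} \;=\; \max_{(P,M,\omega)/\sim}\;\sup_{\chi\in X_\unr(M)}\;\big\| I_P^G(\omega\otimes\chi)(f) \big\| .
\]
By definition $\norm{f}_{C_r^*(G)}$ is the operator norm of $f$ acting on $L^2(G)$, and the Harish-Chandra Plancherel theorem \cite{HC,Wal} realizes the part of the regular representation supported on $\Rep(G)^{\mf s}$ as a direct integral of the representations $I_P^G(\omega\otimes\chi)$ (over $\chi\in X_\unr(M)$ and the finitely many relevant $(P,M,\omega)$) against the Plancherel measure $\mu_{Pl}$; hence $\norm{f}_{C_r^*(G)}$ equals the $\mu_{Pl}$-essential supremum of $\chi\mapsto\norm{I_P^G(\omega\otimes\chi)(f)}$. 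On each component $X_\unr(M)$ the density of $\mu_{Pl}$ is a non-negative rational function that is not identically zero — otherwise the component would carry no Plancherel mass and would not occur in the block — so it is strictly positive on a dense open subset; thus $\mr{supp}(\mu_{Pl})$ is dense, and being closed it is all of the component. Combined with the fact that $\chi\mapsto I_P^G(\omega\otimes\chi)(f)$ is a regular, hence continuous, function of $\chi$, as recorded in \eqref{eq:3.8}, the essential supremum coincides with the genuine supremum. This is the step I expect to require the most care; it is essentially the content of \cite{Ply1}, and working inside a single Bernstein block is what makes it tractable, since only finitely many components appear and their Plancherel densities are well understood.

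Granting the norm identity, $\Phi$ is an isometric $*$-homomorphism on the dense subalgebra $\mc H(G)^{\mf s}\subset C_r^*(G)^{\mf s}$, so it extends to an isometric $*$-homomorphism $C_r^*(G)^{\mf s}\to\mc A$. Its image is complete, hence closed, and contains the dense subalgebra $\mc A_0$, so it is all of $\mc A$; thus $\Phi$ is an isomorphism of $C^*$-algebras, which is the first line. For the second line I would pass to the corner cut out by the self-adjoint idempotent $\langle K_{\mf s}\rangle\in\mc H(G)$. By definition $C_r^*(G,K_{\mf s})^{\mf s}=\langle K_{\mf s}\rangle\,C_r^*(G)^{\mf s}\,\langle K_{\mf s}\rangle$ (using that $C_r^*(G)^{\mf s}$ is a two-sided ideal containing $\langle K_{\mf s}\rangle\,\mc H(G)^{\mf s}$), and under $\Phi$ the idempotent $\langle K_{\mf s}\rangle$ goes to the continuous field of finite-rank orthogonal projections onto the subspaces $I_P^G(V_\omega)^{K_{\mf s}}$. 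Cutting $\mc A$ down by this field of projections replaces each $\mf K(\omega,P)$ by the algebra of all operators on the finite-dimensional space $I_P^G(V_\omega)^{K_{\mf s}}$, i.e. by $\End_\C\big(I_P^G(V_\omega)^{K_{\mf s}}\big)$, while leaving the $C(X_\unr(M))$-module structure and the $W'(T_\omega)$-action intact. This yields $C_r^*(G,K_{\mf s})^{\mf s}\cong\bigoplus_{(P,M,\omega)/\sim}\big(C(X_\unr(M))\otimes\End_\C(I_P^G(V_\omega)^{K_{\mf s}})\big)^{W'(T_\omega)}$, compatibly with Theorem~\ref{thm:3.2} and the Morita equivalences \eqref{eq:3.4}, as desired.
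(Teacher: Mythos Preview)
Your proposal is correct and follows essentially the same route as the paper: reduce to Theorem~\ref{thm:3.2} and then identify the $C^*$-completion by showing the Fourier transform is isometric for the reduced norm, which is exactly the content of \cite[Theorem~2.5]{Ply1} that the paper invokes. You have simply unpacked what Plymen's argument does (Plancherel density has full support, continuity in $\chi$, then cut by $\langle K_{\mf s}\rangle$), whereas the paper cites it as a black box.
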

\begin{proof}
First we note that we have intertwining operators associated to the group $W' (T_\omega)$,
instead of $W (T_\omega)$ in \cite{Ply1,Wal}. The reason for this is explained after
\eqref{eq:3.11}. In view of Theorem \ref{thm:3.2}, it only remains to prove that completing
with respect to the operator norm of $C_r^* (G)$ boils down to replacing 
$C^\infty (X_\unr (M)) \otimes \mf L (\omega,P)$ by $C \big( X_\unr (M) ; \mf K (\omega,P) \big)$.
This is shown in \cite[Theorem 2.5]{Ply1}.
\end{proof}

\subsection{The space of irreducible representations} \

Like in Paragraph \ref{par:2.2}, we need more information about the space of all irreducible
smooth $G$-representations (tempered or not). 
Suppose that $\pi \in \Irr (G)$ has supercuspidal support $\sigma \otimes \chi$, where
$\sigma \in \Irr_{L^2}(M)$ and $\chi \in X_\nr (M)$. Then $\log |\chi| \in \mf a_M$, and its
image in $\mf a_0$ is uniquely determined, up to $W (G,M_0)$, by $\pi$. In other words, 
\begin{equation}\label{eq:3.24}
cc (\pi) := W(G,M_0) \log |\chi|
\end{equation}
is an invariant of $\pi$. Since the norm on $\mf a_0$ comes from a $W(G,M_0)$-invariant 
inner product, $\norm{cc (\pi)} := \norm{ \log |\chi| }$ is well-defined.

\begin{thm}\label{thm:3.4}
\enuma{
\item For every $\pi \in \Irr (G)$ there exists an induction datum\\
$(P,M,\omega,\chi)$, unique up to conjugation, such that $\pi$ is a constituent of 
$I_P^G (\omega \otimes \chi)$ and $\norm{cc (\omega)}$ is maximal for this property. 
In this case we call $\pi$ a 
\emph{Langlands constituent} of $I_P^G (\omega \otimes \chi)$.
\item $\pi$ is tempered if and only if $\chi \in X_\unr (M)$.
\item For any induction datum $(P,M,\omega,\chi)$, every constituent of $I_P^G (\omega 
\otimes \chi)$ is either a Langlands constituent or a constituent of some $I_{P'}^G (\omega' 
\otimes \chi')$ with $\norm{cc (\omega')} > \norm{cc (\omega)}$.
\item Suppose that $L$ is a standard Levi subgroup and that $\pi \in \Irr (G)^\fs$,
where $\fs = [L,\sigma]_G$. Then we can choose $(P,M,\omega,\chi)$ from part (a) such
that $P \supset P_0 ,\; M \supset L$ and $\omega \in \Irr (M)^{[L,\sigma]_M}$.
}
\end{thm}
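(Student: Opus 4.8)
The plan is to deduce Theorem~\ref{thm:3.4} from the Langlands classification for reductive $p$-adic groups \cite{SolPadicHP}, together with Harish-Chandra's description of the tempered spectrum \cite[\S III]{Wal} and Proposition~\ref{prop:3.6}. The statement is the exact $p$-adic counterpart of Theorem~\ref{thm:2.6}, under the dictionary: the simple roots of $(G,M_0)$ play the role of $\Delta$; standard Levi subgroups $M$ play the role of the parabolic subalgebras $\mc H^Q$; $\Irr_{L^2}(M)$ replaces $\Irr_{L^2}(\mc H_Q)$; $X_\nr (M)$ replaces $T^Q$; and the intertwining operators attached to $W'(T_\omega)$ replace those attached to the groupoid $\mc G$. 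So the four parts should follow along the same lines as in the proof of Theorem~\ref{thm:2.6}, importing the needed structure theory from \cite{SolPadicHP} in place of \cite{SolAHA}.

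For existence in (a) I would start from the standard form of the Langlands classification: $\pi$ is the unique irreducible quotient of a standard module $I_P^G(\tau\otimes\nu)$ with $P=MU\supset P_0$, $\tau\in\Irr(M)$ tempered, and $\log|\nu|$ in the open cone ${}^+\mf a^G_P$. By Harish-Chandra, $\tau$ is a direct summand of $I_Q^M(\omega)$ for some Levi $M_1\subseteq M$ and some $\omega\in\Irr_{L^2}(M_1)$; by transitivity of parabolic induction \cite[Lemme VI.1.4]{Ren}, $\pi$ is then a subquotient of $I_{P_1}^G(\omega\otimes\chi)$ with $\chi=\nu|_{M_1}\in X_\nr(M_1)$, which is the desired datum. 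Since $\omega$ is square-integrable modulo centre, its cuspidal exponent is orthogonal to $\mf a_{M_1}$, while $\log|\chi|=\log|\nu|\in\mf a_{M_1}$; as every induction datum realising $\pi$ has the same cuspidal support as $\pi$ (hence a common value $\norm{cc(\pi)}$), one gets $\norm{cc(\omega')}^2=\norm{cc(\pi)}^2-\norm{\log|\chi'|}^2$ for each such datum $(P',M',\omega',\chi')$. Thus maximising $\norm{cc(\omega)}$ is the same as minimising $\norm{\log|\chi|}$, and the structure theory of standard modules (the $p$-adic analogue of \cite[Corollary 2.2.5, Lemma 2.2.6]{SolAHA}, as in \cite{SolPadicHP}) shows that this minimum is attained exactly at the datum built above from the Langlands data, and that it is unique up to conjugation. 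This gives (a).

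For (b): when $\chi\in X_\unr(M)$ the representation $\omega\otimes\chi$ is again square-integrable modulo centre, so $I_P^G(\omega\otimes\chi)$ is tempered by the "if" part of Proposition~\ref{prop:3.6}, and hence so is its constituent $\pi$; conversely, if $\pi$ is tempered then $(G,\pi,1)$ is its Langlands datum by uniqueness, so in the construction above $\nu=1$ and $\chi$ is unitary. For (c), I would first conjugate $(P,M,\omega,\chi)$ so that $|\chi|$ is dominant and let $M_\xi\supseteq M$ be the Levi generated by $M$ and the simple roots vanishing on $\log|\chi|$; then $\log|\chi|\in\mf a_{M_\xi}$ lies in the open cone for $P_\xi=M_\xi U_\xi$, and $I_{P\cap M_\xi}^{M_\xi}(\omega\otimes\chi)=\bigoplus_i\tau_i\otimes|\chi|$ with each $\tau_i$ tempered (Proposition~\ref{prop:3.6}), so $I_P^G(\omega\otimes\chi)=\bigoplus_i I_{P_\xi}^G(\tau_i\otimes|\chi|)$ is a finite direct sum of standard modules. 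Their Langlands quotients are precisely the Langlands constituents, and every other constituent is, by the standard-module structure theorem \cite{SolPadicHP}, the Langlands quotient of a standard module whose positive parameter has norm strictly smaller than $\norm{\log|\chi|}$ --- which by the orthogonal decomposition above means strictly larger $\norm{cc(\omega')}$.

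For (d): fix a datum $(P',M',\omega',\chi')$ as in (a). Since $\pi\in\Irr(G)^\fs$ with $\fs=[L,\sigma]_G$ and $L$ standard, the cuspidal-support Levi of $\omega'$ is $G$-conjugate to $L$, and because all data realising $\pi$ share a cuspidal support lying in $\fs$, one can pick $g\in N_G(L)$ so that after conjugating the whole datum by $g$ (which alters neither the constituents of the induced representation, by \cite[Corollary 2.7]{SolPadicHP}, nor $\norm{cc(\omega')}$) the cuspidal-support Levi of $g\omega'g^{-1}$ equals $L$ and its cuspidal support lies in $[L,\sigma]_L\subseteq[L,\sigma]_M$. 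The Levi $M:=gM'g^{-1}$ then contains $L$, hence contains $M_0$ and is standard; replacing $P$ by the standard parabolic with Levi factor $M$ yields a datum of the required shape. The main obstacle, as always with the Langlands classification, is the fine structure of standard modules invoked in (a) and (c) --- the uniqueness of the maximal datum and the monotonicity of $\norm{cc(\omega)}$ along constituents; everything else is bookkeeping with parabolic induction and conjugation, exactly parallel to Theorem~\ref{thm:2.6}.
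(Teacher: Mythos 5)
Your proposal is correct and follows essentially the same route as the paper's proof: both deduce the theorem from the Langlands classification for reductive $p$-adic groups and the structure theory of standard modules developed in \cite{SolPadicHP}, together with Harish-Chandra's description of the tempered spectrum. Where the paper's proof of parts (a)--(c) simply cites \cite{SolPadicHP} (Lemmas 2.11--2.13, Corollary 2.7, Theorem 2.15, Proposition 2.15), you make the mechanism transparent through the orthogonal decomposition $\norm{cc(\pi)}^2 = \norm{cc(\omega)}^2 + \norm{\log|\chi|}^2$ coming from $\mf a_0 = \mf a_M \oplus \mf a^M$, which converts maximality of $\norm{cc(\omega)}$ into minimality of the Langlands positivity parameter; this is a useful clarification that the paper leaves implicit, but it still rests on the same structural theorems. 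In part (d) your conjugation step is described slightly loosely: one first conjugates by a general $g \in G$ (not necessarily $g \in N_G(L)$) to carry the cuspidal-support datum of $\omega'$ onto a pair $(L,\sigma\otimes\chi'')$, and only then applies a second conjugation by an element of $N_G(L)$, which fixes $L$, to make $M$ standard. The paper is a bit more careful here, using second adjointness to place $J^G_{\overline P}(\pi)$ in $\Irr(M)^{[L,\sigma]_M}$ and hence to confirm $\omega\otimes\chi$ lands in the correct Bernstein block; your appeal to equality of cuspidal supports reaches the same conclusion once the conjugating element is chosen correctly.
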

\begin{proof}
(a) See \cite[Theorem 2.15.b]{SolPadicHP}.\\
(b) This is a direct consequence of \cite[Proposition 2.14.b and Theorem 2.15.a]{SolPadicHP}.\\
(c) By \cite[Lemma 2.13]{SolPadicHP} $(P,M,\omega,\chi)$ is equivalent to an induction datum
$\xi^+$ in positive position. By \cite[Corollary 2.7]{SolPadicHP} $I_P^G (\omega \otimes \chi)$
has the same irreducible subquotients, counted with multiplicity, as the parabolically
induced representation associated to $\xi^+$. Therefore we may assume that $(P,M,\omega,\chi)$
is in positive position, that is, $P \supset P_0$ and $\log |\chi|$ lies in the closed 
positive cone in $\mf a_0$ (determined by $P_0$). 

Then \cite[Theorem 2.15.a]{SolPadicHP} says
that the Langlands constituents of $I_P^G (\omega \otimes \chi)$ are precisely its irreducible
quotients. Furthermore, by \cite[Proposition 2.15.a]{SolPadicHP} $I_P^G (\omega \otimes \chi)$
is a direct sum of representations of the form $I_Q^G (\tau \otimes |\chi|)$, where 
$(Q,\tau,\log |\chi|)$ is a datum for the Langlands classification of Irr$(G)$. Suppose that 
$\pi'$ is a constituent of $I_Q^G (\tau \otimes |\chi|)$, but not a quotient. 
By \cite[Lemma 2.11.a and Lemma 2.12]{SolPadicHP}, $\pi'$ is the Langlands quotient of 
$I_{Q'}^G (\tau' \otimes \nu')$, for a Langlands datum $(Q', \tau', \log \nu')$ with 
$Q' \supset Q$ and $\norm{cc (\tau')} > \norm{cc (\tau)}$.
By \cite[Proposition 2.15.a]{SolPadicHP} $\pi'$ is a Langlands constituent of $I_{P'}^G 
(\omega' \otimes \chi')$, for some induction datum $(P',M',\omega',\chi')$ with
\[
\norm{cc (\omega')} = \norm{cc (\tau')} > \norm{cc (\tau)} = \norm{cc (\omega)} . 
\]
(d) Let $P_i = M_i U_i$ be a standard parabolic subgroup and let $\overline{P_i}$ be
the unique parabolic subgroup with Levi factor $M_i$ that is opposite to $P_i$. Let 
$J^G_{\overline{P_i}} : \Rep (G) \to \Rep (M_i)$ be the normalized Jacquet restriction functor. 

From \cite[\S VII.4.2]{Ren} we recall how $\pi$ can be realized as a Langlands quotient.
Namely, we take $\overline{P_1}$ such that $J^G_{\overline{P_1}}(\pi)$ contains a
representation of the form $\tau \otimes \nu$, where $(P_1, \tau, \log \nu)$ is a Langlands
datum. By \cite[Proposition III.4.1]{Wal} there exists a parabolic subgroup $P_2$ with 
$P_0 \subset P_2 \subset P_1$, and a $\omega' \in \Irr_{L^2}(M_2)$, such that $\tau \otimes \nu$
is a direct summand of $I_{M_1 \cap P_2}^{M_1} (\omega' \otimes \nu)$. From the proof of
part (c) we see that $\pi$ is a Langlands constituent of $I_{P_2}^G (\omega' \otimes \nu)$. 
By the second adjointness theorem
\begin{equation}\label{eq:3.21}
0 \neq \Hom_G (I_{P_2}^G (\omega' \otimes \nu),\pi) \cong \Hom_{M_2} ( \omega' \otimes \nu,
J^G_{\overline{P_2}}(\pi)) .
\end{equation}
The cuspidal support of $J^G_{\overline{P_2}}(\pi)$ equals that of $\pi$, so $\omega' \otimes 
\nu$ also has cuspidal support in $[L,\sigma]_G$. More precisely, the cuspidal support of
$\omega' \otimes \nu$ is of the form $[L',\sigma']_{M_2}$, where $L'$ is a standard Levi 
subgroup of $G$ conjugate to $L$. Since every Levi subgroup containing $L$ is $G$-conjugate
to a standard Levi subgroup of $G$ containing $L$, we may replace $(P_2,M_2,\omega',\nu)$ 
by a $G$-conjugate $(P,M,\omega,\chi)$ with $M$ standard. Thus, we can arrange 
that the cuspidal support becomes $[L,\sigma'']_M$, for some cuspidal $\sigma'' \in \Irr (L)$. 
Then \eqref{eq:3.21} is also valid for $I_P^G (\omega \otimes \chi)$, since 
$I_{P_2}^G (\omega' \otimes \nu)$ is not affected by $G$-conjugation of $(P_2,M_2,\omega',\nu)$. 
Second adjointness tells us that $J^G_{\overline P}(\pi) \in \Irr (M)^{[L,\sigma]_M}$, so also
$\omega \otimes \chi \in \Irr (M)^{[L,\sigma]_M}$. Finally, we may replace $P$ by a standard
parabolic subgroup with Levi factor $M$, for this does not change the collection of
constituents of $I_P^G (\omega \otimes \chi)$.
\end{proof}

In \cite{SolPadicHP} Theorem \ref{thm:3.4} was used to study the geometry of Irr$(G)$, and
the relation with the subspace of tempered irreducible representations. For our purposes we 
need some aspects of that, and we need to know that for almost all induction data every
constituent is a Langlands constituent.

\begin{prop}\label{prop:3.5}
Let $(P,M,\omega,\chi)$ be an induction datum for $G$. 
\enuma{
\item For $r \in \R_{>-1}$, the number of inequivalent Langlands constituents of \\
$I_P^G (\omega \otimes \chi \, |\chi |^r)$ does not depend on $r$.
\item For all but finitely many $r \in \R_{>-1}$, $I_P^G (\omega \otimes \chi \, |\chi|^r )$
is completely reducible. Then all its irreducible subquotients are Langlands constituents.
}
\end{prop}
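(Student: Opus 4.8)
The plan is to transcribe the proof of Proposition~\ref{prop:2.7}, with Theorem~\ref{thm:3.4} and its proof playing the role of the inputs from \cite{SolAHA}. For part~(a) I would first use \cite[Corollary 2.7]{SolPadicHP} and \cite[Lemma 2.13]{SolPadicHP} to replace $(P,M,\omega,\chi)$ by an equivalent datum in positive position, as in the proof of Theorem~\ref{thm:3.4}.c; this changes neither the constituents nor the number to be computed, and the rescaled data $(P,M,\omega,\chi\,|\chi|^r)$ remain in positive position for every $r>-1$. The key observation is that the standard parabolic $P_\xi=M_\xi U_\xi$ determined by the simple roots $\alpha$ with $|\alpha(\chi)|=1$ is independent of $r>-1$, that $\log|\chi|\in\mf a_{M_\xi}$, and hence, by induction in stages \cite[Lemme VI.1.4]{Ren} and Proposition~\ref{prop:3.6}, that
\[
I_P^G (\omega\otimes\chi\,|\chi|^r) \;=\; \bigoplus\nolimits_j I_{P_\xi}^G\big(\tau_j\otimes|\chi|^{1+r}\big),
\]
where $\{\tau_j\}_j$ is the set of (tempered) irreducible constituents of the unitary, hence semisimple, representation $I_{P\cap M_\xi}^{M_\xi}(\omega\otimes\chi\,|\chi|^{-1})$, a set independent of $r$. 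Each $(P_\xi,\tau_j,(1+r)\log|\chi|)$ is a Langlands datum, so by the reasoning in the proof of Theorem~\ref{thm:3.4}.c (via \cite[Theorem 2.15.a]{SolPadicHP}) the Langlands constituents of $I_P^G(\omega\otimes\chi\,|\chi|^r)$ are precisely the unique irreducible quotients $L_j$ of the summands, and $L_j\cong L_k$ exactly when $\tau_j\cong\tau_k$ by uniqueness in the Langlands classification (as $P_\xi$ and the ray direction are fixed). Thus the number of inequivalent Langlands constituents equals the number of isomorphism classes among the $\tau_j$, which is the same for every $r>-1$.

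For part~(b) I would keep the same decomposition, so that $I_P^G(\omega\otimes\chi\,|\chi|^r)$ is completely reducible if and only if each standard module $I_{P_\xi}^G(\tau_j\otimes|\chi|^{1+r})$ is. A standard module has one-dimensional endomorphism algebra (a general feature of the Langlands classification; it follows from the second adjointness theorem and Bernstein's geometric lemma applied to $J^G_{\overline{P_\xi}}I_{P_\xi}^G(\tau_j\otimes|\chi|^{1+r})$, strict dominance of the Langlands parameter isolating $\tau_j\otimes|\chi|^{1+r}$, with multiplicity one, in that filtration). Hence such a module is completely reducible if and only if it is irreducible, in which case it coincides with its own unique Langlands constituent; this yields the second assertion of~(b) once the first is known.

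To prove the first assertion, I would view $z\mapsto I_{P_\xi}^G(\tau_j\otimes|\chi|^z)$, $z\in\C$, as an algebraic family of finite length $G$-representations in a single Bernstein block $\Rep(G)^{\fs'}$; passing to $K_{\fs'}$-fixed vectors turns it into an algebraic family of finite dimensional modules over the finitely generated algebra $\mc H(G,K_{\fs'})^{\fs'}$, in which, exactly as in the proof of Proposition~\ref{prop:2.7}, irreducibility of a member is a Zariski-open condition on $z$ that, by the Morita equivalence \eqref{eq:3.4}, is equivalent to irreducibility of the corresponding $G$-representation. It therefore suffices to exhibit one irreducible member. For this I would note that the stabilizer of $\tau_j\otimes|\chi|^z$ in $W(G,M_\xi)$ is contained in the finite set $\{w:w\cdot\tau_j\cong\tau_j\}$, and that for each $w\neq 1$ in this set the equation $w(|\chi|^z)=|\chi|^z$ defines a proper Zariski-closed subset of the parameter variety (it fails at $z=1$, a Langlands datum with strictly dominant parameter, hence with trivial stabilizer); since a proper closed subset is finite while $i\R$ accounts for infinitely many values of $|\chi|^z$, the stabilizer is trivial for a generic $z\in i\R$. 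For such a $z$ the character $|\chi|^z$ is unitary, so $I_{P_\xi}^G(\tau_j\otimes|\chi|^z)$ is unitary, hence completely reducible, while triviality of the stabilizer makes its endomorphism algebra one-dimensional via the $R$-group description in \cite[\S V]{Wal}; therefore it is irreducible, the required seed point.

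The step I expect to be most delicate is the bookkeeping in part~(a): keeping track of positive position, the parabolic $P_\xi$, and induction in stages so as to be sure that the decomposition and the resulting Langlands data genuinely do not move along the ray. In part~(b) the only point going beyond the affine Hecke algebra argument is organizing the openness of irreducibility in the infinite-dimensional $p$-adic setting, which is handled by restricting to a compact open subgroup and using \eqref{eq:3.4}; everything else is a faithful transcription of the proof of Proposition~\ref{prop:2.7}.
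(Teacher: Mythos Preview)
Your argument is correct, and for part~(a) it coincides with the paper's: the paper simply cites \cite[Lemma~2.16]{SolPadicHP}, whose content is exactly the decomposition into standard modules $I_{P_\xi}^G(\tau_j\otimes|\chi|^{1+r})$ and the resulting count of Langlands quotients that you spell out.

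For part~(b) you take a genuinely different route.  After reducing to a single standard module, the paper finds its irreducible seed point near $r=-1$: strict dominance of $\log|\chi|^{1+r}$ makes $\langle\alpha,\log|\chi|^{1+r}\rangle\neq 0$ for every root of $(G,M_\xi)$, and then Sauvageot's generic irreducibility theorem \cite[Th\'eor\`eme~3.2]{Sau} supplies irreducibility for $r$ close to $-1$; Zariski--openness is quoted from \cite[Proposition~VI.8.4]{Ren}.  You instead transcribe Proposition~\ref{prop:2.7}.b and look for the seed point at a \emph{unitary} parameter $z\in i\R$ with trivial $W(G,M_\xi)$--stabiliser.  This works, but your appeal to ``the $R$--group description in \cite[\S V]{Wal}'' is loose: that section treats induction from discrete series, not from general tempered $\tau_j$.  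The gap is easily closed---either reduce to the discrete series datum $\omega$ via induction in stages, or, more simply, note that $\dim\End_G\bigl(I_{P_\xi}^G(\tau_j\otimes|\chi|^{z})\bigr)$ is upper--semicontinuous in $z$ (by your own $K$--fixed--vector argument) and equals $1$ for every $z>0$ by the standard--module property you already established, hence equals $1$ at all but finitely many $z\in i\R$; combined with unitarity this gives the required irreducible member without invoking $R$--groups at all.  Each approach has its advantage: the paper's is a clean black--box citation of \cite{Sau}, while yours stays parallel to the affine Hecke case and makes the mechanism (endomorphism dimension plus complete reducibility) more transparent.
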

\begin{proof}
(a) All the induction data under consideration have the same stabilizer in $W' (T_\omega)$.
As $W' (T_\omega)$ is by construction the stabilizer of $T_\omega$ in the $\mc W$ from
\cite{SolPadicHP}, the statement is a special case of \cite[Lemma 2.16]{SolPadicHP}.\\
(b) As noted in the proof of Theorem \ref{thm:3.4}.c, $I_P^G (\omega \otimes \chi \, |\chi|^r)$ 
is a direct sum of representations of the form $I_Q^G (\tau \otimes |\chi|^{r+1})$, where 
$(Q = L U_Q,\tau,\log |\chi|^{r+1})$ is a Langlands datum. Hence it suffices to show that 
$I_Q^G (\tau \otimes |\chi|^{r+1})$ is irreducible for almost all $r \in \R_{>-1}$.
The conditions of a Langlands datum say that $\tau \in \Irr (L)$ is tempered and that
$\log |\chi|^{r+1} \in \mf a_L$ is strictly positive with respect to the roots for $(Q,L)$. 
This implies that, for every $r \in \R_{>-1}$ and every root $\alpha$ for $(G,L)$,
$\inp{\alpha}{\log |\chi|^{r+1}} \neq 0$. Now \cite[Th\'eor\`eme 3.2]{Sau} says that,
for $r \in \R_{>-1}$ close enough to $-1$, $I_Q^G (\tau \otimes |\chi|^{r+1})$ is irreducible.
On the algebraic family of finite length representations $I_Q^G (\tau \otimes |\chi|^{r+1})$
with $r \in \R$, irreducibility is an Zariski-open condition \cite[Proposition VI.8.4]{Ren}. 
Hence the locus of $r$'s for which this representation is reducible is a finite set.
\end{proof}
\vspace{3mm}

\section{Morita equivalences}
\label{sec:comparison}

In this section we will first formulate a long list of conditions for the objects we want
to compare. Assuming these conditions, we will prove a comparison theorem. In the next
sections we will check that these conditions are fulfilled in cases of interest. 

\subsection{Conditions and first consequences} \

We keep the notations from the previous section. 

\begin{cond}\label{cond:Morita}
For every parabolic subgroup $P$ with $P_L \subset P \subset G$ and Levi factor $M \supset L$,
an algebra $\mc H^M$ and a Morita equivalence
\[
\Phi_M : \Rep (M)^{\mf s_M} \to \Mod (\mc H^M) 
\]
are given. When $P' \supset P$ is another such parabolic subgroup, an algebra injection 
$\lambda_{M M'} : \mc H^M \to \mc H^{M'}$ is given, with the below properties.
\begin{itemize}
\item[(i)] The following diagram commutes:
\[
\xymatrix{
\Rep (M')^{\mf s_{M'}} \ar[rr]^{\Phi_{M'}} & & \Mod (\mc H^{M'}) \\
\Rep (M)^{\mf s_M} \ar[u]^{I_{P \cap M'}^{M'}} \ar[rr]^{\Phi_M} & &
\Mod (\mc H^M) \ar[u]^{\ind_{\lambda_{M M'}(\mc H^M)}^{\mc H^{M'}}}
}
\]
\item[(ii)] Let $\overline P$ be the parabolic subgroup of $G$ which has Levi factor $M$ and is 
opposite to $P$. Let $\mr{pr}_{\mf s_M} : \Rep (M) \to \Rep (M)^{\mf s_M}$ be the projection 
coming from the Bernstein decomposition for $M$. The following diagram commutes:
\[
\xymatrix{
\Rep (M')^{\mf s_{M'}} \ar[rr]^{\Phi_{M'}} 
\ar[d]^{\mr{pr}_{\mf s_M} \circ J^{M'}_{\overline{P} \cap M'}} & & 
\Mod (\mc H^{M'}) \ar[d]^{\Res_{\lambda_{M M'}(\mc H^M)}^{\mc H^{M'}}}  \\
\Rep (M)^{\mf s_M} \ar[rr]^{\Phi_M} & & \Mod (\mc H^M) 
}
\]
\item[(iii)] If $P \subset P' \subset P'' \subset G$, then 
$\lambda_{M M''} = \lambda_{M' M''} \circ \lambda_{M M'}$.
\end{itemize}
\end{cond}

The Conditions \ref{cond:Morita} are quite general, in the sense that they do not involve
the structure of the algebras $\mc H^M$. We will see later that in many cases these 
conditions hold already by abstract functoriality principles.

The next series of conditions is much more specific though. For $P = M U$, let $R(M,L)$ be 
the set of roots of $M$ with respect to the maximal $F$-split torus $A_L$ in the centre of 
$L$. This is a root system when $L$ is a minimal $F$-Levi subgroup
of $G$. In general it is only an orthogonal projection of such a root system (but in many
cases encountered in the literature it is nevertheless a root system).
For $P \supset P_L$ we define the set of positive roots as $R^+ (M,L) = R(M \cap P_L,L)$,
and we call the minimal elements of this set the simple roots of $(M,L)$.

\begin{cond}\label{cond:Hecke} 
Assume Condition \ref{cond:Morita}.
\begin{itemize}
\item[(i)] $\mc H^G$ (or $(\mc H^G)^\op$) is an extended affine Hecke algebra 
$\mc H (\mc R,q) \rtimes \Gamma$.
\item[(ii)] All the $\mc H^M$ (or all the $(\mc H^M)^\op$) are parabolic subalgebras and 
the $\lambda_{M M'}$ are inclusions of parabolic subalgebras.
\item[(iii)] Consider the bijection
\[
\Phi_L : X_\nr (L) / X_\nr (L,\sigma) \cong \Irr (L)^{\mf s_L} \; \to \; \Irr (\mc H^L) \cong T 
\]
and its differential d$\Phi_L : X^* (L) \otimes_\Z \C \to Y \otimes_\Z \C$. 

Then d$\Phi_L^{-1}$ maps the positive coroots $R^{\vee +}$ for $\mc H (\mc R,q)$ to $R^+(G,L)$, 
and d$\Phi_L^{-1} (\Q R^\vee)$ has a $\Q$-basis consisting of simple roots of $(G,L)$.
\item[(iv)] Suppose that $Q \subset \Delta$ and d$\Phi_L \big( \Q R(M,L) \big) \cap R^\vee = 
R_Q^\vee$. Then $\mc H^M = \mc H^Q \rtimes \Gamma_M$ for some $\Gamma_M \subset \Gamma (Q,Q)$. 
If moreover d$\Phi_L \big( \Q R(M,L) \big) = \Q Q^\vee$, then $\Gamma_M$ satisfies 
Condition \ref{cond:Gamma} for $Q$. 
\end{itemize}
\end{cond}
In practice the positivity part of Condition \ref{cond:Hecke}.iii is innocent. Namely, 
usually one starts by fixing a minimal parabolic subgroup, and proves statements with that 
parabolic as the standard one. Suppose that $R(G,L)$ is a root system and that all the 
above conditions hold, except the positivity part of Condition \ref{cond:Hecke}.iii. Then 
d$\Phi_L^{-1} (\Q R^\vee) \cap R (G,L)$ is a parabolic root subsystem of $R(G,L)$, so it is
conjugate under $W(G,L)$ to a standard parabolic root subsystem, say $R(M,L)$. Applying an 
element of $W(M,L)$, we can moreover arrange that the image of $R^{\vee +}$ consists of 
positive roots. Equivalently, with respect to a different parabolic subgroup $P'_L$ of $G$ 
with Levi factor $L$, Condition \ref{cond:Hecke}.iii is fulfilled. 

Then we restart the whole procedure with $P'_L$ instead
of $P_L$, and the same arguments as before will also prove the required positivity statements. 
This applies to all the examples discussed in Sections \ref{sec:types} and \ref{sec:progen}.

We draw some first consequences for the above conditions.

\begin{lem}\label{lem:4.5}
Assume Conditions \ref{cond:Morita} and \ref{cond:Hecke}.
\enuma{
\item There exists a canonical surjective homomorphism of complex tori $\Phi_\nr : X_\nr (L) 
\to T$, with finite kernel $X_\nr (L,\sigma)$. 
\item When d$\Phi_L( \Q R(M,L)) \cap R^\vee = R_Q^\vee$, the image of $X_\nr (M)$ 
under the map from part (a) is contained in $T^Q$. When moreover d$\Phi_L( \Q R(M,L)) =
\Q Q^\vee$, the image of $X_\nr (M)$ equals $T^Q$.
\item For all $\omega \in \Rep (M)^{\fs_M}$ and $\chi \in X_\nr (M)$: 
$\Phi_M (\omega \otimes \chi) = \Phi_M (\omega) \otimes \Phi_\nr (\chi)$.
} 
\end{lem}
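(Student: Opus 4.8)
The plan is to treat the three statements in order, bootstrapping everything from the case $M=L$.

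\emph{Part (a).} I would start from the fact that a Morita equivalence induces an isomorphism of centres. Here $Z(\Rep(L)^{\fs_L})$ is the Bernstein centre $\mc O(T_{\fs_L})$ with $T_{\fs_L}=X_\nr(L)/X_\nr(L,\sigma)$ a complex torus (via \eqref{eq:3.5}), while by Condition \ref{cond:Hecke}(ii) the algebra $\mc H^L$ is the rank-zero parabolic subalgebra: since $\Irr(\mc H^L)\cong\Irr(L)^{\fs_L}$ is a single torus, \eqref{eq:2.5} forces $\Gamma_\emptyset$ to be trivial, so $\mc H^L=\mc O(T)$ and $Z(\mc H^L)=\mc O(T)$. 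Thus $\Phi_L$ yields an isomorphism of affine varieties $T\isom T_{\fs_L}$. The twisting action of the connected group $X_\nr(L)$ on $\Rep(L)^{\fs_L}$ induces the translation action on $T_{\fs_L}$; transporting it through this isomorphism gives an algebraic action of $X_\nr(L)$ on $T$ by variety automorphisms, and since $X_\nr(L)$ is connected this factors through the identity component $T\subseteq\Aut(T)$, i.e.\ through a homomorphism $\Phi_\nr\colon X_\nr(L)\to T$. Its kernel consists of the $\chi$ fixing every point of $\Irr(L)^{\fs_L}$, i.e.\ with $\sigma'\otimes\chi\cong\sigma'$ for all $\sigma'$ in the block, which is the finite group $X_\nr(L,\sigma)$; as $\dim X_\nr(L)=\dim T$ the image is a closed subtorus of full dimension, hence all of $T$. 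Every step here is canonical.

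\emph{Part (b).} The image $\Phi_\nr(X_\nr(M))$ is the connected subtorus of $T$ whose rational cocharacter lattice is $d\Phi_L(X^*(M)\otimes\Q)$, under the identifications of $X^*(L)\otimes\C$ with the rational cocharacter space of $X_\nr(L)$ and $Y\otimes\C$ with that of $T$ that are already built into Condition \ref{cond:Hecke}(iii). By the standard description of Levi subgroups, $X^*(M)\otimes\Q$ is the subspace of $X^*(L)\otimes\Q$ annihilated by the coroots $\beta^\vee$, $\beta\in R(M,L)$, with $\Q R(M,L)$ a complement. Next I would invoke Condition \ref{cond:Morita}(i) for $L\subseteq G$: it identifies the principal-series families $\chi\mapsto I_{P_L}^G(\sigma\otimes\chi)$ and $t\mapsto\ind_{\mc O(T)}^{\mc H^G}(t)$ (up to the offset $\Phi_L(\sigma)$), hence matches their reducibility loci; together with Condition \ref{cond:Hecke}(iii) this shows that $d\Phi_L^{\mathsf t}$ carries the roots of $\mc R$ to positive multiples of the coroots of $(G,L)$, under the bijection matching simple roots of $\mc R$ with simple roots of $(G,L)$ and, by the hypothesis of (iv), $Q$ with the simple roots of $R(M,L)$. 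Hence $v\perp R(M,L)^\vee$ forces $d\Phi_L(v)\perp Q$, i.e.\ $d\Phi_L(X^*(M)\otimes\Q)\subseteq Q^\perp\otimes\Q=Y^Q\otimes\Q$, so $\Phi_\nr(X_\nr(M))\subseteq T^Q$. When moreover $d\Phi_L(\Q R(M,L))=\Q Q^\vee$, a dimension count (both $\Q R(M,L)$ and $\Q Q^\vee$ have dimension equal to the number of simple roots involved, and $d\Phi_L$ is an isomorphism) upgrades the inclusion to $d\Phi_L(X^*(M)\otimes\Q)=Y^Q\otimes\Q$, and two connected subtori of $T$ with the same rational cocharacter lattice coincide.

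\emph{Part (c).} For $M=L$ this is exactly the defining property of $\Phi_\nr$ from Part (a). For general $M$ I would put $\Pi_L:=\Phi_L^{-1}(\mc H^L)$, a projective generator of $\Rep(L)^{\fs_L}$, and $\Pi:=I_{P_L\cap M}^M(\Pi_L)$. By Condition \ref{cond:Morita}(i), $\Phi_M(\Pi)\cong\ind_{\mc H^L}^{\mc H^M}(\mc H^L)\cong\mc H^M$ as a left module, so $\Pi$ is a projective generator of $\Rep(M)^{\fs_M}$ with $\End_{\Rep(M)}(\Pi)^\op\cong\mc H^M$, and under this identification $\Phi_M=\Hom(\Pi,-)$. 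The functor $-\otimes\chi$ fixes $\Pi$ up to isomorphism (from $\Pi_L\otimes\chi\cong\Pi_L$, the $M=L$ case, and $I_{P_L\cap M}^M(\omega\otimes\chi)\cong I_{P_L\cap M}^M(\omega)\otimes\chi$), hence induces an automorphism $\tilde\tau_\chi$ of $\mc H^M$, well defined up to inner automorphism. It then remains to identify $\tilde\tau_\chi$, up to inner automorphisms, with $\phi_{\Phi_\nr(\chi)}$ (which makes sense by Part (b), since $\Phi_\nr(\chi)\in T^Q$, via \eqref{eq:1.20}): on the centre $\tilde\tau_\chi$ acts by translation by $\Phi_\nr(\chi)$, consistent with the $M=L$ case and Condition \ref{cond:Morita}(i), and inside the connected family $\{\tilde\tau_\chi\}_{\chi}$ through the identity this pins $\tilde\tau_\chi$ down among automorphisms of $\mc H(\mc R^Q,q^Q)\rtimes\Gamma_M$. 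Consequently $\omega\mapsto\Phi_M(\omega\otimes\chi)$ is restriction along $\tilde\tau_\chi$, naturally isomorphic to restriction along $\phi_{\Phi_\nr(\chi)}$, i.e.\ to $\omega\mapsto\Phi_M(\omega)\otimes\Phi_\nr(\chi)$.

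\emph{Main obstacle.} The delicate point is Part (b): Condition \ref{cond:Hecke}(iii) only constrains $d\Phi_L$ on coroot directions and on simple roots, whereas the ``perpendicular'' statement for $X^*(M)$ needs $d\Phi_L$ to intertwine the pairings of $X^*(L)$ and $Y$ with the respective coroot/root lattices. Extracting this from the abstract hypotheses is the crux, and the natural route—matching reducibility hyperplanes of principal series through Condition \ref{cond:Morita}(i)—requires knowing that those arrangements genuinely recover the (co)roots (in the cases of interest $R(G,L)$ is an honest root system, which makes this clean). A secondary, more bookkeeping-heavy point is the rigidity used in Part (c): that an automorphism of an extended affine Hecke algebra lying in a connected family through the identity and acting on the centre by a prescribed translation is inner-conjugate to some $\phi_t$, which relies on the structure theory of automorphisms of such algebras.
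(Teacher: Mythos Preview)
Your part (a) is essentially the paper's argument with more care: both extract the group homomorphism $\Phi_\nr$ as the ``linear part'' of the variety isomorphism $\Phi_L$ between the tori $\Irr(L)^{\fs_L}$ and $T$.

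For part (c) you take a genuinely different route from the paper, and yours has a real gap. The paper works directly from Condition~\ref{cond:Morita}(ii): using $J^M_{\overline{P_L}\cap M}(\omega\otimes\chi)=J^M_{\overline{P_L}\cap M}(\omega)\otimes\chi$ one gets
\[
\Res^{\mc H^M}_{\mc H^L}\bigl(\Phi_M(\omega\otimes\chi)\bigr)
=\Phi_L\bigl(J^M_{\overline{P_L}\cap M}(\omega)\bigr)\otimes\Phi_\nr(\chi)
=\Res^{\mc H^M}_{\mc H^L}\bigl(\Phi_M(\omega)\bigr)\otimes\Phi_\nr(\chi),
\]
so the $\mc O(T)$-weights already match; for irreducible $\omega$ a connectedness argument in $\Irr(\mc H^M)$ then forces $\Phi_M(\omega\otimes\chi)=\Phi_M(\omega)\otimes\Phi_\nr(\chi)$, and this extends to all $\omega$ by exactness. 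Your progenerator approach instead reduces everything to a rigidity statement about automorphisms of $\mc H^Q\rtimes\Gamma_M$ (connected family through the identity, prescribed action on the centre $\Rightarrow$ inner-conjugate to some $\phi_t$), which you acknowledge but do not prove. That statement is not obvious, and the paper's route sidesteps it entirely.

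For part (b), you have correctly located the subtle point: Condition~\ref{cond:Hecke}(iii) constrains $d\Phi_L$ on the $Y$-side ($R^\vee\mapsto R(G,L)$), whereas $\Phi_\nr(X_\nr(M))\subset T^Q$ is a statement about the \emph{transpose} map on the $X$-side (one needs $(d\Phi_L)^t(Q)\subset\Q R(M,L)^\vee$). The paper simply compares the lattices cutting out $X_\nr(M)$ and $T^Q$ and asserts the containment; it is terse about why the duality works out. Your proposed remedy via reducibility loci is in principle a valid strategy --- the walls for $\ind_{\mc O(T)}^{\mc H^G}$ have normals in $R\subset X$, those for $I_{P_L}^G(\sigma\otimes\cdot)$ have normals in $R(G,L)^\vee\subset X_*(A_L)$, and $\Phi_\nr$ matches them --- but it requires substantial outside input (that these hyperplane arrangements recover exactly the roots/coroots, not more and not less), which you do not supply. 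A cleaner argument, staying within the paper's framework, is: $W_{\fs_M}\subset N_M(L)/L$ fixes $X_\nr(M)$ pointwise; the centre isomorphism coming from $\Phi_M$ (this is the content of Lemma~\ref{lem:4.3}(a), whose proof does not use part (b)) identifies $W_{\fs_M}$ with $W(R_Q)\Gamma_M$ acting on $T$ via $\Phi_L$; hence $\Phi_\nr(X_\nr(M))\subset T^{W(R_Q)}$, and by connectedness $\Phi_\nr(X_\nr(M))\subset(T^{W(R_Q)})^\circ=T^Q$. The equality under the stronger hypothesis then follows from your dimension count.
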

\begin{proof}
(a) The map $\chi \mapsto \sigma \otimes \chi$ induces an isomorphism of algebraic varieties
\[
X_\nr (L) / X_\nr (L,\sigma) \to
\Irr (L)^{\mf s_L} = \{ \sigma \otimes \chi : \chi \in X_\nr (L) \} .
\]
By Condition \ref{cond:Hecke}.(ii) $\Phi_L$ gives a bijection
$\Irr (L)^{\mf s_L} \to \Irr (\mc H^L) = T$. This lifts to a surjective group homomorphism 
\begin{equation}\label{eq:4.1}
\Phi_\nr : X_\nr (L) \to T \quad \text{with} \quad \Phi_L (\sigma \otimes \chi) = 
\Phi_L (\sigma) \otimes \Phi_\nr (\chi) .
\end{equation}
(b) For every $Q \subset \Delta$ we defined the subtorus
\[
T^Q = \{ t \in T : t(x) = 1 \; \forall x \in \Q Q \cap X \} 
\]
of $T$. Using Condition \ref{cond:Hecke}.(iv) we can write
\[
X_\nr (M) = \{ \chi \in X_\nr (L) : \chi = 1 \text{ on } \Q R(M,L)^\vee \cap X^* (X_\nr (L)) \} .
\]
The relation between $M$ and $Q$ shows that the preimage of $\Phi_\nr^{-1} (T^Q)$ contains
$X_\nr (M)$. When d$\Phi_L( \Q R(M,L)) = \Q Q^\vee$, $\Phi_L$ also induces a bijection between
$\Q R (M,L)^\vee$ and $\Q Q$, and $X_\nr (M)$ is the full preimage of $T^Q$.\\
(c) The kernel of $\Phi_\nr : X_\nr (M) \to T^Q$ is
\[
X_\nr (M,\sigma) := X_\nr (L,\sigma) \cap X_\nr (M) . 
\]
Then $X_\nr (M,\sigma)$ acts on $X_\nr (M)$ by translations and $\mc G_{Q,\sigma}$ acts
on $T^Q \cong \\ X_\nr (M) / X_\nr (M,\sigma)$. By Lemma \ref{lem:3.1} there exists a
canonical short exact sequence
\begin{equation}\label{eq:4.2}
1 \to X_\nr (M,\sigma) \to \mc G'_{Q,\sigma} \to \mc G_{Q,\sigma} \to 1 ,
\end{equation}
such that the action of $\mc G'_{Q,\sigma}$ on $X_\nr (M)$ lifts that of $\mc G_{Q,\sigma}$
on $T^Q$. For $\omega \in \Rep (M)^{\fs_M}$ Condition \ref{cond:Morita}.(ii) and \eqref{eq:4.1}
imply that
\begin{align}\label{eq:4.15}
& \Res^{\mc H^M}_{\lambda_{L M} (\mc H^L)} (\Phi_M (\omega \otimes \chi) ) =
\Phi_L ( J^M_{\overline{P_L} \cap M}(\omega \otimes \chi) ) \\
\nonumber & = \Phi_L (J^M_{\overline{P_L} \cap M} (\omega) \otimes \chi )  = 
\Phi_L (J^M_{\overline{P_L} \cap M} (\omega)) \otimes \Phi_L (\chi) \\
\nonumber & = \Res^{\mc H^M}_{\lambda_{L M}(\mc H^L)}(\Phi_M (\omega)) \otimes \Phi_\nr (\chi) = 
\Res^{\mc H^M}_{\lambda_{L M}(\mc H^L)}(\Phi_M(\omega) \otimes \Phi_\nr (\chi)) .
\end{align}
When $\omega$ is irreducible, $\Phi_M (\omega \otimes \chi)$ lies in the same connected component 
of $\Irr (\mc H^M)$ as $\Phi_M (\omega)$, so \eqref{eq:4.15} shows that it is an unramified twist of 
$\Phi_M (\omega)$. Hence
\begin{equation}\label{eq:4.14}
\Phi_M (\omega \otimes \chi) = \Phi_M (\omega) \otimes \Phi_\nr (\chi) \quad \text{when }
\omega \text{ is irreducible.} 
\end{equation}
Using the invertibility of $\Phi_M$, both sides of \eqref{eq:4.14} define a group action of 
$X_\nr (M)$ on Mod$(\mc H^M)$, by exact functors with commute with inductive (and projective) limits. 
Since these actions agree on irreducible representations, they agree on all representations.
\end{proof}

\begin{lem}\label{lem:4.3}
Assume Conditions \ref{cond:Morita} and \ref{cond:Hecke} and suppose that \\
d$\Phi_L \big( \Q R(M,L) \big) \cap R^\vee = R_Q^\vee$. 
\enuma{
\item The map $\Phi_L$ induces a group isomorphism $W_{\fs_M} \to W (R_Q) \Gamma_M$.
\item $W_{\fs_M}$ fixes $X_\nr (M)$ pointwise and, when moreover $\textup{d} \Phi_L \big( 
\Q R(M,L) \big) = \Q Q^\vee$, $W(R_Q) \Gamma_M$ fixes $T^Q$ pointwise.
}
\end{lem}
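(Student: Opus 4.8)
The plan is to exploit that both $W_{\fs_M}$ and $W(R_Q)\Gamma_M$ are, by their very definitions, stabilizers of a torus inside a group acting on $\Irr(L) \cong T$ via \eqref{eq:3.10}, and that $\Phi_L$ is a $W$-equivariant bijection between these pictures. First I would recall from Lemma \ref{lem:4.5}(a) that $\Phi_L$ lifts to a surjection $\Phi_\nr : X_\nr(L) \to T$ with kernel $X_\nr(L,\sigma)$, and from Lemma \ref{lem:4.5}(b) that under the hypothesis $\textup{d}\Phi_L(\Q R(M,L)) \cap R^\vee = R_Q^\vee$ the image $\Phi_\nr(X_\nr(M))$ sits inside $T^Q$ (with equality under the stronger hypothesis). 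Next, the key observation is that Condition \ref{cond:Morita}(i) together with Condition \ref{cond:Hecke}(ii) forces $\Phi_G$ to intertwine the action of $W(G,L)$ on $\Irr(L)^{\fs_L}$ with the action of $W\Gamma$ on $\Irr(\mc H^L) \cong T$ coming from the groupoid $\mc G$ and the intertwining operators: two induction data $I_{P_L}^G(\sigma\otimes\chi)$ and $I_{P_L'}^G(g\sigma\otimes g\chi)$ have the same constituents (cited after \eqref{eq:3.13}), and on the Hecke side the analogous statement holds by \eqref{eq:2.10}; since $\Phi_G$ is an equivalence of categories these two actions must match. Consequently $\Phi_L$ carries the stabilizer of $T_{\fs_M}$ in $W(G,L)$ isomorphically onto the stabilizer of $T^Q$ in $W\Gamma$, i.e. onto $\{w\in W\Gamma : w(Q)=Q\}$ acting on $T^Q$. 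Using Condition \ref{cond:Hecke}(iii) — that $\textup{d}\Phi_L^{-1}$ sends $R^{\vee+}$ to $R^+(G,L)$ — the reflections in $R(M,L)$ go over to the reflections in $R_Q$, so the subgroup of $W_{\fs_M}$ generated by reflections maps onto $W(R_Q)$, and the complementary part maps onto $\Gamma_M$ by Condition \ref{cond:Hecke}(iv); this gives part (a).

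For part (b), I would argue that $W_{\fs_M}$ acts trivially on $X_\nr(M)$ because $W_{\fs_M} = W(G,L)_{T_{\fs_M}}$ stabilizes $T_{\fs_M}$ setwise while fixing the connected component containing $\omega$ for every $\omega \in \Irr_{L^2}(M)^{\fs_M}$ — indeed elements of $W(G,L)$ coming from $W(M,L) \subset W(G,M_0)$ already act through $N_M(L)/L$ and hence preserve each $M$-representation up to twist, and the point is that the residual action on the unramified-twist parameter $X_\nr(M)$ factors through the reflections in $R(M,L)$, which act trivially on $\mf a_M$ (they lie in $\mf a^M$). More cleanly: $W(R_Q)$ acts trivially on $X^Q := X/(X\cap\Q Q)$ by construction, hence trivially on $T^Q$, and transporting this back via the isomorphism of part (a) together with Lemma \ref{lem:4.5}(b) (which identifies $X_\nr(M)/X_\nr(M,\sigma)$ with $T^Q$, or with its image) shows $W(R_Q)\Gamma_M$ — more precisely its image in $\Aut(T^Q)$ — is trivial on $T^Q$ when $\textup{d}\Phi_L(\Q R(M,L)) = \Q Q^\vee$; here one uses that $\Gamma_M \subset \Gamma(Q,Q)$ satisfies Condition \ref{cond:Gamma}(2), so it stabilizes $T^Q$, and the action on $T^Q$ of the whole group $W(R_Q)\Gamma_M$ is what needs to be computed. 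Finally, pulling back through part (a) converts the statement about $T^Q$ into the claim that $W_{\fs_M}$ fixes $X_\nr(M)$ pointwise.

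The main obstacle, I expect, is \textbf{establishing rigorously that $\Phi_L$ (really $\Phi_G$ restricted to the relevant induction data) intertwines the $W(G,L)$-action of \eqref{eq:3.10} with the $W\Gamma$-action on $T$}: one must check that the finite-groupoid intertwining operators on the Hecke side \eqref{eq:2.10} and Harish-Chandra's intertwiners \eqref{eq:3.13} are compatible under $\Phi_G$, at least up to the level of which representations get identified, and that the normalization of $\Gamma_M$ in Condition \ref{cond:Hecke}(iv) is consistent with the normalization of $W_{\fs_M}$ as a subquotient of $N_G(L)/L$. Once that compatibility is in hand, parts (a) and (b) follow formally from the structure of parabolic root subsystems and the definition of $X^Q$; the remaining verifications — that reflections in $R_Q$ match reflections in $R(M,L)$ under $\textup{d}\Phi_L$, and that $W(R_Q)$ acts trivially on $T^Q$ — are routine given Condition \ref{cond:Hecke}(iii) and the definitions in Section \ref{sec:AHA}.
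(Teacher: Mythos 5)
The paper proves part (a) by comparing the \emph{centres} of the two module categories: by \eqref{eq:1.11} and \cite[Th\'eor\`eme 2.13]{BeDe} both centres are coordinate rings of quotient varieties, Condition \ref{cond:Morita} forces $\mc O (\Irr(L)^{\fs_L})^{W_\fs} \cong \mc O(T)^{W\Gamma}$, and after checking that both $W_\fs$ and $W\Gamma$ act \emph{faithfully} (a nontrivial step requiring \eqref{eq:2.5} and generic irreducibility via \cite{Sau}), the equation $\Phi_L(w\Phi_L^{-1}t) = \gamma t$ on the open locus of free orbits defines the desired group isomorphism. This uses only what the Conditions actually supply, namely compatibility with parabolic induction/restriction. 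Your proposal instead routes part (a) through intertwining operators, claiming $\Phi_G$ must intertwine Harish-Chandra's operators \eqref{eq:3.13} with the Hecke-side operators \eqref{eq:2.10} "since $\Phi_G$ is an equivalence of categories". That is a genuine gap: the Conditions say nothing about intertwining operators, and an abstract equivalence of categories does not automatically match specific families of intertwiners — indeed, compatibility of intertwiners under $\Phi_G$ is essentially the content of Lemma \ref{lem:4.10}, which appears \emph{after} Lemma \ref{lem:4.3} and relies on it. You flag this yourself as "the main obstacle", and it is not a routine verification but the crux of the matter. A second gap in part (a) is the assertion that "the subgroup of $W_{\fs_M}$ generated by reflections maps onto $W(R_Q)$, and the complementary part maps onto $\Gamma_M$": you have not established that $W_{\fs_M}$ admits such a decomposition (and Condition \ref{cond:Hecke}(iv) describes $\Gamma_M$ inside $\Gamma$, not inside $W_{\fs_M}$). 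You also skip the faithfulness check entirely, which is needed to promote a bijection of quotient varieties to a group isomorphism.

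For part (b), your first claim — that $W_{\fs_M}$ fixes $X_\nr(M)$ pointwise — is correct, though overelaborate; the paper simply notes $W_{\fs_M} \subset N_M(L)/L$, and conjugation by elements of $M$ is trivial on unramified characters of $M$. Your attempted \emph{direct} argument for the second claim has a real hole: $W(R_Q)$ acts trivially on $T^Q$ by definition, but Condition \ref{cond:Gamma}(2)--(3) only says $\Gamma_Q$ \emph{stabilizes} $T^Q$ and acts by multiplication with elements of $K_Q$, which does not give triviality. You then fall back on "pulling back through part (a)", which is correct and is in fact the paper's route: the second claim follows from the first one together with part (a) and Lemma \ref{lem:4.5}(b). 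So part (b) is recoverable once (a) is in place, but part (a) as you have set it up does not close.
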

\begin{proof}
(a) It suffices to prove this when $M = G$.
By \cite[Th\'eor\`eme 2.13]{BeDe}, the centre of the category Rep$(G)^\fs$ is
\[
\mc O ( \Irr (L)^{\fs_L} )^{W_\fs} = \mc O (\Irr (L)^{\fs_L} / W_\fs ) .
\]
The pointwise fixator of $X_\nr (L)$ in $N_G (L)$ is $Z_G (A_L) = Z_G (Z (L)^\circ ) = L$. 
Since $W_\fs \subset N_G (L) / L$, it acts faithfully on $X_\nr (L)$ by algebraic group 
automorphisms. Hence $W_\fs$ also acts faithfully on $\Irr (L)^{\fs_L}$. 

By \eqref{eq:1.11} the centre of Mod$(\mc H \rtimes \Gamma)$ is
\begin{equation}\label{eq:4.13}
Z( \mc H \rtimes \Gamma ) = \mc O (T)^{W \Gamma} = \mc O (T / W \Gamma) , 
\end{equation}
provided that $W \Gamma$ acts faithfully on $T$. Clearly $W$ acts faithfully on $T$.
By assumption every $\gamma \in \Gamma$ acts on $R$ by a diagram automorphism, so it cannot 
act on $T$ as any nontrivial element of $W$. Hence, to check that $W \Gamma$ acts faithfully
on $T$, it suffices to do so for $\Gamma$. 

In view of \eqref{eq:2.5}, the isomorphism $\Phi_L : \Irr (L)^{\fs_L} \to T$ implies that
$\Gamma_\es$ is trivial. We recall from \cite[Th\'eor\`eme 3.2]{Sau} that that 
$I_{P_L}^G (\sigma \otimes \chi)$ is irreducible for $\chi$ in a Zariski-open nonempty 
subset of $X_\nr (L)$. If $\gamma \in \Gamma \setminus \{1\}$ would act trivially on $T$,
then so would the cyclic group $\langle \gamma \rangle$ generated by it. In that case
\[
\ind_{\mc H^\es}^{\mc H \rtimes \Gamma} \C_t = \ind_{\mc H^\es \rtimes \langle \gamma 
\rangle}^{\mc H \rtimes \Gamma} (\C_t \otimes_\C \C \langle \gamma \rangle ) 
\]
would be reducible for all $t \in T$ (as $\C \langle \gamma \rangle$ is reducible).
That would contradict Condition \ref{cond:Morita}.i. So $W \Gamma$ acts faithfully
on $T$ and \eqref{eq:4.13} holds.

Now Condition \ref{cond:Morita} says that 
\[
Z( \mc H \rtimes \Gamma) = \mc O (T / W \Gamma) \cong \mc O (\Irr (L)^{\fs_L} / W_\fs ) .
\]
From this and Condition \ref{cond:Morita}.i, we deduce that $\Phi_L : \Irr (L)^{\fs_L} \to T$ 
induces a bijection 
\[
\Irr (L)^{\fs_L} / W_\fs \to T / W \Gamma. 
\]
On both sides the finite 
groups act faithfully by automorphisms of complex algebraic varieties. Consider the open
subvariety of $T$ (resp. of $\Irr (L)^{\fs_L}$) where the stabilizers in $W \Gamma$
(resp. in $W_\fs)$ are trivial. For such a $t \in T$ and $\gamma \in W \Gamma$, the equation
$\Phi_L (w \Phi_L^{-1} t) = \gamma t$ holds for a unique $w \in W_\fs$. This defines the
group isomorphism $W \Gamma \to W_\fs$.\\
(b) The first claim is trivial, because $W_{\fs_M} \subset N_M (L) / L$. The second claim 
follows directly from the first claim, part (a) and Lemma \ref{lem:4.5}.b.
\end{proof}

\begin{lem}\label{lem:4.4}
Assume 
\[
M \mapsto \textup{d}\Phi_L \big( \Q R(M,L) \big) \cap \Delta^\vee 
\]
induces a bijection between:
\begin{itemize}
\item $W_\fs$-association classes of Levi subgroups $M \subset G$ such that $L \subset M$ 
and d$\Phi_L \big( \Q R(M,L) \big)$ equals the $\Q$-span of a subset of $\Delta^\vee$;
\item subsets of $\Delta^\vee$, modulo $\Gamma W$-association.
\end{itemize}
\end{lem}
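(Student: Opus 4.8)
The plan is to establish that the map is well-defined on $W_\fs$-association classes, injective and surjective, by transporting the root combinatorics through the linear isomorphism $\textup{d}\Phi_L\colon X^*(L)\otimes_\Z\C\to Y\otimes_\Z\C$ together with the group isomorphism $W_\fs\cong W\Gamma$ of Lemma \ref{lem:4.3}(a) (valid with $M=G$, since Condition \ref{cond:Hecke}(iii) makes $\textup{d}\Phi_L^{-1}(\Q R^\vee)$ a span of simple roots of $(G,L)$, hence $\Q R^\vee\subseteq\textup{d}\Phi_L(\Q R(G,L))$). First some bookkeeping: for a Levi subgroup $M$ in the collection write $\textup{d}\Phi_L(\Q R(M,L))=\Q Q_M^\vee$ with $Q_M\subseteq\Delta$; as $\Delta^\vee$ is linearly independent, $\textup{d}\Phi_L(\Q R(M,L))\cap\Delta^\vee=Q_M^\vee$, so the map in question sends the class of $M$ to the $W\Gamma$-association class of $Q_M^\vee$, and it remembers precisely the subspace $\Q R(M,L)$. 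This subspace determines $M$, because $A_M=\big(\bigcap_{\alpha\in R(M,L)}\ker(\alpha|_{A_L})\big)^\circ$ depends only on $\Q R(M,L)\subseteq X^*(A_L)\otimes\Q$ and $M=Z_G(A_M)$.

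Next I would prove the equivariance $\textup{d}\Phi_L\circ w=l_v\circ\textup{d}\Phi_L$ on $X^*(L)\otimes\C$ for $w\in W_\fs$ and the corresponding $v\in W\Gamma$, where $l_v$ denotes the linear part of the action of $v$ on $T$ (equivalently on $Y\otimes\C$). This follows from the relation $\Phi_L(\sigma\otimes\chi)=\Phi_L(\sigma)\,\Phi_\nr(\chi)$ of Lemma \ref{lem:4.5}(a) together with the relation $\Phi_L\circ w=v\circ\Phi_L$ on $\Irr(L)^{\fs_L}$ — the latter holds on a Zariski-dense open subset by the very construction of the isomorphism in the proof of Lemma \ref{lem:4.3}, hence everywhere, as $\Irr(L)^{\fs_L}$ is irreducible. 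Writing $w(\sigma\otimes\chi)\cong\sigma\otimes(\chi_w\cdot w\chi)$ with $w\sigma\cong\sigma\otimes\chi_w$, and comparing with the affine formula $v(t)=z_v\,l_v(t)$, the translation $z_v$ cancels and one is left with $\Phi_\nr\circ w=l_v\circ\Phi_\nr$; differentiating at the identity gives the claim. Since $W$ acts on $Y$ preserving $R^\vee$ and $\Gamma$ acts by root datum automorphisms, $l_v$ preserves $R^\vee$, hence $\Q R^\vee$.

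For well-definedness and injectivity: conjugation by $w\in W_\fs$ sends $M$ to $wMw^{-1}$ with $R(wMw^{-1},L)=wR(M,L)$, so $\textup{d}\Phi_L(\Q R(wMw^{-1},L))=l_v(\Q Q_M^\vee)$. If $M$ and $wMw^{-1}$ both lie in the collection this equals $\Q Q_{wMw^{-1}}^\vee$; intersecting with $R^\vee$ (which $l_v$ preserves) yields $l_v(R_{Q_M}^\vee)=R_{Q_{wMw^{-1}}}^\vee$, two standard parabolic sub-root-systems of $R^\vee$ that are $W\Gamma$-conjugate. Such subsystems arise from $W\Gamma$-associate subsets of $\Delta$ — carry the simple system $Q_M^\vee$ onto a simple system of the target and then correct by an element of $W(R_{Q_{wMw^{-1}}})\subseteq W$ — so $Q_M^\vee$ and $Q_{wMw^{-1}}^\vee$ are associate, giving well-definedness. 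Conversely, if $Q_M^\vee$ and $Q_{M'}^\vee$ are associate via $v\in W\Gamma$ and $w\in W_\fs$ corresponds to $v$, then $\textup{d}\Phi_L(\Q R(wMw^{-1},L))=l_v(\Q Q_M^\vee)=\Q Q_{M'}^\vee=\textup{d}\Phi_L(\Q R(M',L))$, so $\Q R(wMw^{-1},L)=\Q R(M',L)$ (as $\textup{d}\Phi_L$ is injective) and therefore $wMw^{-1}=M'$ by the bookkeeping step — injectivity.

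For surjectivity, Condition \ref{cond:Hecke}(iii) lets one identify $\Pi:=\textup{d}\Phi_L^{-1}(\Delta^\vee)$ with a set of simple roots of $(G,L)$: the positivity half shows $\textup{d}\Phi_L^{-1}$ maps $R^{\vee +}$ into the positive roots of $(G,L)$, so $\textup{d}\Phi_L^{-1}(R^\vee)$ is a sub-root-system of $R(G,L)$ placed compatibly with the choice of positive roots, and its base $\Pi$ is then the set of indecomposable elements, which by the second half of (iii) are simple roots of $(G,L)$ — after, if necessary, replacing $P_L$ as explained following Condition \ref{cond:Hecke}. Given $Q\subseteq\Delta$, let $M$ be the standard Levi subgroup containing $L$ with simple roots $\textup{d}\Phi_L^{-1}(Q^\vee)\subseteq\Pi$; then $R(M,L)=R(G,L)\cap\Q\,\textup{d}\Phi_L^{-1}(Q^\vee)$, so $\Q R(M,L)=\Q\,\textup{d}\Phi_L^{-1}(Q^\vee)$ and $\textup{d}\Phi_L(\Q R(M,L))=\Q Q^\vee$, whence $M$ is in the collection and maps to the class of $Q^\vee$. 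The main obstacle I anticipate is exactly this last translation of root-system data when $R(G,L)$ is only an orthogonal projection of a root system — making sure Condition \ref{cond:Hecke}(iii) really forces $\Pi$ to consist of simple roots of $(G,L)$ — together with keeping the Weyl-group bookkeeping in the equivariance step consistent with the diagram-automorphism part $\Gamma$ (whose linear action on $Y$ preserves $R^\vee$ but interacts with the $W$-part).
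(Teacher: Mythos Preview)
Your equivariance relation $\textup{d}\Phi_L\circ w=l_v\circ\textup{d}\Phi_L$ is correctly derived, and with it your well-definedness and injectivity arguments go through. For injectivity this is close in spirit to the paper's use of Lemma~\ref{lem:4.3}; for well-definedness your route is more direct than the paper's, which instead compares the images of the parabolic induction functors $I_P^G$ and $I_{P'}^G$ and then appeals to Proposition~\ref{prop:2.3} on the Hecke side.

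The obstacle you flag in surjectivity is real, and your argument as written does not close it. Condition~\ref{cond:Hecke}(iii) only says the subspace $\textup{d}\Phi_L^{-1}(\Q R^\vee)$ has \emph{some} $\Q$-basis of simple roots of $(G,L)$; it does not force $\Pi=\textup{d}\Phi_L^{-1}(\Delta^\vee)$ to be that basis. At the level of root combinatorics: take $R(G,L)$ of type $B_2$ with simple roots $\alpha$ (long) and $\beta$ (short), and $R^\vee$ of type $A_1\times A_1$ with $\textup{d}\Phi_L^{-1}(\Delta^\vee)=\{\alpha,\alpha+2\beta\}$. Both clauses of (iii) hold --- positivity is clear and the span is $\Q\{\alpha,\beta\}$ --- yet $\alpha+2\beta$ is not simple, and no change of $P_L$ helps since $\{\alpha,\alpha+2\beta\}$ is never a base of $B_2$. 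So for the $Q$ corresponding to $\alpha+2\beta$ there is no \emph{standard} Levi in the collection mapping to $Q^\vee$. Your route can be repaired by abandoning standardness: for $Q\subseteq\Delta$ set $V_Q=\textup{d}\Phi_L^{-1}(\Q Q^\vee)$ and let $M_Q=Z_G(S_Q)$, where $S_Q\subset A_L$ is the subtorus annihilated by $V_Q\cap X^*(A_L)$. Then $R(M_Q,L)=R(G,L)\cap V_Q$, which contains $\textup{d}\Phi_L^{-1}(R_Q^\vee)$ (a subset of $R(G,L)$ by (iii)) and hence spans $V_Q$, giving $\textup{d}\Phi_L(\Q R(M_Q,L))=\Q Q^\vee$.

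The paper's surjectivity proof is entirely different and uses genuine representation theory. After exhibiting a Levi $M_\fs$ mapping to $\Delta^\vee$, it argues by contradiction: choose $\tilde Q^\vee\subsetneq\Delta^\vee$ maximal among subsets not in the image, pick $Q^\vee=\tilde Q^\vee\cup\{\alpha^\vee\}$ in the image, and compare spans in the Grothendieck group $\mr{Gr}(\mc H^Q\rtimes\Gamma_M)$ via Theorem~\ref{thm:2.9} (Kazhdan) and Theorem~\ref{thm:1.8}, reaching a contradiction on the dimension of the contributing torus orbits. Your corrected combinatorial argument would bypass that machinery entirely.
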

\begin{proof}
Suppose that $M$ and $M'$ are such Levi subgroups, and that they are conjugate
by an element $w \in W_\fs$. Then the functors $I_P^G : \Rep (M)^{\fs_M} \to \Rep (G)^\fs$
and $I_{P'}^G : \Rep (M')^{\fs_{M'}} \to \Rep (G)^{\fs}$ have the same image, for
\[
I_{P'}^G \circ \text{Ad}(w)^* = \text{Ad}(w)^* \circ I_P^G \cong I_P^G . 
\]
With Condition \ref{cond:Morita}.i, this implies that $\mr{ind}_{\lambda_{MG}(\mc H^M )
}^{\mc H^G}$ and $\mr{ind}_{\lambda_{M'G}(\mc H^{M'})}^{\mc H^G}$ have the same image.
Condition \ref{cond:Hecke}.ii says that $\mc H^M = \mc H^Q \rtimes \Gamma_M$ and 
$\mc H^{M'} = \mc H^{Q'} \rtimes \Gamma_{M'}$ are parabolic subalgebras
of $\mc H^G$, and then Proposition \ref{prop:2.3} shows that they must be $\mc G$-associate.
Condition \ref{cond:Hecke}.iv implies that 
\[
Q^\vee = \textup{d}\Phi_L \big( \Q R(M,L) \big) \cap \Delta^\vee \quad \text{and} \quad 
Q'^\vee = \textup{d}\Phi_L \big( \Q R(M',L) \big) \cap \Delta^\vee
\]
are $W \Gamma$-associate. Hence the map of the lemma is well-defined on the given
equivalence classes.

Using the same notations as above, suppose that $Q^\vee$ and $Q'^\vee$ are $W \Gamma$-associate.
Then $\textup{d}\Phi_L \big( \Q R(M,L) \big) = \Q Q^\vee$ is $W \Gamma$-associate to
$\textup{d}\Phi_L \big( \Q R(M',L) \big) = \Q Q'^\vee$. By Lemma \ref{lem:4.3}, 
$\Q R(M,L)$ is $W_\fs$-associate to $\Q R(M',L)$. Hence $M$ and $M'$ are conjugate by an
element of $W_\fs$, showing that the map of the lemma is injective.

By Condition \ref{cond:Hecke}.iii, the subgroup $P_\fs \subset G$ generated by $P_L$ and
the root subgroups for roots in d$\Phi_L^{-1}(\Q R^\vee)$ is a parabolic subgroup of $G$.
The map of the statement sends the standard Levi factor $M_\fs$ of $P_\fs$ to $\Delta^\vee$. 

Suppose that the map is not surjective. Choose $\tilde Q^\vee \subset \Delta^\vee$ which 
does not lie in the image and is maximal for that property. Since $\Delta^\vee$ belongs 
to the image, $\tilde Q \neq \Delta$ and we can find $\alpha \in \Delta \setminus \tilde Q$ 
such that $Q^\vee := \tilde Q^\vee \cup \{\alpha^\vee \}$ does lie in the image. We write 
$\Gamma_{\tilde Q} = \Gamma_Q \cap \Gamma (\tilde Q,\tilde Q)$.

For every Levi subgroup $M' \subset M$ we choose finitely many representations $\pi_{M',i} \in 
\Irr (M')^{\fs_{M'}}$ as in Theorem \ref{thm:2.9}. Then the representations
\[
I_{P' \cap M}^M (\pi_{M',i} \otimes \chi_{M'}) \quad \text{with } \chi_{M'} \in X_\nr (M'),
\]
for all $M'$ and all possible $i$, span the Grothendieck group of $\Rep (M)^{\fs_M}$. 
Applying Condition \ref{cond:Morita}, we find that the representations
\begin{equation}
\mr{ind}_{\lambda_{M' M}(\mc H^{M'})}^{\mc H^M} \Phi_{M'} (\pi_{M',i} \otimes \chi_{M'}) 
\end{equation}
span the Grothendieck group Gr$(\mc H^M)$ of Mod$(\mc H^M)$. By Lemma \ref{lem:4.5} 
\[
\Phi_{M'} (\pi_{M',i} \otimes \chi_{M'}) = \Phi_{M'} (\pi_{M',i}) \otimes \Phi_\nr (\chi_{M'})
\]
and $\Phi_\nr (\chi_{M'}) \in T^{Q'}$. For $M' \neq M$ and $t \in T^{Q'} ,\; 
\Phi_{M'} (\pi_{M',i}) \otimes t$ is a representation of $\mc H^{Q'} \rtimes \Gamma_{M'}$ with
$Q'$ not $W \Gamma$-associate to $\tilde Q$. Hence the collection of representations
\begin{equation}\label{eq:3.23}
\Phi_{M} (\pi_{M,i}) \otimes t \quad \text{with } t \in T^Q
\end{equation}
spans the quotient
\begin{equation}\label{eq:3.22}
\mr{Gr}(\mc H^Q \rtimes \Gamma_M) \big/ \sum_{Q' \subsetneq Q, Q' \text{ not associate to } 
\tilde Q} \mr{ind}_{\lambda_{M' M}(\mc H^{Q'} \rtimes \Gamma_{M'})}^{\mc H^Q \rtimes \Gamma_M} 
\mr{Gr}( \mc H^{Q'} \rtimes \Gamma_{M'} ) .
\end{equation}
By Theorem \ref{thm:1.8}.a, $\mr{ind}_{\lambda_{\tilde Q M}(\mc H^{\tilde Q} \rtimes 
\Gamma_{\tilde Q})}^{\mc H^Q \rtimes \Gamma_M} \mr{Gr}( \mc H^{\tilde Q} \rtimes \Gamma_{\tilde Q})$
contributes an entire $T^{W(R_{\tilde Q}) \Gamma_{\tilde Q}}$-orbit of representations to 
\eqref{eq:3.22}. 

By Lemma \ref{lem:4.3} $T^Q \subset T^{W (R_Q) \Gamma_Q}$, which shows in particular that 
the translation part $z_\gamma$ of $\gamma$ is trivial for all $\gamma \in \Gamma_Q$.
As $W(R_{\tilde Q}) \Gamma_{\tilde Q} \subsetneq W(R_Q) \Gamma_Q$, we have
\[
T^{W(R_{\tilde Q}) \Gamma_{\tilde Q}} \supset T^Q .
\]
We want to see that the left hand side has higher dimension than the right hand side. 
By construction $W(R_{\tilde Q})$ fixes $T^{\tilde Q}$ pointwise.
The torus $T_1 = ( T_Q \cap T^{\tilde Q} )^\circ$ is one-dimensional, because 
$|Q \setminus \tilde Q| = 1$. For the same reason $\alpha : T_1 \to \C^\times$ is a surjection
with finite kernel. The group $\Gamma_{\tilde Q} = \Gamma_Q \cap \Gamma (\tilde Q,\tilde Q)$
stabilizes both $Q$ and $\tilde Q$, so fixes $\alpha$. Therefore $\Gamma_{\tilde Q}$ fixes
$T_1$ pointwise. It follows that $W(R_{\tilde Q}) \Gamma_{\tilde Q}$ fixes the torus 
$T^{\tilde Q} = T^Q T_1$ pointwise. 

Returning to \eqref{eq:3.23} and \eqref{eq:3.22}, we see now that the contribution from 
$\mc H^{\tilde Q} \rtimes \Gamma_{\tilde Q}$ encompasses at least one $T^{\tilde Q}$-orbit. 
But that is impossible, because the $i$'s in \eqref{eq:3.23} belong to a finite set and 
$\dim_\C (T^{Q'}) > \dim_\C (T^Q)$.
This contradiction entails that the map from the statement is surjective.
\end{proof}

\subsection{Preservation of temperedness} \

We will show that the above conditions imply that the Morita equivalences preserve temperedness 
and (under an extra condition) discrete series. For $\Phi_M^{-1}$ this is relatively easy.

\begin{lem}\label{lem:4.6}
Assume Conditions \ref{cond:Morita} and \ref{cond:Hecke}, and let $P = MU$ be a
parabolic subgroup containing $P_L$ such that d$\Phi_L (\Q R(M,L)) = \Q Q^\vee$.
\enuma{
\item $\Phi_M^{-1}$ preserves temperedness of finite length representations.
\item $\Phi_M^{-1}$ sends finite dimensional essentially discrete series 
$\mc H^M$-representations to essentially square-integrable $M$-representations.
\item Suppose that $\pi' \in \Mod (\mc H^M)$ has finite dimension, is tempered, essentially 
discrete series and factors through $\psi_t : \mc H^Q \rtimes \Gamma_M \to \mc H_Q \rtimes 
\Gamma_M$ for some $t \in T^Q$. Then the $M$-representation $\Phi_M^{-1}(\pi')$ is 
square-integrable modulo centre.
}
\end{lem}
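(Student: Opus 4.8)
The plan is to establish (a), (b) and (c) simultaneously, by induction on the semisimple rank of $M$ (equivalently on $|Q|$), where by Lemma \ref{lem:4.4} the Levi subgroups $M'$ with $L\subseteq M'\subseteq M$ correspond to the subsets $Q'\subseteq Q$, and Condition \ref{cond:Hecke}(iv) guarantees that the hypothesis $\mathrm{d}\Phi_L(\Q R(M',L))=\Q Q'^\vee$ passes to every such $M'$. In the base case $M=L$ one has $Q=\emptyset$, $\Gamma_\emptyset=\{1\}$, $\mc H^L=\mc O(T)$ and $\Phi_L\colon\Irr(L)^{\fs_L}\to T$; here every module is (essentially) discrete series, $\Phi_L$ matches $T_\un$ with $X_\unr(L)$, and (c) amounts to $\Phi_L^{-1}(\C_1)=\sigma$ being square-integrable modulo centre, which holds since $\sigma$ is tempered supercuspidal (I use here that $\Phi_L(\sigma)=1\in T$, which is implicit in the meaning of $\mathrm{d}\Phi_L$ in Condition \ref{cond:Hecke}(iii)). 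For the inductive step I would prove (c) first, without using the inductive hypothesis, then deduce (b), and finally (a).

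For (c): reduce to $\pi'$ irreducible; then Lemma \ref{lem:2.2} (applied with $\Gamma_Q:=\Gamma_M$) forces $\pi'\cong\bar\pi'\otimes t$ with $\bar\pi'\in\Irr_{L^2}(\mc H_Q\rtimes\Gamma_M)$ and $t\in T^Q_\un$. By Lemma \ref{lem:4.5}(b),(c) one has $t=\Phi_\nr(\chi_0)$ for some $\chi_0\in X_\unr(M)$ and $\Phi_M^{-1}(\pi')\cong\Phi_M^{-1}(\bar\pi')\otimes\chi_0$, so, since square-integrability modulo centre is preserved by unitary twists, it suffices to show that $\omega_0:=\Phi_M^{-1}(\bar\pi')$ is square-integrable modulo centre. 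The representation $\omega_0$ has finite length, lies in $\Rep(M)^{\fs_M}$, and has unitary central character. Because every object of $\Rep(M)^{\fs_M}$ has cuspidal support in $[L,\sigma]_M$, the module $J^M_{\overline{P_L}\cap M}(\omega_0)$ lies in $\Rep(L)^{\fs_L}$, so $\mathrm{pr}_{\fs_L}$ acts trivially on it and Condition \ref{cond:Morita}(ii) identifies it with $\Phi_L^{-1}$ of the restriction of $\bar\pi'$ to $\mc H^L=\mc O(T)$. Casselman's square-integrability criterion, reduced to the standard maximal parabolics of $M$ and re-expressed through the single Jacquet functor $J^M_{\overline{P_L}\cap M}$ (using transitivity of Jacquet restriction for opposite parabolics), says that $\omega_0$ is square-integrable modulo centre exactly when each cuspidal support $\sigma\otimes\mu$ of $J^M_{\overline{P_L}\cap M}(\omega_0)$ has $\log|\mu|$ in the open cone spanned by the negative roots of $(M,L)$. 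Under $\Phi_L$ such a $\mu$ corresponds to an $\mc O(T)$-weight $t_0$ of $\bar\pi'$ with $\log|t_0|=\mathrm{d}\Phi_L(\log|\mu|)$ (Lemma \ref{lem:4.5}(a), using $\Phi_L(\sigma)=1$); by Condition \ref{cond:Hecke}(iii), $\mathrm{d}\Phi_L^{-1}$ carries the positive coroots of $R_Q$ to the positive roots of $(M,L)$ and the simple coroots to the simple roots, so the cone condition on all the $\mu$ translates precisely into $|\mathrm{Wt}(\bar\pi')|\subset T^{--}_Q$, i.e.\ into $\bar\pi'$ being discrete series, which is our hypothesis.

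Part (b) then follows from (c): for irreducible essentially discrete series $\pi'$, Lemma \ref{lem:2.2}(a),(c) give $\pi'\cong\pi_Q\otimes t^Q$ with $\pi_Q\in\Irr_{L^2}(\mc H_Q\rtimes\Gamma_M)$ and $t^Q\in T^Q=\Phi_\nr(X_\nr(M))$; the inflation of $\pi_Q$ is tempered, essentially discrete series and factors through $\psi_1$, so (c) applies and $\Phi_M^{-1}(\pi_Q)$ is square-integrable modulo centre, whence by Lemma \ref{lem:4.5}(c) the representation $\Phi_M^{-1}(\pi')\cong\Phi_M^{-1}(\pi_Q)\otimes\chi_0$ (with $\chi_0\in X_\nr(M)$) is essentially square-integrable; the general case follows by exactness of $\Phi_M^{-1}$. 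For (a) I would take $\pi'$ irreducible and tempered; by Theorem \ref{thm:2.6}(a),(d) it is a constituent of $\pi(Q',\sigma',t')$ with $Q'\subseteq Q$, $\sigma'\in\Irr_{L^2}(\mc H_{Q'}\rtimes\Gamma_{M'})$ and $t'\in T^{Q'}_\un$. If $Q'=Q$, then $\pi(Q,\sigma',t')=\sigma'\otimes t'$ is irreducible, tempered, essentially discrete series and factors through $\psi_{t'}$, so $\pi'=\sigma'\otimes t'$ and $\Phi_M^{-1}(\pi')$ is square-integrable modulo centre by (c), in particular tempered. If $Q'\subsetneq Q$, then $M'\subsetneq M$ and $\sigma'\otimes t'$ is a tempered $\mc H^{M'}$-module, so by the inductive hypothesis (part (a) for $M'$) the module $\Phi_{M'}^{-1}(\sigma'\otimes t')$ is tempered; by Condition \ref{cond:Morita}(i), $\Phi_M^{-1}\big(\pi(Q',\sigma',t')\big)\cong I^M_{P'\cap M}\big(\Phi_{M'}^{-1}(\sigma'\otimes t')\big)$, which is tempered by Proposition \ref{prop:3.6}, and $\Phi_M^{-1}(\pi')$ is one of its constituents.

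The main obstacle I expect is the cone-matching in (c): one must carefully reconcile the positivity conventions on the two sides — positive roots of $(M,L)$ versus positive coroots of $R_Q$, the parabolic $P_L$ versus its opposite $\overline{P_L}$ (which enters both through Condition \ref{cond:Morita}(ii) and through the possible passage to the opposite algebra in Condition \ref{cond:Hecke}(i)), the reduction of Casselman's criterion to $J^M_{\overline{P_L}\cap M}$, and the fact that the base point $\Phi_L(\sigma)$ must be unitary — and then verify that Condition \ref{cond:Hecke}(iii) supplies exactly the needed compatibility. The checks that $\mathrm{pr}_{\fs_L}$ is harmless and that $\omega_0$ has unitary central character are routine but must be carried out.
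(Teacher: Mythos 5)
Your proposal is correct in outline but takes a genuinely different route from the paper. The paper's proof is much more direct and does not need induction: for (a) it simply observes that (by Condition \ref{cond:Morita}(ii)) the $\mc O(T)$-weights of $\mathrm{Res}\,\Phi_M(\pi)$ correspond to the $A_L$-exponents of $J^M_{\overline{P_L}\cap M}\pi$, and that (by Condition \ref{cond:Hecke}(iii)) $\mathrm d\Phi_L^{-1}$ carries the closed cone $\mf a^{-Q}$ onto the closed cone $\overline{{}^+\mf a^M_{\overline{P_L}\cap M}}$, so Waldspurger's temperedness criterion transfers in one step; (b) is the same argument with open cones plus the $\mf a_M$ direction, and (c) is then a two-line deduction from (a) and (b). You instead prove (c) first by the analogous cone-matching for $\bar\pi'$ discrete series, deduce (b) by twisting by $X_\nr(M)$, and obtain (a) by induction on $|Q|$ via Theorem \ref{thm:2.6} and Proposition \ref{prop:3.6}. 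This works, but it is heavier machinery than needed: Theorem \ref{thm:2.6} and Proposition \ref{prop:3.6} are deeper facts than the direct exponent comparison that suffices for (a), and you only invoke them to avoid the simpler closed-cone matching. A few points deserve attention. First, your deduction of $\Phi_L(\sigma)=1$ from Condition \ref{cond:Hecke}(iii) is not warranted: the condition only constrains the differential $\mathrm d\Phi_L$ and says nothing about the base point; the cone-matching in the paper (and in your argument) implicitly needs only $|\Phi_L(\sigma)|=1$, and that is what you should invoke (and flag as an implicit normalization). Second, your separate assertion that $\omega_0$ has unitary central character is redundant, not missing: since $^+\mf a^M_{\overline{P_L}\cap M}\subset\mf a^M_L$, the strict cone condition already forces the $\mf a_M$-component of each exponent to vanish, so the $A_M$-character (and hence the $Z(M)$-character) is automatically unitary. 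Third, in the base case $M=L$ (so $Q=\emptyset$), the phrase ``every module is essentially discrete series'' should be interpreted via the restriction-to-$\mc H_\Delta$ definition, since $T^{--}$ is literally empty when $\Delta=\emptyset$ does not span; your reading is correct but worth making explicit. With these clarifications the argument goes through, and it matches the paper at the crucial step (using Condition \ref{cond:Morita}(ii) and Condition \ref{cond:Hecke}(iii) to match cones of exponents), just packaged in a different logical order.
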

\begin{proof}
(a) Since every irreducible $\mc H^M$-module has finite dimension, $\Phi_M$ restricts
to an equivalence between finite length representations on the one hand, and the finite
dimensional modules on the other hand.

Let $\pi \in \Rep (M)^{\mf s_M}$ be of finite length, and recall the criterion for
temperedness from \cite[Proposition III.2.2]{Wal} and \eqref{eq:3.26}. As the supercuspidal
support of $\pi$ is contained in $[L,\sigma]_G$, it is equivalent to impose these conditions 
only with respect to the parabolic subgroup $P_L = L U_L$ \cite[Proposition 1.2.i]{Hei2}.
Let $\overline{P_L}$ be the parabolic subgroup opposite to $P_L$. Then $\overline{P_L} \cap M$ 
is opposite to $P_L \cap M$. The above condition (for $P_L$) is equivalent to: 
\begin{equation}\label{eq:3.17}
\log |\chi| \in \overline{{}^+ \mf a^M_{\overline{P_L} \cap M}} = 
\big\{ \sum\nolimits_{\alpha \in R(P_L \cap M,L)} c_\alpha \alpha |_{A_L} : c_\alpha \leq 0 \big\} 
\end{equation}
for every $A_L$-weight $\chi$ of $J^M_{M \cap \overline{P_L}} \pi$.

By the assumption on $M$ and Condition \ref{cond:Hecke}.iii, 
d$\Phi_L^{-1} :\Q Q^\vee \to \Q R (M,L)$ is a linear bijection which sends 
\begin{equation}\label{eq:3.18}
\mf a^{-Q} = \big\{ \sum\nolimits_{\alpha \in Q} \lambda_\alpha \alpha^\vee : 
\lambda_\alpha \leq 0 \big\} \quad \text{to} \quad \overline{{}^+ \mf a^M_{\overline{P_L} \cap M}} .
\end{equation}
Suppose that $\Phi_M (\pi)$ is tempered. By definition, this means that
all $\mc H^L$-weights $t$ of $\Res^{\mc H^M}_{\lambda_{L M} (\mc H^L)}(\Phi_M (\pi))$ satisfy 
$\log |t| \in \mf a^{-Q}$. By Condition \ref{cond:Morita}.(ii) and \eqref{eq:3.18}, all 
$A_L$-weights $\chi$ of $J^M_{M \cap \overline{P_L}} \pi$ have 
$\log |\chi| \in \overline{{}^+ \mf a^M_{\overline{P_L} \cap M}}$. 
Thus \eqref{eq:3.17} says that $\pi$ is tempered.\\
(b) By \cite[Proposition III.1.1]{Wal} and arguments analogous to the above, $\pi$ is 
square-integrable modulo centre if and only if 
\begin{equation}\label{eq:3.19}
\log |\chi| \in {}^+ \mf a^M_{\overline{P_L} \cap M} = 
\big\{ \sum\nolimits_{\alpha \in R(P_L \cap M,L)} c_\alpha \alpha |_{A_L} : c_\alpha < 0 \big\} 
\end{equation}
for every $A_L$-weight $\chi$ of $J^M_{M \cap \overline{P_L}} \pi$. The criterium
for essential square-integrability then becomes 
\begin{equation}\label{eq:3.20}
\log |\chi| \in {}^+ \mf a^M_{\overline{P_L} \cap M} + \mf a_M .
\end{equation}
for every $A_L$-weight $\chi$ of $J^M_{M \cap \overline{P_L}} \pi$. Since the rank $|Q|$ of 
$R_Q$ equals the rank of $R(M,L)$ and d$\Phi_L^{-1}$ preserves positivity, it maps $\mf a^{--Q}$ 
(the interior of $\mf a^{-Q}$) to $\mf a^M_{\overline{P_L} \cap M}$. 
By Lemma \ref{lem:4.5}.b $\Phi_L^{-1}(T^Q) = X_\nr (M)$. 

Suppose that $\Phi_M (\pi)$ is essentially discrete series. By definition, this means that
all $\mc H^L$-weights $t$ of $\Res^{\mc H^M}_{\lambda_{L M} (\mc H^L)}(\Phi_M (\pi))$ satisfy 
$|t| \in \exp(\mf a^{--Q}) T^Q$. By the above and Condition \ref{cond:Morita}.ii, all 
$A_L$-weights of $J^M_{M \cap \overline{P_L}} \pi$ lie in \eqref{eq:3.20}. Hence
$\pi$ is essentially square-integrable.\\
(c) Recall that a $M$-representation is essentially square-integrable if its restriction to the
derived subgroup of $M$ is square-integrable. If a $M$-representation with a central character is 
tempered, then $Z(M)$ acts on its by a unitary character. Hence all tempered essentially 
square-integrable representations with a central character are square-integrable modulo centre.  

By parts (a) and (b) $\Phi_M^{-1}(\pi')$ is tempered and essentially square-integrable. Since
$\Phi_M^{-1}(\pi')$ lies in one Bernstein component $\Rep (M)^{\fs_M}$, the maximal compact 
subgroup of $Z(M)$ acts on it by a single character $\chi_0$, which is automatically unitary. 
Then $X_\nr (M)$ (resp. $X_\unr (M)$) parametrizes the extensions of $\chi_0$ to a character 
(resp. unitary character) of $Z(M)$. Lemma \ref{lem:4.5}.c says that $Z(M)$ acts on 
$\Phi_M^{-1}(\pi')$ by the character determined by $\chi_0$ and $\Phi_\nr^{-1}(t)$. Lemma 
\ref{lem:2.2}.b shows that $t \in T^Q_\un$ and $\Phi_\nr^{-1} (T_\un) = X_\unr (L)$, so
$Z(M)$ acts by a unitary character.
\end{proof}

Part (a) of Lemma \ref{lem:4.6} admits a quick generalization to all Levi subgroups that we
encounter. On the other hand, that is not possible for parts (b) and (c). In fact, for Levi
subgroups $M \subset G$ containing $L$ but not of the form as in Lemma \ref{lem:4.6}, 
$\Rep (M)^{\fs_M}$ contains no essentially square-integrable representations. We delay the 
proof of that claim to Proposition \ref{prop:4.1}. We note that in those cases 
$\Irr_{L^2}(\mc H^M)$ can still be nonempty.

\begin{lem}\label{lem:4.9}
Assume Conditions \ref{cond:Morita} and \ref{cond:Hecke}, and let $P' = M'U'$ be a
parabolic subgroup containing $P_L$ such that $\Q R(M',L) \cap \textup{d}\Phi_L^{-1}(R^\vee)$
does not span $\Q R(M',L)$. Then $\Phi_{M'}^{-1}$ preserves temperedness of finite length 
representations.
\end{lem}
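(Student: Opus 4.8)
The plan is to reduce this to Lemma \ref{lem:4.6}.a by interpolating a suitable smaller Levi subgroup. By Conditions \ref{cond:Hecke}.ii and \ref{cond:Hecke}.iv, $\mc H^{M'} = \mc H^Q \rtimes \Gamma_{M'}$ for the $Q \subseteq \Delta$ with $\textup{d}\Phi_L\big(\Q R(M',L)\big) \cap R^\vee = R_Q^\vee$, and the hypothesis of the lemma says exactly that $R_Q^\vee$ does not span $\textup{d}\Phi_L\big(\Q R(M',L)\big)$. As in the proof of Lemma \ref{lem:4.4}, Condition \ref{cond:Hecke}.iii shows that $\textup{d}\Phi_L^{-1}(\Q Q^\vee)$ is spanned by a standard parabolic root subsystem of $R(G,L)$ contained in $R(M',L)$; let $M$ be the corresponding standard Levi subgroup, so that $L \subseteq M \subsetneq M'$ and $\textup{d}\Phi_L\big(\Q R(M,L)\big) = \Q Q^\vee$. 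Then Condition \ref{cond:Hecke}.iv gives $\mc H^M = \mc H^Q \rtimes \Gamma_M$ with $\Gamma_M$ satisfying Condition \ref{cond:Gamma} for $Q$, and $\Gamma_M \subseteq \Gamma_{M'}$ because $\lambda_{M M'} : \mc H^Q \rtimes \Gamma_M \to \mc H^Q \rtimes \Gamma_{M'}$ is an inclusion of parabolic subalgebras restricting to the identity on $\mc H^Q$.

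Now let $\pi' \in \Rep(M')^{\fs_{M'}}$ be of finite length with $\Phi_{M'}(\pi')$ tempered. Temperedness of a module over $\mc H^{M'} = \mc H(\mc R^Q,q^Q) \rtimes \Gamma_{M'}$ or over $\mc H^M = \mc H(\mc R^Q,q^Q) \rtimes \Gamma_M$ is a condition on $\mc O(T)$-weights alone, and restriction along $\mc H^M \hookrightarrow \mc H^{M'}$ alters neither the underlying vector space nor the $\mc O(T)$-action; hence $\Res^{\mc H^{M'}}_{\mc H^M} \Phi_{M'}(\pi')$ is tempered, and finite-dimensional since $[\Gamma_{M'}:\Gamma_M] < \infty$. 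Let $P = MU$ be the standard parabolic of $G$ with Levi factor $M$ and $\overline P$ its opposite. The supercuspidal support of $J^{M'}_{\overline P \cap M'}\pi'$ lies in $[L,\sigma]_M = \fs_M$, so $\mr{pr}_{\fs_M}$ fixes it; writing $\pi'' := J^{M'}_{\overline P \cap M'}\pi' \in \Rep(M)^{\fs_M}$, which is of finite length, Condition \ref{cond:Morita}.ii yields $\Res^{\mc H^{M'}}_{\mc H^M}\Phi_{M'}(\pi') = \Phi_M(\pi'')$. Thus $\Phi_M(\pi'')$ is tempered, and since $\textup{d}\Phi_L\big(\Q R(M,L)\big) = \Q Q^\vee$, Lemma \ref{lem:4.6}.a applies and shows that $\pi''$ is a tempered $M$-representation.

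It remains to pass from temperedness of $\pi''$ in $M$ to temperedness of $\pi'$ in $M'$. By \cite[Proposition III.2.2]{Wal} and \cite[Proposition 1.2.i]{Hei2}, $\pi'$ is tempered if and only if every $A_L$-weight $\chi$ of $J^{M'}_{M' \cap \overline{P_L}}\pi'$ satisfies $\log|\chi| \in \overline{{}^+\mf a^{M'}_{\overline{P_L}\cap M'}}$, and likewise $\pi''$ is tempered if and only if every $A_L$-weight $\chi$ of $J^M_{M \cap \overline{P_L}}\pi''$ satisfies $\log|\chi| \in \overline{{}^+\mf a^M_{\overline{P_L}\cap M}}$. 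Since $P_L \subseteq P$ (both standard, with $L \subseteq M$) forces $\overline{P_L} \subseteq \overline P$ and hence $M' \cap \overline{P_L} \subseteq M' \cap \overline P$, transitivity of Jacquet restriction gives $J^{M'}_{M'\cap\overline{P_L}}\pi' = J^M_{M\cap\overline{P_L}}(J^{M'}_{\overline P\cap M'}\pi') = J^M_{M\cap\overline{P_L}}\pi''$, so the two criteria range over the same $A_L$-weights. Finally $R(P_L\cap M,L) \subseteq R(P_L\cap M',L)$ yields $\overline{{}^+\mf a^M_{\overline{P_L}\cap M}} \subseteq \overline{{}^+\mf a^{M'}_{\overline{P_L}\cap M'}}$ — a cone spanned by fewer roots lies in one spanned by more — so the criterion for $\pi''$ implies that for $\pi'$, and $\pi'$ is tempered. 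The main obstacles are of a bookkeeping nature: verifying that $\textup{d}\Phi_L^{-1}(\Q Q^\vee)$ really defines a standard Levi subgroup $M$ with $L \subseteq M \subseteq M'$, and checking that the opposite parabolics involved are compatible so that the Jacquet restriction of $\pi'$ to $L$ genuinely factors through its restriction to $M$.
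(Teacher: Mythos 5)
Your first steps agree with the paper: you interpolate the Levi subgroup $M$ with $\textup{d}\Phi_L(\Q R(M,L)) = \Q Q^\vee$, and you correctly observe that $\Res^{\mc H^{M'}}_{\mc H^M}\Phi_{M'}(\pi')$ is tempered because $\mc H^M$ and $\mc H^{M'}$ share the same $\mc O(T)$ and the same cone $T^{-Q}$. The gap comes immediately after, in the assertion that ``the supercuspidal support of $J^{M'}_{\overline P \cap M'}\pi'$ lies in $[L,\sigma]_M$, so $\mr{pr}_{\fs_M}$ fixes it.'' That is false in general: the constituents of $J^{M'}_{\overline P \cap M'}\pi'$ have supercuspidal support in the $M'$-conjugacy class of pairs $(L,\sigma\otimes\chi)$, and such pairs need not be $M$-conjugate, so they can lie in inertial classes $[wL,w\sigma]_M$ with $w \in N_{M'}(L)$, $w \notin M$, that are genuinely different from $\fs_M$. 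A concrete instance falling under the hypothesis of the lemma: $M' = \GL_2(F)$, $L = M = \GL_1(F)^2$, $\sigma = \sigma_1 \otimes \sigma_2$ with $\sigma_1$ and $\sigma_2$ not inertially equivalent, so that $R_\fs = \emptyset$. Then $J^{\GL_2}_{\overline{P}}\pi'$ has constituents both in $[\GL_1^2,\sigma_1\otimes\sigma_2]_{\GL_1^2}$ and in $[\GL_1^2,\sigma_2\otimes\sigma_1]_{\GL_1^2}$, and these are distinct Bernstein blocks of $M$. Consequently, what Condition~\ref{cond:Morita}.ii gives you is $\Res^{\mc H^{M'}}_{\mc H^M}\Phi_{M'}(\pi') = \Phi_M\bigl(\mr{pr}_{\fs_M}J^{M'}_{\overline{P}\cap M'}\pi'\bigr)$ only; temperedness of $\mr{pr}_{\fs_M}J^{M'}_{\overline{P}\cap M'}\pi'$ controls only the $A_L$-weights supported in $\fs_M$, whereas the criterion of \cite[Proposition III.2.2]{Wal}/\cite[Proposition 1.2.i]{Hei2} requires the bound on \emph{all} $A_L$-weights of $J^{M'}_{M'\cap\overline{P_L}}\pi'$, including those coming from the other Bernstein blocks. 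Your transitivity identity $J^{M'}_{M'\cap\overline{P_L}}\pi' = J^M_{M\cap\overline{P_L}}\pi''$ therefore does not hold with $\pi''$ equal to the $\fs_M$-projection, and the cone-inclusion step no longer closes the argument.

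The paper's proof circumvents this entirely by arguing in the opposite direction: it takes $V' = \Phi_{M'}(\pi')$, restricts to $\mc H^M$, transfers to $\Rep(M)^{\fs_M}$ by $\Phi_M^{-1}$ (which is Lemma~\ref{lem:4.6}.a), then applies \emph{group-side parabolic induction} $I^{M'}_{P\cap M'}$, which preserves temperedness unconditionally (\cite[Lemme VII.2.2]{Ren}). The resulting tempered $M'$-representation is $\Phi_{M'}^{-1}\bigl(\ind_{\mc H^M}^{\mc H^{M'}}\Res^{\mc H^{M'}}_{\mc H^M}V'\bigr)$, and the Mackey-type decomposition \eqref{eq:3.15}, namely $\ind_{\mc H^M}^{\mc H^{M'}}\Res^{\mc H^{M'}}_{\mc H^M}(V') \cong \C[\Gamma_{M'}/\Gamma_M]\otimes_\C V'$, exhibits $V'$ — hence $\Phi_{M'}^{-1}(V') = \pi'$ — as a direct summand, so $\pi'$ is tempered. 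This is the key device you are missing: rather than attempting to recover $\pi'$ from its Jacquet module (where the other Bernstein blocks get in the way), one recovers $\pi'$ as a direct summand of a parabolically induced representation, which is tempered for free.
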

\begin{proof}
By Condition \ref{cond:Hecke}.iii d$\Phi_L (\Q R(M',L))
\cap R^\vee$ is a standard parabolic root subsystem of $R^\vee$, that is, of the form
$R_Q^\vee$ for a unique $Q \subset \Delta$. By Lemma \ref{lem:4.4} there exists a Levi
subgroup $M \subset M'$ such that $L \subset M$ and d$\Phi_L(\Q R(M,L)) = \Q Q^\vee$. 

By Condition \ref{cond:Hecke}.iv $\mc H^{M'} = \mc H^Q \rtimes \Gamma_{M'}$ and 
$\mc H^M = \mc H^Q \rtimes \Gamma_M$, and by Condition \ref{cond:Hecke}.ii 
$\Gamma_{M'} \supset \Gamma_M$ and $\lambda_{M M'}$ is just the inclusion. The cone 
$T^{-Q} \subset T_\rs$ is the same for $\mc H^M, \mc H^Q$ and $\mc H^{M'}$. 
For any finite dimensional $\mc H^M$-module $V$:
\begin{equation}\label{eq:3.14}
\mr{Wt} \big( \ind_{\mc H^M}^{\mc H^{M'}} (V) \big) = \{ \gamma (t) : t \in \mr{Wt}(V),
\gamma \in \Gamma_{M'} \} .
\end{equation}
Since $\Gamma_{M'}$ preserves $T^{-Q} ,\; \ind_{\mc H^M}^{\mc H^{M'}} (V)$ 
is tempered if and only if $V$ is tempered.
Similarly, \eqref{eq:3.14} shows that a finite dimensional $\mc H^{M'}$-module $V'$ is 
tempered if and only if $\Res^{\mc H^{M'}}_{\mc H^M}(V')$ is tempered. We note also that 
\begin{equation}\label{eq:3.15}
\ind_{\mc H^M}^{\mc H^{M'}} \Res^{\mc H^{M'}}_{\mc H^M}(V') \cong
\C [\Gamma_{M'}] \otimes_{\C [\Gamma_M]} V' \cong \C [\Gamma_{M'} / \Gamma_M] \otimes_\C V' ,
\end{equation}
a $\mc H^Q \rtimes \Gamma_{M'}$-module for the diagonal action. Then \eqref{eq:3.15}
contains $V'$ as the direct summand $\C [ \Gamma_{M'} / \Gamma_{M'}] \otimes_\C V'$, 
and the restriction of \eqref{eq:3.15} to $\mc H^M$ is a direct sum of copies of
$\Res^{\mc H^{M'}}_{\mc H^M}(V')$. 

We recall from \cite[Lemme VII.2.2]{Ren} that $I^{M'}_{P \cap M'}$ always preserves 
temperedness. Consider a finite dimensional tempered module 
$V' \in \Mod (\mc H^{M'})$. By Lemma \ref{lem:4.6}.a the $M'$-representation
$I^{M'}_{P \cap M'} \circ \Phi_M^{-1} \circ \Res^{\mc H^{M'}}_{\mc H^M}(V')$
is tempered. By Condition \ref{cond:Morita} it is isomorphic to
\begin{equation}\label{eq:4.18}
\Phi_{M'}^{-1} \circ \ind_{\mc H^M}^{\mc H^{M'}} \circ \Res^{\mc H^{M'}}_{\mc H^M}(V') 
\cong I^{M'}_{P \cap M'} \circ \mr{pr}_{\mf s_M} \circ J^{M'}_{\overline{P} \cap M'} \circ
\Phi_{M'}^{-1} (V') ,
\end{equation}
Since $\Phi_{M'}^{-1}$ is an equivalence, \eqref{eq:3.15} shows that \eqref{eq:4.18} 
contains $\Phi_{M'}^{-1} (V')$ as a direct summand. So the latter is tempered as well.
\end{proof}

Let $M \supset L$ be a Levi subgroup of $G$ and write $\textup{d}\Phi_L (\Q R(M,L)) \cap 
R^\vee = R_Q^\vee$.  As family of parabolic subalgebras of $\mc H^M$ we take $\mc H^M$ and 
the $\mc H^{M_1}$ where $M \supsetneq M_1 \supset L$ and d$\Phi_L (\Q R(M_1,L)) = \Q Q_1$ for 
some $Q_1 \subsetneq Q$. By Lemma \ref{lem:4.4} every (proper) subset of $Q$ is obtained in this 
way. Recall the notion of Langlands constituents from Theorems \ref{thm:2.6} and \ref{thm:3.4}.

\begin{lem}\label{lem:4.8}
Let $M'$ be a Levi subgroup of $M$ containing $L$, such that \\
$\textup{d}\Phi_L (\Q R(M',L)) = \Q Q'^\vee$ for a subset $Q' \subset Q$. 
Let $\sigma \in \Irr_{L^2}(\mc H_{Q'} \rtimes \Gamma_{M'})$ and $t \in T^{Q'}$. Then
\[
\Phi_{M'}^{-1} (\sigma \otimes t) = \Phi_{M'}^{-1} (\sigma \otimes t \,|t|^{-1}) \otimes
\Phi_\nr^{-1}( |t| ) \quad \text{with} \quad 
\Phi_{M'}^{-1} (\sigma \otimes t \,|t|^{-1}) \in \Irr_{L^2}(M')^{\fs_{M'}} .
\]
The Morita equivalence $\Phi_M^{-1}$ restricts to a bijection between the Langlands
constituents of $\ind_{\mc H^{M'}}^{\mc H^M} (\sigma \otimes t)$ and those of
\[
\Phi_M^{-1} \big( \ind_{\mc H^{M'}}^{\mc H^M} (\sigma \otimes t) \big) \cong
I^M_{(M \cap P_L) M'} \big( \Phi_{M'}^{-1} (\sigma \otimes t \,|t|^{-1}) \otimes
\Phi_\nr^{-1}( |t| ) \big) .
\]
\end{lem}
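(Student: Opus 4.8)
First I would establish the asserted decomposition of $\Phi_{M'}^{-1}(\sigma \otimes t)$. Using the polar decomposition $t = |t| \cdot (t\,|t|^{-1})$ with $|t| \in T^{Q'}_\rs$ and $t\,|t|^{-1} \in T^{Q'}_\un$, one has $\sigma \otimes t = (\sigma \otimes t\,|t|^{-1}) \otimes |t|$. The module $\sigma \otimes t\,|t|^{-1}$ is a finite dimensional $\mc H^{M'}$-representation which is tempered by Lemma \ref{lem:2.2}.b, essentially discrete series by Lemma \ref{lem:2.2}.c, and — as $\sigma$ is inflated from $\mc H_{Q'} \rtimes \Gamma_{M'}$ — factors through $\psi_{t\,|t|^{-1}} : \mc H^{Q'} \rtimes \Gamma_{M'} \to \mc H_{Q'} \rtimes \Gamma_{M'}$; since $\textup{d}\Phi_L(\Q R(M',L)) = \Q Q'^\vee$, Lemma \ref{lem:4.6}.c applies and yields that $\Phi_{M'}^{-1}(\sigma \otimes t\,|t|^{-1})$ is square-integrable modulo centre, while it is irreducible with supercuspidal support in $\fs_{M'}$ because $\Phi_{M'}$ is a Morita equivalence for $\Rep (M')^{\fs_{M'}}$; hence $\Phi_{M'}^{-1}(\sigma \otimes t\,|t|^{-1}) \in \Irr_{L^2}(M')^{\fs_{M'}}$. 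Finally Lemma \ref{lem:4.5}.c (for $M'$) gives $\Phi_{M'}^{-1}\big( (\sigma \otimes t\,|t|^{-1}) \otimes |t| \big) = \Phi_{M'}^{-1}(\sigma \otimes t\,|t|^{-1}) \otimes \Phi_\nr^{-1}(|t|)$, any preimage $\Phi_\nr^{-1}(|t|) \in X_\nr(M')$ serving equally well since $\ker \Phi_\nr = X_\nr(M',\sigma)$ acts trivially on $\Phi_{M'}^{-1}(\sigma \otimes t\,|t|^{-1})$, again by Lemma \ref{lem:4.5}.c.

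The displayed isomorphism is then a direct application of Condition \ref{cond:Morita}.(i) to the pair $M' \subset M$ and the standard parabolic subgroup $(M \cap P_L)M'$ of $M$ with Levi factor $M'$: the commuting square there gives $\Phi_M^{-1} \circ \ind_{\mc H^{M'}}^{\mc H^M} \cong I^M_{(M\cap P_L)M'} \circ \Phi_{M'}^{-1}$, and substituting $\sigma \otimes t$ and rewriting via the first paragraph produces the stated formula.

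It remains to prove the bijection of Langlands constituents. Since $\Phi_M$ is an equivalence of categories $\Rep (M)^{\fs_M} \to \Mod (\mc H^M)$, it preserves irreducibility, finite length and the head (maximal semisimple quotient) of a module; combined with the displayed isomorphism, $\Phi_M^{-1}$ therefore yields a bijection between the constituents of $\ind_{\mc H^{M'}}^{\mc H^M}(\sigma \otimes t)$ and those of $I^M_{(M\cap P_L)M'}(\Phi_{M'}^{-1}(\sigma \otimes t))$ which matches irreducible quotients. I would then reduce to the case that $(Q',\sigma,t)$ is in positive position, $|t| \in T^{Q'+}$: replacing $(Q',\sigma,t)$ by a $\mc G$-conjugate in positive position changes neither the constituents nor the Langlands constituents of $\ind_{\mc H^{M'}}^{\mc H^M}(\sigma \otimes t)$ by \cite[Lemma 3.1.7]{SolAHA}, and by Lemmas \ref{lem:4.3}, \ref{lem:4.4}, \ref{lem:4.5} together with Condition \ref{cond:Hecke}.iii this corresponds under $\Phi$ to a $W_{\fs}$-conjugation of the $p$-adic datum, which likewise leaves the constituents and Langlands constituents of the induced representation unchanged \cite[Corollary 2.7]{SolPadicHP}, while $\Phi_M$ keeps intertwining the two induced modules. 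Once $(Q',\sigma,t)$ is positive, the proof of Theorem \ref{thm:2.6}.c shows that the Langlands constituents of $\ind_{\mc H^{M'}}^{\mc H^M}(\sigma \otimes t)$ are exactly its irreducible quotients — in positive position $\pi^{\Gamma_M}(Q',\delta,t)$ ($\delta$ an $\mc H_{Q'}$-constituent of $\sigma$, cf. \eqref{eq:1.10}) is a direct sum of standard Langlands modules, of which $\ind_{\mc H^{M'}}^{\mc H^M}(\sigma \otimes t)$ is a direct summand, and each standard module has a simple head by the Langlands classification \cite[Corollary 2.2.5]{SolAHA}. Symmetrically, the proof of Theorem \ref{thm:3.4}.c, with \cite[Proposition 2.15.a]{SolPadicHP} and the Langlands classification for $M$, shows that the Langlands constituents of $I^M_{(M\cap P_L)M'}(\Phi_{M'}^{-1}(\sigma \otimes t))$ are exactly its irreducible quotients. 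As $\Phi_M^{-1}$ carries head to head, the bijection above restricts to the required bijection of Langlands constituents. Propositions \ref{prop:2.7} and \ref{prop:3.5} give a useful consistency check: the number of Langlands constituents is constant along the rays $r \mapsto t\,|t|^r$ and $r \mapsto \Phi_\nr^{-1}(|t|)^{1+r}$ on the two sides, and for generic $r$ both induced modules are completely reducible with every constituent a Langlands constituent.

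The step I expect to be the main obstacle is making the reduction to positive position genuinely compatible with $\Phi_M$, i.e.\ identifying the groupoid $\mc G$ acting on the Hecke-algebra induction data with the Weyl-group action on the $p$-adic induction data; this forces one to track the extra $K_{Q'}$-part of $\mc G$ against the finite covering $X_\nr(M') \to T^{Q'}$ of Lemma \ref{lem:4.5}. Closely related, one must cope with the fact that the standard modules in the positive-position decomposition are induced from $\mc H^{P(\xi)} \rtimes \Gamma_M(P(\xi),P(\xi))$ with the \emph{full} stabilizer of $P(\xi)$ in $\Gamma_M$, which need not be one of the chosen parabolic subalgebras $\mc H^{M_1}$; here an induction-in-stages argument, together with Clifford theory applied to the tempered inducing datum, should let one rewrite each Langlands quotient as the head of a module induced from an $\mc H^{M_1}$, to which Condition \ref{cond:Morita}.(i) does apply.
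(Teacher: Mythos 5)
Your first two paragraphs (the decomposition of $\Phi_{M'}^{-1}(\sigma\otimes t)$ and the displayed isomorphism via Condition~\ref{cond:Morita}.(i)) are correct and coincide with the paper's treatment. For the bijection of Langlands constituents, however, you take a genuinely different route. The paper argues by deformation and counting: for generic $r$ the module $\ind(\sigma\otimes t\,|t|^r)$ and its $\Phi_M^{-1}$-image are completely reducible with \emph{all} constituents Langlands (Propositions~\ref{prop:2.7}.b and~\ref{prop:3.5}.b), the \emph{number} of Langlands constituents is constant in $r$ on both sides (Propositions~\ref{prop:2.7}.a and~\ref{prop:3.5}.a), and the transfer of the norm-of-central-character via Condition~\ref{cond:Morita}.(ii) shows $\Phi_M^{-1}$ sends non-Langlands constituents to non-Langlands constituents (Theorems~\ref{thm:2.6}.c and~\ref{thm:3.4}.c); equal counts plus this containment force the bijection. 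You instead reduce to a positive-position datum and identify Langlands constituents with the simple quotients, i.e.\ the head of the standard decomposition, which is a categorical invariant preserved by the equivalence $\Phi_M$. Both strategies are sound; the paper's has the advantage that it never touches the groupoid action on induction data or any Langlands classification beyond what Theorems~\ref{thm:2.6}.c and~\ref{thm:3.4}.c already package.

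About the two obstacles you flag. The second one (that $\mc H^{P(\xi)}\rtimes\Gamma_M(P(\xi),P(\xi))$ need not be one of the chosen $\mc H^{M_1}$) is in fact not an obstacle for your argument: you never need to transport the \emph{standard modules} across $\Phi_M$, only the equality \emph{Langlands constituents $=$ irreducible quotients}, which is intrinsic to each side once the datum is positive, and the matching is then done wholesale by $\Phi_M$ preserving heads. The first one (conjugating to positive position compatibly with $\Phi_M$) is a real point that needs to be spelled out, but it is resolvable rather than fatal: Lemma~\ref{lem:4.3} identifies $W_{\fs_M}$ with $W(R_Q)\Gamma_M$, Condition~\ref{cond:Hecke}.(iii) makes the positivity cones correspond under $\textup{d}\Phi_L$, the $K_{Q'}$-part of $\mc G$ lies in $T_\un$ and hence does not affect absolute values, and both \cite[Lemma~3.1.7]{SolAHA} and \cite[Corollary~2.7]{SolPadicHP} give invariance of constituents (and of the Langlands subset, since $\norm{cc}$ is conjugation-invariant) under the respective association actions. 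With that paragraph filled in, your proof would be complete, and would constitute a legitimate alternative to the one in the paper.
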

\textbf{Remarks.}
By Lemma \ref{lem:4.5}.a $\Phi_\nr$ becomes injective when restricted to
unramified characters with values in $\R_{>0}$. Therefore $\Phi_\nr^{-1}( |t| ) \in
X_\nr (M')$ is well-defined.

When $Q' = Q$, Langlands constituents are not defined on the Hecke algebra side.
In that case the lemma must be interpreted differently. The functor 
$\ind_{\mc H^{M'}}^{\mc H^M} = \ind_{\mc H^Q \rtimes \Gamma_{M'}}^{\mc H^Q \rtimes \Gamma_M}$ 
preserves complete reducibility (by Clifford theory, as $\Gamma_M$ is finite). By Condition
\ref{cond:Morita}.i, so does $I^M_{(P_L \cap M)M'}$. In view of Proposition \ref{prop:3.5}.b, 
the lemma becomes true in the case $Q' = Q$, provided we declare that all irreducible 
subquotients of $\ind_{\mc H^{M'}}^{\mc H^M} (\sigma \otimes t)$ are Langlands constituents.
\begin{proof}
The alternative 
expression for $\Phi_{M'}^{-1} (\sigma \otimes t)$ comes from Lemma \ref{lem:4.5}.c.
By Lemma \ref{lem:2.2} $\sigma \otimes t \, |t|^{-1} \in \Irr_{L^2}(\mc H^{Q'} \rtimes
\Gamma_{M'})$, and by Lemma \ref{lem:4.6}.c 
$\Phi_{M'}^{-1} (\sigma \otimes t \,|t|^{-1}) \in \Irr_{L^2}(M')$. 

\textbf{Case I.} Suppose that $\ind_{\mc H^{M'}}^{\mc H^M} (\sigma \otimes t)$ is completely
reducible. By Proposition \ref{prop:2.7}.b all its irreducible subquotients are Langlands
constituents. Since $\Phi_M$ is an equivalence, $\Phi_M^{-1} \big( \ind_{\mc H^{M'}}^{\mc H^M} 
(\sigma \otimes t) \big)$ is also completely reducible. By Proposition \ref{prop:3.5}.b all 
its irreducible subquotients are Langlands constituents. Hence $\Phi_M^{-1}$ provides a
bijection between these two collections of Langlands constituents.

\textbf{Case II.} Suppose that $\ind_{\mc H^{M'}}^{\mc H^M} (\sigma \otimes t)$ is not
completely reducible. By Proposition \ref{prop:2.7}.b there exists an $r \in \R_{>-1}$
such that $\ind_{\mc H^{M'}}^{\mc H^M} (\sigma \otimes t \, |t|^r )$ is completely
reducible. By Proposition \ref{prop:2.7}.a, Case I and Proposition \ref{prop:3.5}.a, the 
four representations
\begin{multline*}
\ind_{\mc H^{M'}}^{\mc H^M} (\sigma \otimes t) ,\; 
\ind_{\mc H^{M'}}^{\mc H^M} (\sigma \otimes t \, |t|^r ) ,\;
\Phi_M^{-1} \big( \ind_{\mc H^{M'}}^{\mc H^M} (\sigma \otimes t \, |t|^r) \big) 
\text{ and } \Phi_M^{-1} \big( \ind_{\mc H^{M'}}^{\mc H^M} (\sigma \otimes t) \big) 
\end{multline*}
have the same number of inequivalent Langlands constituents.

Let $\pi'$ be a non-Langlands constituent of $\ind_{\mc H^{M'}}^{\mc H^M} (\sigma \otimes t)$.
By Theorem \ref{thm:2.6}.c there exists a Levi subgroup $M_1 \subset M$, with $M_1 = M$ or
d$\Phi_L (\Q R(M_1,L)) = \Q Q_1^\vee \subsetneq Q^\vee$, such that  $\pi'$ is a constituent of 
$\ind_{\mc H^{M_1}}^{\mc H^M} (\sigma_1 \otimes t_1)$ for some 
$t_1 \in T^{Q_1}$ and $\sigma_1 \in \Irr_{L^2} (\mc H_{Q_1} \rtimes \Gamma_{M_1})$ with
$\norm{cc (\sigma_1)} > \norm{cc (\sigma)}$. When $M_1 = M$, Lemma \ref{lem:2.2} shows
that the same condition on $\pi'$ is also fulfilled for the unique Levi subgroup 
$M_2 \subset M$ with d$\Phi_L (\Q R(M_2,L)) = \Q Q^\vee$. In that case we replace $M_1$ by $M_2$.

Now $\Phi_{M_1}^{-1} ( \sigma_1 \otimes t_1 |t_1|^{-1} ) \in \Irr_{L^2}(M_1)$ and 
$\pi'$ is a constituent of 
\begin{equation}\label{eq:4.16}
I^M_{(M \cap P_L) M_1} \big( \Phi_{M'}^{-1} (\sigma_1 \otimes t_1 |t_1|^{-1}) \otimes
\Phi_\nr^{-1}( |t_1| ) \big) . 
\end{equation}
Recall that the invariant $\norm{cc_{M_1}}$ on $\Irr (M_1)^{\fs_{M_1}}$ is defined via
a $W_\fs$-invariant inner product on $\mf a_L = X^* (A_L) \otimes_\Z \R$. Via Lemmas 
\ref{lem:4.5}.a and \ref{lem:4.3}.a this can be transferred (canonically) to a 
$W \Gamma$-invariant inner product on $\mf a$. The supercuspidal support (which
is involved in $cc_{M_1}$) on $\Irr (M_1)^{\fs_{M_1}}$ is (up to conjugation) given
by $J^{M_1}_{\overline{P_L} \cap M_1}$. Then Condition \ref{cond:Morita}.ii shows that
\begin{multline}
\norm{cc_{M_1} \big( \Phi_{M_1}^{-1}(\sigma_1 \otimes t_1 |t_1|^{-1}) \big)} = 
\norm{cc (\sigma_1 \otimes t_1 |t_1|^{-1})} = \norm{cc (\sigma_1)} > \\
\norm{cc (\sigma)} = \norm{cc (\sigma \otimes t \, |t|^{-1})} =
\norm{cc_{M'} \big( \Phi_{M'}^{-1} (\sigma \otimes t \, |t|^{-1}) \big)} . 
\end{multline}
Now Theorem \ref{thm:3.4}.c says that $\Phi_M^{-1} (\pi')$ is not a Langlands constituent of\\
$\Phi_M^{-1} \big( \ind_{\mc H^{M'}}^{\mc H^M} (\sigma \otimes t) \big)$. 
Summarizing, we know that:
\begin{itemize}
\item $\Phi_M^{-1}$ provides a bijection between the collections of inequivalent
irreducible subquotients of $\ind_{\mc H^{M'}}^{\mc H^M} (\sigma \otimes t)$ and of
$\Phi_M^{-1} \big( \ind_{\mc H^{M'}}^{\mc H^M} (\sigma \otimes t) \big)$;
\item these two collections have the same number of Langlands constituents and the same
number of non-Langlands constituents;
\item $\Phi_M^{-1}$ maps non-Langlands constituents of $\ind_{\mc H^{M'}}^{\mc H^M} 
(\sigma \otimes t)$ to non-Langlands constituents of $\Phi_M^{-1} \big( 
\ind_{\mc H^{M'}}^{\mc H^M} (\sigma \otimes t) \big)$.
\end{itemize}
Consequently $\Phi_M^{-1}$ also provides a bijection between the collections of 
inequivalent Langlands constituents on both sides.
\end{proof}

Now we are ready for the proof of main result of this paragraph.

\begin{thm}\label{thm:4.7}
Assume Conditions \ref{cond:Morita} and \ref{cond:Hecke}, and let $P = MU$ be a
parabolic subgroup containing $P_L$.
\enuma{
\item $\Phi_M$ restricts to an equivalence between the category of finite length tempered 
representations in $\Rep (M)^{\mf s_M}$ and the category of finite dimensional tempered 
$\mc H^M$-modules. 
\item Suppose that d$\Phi_L(\Q R(M,L)) = \Q Q^\vee$ for some $Q \subset \Delta$. Then 
$\Phi_M$ sends finite length essentially square-integrable $M$-representations to essentially 
discrete series $\mc H^M$-representations, and $\Phi_M^{-1}$ does the converse.
}
\end{thm}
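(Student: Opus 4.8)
The plan is to prove (b) first --- its new content being that $\Phi_M$ takes essentially square-integrable representations to essentially discrete series modules, since the converse is Lemma~\ref{lem:4.6}(b) --- and then to deduce (a). In both parts one reduces to irreducible $\pi$: $\Phi_M$ is an equivalence and parabolic induction is exact, while a finite length (resp.\ finite dimensional) module is tempered or essentially square-integrable exactly when all its composition factors are, because the relevant Jacquet module, resp.\ set of $\mc O(T)$-weights, is the union of those of the factors. I will also use the case $M=L$ of~(a): there $\mc H^L=\mc O(T)$, its tempered modules are the points of $T_\un$, and by Lemma~\ref{lem:4.6}(a) the set $\Phi_L(T_{\fs,\un})$ contains $T_\un$, hence equals it since both are compact real tori of dimension $\dim_\C T$; in particular $|\Phi_L(\sigma)|=1$.

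For~(b), assume $\mr{d}\Phi_L(\Q R(M,L))=\Q Q^\vee$. The idea is to reverse the computation in the proof of Lemma~\ref{lem:4.6}(b), every ingredient being an equivalence. By Condition~\ref{cond:Morita}(ii), $\Res^{\mc H^M}_{\lambda_{LM}(\mc H^L)}\!\big(\Phi_M(\pi)\big)=\Phi_L\big(\mr{pr}_{\fs_L}J^M_{\overline{P_L}\cap M}(\pi)\big)$, so the $\mc O(T)$-weights of $\Phi_M(\pi)$ are precisely the points $\Phi_L(\sigma\otimes\chi)=\Phi_L(\sigma)\,\Phi_\nr(\chi)$ with $\sigma\otimes\chi$ a supercuspidal support occurring in $J^M_{\overline{P_L}\cap M}(\pi)$; by Lemma~\ref{lem:4.5} and $|\Phi_L(\sigma)|=1$ the absolute values of these weights are $\exp(\mr{d}\Phi_L(\log|\chi|))$. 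By Condition~\ref{cond:Hecke}(iii), the equality of ranks, and the cone identifications already made in the proof of Lemma~\ref{lem:4.6}(b), $\mr{d}\Phi_L$ maps ${}^{+}\mf a^M_{\overline{P_L}\cap M}+\mf a_M$ bijectively onto $\mf a^{--Q}+\mf a^Q$, whose exponential is $T^{--Q}T^Q_{\rs}$. Since $\pi$ is essentially square-integrable exactly when $\log|\chi|\in{}^{+}\mf a^M_{\overline{P_L}\cap M}+\mf a_M$ for all such $\chi$, and $\Phi_M(\pi)$ is essentially discrete series exactly when $|\mr{Wt}(\Phi_M(\pi))|\subseteq T^{--Q}T^Q_{\rs}$, the two conditions translate into one another; with Lemma~\ref{lem:4.6}(b) this proves~(b). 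Repeating the computation with the tempered cone $\overline{{}^{+}\mf a^M_{\overline{P_L}\cap M}}$, which $\mr{d}\Phi_L$ sends onto $\overline{\mf a^{--Q}}=\mf a^{-Q}$, gives at once: \emph{if $M$ is good then $\Phi_M(\pi)$ is tempered if and only if $\pi$ is}.

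It remains to prove the $\Phi_M$-direction of~(a) when $M$ is \emph{not} good, i.e.\ $\Q R(M,L)\cap\mr{d}\Phi_L^{-1}(R^\vee)$ does not span $\Q R(M,L)$; these two cases exhaust all $M$, and in the non-good case the $\Phi_M^{-1}$-direction is Lemma~\ref{lem:4.9}. Let $\pi\in\Irr(M)^{\fs_M}$ be tempered. By Theorem~\ref{thm:3.4}(a),(d), applied to $M$, $\pi$ is a Langlands constituent of $I_{P''}^M(\omega\otimes\chi)$ with $L\subseteq M''\subseteq M$, $\omega\in\Irr_{L^2}(M'')^{\fs_{M''}}$ and $\chi\in X_\nr(M'')$, and Theorem~\ref{thm:3.4}(b) forces $\chi$ unitary, so $I_{P''}^M(\omega\otimes\chi)$ is completely reducible with tempered constituents and $\pi$ is a direct summand. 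Since $\Rep(M'')^{\fs_{M''}}$ contains the square-integrable-modulo-centre representation $\omega$, $M''$ is good --- here one invokes Proposition~\ref{prop:4.1}, the absence of essentially square-integrable representations in Bernstein blocks of non-good Levi subgroups, which also rules out $M''=M$. By the good case of~(a) and by~(b), $\Phi_{M''}(\omega)$ is tempered and essentially discrete series, and tensoring with $\Phi_\nr(\chi)\in T^{Q''}_\un$ (Lemma~\ref{lem:4.5}) preserves this; then $\ind_{\mc H^{M''}}^{\mc H^M}$ of such a module is tempered, being one of the representations $\pi(Q'',\sigma,t)$, $t\in T^{Q''}_\un$, occurring in the Plancherel isomorphism of Proposition~\ref{prop:2.3}. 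By Condition~\ref{cond:Morita}(i), $\Phi_M\big(I_{P''}^M(\omega\otimes\chi)\big)\cong\ind_{\mc H^{M''}}^{\mc H^M}\!\big(\Phi_{M''}(\omega)\otimes\Phi_\nr(\chi)\big)$, so $\Phi_M(\pi)$ is a direct summand of a tempered module and is tempered. The main obstacle is this backwards reading of Lemma~\ref{lem:4.6}(b): it relies on $\mr{d}\Phi_L$ identifying the square-integrability cone with $\mf a^{--Q}+\mf a^Q$ --- which needs both the positivity in Condition~\ref{cond:Hecke}(iii) and the equality of ranks, hence the ``good'' hypothesis --- and on Proposition~\ref{prop:4.1} to ensure that the non-good case never carries an essentially square-integrable representation.
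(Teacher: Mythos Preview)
Your direct cone argument for (b) and for the ``good'' case of (a) is appealing, but it requires more than Lemma~\ref{lem:4.6} establishes. That lemma shows $\mr{d}\Phi_L^{-1}$ carries $\mf a^{-Q}$ (resp.\ $\mf a^{--Q}+\mf a^Q$) \emph{into} the corresponding group-side cone; for your direction you need the \emph{reverse} inclusion, namely that $\mr{d}\Phi_L$ takes $\overline{{}^+\mf a^M_{\overline{P_L}\cap M}}$ into $\mf a^{-Q}$. This does not follow formally from one inclusion plus bijectivity of $\mr{d}\Phi_L$ on the ambient spaces: a linear isomorphism can map one full-dimensional simplicial cone properly into another. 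What you would need is that $\{\mr{d}\Phi_L^{-1}(\beta^\vee):\beta\in Q\}$ are precisely the \emph{simple} roots of $(M,L)$, not merely some positive roots --- and Condition~\ref{cond:Hecke}(iii) only asserts the latter. So the phrase ``bijectively onto'' is not justified by what you cite.

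The more serious gap is in the non-good case of (a). You invoke Proposition~\ref{prop:4.1}(a) to force $M''$ to be good (and in particular $M''\neq M$), but the paper's proof of Proposition~\ref{prop:4.1}(a) explicitly uses Theorem~\ref{thm:4.7}(a) for the very Levi under consideration. Even with an induction on Levi subgroups, the case $M''=M$ --- i.e.\ $\pi$ itself square-integrable modulo centre with $M$ not good --- cannot be excluded this way, and your argument gives no independent reason why $\Phi_M(\pi)$ is tempered in that situation. The paper avoids both problems by running the Langlands-constituent argument on the \emph{Hecke} side: one applies Theorem~\ref{thm:2.6} to $\Phi_M(\pi)$, obtaining a datum $(Q_1,\sigma_1,t_1)$; the subset $Q_1\subset Q$ automatically corresponds to a good Levi $M_1$ (if $Q_1=Q$ one passes via Lemma~\ref{lem:2.2} to the good $M_2$ with $\mr d\Phi_L(\Q R(M_2,L))=\Q Q^\vee$). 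Lemma~\ref{lem:4.8} then transfers the Langlands-constituent property to the group side, where Theorems~\ref{thm:2.6}(d) and~\ref{thm:3.4}(b) give the temperedness equivalence. Starting from the group side, as you do, one has no mechanism to guarantee the Levi carrying the discrete series datum is good.
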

\begin{proof}
(a) In view of Lemmas \ref{lem:4.6}.a and \ref{lem:4.9}, it suffices to prove that $\Phi_M$ 
preserves temperedness of finite length representations.

Suppose, on the contrary, that there exists a finite length tempered $\pi \in \Rep (M)^{\mf s_M}$
such that $\Phi_M (\pi) \in \mr{Mod}(\mc H^M)$ is not tempered. Since $\Phi_M (\pi)$ has finite
length, it has a composition series with finite dimensional irreducible quotients. It follows 
directly from the definition of temperedness for $\mc H^M = \mc H^Q \rtimes \Gamma_M$ that at least 
one of these irreducible subquotients, say $\rho_1$, is not tempered. Then we may replace 
$\Phi_M (\pi)$ by $\rho_1$ and $\pi$ by $\Phi_M^{-1}(\rho_1)$. Hence it suffices to prove the claim 
for irreducible representations.

We take the same family of parabolic subalgebras of $\mc H^M$ as in Lemma \ref{lem:4.8}.
By Theorem \ref{thm:2.6}.c there exists a Levi subgroup $M_1 \subset M$, with $M_1 = M$ or
d$\Phi_L (\Q R(M_1,L)) = \Q Q_1^\vee \subsetneq Q^\vee$, such that $\Phi_M (\pi)$ is a 
Langlands constituent of $\ind_{\mc H^{M_1}}^{\mc H^M} (\sigma_1 \otimes t_1)$ for some 
$t_1 \in T^{Q_1}$ and $\sigma_1 \in \Irr_{L^2} (\mc H_{Q_1} \rtimes \Gamma_{M_1})$. 
When $M_1 = M$, Lemma \ref{lem:2.2} shows
that the same condition on $\pi'$ is also fulfilled for the unique Levi subgroup 
$M_2 \subset M$ with d$\Phi_L (\Q R(M_2,L)) = \Q Q$. In that case we replace $M_1$ by $M_2$.

By Lemma \ref{lem:4.8} $\pi$ is a Langlands constituent of 
\begin{equation}\label{eq:4.17}
\Phi_M^{-1} \big( \ind_{\mc H^{M_1}}^{\mc H^M} (\sigma \otimes t) \big) \cong
I^M_{(M \cap P_L) M_1} \big( \Phi_{M_1}^{-1} (\sigma \otimes t \,|t|^{-1}) \otimes
\Phi_\nr^{-1}( |t| ) \big) . 
\end{equation}
Suppose that $\Phi_M (\pi)$ is not tempered, so $t \notin T_\un$ by Theorem \ref{thm:2.6}.d. 
Then $\Phi_\nr^{-1}( |t|) \in X_\nr (M') \setminus X_\unr (M')$, and by 
Theorem \ref{thm:3.4}.b $\pi$ is not tempered. 

With Lemma \ref{lem:4.6}.a we see that $\Phi_M^{-1}$ preserves both temperedness and
non-tem\-pered\-ness of irreducible representations. Hence so does $\Phi_M$.\\
(b) For $\Phi_M^{-1}$ this is Lemma \ref{lem:4.6}.b, so we only have to consider the claim 
for $\Phi_M$. Up to \eqref{eq:4.17} we can follow the proof of part (a), only replacing 
tempered by essentially discrete series everywhere. 

Suppose that $\Phi_M (\pi)$ is not essentially discrete series. By the uniqueness in Theorem
\ref{thm:2.6}.b $Q'$ is a proper subset of $Q$. Then $M'$ is a proper Levi subgroup of $M$, 
so by the uniqueness in Theorem \ref{thm:3.4}.a $\pi$ is not essentially square-integrable. 
With Lemma \ref{lem:4.6}.b we conclude that $\Phi_M^{-1}$ sends those irreducible representations 
which are essentially discrete series to essentially square-integrable representations, and 
those which are not essentially discrete series to representations that are not essentially 
square-integrable. Now it is clear that $\Phi_M$ also respects these properties.
\end{proof}

For essentially square-integrable representations we can be more precise than Theorem 
\ref{thm:4.7}. We write d$\Phi_L (\Q R(M,L)) \cap R^\vee = R_Q^\vee$.

\begin{prop}\label{prop:4.1}
Assume Conditions \ref{cond:Morita} and \ref{cond:Hecke}, and let $P = MU$ be a
parabolic subgroup containing $P_L$.
\enuma{
\item Suppose that $\Q R(M,L)) \cap \textup{d}\Phi_L^{-1}(R^\vee)$ does not span $\Q R(M,L)$.
Then $\Rep (M)^{\mf s_M}$ contains no finite length essentially square-integrable 
representations. 
\item Suppose that $\pi \in \Rep (M)^{\fs_M}$ is square-integrable modulo centre 
and has finite length. Then $\Phi_M (\pi)$ is tempered, essentially discrete series 
and factors through $\psi_t : \mc H^Q \rtimes \Gamma_M \to \mc H_Q \rtimes \Gamma_M$ 
for some $t \in T_\un^Q$. 
\item Suppose that d$\Phi_L (\Q R(M,L)) = \Q Q^\vee$. Then $\Phi_M$ gives a bijection 
between $\Irr_{L^2}(M)^{\mf s_M}$ and $\Irr_{L^2}(\mc H^M)$.
}
\end{prop}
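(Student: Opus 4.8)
The plan is to prove the three parts in order; part (a) carries all the content, and (b) and (c) are bookkeeping built on Theorem \ref{thm:4.7}, Lemma \ref{lem:2.2} and Lemma \ref{lem:4.6}.c.

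For part (a) I would argue by contradiction: suppose $\Rep(M)^{\fs_M}$ has a finite length essentially square-integrable representation. Passing to an irreducible constituent and twisting by a suitable unramified character of $M$ (permitted by Lemma \ref{lem:4.5}.c, which keeps us inside $\Mod(\mc H^M)$), we may take $\pi$ irreducible, square-integrable modulo centre, hence tempered. Put $R_Q^\vee = \mathrm{d}\Phi_L(\Q R(M,L))\cap R^\vee$; the hypothesis means $\Q Q^\vee\subsetneq\mathrm{d}\Phi_L(\Q R(M,L))$, so by Lemma \ref{lem:4.4} there is a Levi subgroup $M_1$ with $L\subseteq M_1\subsetneq M$ and $\mathrm{d}\Phi_L(\Q R(M_1,L))=\Q Q^\vee$, together with a parabolic $P_1\supseteq P_L$ of $G$ with Levi factor $M_1$. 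By Condition \ref{cond:Hecke}.ii--iv, $\mc H^{M_1} = \mc H^Q\rtimes\Gamma_{M_1}$ is then a parabolic subalgebra of $\mc H^M = \mc H^Q\rtimes\Gamma_M$ attached to the \emph{full} set $Q$, with $\Gamma_{M_1}\subseteq\Gamma_M$ and $\lambda_{M_1M}$ the inclusion. By Theorem \ref{thm:4.7}.a, $\Phi_M(\pi)$ is finite dimensional and tempered, so $\tau := \ind_{\mc H^Q}^{\mc H^{M_1}}\big(\Phi_M(\pi)|_{\mc H^Q}\big)$ has all its $\mc O(T)$-weights among those of $\Phi_M(\pi)$ (here one uses $\Gamma_{M_1}\subseteq\Gamma_M$), hence $\tau$ is a finite dimensional tempered $\mc H^{M_1}$-module and $\Phi_{M_1}^{-1}(\tau)$ is a finite length tempered $M_1$-representation (Theorem \ref{thm:4.7}.a). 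Since the natural surjection $\ind_{\mc H^Q}^{\mc H^M}\big(\Phi_M(\pi)|_{\mc H^Q}\big) = \ind_{\mc H^{M_1}}^{\mc H^M}\tau \twoheadrightarrow \Phi_M(\pi)$ shows, via Condition \ref{cond:Morita}.i, that $\pi$ is a quotient of $I^M_{P_1\cap M}\big(\Phi_{M_1}^{-1}(\tau)\big)$, and this induced representation is tempered (Proposition \ref{prop:3.6}) and of finite length, hence completely reducible, $\pi$ must embed into it; then second adjointness puts a tempered constituent of $M_1$ into $J^M_{\overline{P_1}\cap M}(\pi)$, whose exponent has zero projection to $\mf a^M_{M_1}$. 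As $M_1\subsetneq M$, that contradicts Casselman's square-integrability criterion (cf.\ \eqref{eq:3.19} and \cite[Proposition III.1.1]{Wal}), which forces every such exponent into the open positive cone. This will prove (a).

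For part (b): a square-integrable-modulo-centre representation of $M$ lies in $\Rep(M)^{\fs_M}$, so part (a) forces $\mathrm{d}\Phi_L(\Q R(M,L)) = \Q Q^\vee$; then Condition \ref{cond:Hecke}.iv gives that $\Gamma_M$ satisfies Condition \ref{cond:Gamma} for $Q$, so Theorem \ref{thm:4.7} and Lemma \ref{lem:2.2} (with the role of $\Gamma_Q$ played by $\Gamma_M$) are available. As $\pi$ is tempered and essentially square-integrable, $\Phi_M(\pi)$ is tempered and essentially discrete series by Theorem \ref{thm:4.7}.a,b. On an irreducible constituent, Lemma \ref{lem:2.2}.a writes $\Phi_M(\pi)\cong\delta\otimes t$ with $\delta\in\Irr(\mc H_Q\rtimes\Gamma_M)$ and $t\in T^Q$, Lemma \ref{lem:2.2}.c forces $\delta$ to be discrete series, and Lemma \ref{lem:2.2}.b together with temperedness forces $t\in T^Q_\un$ -- which is exactly the statement that $\Phi_M(\pi)$ factors through $\psi_t:\mc H^Q\rtimes\Gamma_M\to\mc H_Q\rtimes\Gamma_M$. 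For $\pi$ of finite length one applies this to each constituent and notes that these share the unitary central character of $\pi$, which by Lemma \ref{lem:4.3}.b pins down a single $t\in T^Q_\un$.

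For part (c): under the standing assumption $\mathrm{d}\Phi_L(\Q R(M,L)) = \Q Q^\vee$ both inclusions become immediate. If $\pi\in\Irr_{L^2}(M)^{\fs_M}$, then $\pi$ is irreducible, tempered and essentially square-integrable, so $\Phi_M(\pi)$ is irreducible, tempered and essentially discrete series by Theorem \ref{thm:4.7}, i.e.\ $\Phi_M(\pi)\in\Irr_{L^2}(\mc H^M)$. Conversely, for $\rho\in\Irr_{L^2}(\mc H^M)$, Lemma \ref{lem:2.2} (as in (b)) shows $\rho$ factors through $\psi_t$ for some $t\in T^Q_\un$, so Lemma \ref{lem:4.6}.c gives $\Phi_M^{-1}(\rho)$ square-integrable modulo centre; it is irreducible and lies in $\Rep(M)^{\fs_M}$, hence in $\Irr_{L^2}(M)^{\fs_M}$. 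Since $\Phi_M$ already restricts to a bijection $\Irr(M)^{\fs_M}\to\Irr(\mc H^M)$, these two restrictions are mutually inverse. The one genuine obstacle is in part (a): one must locate the proper Levi $M_1$ via Lemma \ref{lem:4.4}, recognise that on the Hecke side everything factors through the parabolic subalgebra $\mc H^{M_1}=\mc H^Q\rtimes\Gamma_{M_1}$ attached to the full $Q$, transport this through Condition \ref{cond:Morita}.i, and then invoke the standard but slightly delicate fact that a square-integrable-modulo-centre representation cannot occur inside $I^M_{P'}$ of a tempered representation of a proper Levi subgroup of $M$.
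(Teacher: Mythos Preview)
Your proof is correct. For parts (b) and (c) it is essentially the paper's argument, only spelled out in more detail.

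For part (a) you take a genuinely different, though closely related, route. The paper argues more directly via \emph{restriction} on the Hecke side: since $\mc H^{M_1} = \mc H^Q \rtimes \Gamma_{M_1}$ and $\mc H^M = \mc H^Q \rtimes \Gamma_M$ share the same underlying root system $R_Q$, the module $\Res^{\mc H^M}_{\mc H^{M_1}} \Phi_M(\pi)$ is immediately tempered and nonzero; by Condition~\ref{cond:Morita}.ii this restriction equals $\Phi_{M_1}\big(\mr{pr}_{\fs_{M_1}} J^M_{\overline{P_1}\cap M}(\pi)\big)$, and then Lemma~\ref{lem:4.6}.a makes the Jacquet module itself tempered and nonzero, contradicting \cite[Lemme~III.3.2]{Wal}. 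You instead build the induced module $\tau = \ind_{\mc H^Q}^{\mc H^{M_1}}\big(\Phi_M(\pi)|_{\mc H^Q}\big)$, transport it back through Condition~\ref{cond:Morita}.i to exhibit $\pi$ as a quotient of $I^M_{P_1\cap M}\big(\Phi_{M_1}^{-1}(\tau)\big)$, invoke semisimplicity of the tempered category to turn the quotient into a summand, and then apply second adjointness to place a tempered constituent inside the Jacquet module. Both arguments land on the same contradiction with Casselman's criterion; the paper's route is shorter because it uses Condition~\ref{cond:Morita}.ii (restriction $\leftrightarrow$ Jacquet) rather than Condition~\ref{cond:Morita}.i (induction $\leftrightarrow$ parabolic induction), thereby avoiding the detour through $\tau$ and the appeal to complete reducibility of finite-length tempered representations. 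One small point of phrasing: where you write ``whose exponent has zero projection to $\mf a^M_{M_1}$'', what you actually use is that an irreducible tempered $M_1$-representation has unitary $A_{M_1}$-character, so its exponent is $0$, which lies outside the open cone ${}^+\mf a^M_{\overline{P_1}\cap M}$ demanded by \eqref{eq:3.19}.
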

\begin{proof}
(a) Suppose, contrary to what we need to show, that $\Rep (M)^{\mf s_M}$ does contain
a representation of the indicated kind. Since it has finite length, it has an irreducible
subrepresentation, say $\pi$. Let $\zeta$ be the central character of $\pi$, and let
$|\zeta| \in X_\nr (M)$ be its absolute value. Then $\pi \otimes |\zeta|^{-1} \in 
\Rep (M)^{\fs_M}$ is an irreducible essentially square-integrable representation with a unitary 
central character. Hence it is square-integrable modulo centre and in particular tempered. By
Theorem \ref{thm:4.7}.a $\Phi_M (\pi \otimes |\zeta|^{-1})$ is also tempered. 

Let $P_1 = M_1 U_1 \subset G$ be the parabolic subgroup such that $M \supset M_1 \supset L$ and 
d$\Phi_L (\Q R(M_1,L))$ is spanned by $\Q R(M,L) \cap \textup{d}\Phi_L^{-1}(R^\vee)$.
From \eqref{eq:3.14} we know that $\Res^{\mc H^M}_{\lambda_{M M_1} (\mc H^{M_1})} \circ \Phi_M 
(\pi \otimes |\zeta|^{-1})$ is tempered and nonzero. By Lemma \ref{lem:4.6}.a 
\[
\Phi_{M_1}^{-1} \circ \Res^{\mc H^M}_{\lambda_{M M_1} (\mc H^{M_1})} \circ \Phi_M 
(\pi \otimes |\zeta|^{-1}) = J^M_{P_1 \cap M} (\pi \otimes |\zeta|^{-1}) 
\]
is also tempered and nonzero. But \cite[Lemme III.3.2]{Wal} says that this contradicts
the square-integrability (modulo centre) of $\pi \otimes |\zeta|^{-1}$.
(b) This follows from Theorem \ref{thm:4.7}, in the same way as 
Lemma \ref{lem:4.6}.c followed from parts (a) and (b) of that lemma.\\
(c) This follows from Theorem \ref{thm:4.7}.b, Lemma \ref{lem:4.6}.c and part (b).\\
\end{proof}

\subsection{Comparison of completions} \

In this paragraph we will show that the equivalences $\Phi_M$ induce Morita equivalences
between the appropriate Schwartz algebras.
In Proposition \ref{prop:2.3} we described the Plancherel isomorphism for the Schwartz
completion of an affine Hecke algebra, in terms of the following data:
\begin{itemize}
\item the set of parabolic subalgebras $\mc H^Q \rtimes \Gamma_Q$ of $\mc H \rtimes \Gamma$,
up to $\Gamma W$-equivalence,
\item the tori $T^Q_\un$,
\item the sets $\Irr_{L^2}(\mc H^Q \rtimes \Gamma_Q)$, up to the actions of $T^Q_\un$ and
$W\Gamma (Q,Q)$,
\item the groupoid $\mc G$,
\item the intertwining operators $I(g,Q,\sigma,t)$ for $g \in \mc G_{Q,\sigma}$.
\end{itemize}
These data depend mainly on the categories $\Mod (\mc H^Q \rtimes \Gamma_Q)$.
In Condition \ref{cond:Hecke} we included the possibility that not the $\mc H^M$, but
the $(\mc H^M )^\op$ are affine Hecke algebras, so that $\Phi_M$ becomes an equivalence
between $\Rep (G)^{\mf s}$ and $\Mod \big( (\mc H^Q \rtimes \Gamma_Q )^\op \big)$.
Then we use Lemma \ref{lem:2.4} to describe the Plancherel isomorphism of $\mc S (\mc R,q) 
\rtimes \Gamma$ in terms of right modules of its subalgebras $\mc H^Q \rtimes \Gamma_Q$,
that is in terms of the categories $\Mod (\mc H^M)$. With this in mind, it
suffices to consider the case where each $\mc H^M$ is an (extended) affine Hecke algebra.

On the other hand, in Theorem \ref{thm:3.2} the Plancherel isomorphism for $\mc S (G)^{\mf s}$
was formulated in terms of:
\begin{itemize}
\item the set of parabolic subgroups $P \supset P_L$, up to conjugation by $W_{\mf s}$,
\item the tori $X_\unr (M)$,
\item the sets $\Irr_{L^2}(M)^{\mf s_M}$, 
up to the actions of $X_\unr (M)$ and $\mr{Stab}_{W_{\mf s}}(M)$,
\item the groups $W' (T_\omega)$,
\item the intertwining operators $I(w',\omega \otimes \chi)$ for $w' \in W' (T_\omega)$. 
\end{itemize}
We will compare these two data sets, and manipulate them until we get a nice bijection from
one side to the other. 

By Proposition \ref{prop:4.1}.a only the $P$ with d$\Phi_L (\Q R(M,L))$ of the form 
$\Q Q^\vee$ occur in the Plancherel isomorphism, since for the other $P$ the set 
$\Irr_{L^2}(M)^{\mf s_M}$ is empty. Given $Q \subset \Delta$, we define $\Gamma_Q$ as 
$\Gamma_M$, where $\Q R(M,L) = \Q Q^\vee$.

By Condition \ref{cond:Hecke}.iv there is a canonical bijection from the parabolic subgroups 
$P = M U_P$, with $P\supset P_L, M \supset L$ and d$\Phi_L (\Q R(M,L))$ of the form 
$\Q Q^\vee$ and modulo conjugation by elements of $W_\fs$, to the parabolic subalgebras 
$\mc H^Q \rtimes \Gamma_Q$ of $\mc H^G$, up to association by $W \Gamma$. From Theorem 
\ref{thm:3.2} one sees that two such Levi subgroups $M \subset G$ are $W_\fs$-conjugate if 
and only if the tempered parts of the two subsets $I_P^G (\Rep (M )^{\mf s_M} )$ coincide. 
By Condition \ref{cond:Morita}.i and Theorem \ref{thm:4.7}.a this means precisely that two 
subsets $\ind_{\lambda_{M G}(\mc H^M)}^{\mc H^G} (\Mod (\mc H^M))$ of $\Mod (\mc H^G)$ 
coincide. By Theorem \ref{thm:2.1} that happens if and only if the two $\mc H^M$ are 
$\Gamma W$-equivalent. Thus we can pick of representatives for such $P$ modulo $W_\fs$-
conjugacy, and then the corresponding $\mc H^M$ form representatives for $\Gamma W$-equivalence 
classes of parabolic subalgebras $\mc H^M = \mc H^Q \rtimes \Gamma_Q$ of $\mc H^G$.

By Proposition \ref{prop:4.1},b $\Phi_M$ gives a bijection between $\Irr_{L^2}(M)^{\mf s_M}$ 
and $\Irr_{L^2}(\mc H^M)$. Upon parabolic induction, every $X_\unr (M)$-orbit in 
$\Irr_{L^2}(M)^{\mf s_M}$ (resp. every $T^Q_\un$-orbit in $\Irr_{L^2}(\mc H^M)$ gives rise to a 
family of tempered representations in $\Rep (G )^{\mf s}$ (resp. in $\Mod(\mc H^G)$). From 
Theorem \ref{thm:3.2} we see that $I_P^G (\omega)$ and $I_P^G (\omega')$ belong to the same 
such family if and only if $\omega' = w (\omega \otimes \chi)$ for some 
$w \in \mr{Stab}_{W_{\mf s}}(M)$ and $\chi \in X_\unr (M)$. Similarly, by \ref{cond:Hecke}.ii 
and Proposition \ref{prop:2.3} 
\[
\ind_{\lambda_{M G}(\mc H^M)}^{\mc H^G}(\sigma) \quad \text{and} \quad 
\ind_{\lambda_{M G}(\mc H^M)}^{\mc H^G}(\sigma')
\]
belong to the same family in $\Mod (\mc H^G)$ if and only if $\sigma' = g (\sigma \circ \phi_t)$
for some $g \in \mc G_{Q Q}$ and $t \in T^Q_\un$. 
Applying $\Phi_G$ and Condition \ref{cond:Morita}.i, we see that the respective equivalence
relations on $\Irr_{L^2}(M)^{\mf s_M}$ and $\Irr_{L^2}(\mc H^M)$ agree via $\Phi_M$.

Let the set of representatives $(Q,\sigma) / \sim$ be as in \eqref{eq:2.4}
Let $(P,M,\omega) / \sim$ be its image under Lemma \ref{lem:4.4} and the $\Phi_M^{-1}$.
Then $(P,M,\omega) / \sim$ is a set of representatives as in Theorem \ref{thm:3.2}.
Lemma \ref{lem:4.5}.c and Condition \ref{cond:Morita}.i guarantee that
\begin{equation}\label{eq:4.4}
\Phi_G (I_P^G (\omega \otimes \chi)) = \ind_{\lambda_{M G}(\mc H^M)}^{\mc H^G} (\sigma \otimes
\Phi_\nr (\chi)) = \pi (Q,\sigma,\Phi_\nr (\chi)) .
\end{equation}
Hence $\Phi_G$ matches the finite length tempered elements of $\Rep (G)$ associated to 
$(P,M,\omega)$ (via Theorem \ref{thm:3.2}) with the finite dimensional tempered 
$\mc H^G$-modules associated to $(Q,\sigma)$ (via Proposition \ref{prop:2.3}).
By Theorem \ref{thm:3.2} $I_P^G (\omega \otimes \chi)$ and $I_P^G (\omega \otimes \chi')$
are isomorphic if and only $\chi' = w' \chi$ for some $w' \in W' (T_\omega)$. Analogously,
Proposition \ref{prop:2.3} entails that $\pi (Q,\sigma,t)$ and $\pi (Q,\sigma,t')$ are 
isomorphic if and only if $t' = g (t)$ for some $g \in \mc G_{Q,\sigma}$. From this and 
\eqref{eq:4.2} we see that $\Phi_\nr$ (from Lemma \ref{lem:4.5}.a) induces a bijection
\begin{equation}\label{eq:4.3}
X_\unr (M) / W' (T_\omega) \to T^Q_\un / \mc G_{Q,\sigma} \cong X_\unr (M) / \mc G'_{Q,\sigma} .
\end{equation}
In the proof of Lemma \ref{lem:4.3} we checked that $W_\fs$ (resp. $W \Gamma$) acts faithfully
on $X_\nr (L)$ (resp. on $T$). Then we see from \eqref{eq:3.11} and \eqref{eq:GPQ} that the
group actions in \eqref{eq:4.3} are faithful. Comparing the outer sides of \eqref{eq:4.3} and 
using the same method as in the proof of Lemma \ref{lem:4.3}, we deduce that 
$W' (T_\omega) = \mc G'_{Q,\sigma}$ as subgroups of Aut$(X_\unr (M))$.

Now we come to the intertwining operators. Recall from \eqref{eq:3.12} and \eqref{eq:3.13} that
$I(w',\omega \otimes \chi)$ for $w' \in W' (T_\omega)$ comes from a unitary operator
\begin{equation}\label{eq:4.5}
\pi (w',\omega, \chi) : I_P^G (\omega \otimes \chi) \to I_P^G (\omega \otimes w' (\chi)) .
\end{equation}
For bookkeeping purposes we replace $T^Q$ by $X_\nr (M)$ and $\mc G_{Q,\sigma}$ by
$\mc G'_{Q,\sigma}$, at the same time defining
\[
\pi (Q,\sigma,\chi) := \pi (Q,\sigma,\Phi_\nr (\chi)) \quad \text{and} \quad
\pi (g',Q,\sigma,\chi) = \pi (g,Q,\sigma,\Phi_\nr (\chi))
\]
when $g' \in \mc G'_{Q,\sigma}$ is a lift of $g \in \mc G_{Q,\sigma}$. In particular, for
$k \in X_\nr (M,\sigma)$ the interwiner $\pi (k,Q,\sigma,\chi)$ is the identity as map
on the underlying vector spaces, it only changes $\chi$ to $k \chi$. Then \eqref{eq:2.1} 
says that the action of $\mc G'_{Q,\sigma}$ in Proposition \ref{prop:2.3} and \eqref{eq:2.4}
comes from unitary intertwiners
\begin{equation}\label{eq:4.6}
\pi (g',Q,\sigma,\chi) \in \Hom_{\mc H^G}(\pi (Q,\sigma,\chi), \pi (Q,\sigma, g'(\chi)) .
\end{equation}

\begin{lem}\label{lem:4.10}
The intertwining operators \eqref{eq:4.5} and \eqref{eq:4.6} can be normalized so that
\[
\Phi_G ( \pi (w',\omega, \chi) ) = \pi (g',Q,\sigma,\Phi_\nr (\chi))
\]
whenever $w'$ corresponds to $g'$ under the identification 
$W' (T_\omega) = \mc G'_{Q,\sigma}$ from \eqref{eq:4.3}.
\end{lem}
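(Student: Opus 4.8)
The plan is to compare the two algebraic families of intertwiners at generic parameters, where the relevant $\Hom$-spaces become one-dimensional, to read off the scalar function measuring their discrepancy, and to absorb that scalar into the normalization on the group side. First I would fix a Zariski-dense open subset $U \subseteq X_\unr (M)$ on which $I_P^G (\omega \otimes \chi)$ is irreducible; such a $U$ exists by \cite[Th\'eor\`eme 3.2]{Sau}, or by Proposition \ref{prop:3.5}.b together with uniqueness of the Langlands quotient, exactly as in the proof of Proposition \ref{prop:2.7}.b. By \eqref{eq:4.4} and the fact that $\Phi_G$ is an equivalence, $\pi (Q,\sigma,\Phi_\nr (\chi)) = \Phi_G \big( I_P^G (\omega \otimes \chi) \big)$ is then irreducible for $\chi \in U$, and it is isomorphic to $\pi (Q,\sigma,\Phi_\nr (w'\chi))$ through the invertible operator \eqref{eq:4.6}. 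Hence for $\chi \in U$ the space $\Hom_{\mc H^G}\big( \pi (Q,\sigma,\Phi_\nr \chi),\pi (Q,\sigma,\Phi_\nr (w'\chi)) \big)$ is one-dimensional. Since $\Phi_G$ is $\C$-linear on morphism spaces and preserves isomorphisms, it maps the invertible intertwiner $\pi (w',\omega,\chi)$ of \eqref{eq:4.5} into this line, so there is a unique $c (w',\chi) \in \C^\times$ with
\[
\Phi_G \big( \pi (w',\omega,\chi) \big) = c (w',\chi)\, \pi \big( g',Q,\sigma,\Phi_\nr (\chi) \big) \qquad (\chi \in U) .
\]

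Next I would show that $c (w',\cdot)$ extends to a nowhere-vanishing regular function on all of $X_\unr (M)$. By \cite[Lemme V.3.1]{Wal} and \cite[Theorem 3.1.5]{SolAHA} the assignments $\chi \mapsto \pi (w',\omega,\chi)$ and $\chi \mapsto \pi (g',Q,\sigma,\Phi_\nr \chi)$ are algebraic families of invertible linear maps on the $\chi$-independent underlying vector spaces, and $\Phi_G$ carries algebraic families of $\mc H (G)^{\mf s}$-modules, together with their morphisms, to algebraic families of $\mc H^G$-modules, being implemented by a finitely generated projective bimodule. Writing $A (\chi) = \Phi_G (\pi (w',\omega,\chi))$ and $B (\chi) = \pi (g',Q,\sigma,\Phi_\nr \chi)$, the regular map $\chi \mapsto A (\chi) B (\chi)^{-1}$ takes scalar values on the dense subset $U$; as the scalar operators are a Zariski-closed subset of $\GL$, it takes scalar values everywhere, so $c (w',\chi)$ is defined and regular on all of $X_\unr (M)$, and nowhere zero because $A (\chi) B (\chi)^{-1}$ is invertible. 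I would then renormalize by replacing $\pi (w',\omega,\chi)$ with $c (w',\chi)^{-1} \pi (w',\omega,\chi)$, leaving the operators \eqref{eq:4.6} (and hence the conjugation operators $I (g',Q,\sigma,\cdot)$, which do not see scaling) unchanged. With this normalization the displayed identity reads $\Phi_G (\pi (w',\omega,\chi)) = \pi (g',Q,\sigma,\Phi_\nr (\chi))$, which is the assertion.

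It remains to note that this is a legitimate renormalization: the rescaled operators are still $G$-intertwiners $I_P^G (\omega \otimes \chi) \to I_P^G (\omega \otimes w' \chi)$, they still vary continuously in the Zariski topology on $X_\unr (M)$ because $c (w',\cdot)^{-1}$ is regular and nowhere zero, and they differ from the originals by an everywhere-defined scalar, so every structural relation that $\Phi_G$ transports --- the groupoid/cocycle identities among the intertwiners and their compatibility with the identification $W' (T_\omega) = \mc G'_{Q,\sigma}$ --- is simply inherited from the unchanged $\mc H^G$-side. The step I expect to need the most care is precisely the claim that $\Phi_G$ respects algebraic families and their morphism spaces, i.e.\ that the Morita equivalence is compatible with the variation over $X_\nr (M)$. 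One clean way to secure this is to pass, via \eqref{eq:3.4}, to the finite type algebras $\mc H (G,K_{\mf s})^{\mf s}$ and $\mc H^G$ (up to a matrix algebra), where $\Phi_G$ is induced by an honest algebra isomorphism and algebraicity over the common centre $\mc O (T/W\Gamma) \cong \mc O \big( X_\nr (L)/X_\nr (L,\sigma) \big)^{W_{\mf s}}$ is manifest; alternatively, one invokes base change for finitely generated projective modules and their $\Hom$-spaces over this centre.
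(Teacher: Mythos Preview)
Your argument is correct and follows the same underlying idea as the paper: uniqueness of the intertwiners up to scalars (from generic irreducibility plus algebraic variation in $\chi$) forces $\Phi_G(\pi(w',\omega,\chi))$ and $\pi(g',Q,\sigma,\Phi_\nr(\chi))$ to differ by a scalar function, which can be absorbed into the normalization of $\pi(w',\omega,\chi)$. The paper's proof is terser: rather than isolating the scalar $c(w',\chi)$, checking it extends regularly, and then dividing by it, the paper simply \emph{defines} the new $\pi(w',\omega,\chi)$ to be $\Phi_G^{-1}\big(\pi(g',Q,\sigma,\Phi_\nr(\chi))\big)$. This sidesteps your extension argument entirely, since the right-hand side is already defined for all $\chi$; the one-line justification is that this replacement differs from the original by a scalar (hence is still an intertwiner of the required type). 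Your extra care about whether $\Phi_G$ carries algebraic families to algebraic families is a point the paper glosses over, and your suggested route through $\mc H(G,K_{\mf s})^{\mf s}$ or through base change over the common centre is a reasonable way to make it precise.
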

\begin{proof}
Both \eqref{eq:4.5} and \eqref{eq:4.6} are unique up to scalars, because they depend
algebraically on $\chi$ and because for generic $\chi \in X_\unr (M)$ the involved 
representations are irreducible. (The latter follows for example from the Plancherel
isomorphisms.) Therefore, if $w' = g'$ in the indicated way, 
$\Phi_G ( \pi (w',\omega, \chi) )$ equals $\pi (g',Q,\sigma,\Phi_\nr (\chi))$
up to a complex number of absolute value 1. To make this scalar 1, we simply replace 
$\pi (w',\omega, \chi)$ by $\Phi_G^{-1} \big( \pi (g',Q,\sigma,\Phi_\nr (\chi)) \big)$. 
\end{proof}

We remark that the normalization from Lemma \ref{lem:4.10} is harmless, because it 
does not change $I(w',\omega \otimes \chi)$. 

\begin{thm}\label{thm:4.2}
Under the Conditions \ref{cond:Morita}, \ref{cond:Hecke} and \ref{cond:Gamma},
$\Phi_G : \Rep (G)^{\mf s} \to \Mod (\mc H^G)$ induces Morita equivalences
\[
\mc S (G)^{\mf s} \sim_M \mc S (\mc R,q) \rtimes \Gamma \quad \text{and} \quad
C_r^* (G)^{\mf s} \sim_M C_r^* (\mc R,q) \rtimes \Gamma .
\]
\end{thm}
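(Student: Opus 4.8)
The plan is to play the two Plancherel isomorphisms off against each other: Theorem \ref{thm:3.2} (resp. Theorem \ref{thm:3.3}) for the completions of $\mc H (G)^{\fs}$, and Proposition \ref{prop:2.3}/\eqref{eq:2.4} (resp. its continuous analogue) for the completions of $\mc H (\mc R,q) \rtimes \Gamma$. First I would dispose of the ``opposite algebra'' alternative in Condition \ref{cond:Hecke}.i: a Morita equivalence $\Rep (M)^{\fs_M} \to \Mod ((\mc H^M)^{\op})$ is the same datum as an equivalence with the category of right $\mc H^M$-modules, Lemma \ref{lem:2.4} provides the Plancherel description of the completions of $(\mc H (\mc R,q) \rtimes \Gamma)^{\op}$ in exactly those terms, and a Morita equivalence between two algebras induces one between their opposites; so I may assume throughout that every $\mc H^M$ is an extended affine Hecke algebra. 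I would also pass to the level of the compact open subgroup $K_{\fs}$, via the Morita equivalences \eqref{eq:3.4}, in order to work with the finite-dimensional fibres $\End_\C (I_P^G (V_\omega)^{K_{\fs}})$ appearing in Theorem \ref{thm:3.2}.

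The next step is to collect everything established so far in this section into a single dictionary between the data controlling the two Plancherel isomorphisms, summand by summand. Condition \ref{cond:Hecke}.iv together with Lemma \ref{lem:4.4} identifies the index set $(P,M,\omega)/\!\sim$ of Theorem \ref{thm:3.2} with the index set $(Q,\sigma)/\mc G$ of \eqref{eq:2.4} (using Proposition \ref{prop:4.1}.b to match $\omega$ with $\sigma$); Lemma \ref{lem:4.5} identifies the base tori $X_\unr (M)$ and $T^Q_\un$ via $\Phi_\nr$; the argument around \eqref{eq:4.3} gives $W'(T_\omega) = \mc G'_{Q,\sigma}$ as finite groups acting on that torus; and Lemma \ref{lem:4.10} normalises the intertwining operators so that $\Phi_G$ carries one family to the other, hence so that the two $2$-cocycles they define on $W'(T_\omega)=\mc G'_{Q,\sigma}$ literally coincide. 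Thus, summand by summand, I am comparing two twisted algebras of $C^\infty$-sections of an $\End$-bundle over the \emph{same} base torus, for the \emph{same} finite group and the \emph{same} cocycle, the only difference being the fibre: $\End_\C (I_P^G (V_\omega)^{K_{\fs}})$ on the group side against $\End_\C (V_{Q,\sigma})$ on the Hecke side.

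The main remaining point is then that two such twisted section algebras are always Morita equivalent, even though their fibre dimensions differ in general. For a fixed summand I would take the trivial bundle $T^Q_\un \times \Hom_\C \big( V_{Q,\sigma}, I_P^G (V_\omega)^{K_{\fs}} \big)$, equip it with the action of $\mc G'_{Q,\sigma}$ built from the two matched families of intertwiners (which share the cocycle, so the action is well defined), and pass to $\mc G'_{Q,\sigma}$-invariant $C^\infty$-sections. Fullness over every $\mc G'_{Q,\sigma}$-orbit is automatic, since $\End_\C$ of a nonzero vector space is Morita equivalent to $\C$ and this promotes to the equivariant bundle setting; the one genuine verification is that this bimodule is complete for the relevant Fr\'echet topologies, which is routine and is precisely the check underlying the standard fact that $\big(C^\infty (X) \otimes \End_\C (V)\big)^{\Gamma} \sim_M C^\infty (X) \rtimes_\natural \Gamma$. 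Summing over $(Q,\sigma)/\mc G$ and composing with \eqref{eq:3.4} yields $\mc S (G)^{\fs} \sim_M \mc S (\mc R,q) \rtimes \Gamma$. That this Morita equivalence is \emph{induced by} $\Phi_G$ follows because, by construction, the resulting equivalence of module categories sends $I_P^G (\omega \otimes \chi)$ to $\pi (Q,\sigma,\Phi_\nr (\chi))$, which by \eqref{eq:4.4} is what $\Phi_G$ does on parabolically induced tempered representations; these generate the finite length tempered subcategory of $\Rep (G)^{\fs}$ by Theorems \ref{thm:2.6} and \ref{thm:3.4} together with Propositions \ref{prop:2.7} and \ref{prop:3.5}, so the two equivalences agree there. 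The statement for reduced $C^*$-algebras is obtained by the same argument with $C^\infty$ replaced by $C$, the fibres $\End_\C (I_P^G (V_\omega)^{K_{\fs}})$ replaced by the compact operators $\mf K (\omega,P)$ (again Morita equivalent to $\C$), and Theorem \ref{thm:3.2} replaced by Theorem \ref{thm:3.3}; the imprimitivity bimodules are then honest $C^*$-correspondences.

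The obstacle I expect to be most delicate is not conceptual but one of bookkeeping: one must make sure that the twisting cocycles on $W'(T_\omega)$ and on $\mc G'_{Q,\sigma}$ are identified on the nose before taking invariants — otherwise the two invariant section algebras need not be Morita equivalent — and Lemma \ref{lem:4.10} has to be threaded carefully through the construction of the imprimitivity bimodule for this to hold. A secondary technicality is checking that $\mc G'_{Q,\sigma}$-invariants of a Fr\'echet (resp. $C^*$) imprimitivity bimodule again form an imprimitivity bimodule in the appropriate category; given that all groups in sight are finite, I expect this to go through without trouble.
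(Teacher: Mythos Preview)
Your overall plan matches the paper's proof: compare the two Plancherel isomorphisms summand by summand, use Lemma \ref{lem:4.4} and Proposition \ref{prop:4.1} to match index sets, use \eqref{eq:4.3} to identify $W'(T_\omega)$ with $\mc G'_{Q,\sigma}$, normalise intertwiners via Lemma \ref{lem:4.10}, and then build an explicit imprimitivity bimodule out of sections of $\Hom_\C(V_{Q,\sigma}, I_P^G(V_\omega)^{K_\fs})$. The paper calls these bimodules $B_1, B_2$ and verifies $B_1 \otimes_{A_1} B_2 \cong A_2$ and its mirror by localising at each $\chi \in X_\unr(M)$.

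There is, however, a genuine gap in your fullness argument. You write that fullness is ``automatic, since $\End_\C$ of a nonzero vector space is Morita equivalent to $\C$ and this promotes to the equivariant bundle setting''. Taking $\Gamma$-invariants of a Morita bimodule need not produce a Morita bimodule. For a toy example with $X$ a point, $\Gamma = \Z/2$, $V_1 = \C$ with the trivial action and $V_2 = \C$ with the sign action, one has $\End_\C(V_i)^\Gamma = \C$ for both $i$ but $\Hom_\C(V_1,V_2)^\Gamma = 0$. Likewise, the ``standard fact'' $(C^\infty(X) \otimes \End_\C(V))^\Gamma \sim_M C^\infty(X) \rtimes_\natural \Gamma$ holds only when, at every point, the projective stabiliser representation on $V$ contains every irreducible of the relevant twisted group algebra; matching cocycles alone does not suffice.

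What is actually needed at each $\chi$ is that the two projective $W'(T_\omega)_\chi$-representations on $I_P^G(V_\omega)^{K_\fs}$ and on $V_{Q,\sigma}$ contain the \emph{same} set of irreducibles of the twisted group algebra. The paper secures this by passing to a finite central extension $W''$ of $W'(T_\omega)_\chi$ on which both projective representations linearise, and then checking the tensor identity \eqref{eq:4.11} for honest $W''$-representations. The underlying reason the $W''$-irreducible supports agree is that the intertwiners span the $G$-commutant of $I_P^G(\omega \otimes \chi)$ (this is part of the Plancherel theorem) and likewise on the Hecke side, so the $W''$-isotypic decomposition coincides with the decomposition into irreducible $G$- (respectively $\mc H^G$-) constituents; since $\Phi_G$ is an equivalence sending $I_P^G(\omega \otimes \chi)$ to $\pi(Q,\sigma,\Phi_\nr(\chi))$ and matching intertwiners by Lemma \ref{lem:4.10}, these decompositions correspond. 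This is the substantive step you skipped; once it is in place, the rest of your outline (including the $C^*$-case) goes through as the paper does it.
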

\begin{proof}
In view of Proposition \ref{prop:2.3} and Theorem \ref{thm:3.2}
we have to compare the Schwartz algebras
\begin{equation}\label{eq:4.12}
\begin{aligned}
& \bigoplus\nolimits_{(P,M,\omega) / \sim} \big( C^\infty (X_\unr (M)) \otimes 
\End_\C (I_P^G (V_\omega )^{K_{\mf s}} \big)^{W' (T_\omega)} \quad \text{and} \\
& \bigoplus_{(Q,\sigma) / \mc G} C^\infty \big( T^Q_\un ; \End_\C (V_{Q,\sigma}) 
\big)^{\mc G_{Q,\sigma}} = \bigoplus_{(Q,\sigma) / \mc G} 
C^\infty \big( X_\unr (M) ; \End_\C (V_{Q,\sigma}) \big)^{\mc G'_{Q,\sigma}}.
\end{aligned}
\end{equation}
To justify the equality in the second line, we note that a section of the algebra bundle 
over $X_\unr (M)$ is $X_\nr (M,\sigma)$-invariant if and only if it descends to a section
of the analogous algebra bundle over $T^Q_\un$.

By the above constructions the $\Phi_M$ provide a bijection between the indexing sets
for the sums in \eqref{eq:4.12}, so it suffices to compare
\begin{equation}\label{eq:4.8}
\begin{aligned}
& A_1 := C^\infty \big( X_\unr (M) ; \End_\C (I_P^G (V_\omega )^{K_{\mf s}}) 
\big)^{W' (T_\omega)} \quad \text{with} \quad \\
& A_2 := C^\infty \big( X_\unr (M) ; \End_\C (V_{Q,\sigma}) \big)^{\mc G'_{Q,\sigma}}
\end{aligned}
\end{equation}
when $(P,M)$ corresponds to $Q$ via Lemma \ref{lem:4.4} and $\Phi_M (\omega)
= \sigma$. The Morita equivalences $\mc S (G,K_{\mf s} )^{\mf s} \sim_M \mc S (G)^{\mf s}$
and $\Phi_G$ send $I_P^G (\omega \otimes \chi)^{K_{\mf s}}$ to $\pi (Q,\sigma,\chi)$
and by Lemma \ref{lem:4.10} this is compatible with the intertwining operators. Identifying
$W' (T_\omega)$ and $\mc G'_{Q,\sigma}$ via \eqref{eq:4.3}, we consider the following 
bimodules for $A_1$ and $A_2$:
\begin{align*}
& B_1 := C^\infty \big( X_\unr (M) ; \Hom_\C (I_P^G (V_\omega )^{K_{\mf s}}, V_{Q,\sigma}) 
\big)^{W' (T_\omega)} , \\
& B_2 := C^\infty \big( X_\unr (M) ; \Hom_\C (V_{Q,\sigma}, I_P^G (V_\omega )^{K_{\mf s}}) 
\big)^{W' (T_\omega)} . 
\end{align*}
Here the $W' (T_\omega)$-actions are
\[
\begin{array}{lll}
(w' \cdot f_1)(w' \chi) & = & 
\pi (w', \omega \otimes \chi) f_1 (\chi) \pi (w',Q,\sigma,\chi)^{-1} \qquad f_1 \in B_1 , \\
(w' \cdot f_2)(w' \chi) & = & 
\pi (w', Q,\sigma,\chi) f_2 (\chi) \pi (w',\omega \otimes \chi)^{-1} \qquad f_2 \in B_2 . 
\end{array}
\]
Notice that by Lemma \ref{lem:4.10} these are honest group actions, not just up to
some scalars. We claim that 
\begin{equation}\label{eq:4.9}
B_1 \otimes_{A_1} B_2 \cong A_2 \quad \text{and} \quad B_2 \otimes_{A_2} B_1 \cong A_1
\end{equation}
as bimodules over $A_2$, respectively $A_1$. Since all these algebras and modules are of
finite rank over $C^\infty (X_\unr (M))^{W' (T_\omega)}$, it suffices to check this locally,
at any $\chi \in X_\unr (M)$. Then the proof of the first half of \eqref{eq:4.9} reduces 
to checking that
\begin{align}
\nonumber \Hom_\C (I_P^G (V_\omega )^{K_{\mf s}}, V_{Q,\sigma})^{W' (T_\omega)_\chi} 
& \otimes_{\End_\C (I_P^G (V_\omega)^{K_{\mf s}} )^{W' (T_\omega)_\chi}}
\Hom_\C (V_{Q,\sigma}, I_P^G (V_\omega )^{K_{\mf s}})^{W' (T_\omega )_\chi}  \\
\label{eq:4.10} & \cong \End_\C (V_{Q,\sigma} )^{W' (T_\omega )_\chi} ,
\end{align}
and the other way round for $B_2 \otimes_{A_2} B_1 \cong A_1$.

By the uniqueness of $\pi (w',Q,\sigma,\chi)$ up to scalars, $w' \mapsto \pi (w',Q,\sigma,\chi)$
defines a projective representation of $W' (T_\omega)_\chi$. Let $W''$ be a finite central
extension of $W' (T_\omega)_\chi$, such that this lifts to a linear representation of $W''$.
By \eqref{eq:4.9} the map $w' \mapsto \pi (w',\omega \otimes \chi)$ also lifts to a linear
representation of $W''$. Then $W''$ and $W' (T_\omega)$ have the same invariants in the all
involved modules, so we can rewrite \eqref{eq:4.10} as
\begin{align}\nonumber
\Hom_{\C [W'']} (I_P^G (V_\omega )^{K_{\mf s}}, V_{Q,\sigma}) 
& \otimes_{\End_{\C [W'']}(I_P^G (V_\omega)^{K_{\mf s}} )}
\Hom_{\C [W'']} (V_{Q,\sigma}, I_P^G (V_\omega )^{K_{\mf s}})  \\
\label{eq:4.11} & \cong \End_{\C [W'']} (V_{Q,\sigma} ) .
\end{align}
This is a statement about finite dimensional representations of the finite group $W''$. One
can verfiy \eqref{eq:4.11} by reducing it to the case of irreducible $W''$-representations,
where it is obvious. 

This also proves \eqref{eq:4.10} and \eqref{eq:4.9}, and shows that the algebras in 
\eqref{eq:4.12} are Morita equivalent. Combining that with Theorem \ref{thm:3.2} and 
\eqref{eq:2.4}, we find the desired Morita equivalences of Schwartz algebras. 

To prove that $C_r (G)^{\mf s}$ and $C_r (\mc R,q) \rtimes \Gamma$ are Morita equivalent,
we can use exactly the same argument. We only have to replace $C^\infty$ by continuous
functions everywhere, and to use Theorem \ref{thm:3.3} instead of Theorem \ref{thm:3.2}.
\end{proof}
\vspace{3mm}

\section{Hecke algebras from Bushnell--Kutzko types}
\label{sec:types}

Let $L \subset G$ be a Levi subgroup and let $\sigma \in \Irr (L)$ be supercuspidal.
Recall from \cite[\S 4]{BuKu} that a type for $\fs = [L,\sigma]_G$ consists of a compact 
open subgroup $J \subset G$, and a $\lambda \in \Irr (J)$, such that $\Rep (G)^\fs$ 
is precisely the category of smooth $G$-representations which are generated by their
$\lambda$-isotypical subspace. To such a type one associates the algebra
\[
\mc H (G,J,\lambda) = \End_G (\ind_J^G \lambda) , 
\]
which (by definition) acts from the right on $\ind_J^G \lambda$. Then there is a
Morita equivalence
\begin{equation}\label{eq:5.1}
\begin{array}{cccc}
\Phi_G : & \Rep (G)^\fs & \to & \Mod (\mc H (G,J,\lambda)) \\
 & \pi & \mapsto & \Hom_J (\lambda, \pi ) \cong \Hom_G (\ind_J^G \lambda, \pi) .
\end{array} 
\end{equation}
For a Levi subgroup $M \subset G$ containing $L$, Bushnell and Kutzko \cite[\S 8]{BuKu} 
deve\-loped the notion that $(J,\lambda)$ covers a $[L,\sigma]_M$-type $(J_M,\lambda_M)$. 
Roughly speaking, this means that $J_M = J \cap M$, that $\lambda_M = \Res^J_{J_M} \lambda$ 
and that $\mc H (G,J,\lambda)$ contains invertible "strongly positive" elements.
Under these conditions, writing $\fs_M = [L,\sigma]_M$, there is a Morita equivalence
$\Phi_M : \Rep (M)^{\fs_M} \to \Mod (\mc H (M,J_M,\lambda_M))$ as in \eqref{eq:5.1},
which is in several ways compatible with $\Phi_G$.

\begin{lem}\label{lem:T}
Suppose that $(J,\lambda)$ is a cover of a $[L,\sigma]_L$-type $(J_L,\lambda_L)$. Then
Condition \ref{cond:Morita} is fulfilled, with $\mc H^M = \mc H (M,J_M,\lambda_M))$.
\end{lem}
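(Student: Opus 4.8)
The plan is to verify the three parts of Condition \ref{cond:Morita} by unwinding the Bushnell--Kutzko theory of covers, since each part is a compatibility of $\Phi_M$ with a natural functor (parabolic induction, Jacquet restriction, transitivity of the algebra injections). Throughout I take $\mc H^M = \mc H(M,J_M,\lambda_M)$, where $(J_M,\lambda_M) = (J\cap M,\Res^J_{J\cap M}\lambda)$ is the $[L,\sigma]_M$-type covered by $(J,\lambda)$; that such covers exist for every intermediate Levi $M$, and that they are themselves compatible with the tower $L\subset M\subset M'$, is part of the standard theory \cite[\S 7, \S 8]{BuKu}. The algebra injection $\lambda_{MM'}\colon\mc H^M\to\mc H^{M'}$ is the $t_M$-map of \cite[\S 7, \S 12]{BuKu}: it is the injection $\mc H(M,J_M,\lambda_M)\to\mc H(M',J_{M'},\lambda_{M'})$ coming from the fact that $(J_{M'},\lambda_{M'})$ covers $(J_M,\lambda_M)$ inside $M'$.

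First I would treat part (i), the compatibility with parabolic induction. This is \cite[Corollary 8.4]{BuKu} (together with \cite[Theorem 7.9]{BuKu}): if $(J,\lambda)$ covers $(J_M,\lambda_M)$, then for $\pi\in\Rep(M)^{\fs_M}$ there is a natural isomorphism $\Hom_{J}(\lambda, I^G_P\pi)\cong \Hom_{J_M}(\lambda_M,\pi)$ as modules, and the right-module structure matches the induction $\ind^{\mc H^{M'}}_{\lambda_{MM'}(\mc H^M)}$ along the $t_M$-map. I would phrase this for the pair $(M,M')$ rather than $(M,G)$; the argument is identical since $(J_{M'},\lambda_{M'})$ covers $(J_M,\lambda_M)$ in $M'$, so $I^{M'}_{P\cap M'}$ is handled by exactly the same statement with $G$ replaced by $M'$. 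Part (iii), the cocycle/transitivity condition $\lambda_{MM''}=\lambda_{M'M''}\circ\lambda_{MM'}$, is then a formal property of the $t$-maps: it follows from the transitivity of "covers" along $L\subset M\subset M'\subset M''$ and the explicit description of $\lambda_{MM'}$ via strongly positive elements, as in \cite[\S 12]{BuKu}; one checks it on the invertible generators $\theta$ attached to strongly positive elements of the center, where it is the obvious identity.

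Part (ii) is the one I expect to be the main obstacle, since it concerns Jacquet restriction, which does not behave as simply as induction in type theory. The key input is the second-adjointness-type statement for covers: with $\overline P$ the opposite parabolic, one has for $\pi\in\Rep(M')^{\fs_{M'}}$ a natural isomorphism $\Hom_{J_M}(\lambda_M, \mr{pr}_{\fs_M}J^{M'}_{\overline P\cap M'}\pi)\cong \Res^{\mc H^{M'}}_{\lambda_{MM'}(\mc H^M)}\Hom_{J_{M'}}(\lambda_{M'},\pi)$. The cleanest route is to deduce this from part (i) by adjunction: $I^{M'}_{P\cap M'}$ is left adjoint to $\mr{pr}_{\fs_M}\circ J^{M'}_{\overline P\cap M'}$ on the relevant Bernstein blocks (normalized parabolic induction is left adjoint to the twisted Jacquet functor for the opposite parabolic, by Bernstein's second adjointness \cite[\S VI.9]{Ren}), while $\ind^{\mc H^{M'}}_{\lambda_{MM'}(\mc H^M)}$ is left adjoint to $\Res^{\mc H^{M'}}_{\lambda_{MM'}(\mc H^M)}$. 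Since $\Phi_{M'}$ and $\Phi_M$ are equivalences intertwining the two left adjoints by part (i), they intertwine the right adjoints as well, and the diagram in (ii) commutes up to natural isomorphism. One subtlety to check is that the adjunction on the group side, once restricted to the Bernstein components, really lands in $\Rep(M)^{\fs_M}$ — that is precisely why the projection $\mr{pr}_{\fs_M}$ appears in the statement — and that the unit/counit of adjunction are compatible with the $\Hom_J(\lambda,-)$ functors; this is where I would spend the most care, invoking \cite[(8.3), Corollary 8.4]{BuKu} to identify the two sides canonically. With (i), (ii) and (iii) in hand, Condition \ref{cond:Morita} holds.
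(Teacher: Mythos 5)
Your proposal has the right ingredients and is logically sound, but it runs the argument in the opposite order from the paper, and this matters for the citations. The paper's proof takes \cite[Corollary 8.4]{BuKu} as supplying the \emph{Jacquet-restriction} compatibility directly: that corollary gives a unique algebra injection $t_{\overline P\cap M}\colon\mc H(M,J_M,\lambda_M)\to\mc H(M',J_{M'},\lambda_{M'})$ satisfying
$\Res\circ\Phi_{M'}=\Phi_M\circ\mr{pr}_{\fs_M}\circ R^{M'}_{\overline P\cap M'}$,
which (after twisting by a square root of a modular character to normalize) is exactly Condition \ref{cond:Morita}.(ii). Condition (i) is then obtained by the uniqueness of \emph{left} adjoints, since $\ind$ is left adjoint to $\Res$ and $I^{M'}_{P\cap M'}$ is left adjoint to $\mr{pr}_{\fs_M}\circ J^{M'}_{\overline P\cap M'}$ by second adjointness. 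You instead claim that Corollary 8.4 directly gives the natural isomorphism $\Hom_J(\lambda,I^G_P\pi)\cong\Hom_{J_M}(\lambda_M,\pi)$ compatible with Hecke-algebra induction, i.e.\ Condition (i), and then you deduce (ii) by passing to right adjoints. The adjunction step is fine either way; the issue is that Corollary 8.4 as stated is about Jacquet restriction, not parabolic induction, so the reference you invoke does not literally support the step you place it under. The content you want is available in Bushnell--Kutzko, but you would need to either rephrase via Cor.\ 8.4 in its restriction form (as the paper does) or locate the explicit induction-side statement.

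Two smaller points. First, you should cite \cite[Proposition 8.5]{BuKu} to get that $(J_M,\lambda_M)$ and $(J_{M'},\lambda_{M'})$ are themselves types and that the latter covers the former inside $M'$; this is what legitimizes applying the cover machinery at each intermediate Levi, and the paper starts the proof with exactly this. Second, for Condition (iii) your route through strongly positive elements and the generators of the center would work but is heavier than necessary: the paper deduces transitivity purely formally from the \emph{uniqueness} assertion in Corollary 8.4 combined with the transitivity of normalized Jacquet restriction, so no explicit computation with generators is required. Your approach is not wrong, but it reproves by hand something that the uniqueness clause already hands you for free.
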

\begin{proof}
Let $P$ and $P'$ be as in the condition. By \cite[Proposition 8.5]{BuKu} 
$(J_{M'},\lambda_{M'})$ is a $\fs_{M'}$-type, $(J_M,\lambda_M)$ is a $\fs_M$-type
and the former covers the latter.

By \cite[Corollary 8.4]{BuKu} there exists a unique algebra monomorphism
\[
t_{\overline P \cap M} : \mc H (M,J_M,\lambda_M) \to \mc H (M',J_{M'},\lambda_{M'}) 
\]
such that 
\[
\Res^{\mc H (M',J_{M'},\lambda_{M'})}_{t_{\overline P \cap M} (\mc H (M,J_{M},\lambda_{M}))}
\circ \Phi_{M'} = \Phi_M \circ \mr{pr}_{\fs_M} \circ R^{M'}_{\overline P \cap M'} .
\]
Here $R^{M'}_{\overline P \cap M'}$ means the unnormalized parabolic restriction functor.
To obtain the version with the normalized Jacquet functor $J^{M'}_{\overline P \cap M'}$,
we must adjust $t_{\overline P \cap M}$ by the square root of a modular character.
This yields our $\lambda_{M M'}$. The uniqueness of $\lambda_{M M'}$ and the transitivity of 
normalized Jacquet restriction entail that 
\[
\lambda_{M' M''} \circ \lambda_{M M'} = \lambda_{M M''} \quad \text{when} \quad
P \subset P' \subset P'' \subset G.
\]
On general grounds $\ind_{\lambda_{M M'} (\mc H (M,J_{M},\lambda_{M}))}^{
\mc H (M',J_{M'},\lambda_{M'})}$ is the left adjoint of 
$\Res^{\mc H (M',J_{M'},\lambda_{M'})}_{\lambda_{M M'} (\mc H (M,J_{M},\lambda_{M}))}$.
By Bernstein's second adjointness theorem $I_{P \cap M'}^{M'} : \Rep (M) \to \Rep (M')$
is the left adjoint of $J^{M'}_{\overline{P} \cap M'}$. Hence
\[
I_{P \cap M'}^{M'} : \Rep (M)^{\fs_M} \to \Rep (M')^{\fs_{M'}}
\]
is the left adjoint of $\mr{pr}_{\fs_M} \circ J^{M'}_{\overline P \cap M'}$. By the
uniqueness of adjoints
\[
\Phi_{M'} \circ I_{P \cap M'}^{M'} = \ind_{\lambda_{M M'} (\mc H (M,J_{M},\lambda_{M}))}^{
\mc H (M',J_{M'},\lambda_{M'})} \circ \Phi_M . \qedhere
\]
\end{proof}

Having checked Condition \ref{cond:Morita} in a general framework, we turn to
more specific instances where Condition \ref{cond:Hecke} holds. In most cases the
intermediate algebras $\mc H^M$ are not mentioned explicitly in the literature.
One can obtain them by applying the same references to the group $M$ instead of $G$.
Using the canonical construction of $\lambda_{M M'}$ as in the proof of Lemma \ref{lem:T},
Condition \ref{cond:Hecke}.ii will be satisfied automatically in those cases.
We will check the remaining conditions, mainly by providing relevant references. 
Recall that to achieve Condition \ref{cond:Hecke}.iii we can use the method described 
on page \pageref{cond:Hecke}.\\

\textbf{Iwahori--spherical representations.} \\
This is the classical case. 
Let $\mf o_F$ be the ring of integers of the non-archimedean local field $F$, let
$\mf p_F$ be its maximal ideal, and let $k_F = \mf o_F / \mf p_F$ be the residue field.
Choose an apartment $\mh A$ of the Bruhat--Tits building of $G$ and let $L$ be the
correponding minimal $F$-Levi subgroup of $G$. Let $I$ be an Iwahori subgroup of $G$
associated to a chamber of $\mh A$.
Let $P_L$ be the parabolic subgroup of $G$ with Levi factor $L$, such
that the reduction of $I$ modulo $\mf p_F$ is $P_L (k_F)$.

Borel \cite{Bor} showed that the trivial representation of $I$ is a $\mf s$-type, where
$\mf s = [L,\mathrm{triv}_L]_G$. Borel assumes that $G$ is semisimple,
but it is easy to generalize his arguments to reductive $G$. 

By \cite[\S 3]{IwMa} there is a *-algebra isomorphism 
\begin{equation}\label{eq:2.11}
C_c (I \backslash G / I) \cong \mc H (G, I , \mr{triv}) 
\cong \mc H \big( X_* (L), R^\vee (G,L), X^* (L), R(G,L), \Delta, q_I \big) ,
\end{equation}
where the basis $\Delta$ is determined by $P_L$ and 
$q_{I,\alpha} = \mathrm{vol}(I s_\alpha I) / \mathrm{vol}(I)$ for a simple reflection 
$s_\alpha$. From \cite[\S 3.1]{Bor} one sees that Conditions \ref{cond:Hecke}.iii and 
iv hold. Here $\Gamma_M = 1$ for all $M$, so Condition \ref{cond:Gamma} is vacuous.

Of course Theorem \ref{thm:4.7} was already known for irreducible Iwahori-spherical
representations. Indeed, by \cite[Section 8]{KaLu} and \cite[Theorem 15.1.(2) and 
Proposition 16.6]{ABPS1} the bijection $\Irr (G)^\fs \to \Irr (\mc H (G,I,\mr{triv}))$ 
preserves temperedness and essential square-integrability. Moreover Theorem \ref{thm:4.2}
has been proven for Schwartz algebras in \cite[Theorem 10.2]{DeOp1}: \eqref{eq:2.11}
extends to an isomorphism of Fr\'echet *-algebras
\[
\mc S (I \backslash G / I) \cong 
\mc S \big( X_* (L), R^\vee (G,L), X^* (L), R(G,L), \Delta, q_I \big) .\\[2mm]
\]

\textbf{Principal series representations of split groups.} \\
Suppose that $G$ is $F$-split and let $T$ be a maximal split torus of $G$. Fix a smooth 
character $\chi_{\mf s} \in \Irr (T)$ and put $\mf s = [T,\chi_{\mf s}]_G$, so that 
\[
X_\nr (T) \to T_{\mf s} : \chi \mapsto \chi \chi_{\mf s} 
\]
is a homeomorphism. Notice that $\chi$ restricted to the unique maximal compact 
subgroup $T_{\mr{cpt}}$ of $T$ is a type for $[T,\chi_{\mf s}]_T$.
By \cite[Lemma 6.2]{Roc1} there exist a root subsystem
$R_{\mf s} \subset R^\vee (G,T)$ and a subgroup $\mf R_\fs \subset W_\fs$ such that
$W_\fs = W (R_\fs) \rtimes \mf R_\fs$.

\begin{thm}\label{thm:Roche} \textup{\cite[Theorem 6.3]{Roc1}} \\
There exists a type $(J,\lambda)$ for $\fs$ and a *-algebra isomorphism
\[
\mc H (G,J,\lambda) \cong \mc H (T_\fs,R_\fs,q) \rtimes \mf R_\fs , 
\]
where $q_\alpha = |k_F|$ for all $\alpha \in R_\fs$. Moreover $(J,\lambda)$ is 
a cover of $(T_{\mr{cpt}},\chi)$.
\end{thm}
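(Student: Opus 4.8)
The plan to prove Theorem \ref{thm:Roche} runs as follows. Write $\chi_0$ for the restriction of $\chi_\fs$ to the maximal compact subgroup $T_{\mr{cpt}}$ of $T$; since every unramified character is trivial on $T_{\mr{cpt}}$, one sees at once that $(T_{\mr{cpt}},\chi_0)$ is a $[T,\chi_\fs]_T$-type, and this is the pair appearing in the statement. By Bushnell--Kutzko's theory of covers \cite[\S 8]{BuKu} it then suffices to construct a $G$-cover $(J,\lambda)$ of $(T_{\mr{cpt}},\chi_0)$ and to identify $\mc H(G,J,\lambda)$. For the construction I would fix the apartment $\mh A$ attached to $T$ together with an alcove, giving the Iwahori $I$ and the basis $\Delta$ of $R(G,T)$, and take $J$ to be the group generated by $T_{\mr{cpt}}$ and the filtration subgroups $U_{\alpha,r_\alpha}$ of the root groups, where $r_\alpha$ records the depth of $\chi_\fs\circ\alpha^\vee$ (so $r_\alpha=0$ exactly when $\chi_\fs\circ\alpha^\vee$ is unramified, i.e. $\alpha\in R_\fs$, and the choice between $r_\alpha$ and $r_\alpha+1$ for $\pm\alpha$ is dictated by the alcove). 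The character $\lambda$ would be $\chi_0$ on $T_{\mr{cpt}}$ and trivial on every $U_{\alpha,r_\alpha}$; using the Chevalley commutator relations together with the standing hypotheses on the residue characteristic of $F$ relative to $R(G,T)$, one checks that $J$ is a group, that $\lambda$ is a well-defined character of it, and that $J$ admits an Iwahori-type decomposition $J=(J\cap\overline U_P)(J\cap T)(J\cap U_P)$ for every parabolic $P=TU_P$.

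Next I would verify the cover axioms of \cite[\S 8]{BuKu}. The identities $J\cap T=T_{\mr{cpt}}$ and $\lambda|_{J\cap T}=\chi_0$ and the Iwahori-type decomposition hold by construction, so the one remaining point is to exhibit an invertible element of $\mc H(G,J,\lambda)$ supported on a single double coset $JzJ$ with $z\in T$ strongly positive. I would take $z$ a sufficiently dominant value of a cocharacter, check that $JzJ$ does support $\lambda$ by a direct computation with the decomposition of $J$, and deduce invertibility by showing that the convolution of the generator attached to $z$ with the one attached to $z^{-1}$ is a nonzero scalar -- strong positivity forces the relevant intersections $J\cap{}^zJ$ to multiply cleanly. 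Granting this, \cite[Theorem 8.3]{BuKu} gives that $(J,\lambda)$ is an $\fs$-type and a cover of $(T_{\mr{cpt}},\chi_0)$.

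It remains to compute $\mc H(G,J,\lambda)=\End_G(\ind_J^G\lambda)$, whose natural basis is indexed by the double cosets $JgJ$ with $\Hom_{J\cap{}^gJ}(\lambda,{}^g\lambda)\neq 0$. The strategy is to match this $\chi_\fs$-twisted situation with an untwisted Iwahori--Hecke situation for the root datum with root system $R_\fs$: one shows the supporting double cosets are parametrised by the extended affine Weyl group $X_*(T)\rtimes W_\fs$, that the structure constants reproduce the braid relations and the quadratic relations with parameter $q_\alpha=|k_F|$ for $\alpha\in R_\fs$ (the same index computation as in \cite{IwMa}), and that the complement $\mf R_\fs$ of $W(R_\fs)$ in $W_\fs$ acts by the automorphisms of $\mc H(T_\fs,R_\fs,q)$ induced from its action on $\Delta\cap R_\fs$; passing to the Bernstein presentation this yields $\mc H(G,J,\lambda)\cong\mc H(T_\fs,R_\fs,q)\rtimes\mf R_\fs$. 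Since $\lambda$ is a character, the canonical involution $f\mapsto\overline{f(g^{-1})}$ on $\mc H(G,J,\lambda)$ carries the characteristic function of $JgJ$ to that of $Jg^{-1}J$, which under the identification is exactly the standard $*$-operation on the right-hand side, so the isomorphism is a $*$-isomorphism. The hardest part, as usual in this circle of ideas, is the first one -- the construction of $J$ and $\lambda$ and the verification of the strong-positivity/cover condition -- which is where the hypotheses on $G$ are really used and where the Bruhat--Tits bookkeeping is most delicate; the Hecke-algebra computation afterwards is then essentially the $\chi_\fs$-twisted analogue of Iwahori--Matsumoto.
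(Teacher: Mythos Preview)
The paper does not prove this statement at all: Theorem~\ref{thm:Roche} is simply quoted from \cite[Theorem 6.3]{Roc1}, with no argument given beyond the citation. So there is nothing in the paper to compare your proposal against.

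That said, your outline is a reasonable summary of the shape of Roche's actual proof in \cite{Roc1}: he does build $J$ from $T_{\mr{cpt}}$ and filtration subgroups $U_{\alpha,r_\alpha}$ with jumps governed by the depth of $\chi_\fs\circ\alpha^\vee$, extends $\chi_0$ trivially across the unipotent parts, and verifies the Iwahori decomposition and the cover axioms. One point where your sketch diverges from Roche's execution is the Hecke-algebra identification. Rather than computing structure constants directly \`a la Iwahori--Matsumoto in the twisted setting, Roche passes through an auxiliary split group $H$ (with root system $R_\fs$ and the same torus) and exhibits a support-preserving isomorphism $\mc H(G,J,\lambda)\cong\mc H(H,I_H,\mathrm{triv})$ with the Iwahori--Hecke algebra of $H$; the affine Hecke presentation and the semidirect product by $\mf R_\fs$ then come for free from the Iwahori case. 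Your direct approach would work in principle but is more laborious; Roche's reduction is cleaner. Also note that the $*$-algebra assertion is not explicit in \cite{Roc1}; it follows by the same argument the present paper spells out for the level-zero case (the basis elements $T_s$ are self-adjoint because $T_s$ and $T_s^*$ have the same support and satisfy the same quadratic relation).
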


Furthermore Conditions \ref{cond:Hecke}.iii and iv hold by construction. 
If $\Q R(M,T) = \Q Q^\vee$, then $X \rtimes W_Q \Gamma_Q \subset X_* (T) \rtimes W(M,T)$,
so by Remark \ref{rem:Gamma} Condition \ref{cond:Gamma} holds as well. 

We note that for these Bernstein components Theorem \ref{thm:4.7}.b was already
proven in \cite[Theorem 10.7]{Roc1}, while Theorem \ref{thm:4.7}.a follows from
\cite[Theorem 10.1]{DeOp1}, using \cite[Section 8]{Roc1}.\\

\textbf{Level zero representations.} \\
These are $G$-representations which are generated by non-zero vectors fixed by the
pro-unipotent radical of a parahoric subgroup of $G$. Iwahori-spherical representations
constitute the most basic example of this kind. A type $(J,\lambda)$ for any 
Bernstein component $\mf s$ consisting of level zero representations was exhibited in 
\cite{Mor}, while it was proven in \cite[Theorem 4.9]{Mor2} that it actually is a type. 
More precisely, by \cite[\S 3.8]{Mor2} $(J,\lambda)$ is a cover of a type for the 
underlying supercuspidal Bernstein component of a Levi subgroup $L$ of $G$.

By \cite[Theorem 7.12]{Mor} (see also \cite{Lus3})
\begin{equation}\label{eq:5.2}
\mc H (G,J,\lambda) \cong \mc H (\mc R ,q) \rtimes \C [\Gamma,\natural_\fs]
\end{equation}
for suitable $\mc R, q$ and $\Gamma$. In all examples of level zero Bernstein blocks
which have been worked out, the 2-cocycle $\natural_\fs$ of $\Gamma$ is trivial. 
But even if it were non-trivial, we could easily deal with it. There always exists a
finite central extension 
\[
1 \to \Gamma_1 \to \Gamma_2 \xrightarrow{\phi_\Gamma} \Gamma \to 1 
\]
such that the pullback of $\natural_\fs$ to $\Gamma_2$ splits. Then $\mc H (G,J,\lambda)$ 
can be regarded as the direct summand of $\mc H (\mc R,q) \rtimes \Gamma_2$ associated to
a minimal central idempotent $p_{\natural_\fs} \in \C [\Gamma_1]$. The algebra $\mc H (\mc R,q) 
\rtimes \Gamma_2$, with the parabolic subalgebras $\mc H^Q \rtimes \phi_\Gamma^{-1}(\Gamma_Q)$, 
is of the kind studied in Section \ref{sec:AHA}. In this situation the Conditions 
\ref{cond:Morita} and \ref{cond:Hecke} must be adjusted slightly, now each $\mc H^M$ should 
be $p_{\natural_\fs} \mc H^Q \rtimes \phi_\Gamma^{-1}(\Gamma_M)$ for some $Q \subset \Delta$.
With these minor modifications, all the arguments in Section \ref{sec:comparison} remain valid.

Conditions \ref{cond:Hecke}.iii and iv follow from the setup in \cite[\S 3.12--3.14]{Mor} 
and \cite[\S 1.10]{Mor2}, combined with the description of $\mc R$ in 
\cite[Proposition 7.3]{Mor}. The groups $\Gamma_Q$ for $\Q Q^\vee = \Q R(M,L)$ satisfy 
Condition \ref{cond:Gamma} because they are contained in $X \rtimes W(M,S)$, where 
$W(M,S)$ is the Weyl group of $M$ with respect to a maximal $F$-split torus $S \subset L$.

As in the above examples, there is previous work on temperedness also. It is claimed in
\cite[Theorem 10.1]{DeOp1} that Theorem \ref{thm:4.7}.a holds here. For this one needs 
to know that \eqref{eq:5.2} preserves the traces (maybe up to a positive factor) 
and the natural *-operations. The former follows from the support of the basis elements $T_w$ of 
$\mc H (G,J,\lambda)$ constructed in \cite{Mor} (only the unit element $T_e$ is supported on 
$J$). For a simple (affine) reflection $s$, both $T_s$ and $T_s^*$ have support $J s J$,
so they differ only by a scalar factor. They also satisfy the same quadratic relation,
so $T_s^* = T_s$. This implies that \eqref{eq:5.2} is an isomorphism of *-algebras.

Knowing that \cite[Theorem 10.1]{DeOp1} applies, and together with \cite[Lemma 16.5]{ABPS1} 
it also gives Theorem \ref{thm:4.7}.b. \\

\textbf{Inner forms of $\GL_n (F)$.} \\
Let $D$ be a division algebra with centre $F$. Every Levi subgroup of $G = \GL_m (D)$
is of the form $L = \prod_i \GL_{m_i}(D)^{e_i}$, where $\sum_i m_i e_i = m$. 
Fix a supercuspidal $\omega \in \Irr (L)$, of the form $\omega = \bigotimes_{i=1}^k 
\omega_i^{\otimes e_i}$, where $\omega_i \in \Irr (\GL_{m_i}(D))$ is supercuspidal and 
not inertially equivalent with $\omega_j$ if $i \neq j$. Then $T_\fs \cong \prod_{i=1}^k 
(\C^\times)^{e_i}$, $R_\fs$ is of type $\prod_{i=1}^k A_{e_i - 1}$ and
the stabilizer of $\mf s = [\omega,L]_G$ in $W(G,L)$ is
$W(R_\fs) \cong \prod\nolimits_{i=1}^k S_{e_i}$.
 
\begin{thm}\label{thm:2.2} \textup{\cite{Sec,SeSt}} \\
There exists a type $(J,\lambda)$ for $\fs$, which is a cover of a $[\omega,L]_L$-type.
There exists a parameter function $q_\fs : R_\fs \to q^\N$ such that there is an 
isomorphism of *-algebras
\[
\mc H (G,J,\lambda) \cong \mc H (X^* (T_\fs),R_\fs, X_* (T_\fs), R_\fs^\vee,q_\fs) ,
\]
where the right hand side is a tensor product of affine Hecke algebras of
type $\GL_e$ with $e \leq m$. Moreover this isomorphism sends the natural trace of
$\mc H (G,J,\lambda)$ to a positive multiple of the trace of the right hand side.
\end{thm}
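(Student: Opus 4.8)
The plan is to deduce the statement from the structure theory of inner forms of $\GL_n(F)$ due to Bushnell--Kutzko, Sécherre and Stevens, supplementing their results with the analysis of the trace and the $*$-operation, which are not treated there.

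First I would recall the construction of simple types. Writing $G = \GL_m (D)$ and $\omega = \bigotimes_{i=1}^k \omega_i^{\otimes e_i}$ as in the statement, \cite{Sec} attaches to $\omega$ a maximal simple type $(J_L,\lambda_L)$ for $[\omega,L]_L$ and a simple type $(J,\lambda)$ for $\fs$, built from a common simple stratum; that $(J,\lambda)$ is indeed a type for $\fs$ and covers $(J_L,\lambda_L)$ is proved in \cite{SeSt}. This settles the first assertion, and via Lemma \ref{lem:T} it puts Condition \ref{cond:Morita} at our disposal, with $\mc H^M = \mc H (M,J_M,\lambda_M)$.

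Second, I would record the structure of $\mc H (G,J,\lambda)$. By \cite{Sec,SeSt} it is isomorphic to $\bigotimes_{i=1}^k \mc H (\mc R_i, q_i)$, where $\mc R_i$ is the root datum $(\Z^{e_i}, A_{e_i-1}, \Z^{e_i}, A_{e_i-1}^\vee, \Delta_i)$ of $\GL_{e_i}$ and $q_i = |k_F|^{f_i} = q^{f_i}$ for a positive integer $f_i$ (the torsion number attached to $\omega_i$); note that all simple affine reflections in a factor of type $\tilde A$ are conjugate, so each factor has a single parameter. Since $\sum_i m_i e_i = m$ we have $e_i \le m$, so this is a tensor product of affine Hecke algebras of type $\GL_e$ with $e \le m$; combining the factors yields the root system $R_\fs$ of type $\prod_i A_{e_i-1}$ and the parameter function $q_\fs : R_\fs \to q^{\N}$ recorded before the statement. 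Here $W_\fs = W(R_\fs)$, so no diagram automorphisms intervene (i.e.\ $\Gamma = 1$ and Condition \ref{cond:Gamma} is vacuous); I would cite the explicit presentation in \cite{SeSt} to pin down $\mc R$ and $q_\fs$ precisely, and then Condition \ref{cond:Hecke}.iii and iv follow just as in the principal series case above.

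Third --- and this is where the actual argument lies --- I would verify that the Sécherre--Stevens isomorphism $\Upsilon_\lambda : \mc H (X^*(T_\fs), R_\fs, X_*(T_\fs), R_\fs^\vee, q_\fs) \to \mc H (G,J,\lambda)$ is a $*$-isomorphism and transports the canonical trace of the affine Hecke algebra (the coefficient $c_e$ of $N_e$) to a positive scalar multiple of the natural trace of $\mc H (G,J,\lambda)$. The key point is the support of the standard basis: $\Upsilon_\lambda$ is built so that the generator attached to a simple (affine) reflection $s$ is an element $T_s$ supported on a single double coset $J\zeta_s J$, while $\Upsilon_\lambda(N_e)$ is supported on $J$ itself; propagating along a reduced word, $\Upsilon_\lambda(q(w)^{1/2} N_w)$ is supported on $J\zeta_w J$ with $\zeta_e = 1$. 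Under the trace- and $*$-preserving injections $\mc H (G,J,\lambda) \to e_\lambda \mc H (G) e_\lambda \to \mc H (G)^\fs$ of \eqref{eq:4} and the trace $f \mapsto f(1_G)$ on $\mc H (G)$, the trace of $\Upsilon_\lambda(h)$ therefore reads off $c_e$ up to a positive constant (essentially $\mathrm{vol}(J)^{-1}\dim\lambda$) independent of $h$, which is the trace statement. For the $*$-claim I would note that $T_s^*$ has the same support $J\zeta_s J$ as $T_s$ and satisfies the same quadratic relation (since $q_\fs(s) = |k_F|^{f_s} \in \R_{>0}$), so the two differ by a scalar of modulus one, forcing $T_s^* = T_s$ after the standard normalization, and this propagates to all of $\mc H (G,J,\lambda)$.

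The main obstacle is this support computation, which must be extracted from the layered construction of \cite{Sec,SeSt}: the isomorphism is assembled through $\beta$-extensions and a reduction to a split inner factor, and one has to check that the generators produced at each stage are normalized compatibly with the length function on $W^e$, so that no unexpected scalars enter. Once the support and the quadratic relations of the generators are under control, the $*$- and trace assertions follow formally, and Theorem \ref{thm:4.2} then applies to these Bernstein blocks.
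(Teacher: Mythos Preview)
Your proposal is correct and follows essentially the same line as the paper. The paper does not give a self-contained proof of this theorem: it cites S\'echerre and S\'echerre--Stevens for the existence of the type, the cover property, and the Hecke algebra isomorphism, and then remarks that the claims about the $*$-operation and the trace (which are not explicit in those references) ``can be deduced in the same way as for level zero representations'' --- namely by the support argument you spell out in your third step (only $T_e$ is supported on $J$, and for a simple reflection $T_s$ and $T_s^*$ have the same support and satisfy the same quadratic relation, forcing $T_s^* = T_s$). Your write-up is more explicit than the paper's remark but the content is the same.
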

We remark that the claims about the * and the traces are not made explicit in
\cite{Sec,SeSt}. They can be deduced in the same way as for level zero representations,
see above. With \cite[Theorem 10.1]{DeOp1} that proves Proposition \ref{prop:4.1}
for these groups.

Via the tensor product factorization Condition \ref{cond:Hecke}.iii reduces to the
case of a supercuspidal representation $\sigma^{\otimes e}$ of $\GL_r (D)^e$. There
it is a consequence of the constructions involved in \cite[Th\'eor\`eme 4.6]{Sec},
which entail that the same notion of positivity in real tori is used for $(\GL_r (D)^e,
\GL_1 (D)^{re})$ and for $\mc H (\GL_e ,q)$. Condition \ref{cond:Hecke}.iv is irrelevant
because all the groups $\Gamma_M$ are trivial.

For the Schwartz algebras of these groups Theorem \ref{thm:4.2} can be
found in \cite[Theorem 6.2]{ABPS6}. The proof over there is similar but simpler, because
not all complications from Section \ref{sec:comparison} arise.\\

\textbf{Inner forms of $\SL_n (F)$.} \\
Let $G$ be the kernel of the reduced norm map $\GL_m (D) \to F^\times$. It is an
inner form of $\SL_n (F)$, and every inner form looks like this.
It was shown in \cite{ABPS1} that for every inertial equivalence class $\mf s$, 
$\mc H (G)^{\mf s}$ is Morita equivalent with an algebra which is closely related
to affine Hecke algebras of type $\GL_e$ (yet is of a more general kind). It is not
known whether there exists a $\mf s$-type for every $\mf s$, but in any case the
constructions in \cite{ABPS1} are derived from the work of S\'echerre and Stevens
on inner forms of $\GL_n (F)$, so types are not far away. 
Condition \ref{cond:Morita}.i is \cite[Theorem 1.5.b]{ABPS6}, the maps $\lambda_{M M'}$
are simply inclusions, and Condition \ref{cond:Morita}.ii follows from that by
the uniqueness of adjoints.

Condition \ref{cond:Hecke} does not hold precisely for the algebras $\mc H^M$
obtained in this setting (in fact the Plancherel isomorphism for these $\mc H^M$
has not been worked out), so we cannot apply Proposition \ref{prop:4.1} or 
Theorem \ref{thm:4.2}. Nevertheless the conclusions of these results hold, see 
\cite{ABPS6}. \\

Let us summarize the conclusions from this section.
\begin{cor}\label{cor:5.2}
Let $\fs = [L,\sigma]_G$ be an inertial equivalence class of the kind discussed
in this paragraph (principal series of split group, level zero, inner form of
$\GL_n (F)$ or $\SL_n (F)$). Then $\mc S (G)^\fs$ is Morita equivalent to the
Schwartz completion of an extended affine Hecke algebra and $C_r^* (G)^\fs$ is Morita
equivalent to the $C^*$-completion of the same extended affine Hecke algebra. 
\end{cor}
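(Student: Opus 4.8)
The plan is to treat the four families of inertial classes uniformly by feeding each into Theorem \ref{thm:4.2}, after checking that its hypotheses hold. First I would recall that in each case Section \ref{sec:types} has already produced a Morita equivalence $\Phi_G : \Rep (G)^\fs \to \Mod (\mc H^G)$ in which $\mc H^G$ (or its opposite) is an extended affine Hecke algebra $\mc H (\mc R,q) \rtimes \Gamma$: for principal series of split groups this is Theorem \ref{thm:Roche}; for level zero representations it is \eqref{eq:5.2}; for inner forms of $\GL_n (F)$ it is Theorem \ref{thm:2.2}; and for inner forms of $\SL_n (F)$ it is the construction of \cite{ABPS1}. In each instance the parameters are positive ($q_\alpha$ a power of $|k_F|$ in the first three cases), so the Schwartz and $C^*$-completions $\mc S (\mc R,q) \rtimes \Gamma$ and $C_r^* (\mc R,q) \rtimes \Gamma$ are defined.

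Next I would verify Conditions \ref{cond:Morita}, \ref{cond:Hecke} and \ref{cond:Gamma} case by case, essentially by assembling the observations already scattered through Section \ref{sec:types}. For the principal series of split groups: $(J,\lambda)$ is a cover of $(T_{\mr{cpt}},\chi)$, so Condition \ref{cond:Morita} follows from Lemma \ref{lem:T}; Conditions \ref{cond:Hecke}.iii and iv hold by construction of the Roche type; and since $X \rtimes W_Q \Gamma_Q \subset X_* (T) \rtimes W(M,T)$, Remark \ref{rem:Gamma} gives Condition \ref{cond:Gamma}. For level zero representations the same scheme applies, using \cite[\S 3.8]{Mor2} for the covering property, \cite[\S 3.12--3.14]{Mor} and \cite[Proposition 7.3]{Mor} for Condition \ref{cond:Hecke}, and the containment $\Gamma_Q \subset X \rtimes W(M,S)$ for Condition \ref{cond:Gamma}; here one must also record the minor modification described after \eqref{eq:5.2}, replacing $\mc H (\mc R,q) \rtimes \Gamma$ by the summand $p_{\natural_\fs}(\mc H (\mc R,q) \rtimes \Gamma_2)$ cut out by the 2-cocycle, and noting that all arguments of Section \ref{sec:comparison} survive this change. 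For inner forms of $\GL_n (F)$, Condition \ref{cond:Morita} is again Lemma \ref{lem:T}, Condition \ref{cond:Hecke}.iii reduces via the tensor factorization to \cite[Th\'eor\`eme 4.6]{Sec}, and Condition \ref{cond:Gamma} is vacuous since all $\Gamma_M$ are trivial. Having verified the hypotheses, Theorem \ref{thm:4.2} immediately yields $\mc S (G)^\fs \sim_M \mc S (\mc R,q) \rtimes \Gamma$ and $C_r^* (G)^\fs \sim_M C_r^* (\mc R,q) \rtimes \Gamma$ for these three families (with the obvious reinterpretation for level zero), which is the assertion.

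The main obstacle is the inner forms of $\SL_n (F)$: there the algebras $\mc H^M$ produced in \cite{ABPS1} are not quite parabolic subalgebras of an extended affine Hecke algebra, so Condition \ref{cond:Hecke} fails as literally stated and Theorem \ref{thm:4.2} does not apply directly. For this case I would not attempt to reprove anything, but instead invoke \cite{ABPS6}, where the analogue of Theorem \ref{thm:4.2} is established by a parallel argument adapted to these slightly more general algebras; combined with the $\GL_n$ results through the $\GL_m (D) \to \GL_m (D)/\text{centre}$ comparison of \cite[Theorem 1.5.b]{ABPS6}, this gives the required Morita equivalences of $\mc S (G)^\fs$ and $C_r^* (G)^\fs$ with the completions of the relevant (generalised affine Hecke) algebra. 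A secondary point to keep an eye on is that the \emph{same} extended affine Hecke algebra appears for the Schwartz and the $C^*$-statement; this is automatic from Theorem \ref{thm:4.2}, whose two conclusions are produced by one and the same comparison of Plancherel data, with $C^\infty$ replaced by $C$ throughout.
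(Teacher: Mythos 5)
Your proposal is correct and follows essentially the same approach as the paper: for the first three families you verify Conditions \ref{cond:Morita}, \ref{cond:Hecke}, \ref{cond:Gamma} (which the paper does in the body of Section \ref{sec:types}, so its actual proof of the corollary is just ``we just checked that all its assumptions are fulfilled'') and then invoke Theorem \ref{thm:4.2}, while for inner forms of $\SL_n(F)$ you correctly note that Condition \ref{cond:Hecke} fails and instead appeal to \cite{ABPS6}, adding the observation (also made in the paper) that the method there adapts from Schwartz to $C^*$-completions with minor modifications.
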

\begin{proof}
Except for the last case, this follows by applying Theorem \ref{thm:4.2}. We just
checked that all its assumptions are fulfilled. For the inner forms of $\SL_n (F)$,
\cite[Theorem 6.4]{ABPS6} gives the result in the case of Schwartz algebras. Like
the proof of Theorem \ref{thm:4.2}, the method in \cite[\S 6.2]{ABPS6} also works
for the $C^*$-algebras, with minor modifications.
\end{proof}
\vspace{3mm}

\section{Hecke algebras from Bernstein's progenerators}
\label{sec:progen}

We return to the notations from Sections \ref{sec:group} and \ref{sec:comparison}.
Let $\fs = [L,\sigma]_G$ be any inertial equivalence class for $G$. Bernstein 
\cite[\S 3]{BeRu} constructed a projective generator $\Pi_\fs$ for the category $\Rep (G)^\fs$. 
By \cite[VI.10.1]{Ren}, for any Levi subgroup $M \subset G$ containing $L$:
\[
\Pi_{\fs_M} = I_{P_L \cap M}^M (\Pi_{\fs_L}) ,
\]
and this is a progenerator of $\Rep (M)^{\fs_M}$. In other words, the map
\[
\Phi_M : V \mapsto \Hom_M ( I_{P_L \cap M}^M (\Pi_{\fs_L}), V)
\]
is an equivalence between $\Rep (M)^{\fs_M}$ and the category of right modules of\\
$\End_M (I_{P_L \cap M}^M \Pi_{\fs_L} )$. For $P_L \subset P = M U_P \subset G$ we put
\[
\mc H^M = \End_M (I_{P_L \cap M}^M \Pi_{\fs_L} )^\op = \End_M (\Pi_{\fs_M} )^\op . 
\]
Then $\Phi_M$ provides an equivalence of categories $\Rep (M)^{\fs_M} \to \Mod (\mc H^M)$.

\begin{lem}\label{lem:6.1}
In the above setting Condition \ref{cond:Morita} is fulfilled. 
\end{lem}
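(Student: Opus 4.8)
The plan is to verify the three parts of Condition~\ref{cond:Morita} for the equivalences $\Phi_M : \Rep(M)^{\fs_M} \to \Mod(\mc H^M)$ with $\mc H^M = \End_M(\Pi_{\fs_M})^\op$, using the standard yoga of adjoint functors in the Bernstein setting. The maps $\lambda_{MM'} : \mc H^M \to \mc H^{M'}$ will be defined so that the parabolic induction functor $I_{P\cap M'}^{M'}$ corresponds, under the $\Phi$'s, to the induction functor $\ind_{\lambda_{MM'}(\mc H^M)}^{\mc H^{M'}}$; this is exactly what part~(i) demands, so part~(i) will hold essentially by the definition of $\lambda_{MM'}$, once we know such a $\lambda_{MM'}$ exists and is an algebra injection.

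First I would recall that $\Pi_{\fs_{M'}} = I_{P\cap M'}^{M'}(\Pi_{\fs_M})$ (from \cite[VI.10.1]{Ren} and transitivity of parabolic induction, since $P_L \cap M' = P_L \cap M$ composed appropriately; more precisely $I_{P_L\cap M'}^{M'} = I_{P\cap M'}^{M'}\circ I_{P_L\cap M}^{M}$ by transitivity). Applying $\End$ and using that $I_{P\cap M'}^{M'}$ is exact and that $\Pi_{\fs_M}$ is a progenerator, one gets a canonical algebra homomorphism $\End_M(\Pi_{\fs_M}) \to \End_{M'}(I_{P\cap M'}^{M'}\Pi_{\fs_M}) = \End_{M'}(\Pi_{\fs_{M'}})$, namely $\phi \mapsto I_{P\cap M'}^{M'}(\phi)$. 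Passing to opposite algebras gives $\lambda_{MM'} : \mc H^M \to \mc H^{M'}$. Injectivity follows because $I_{P\cap M'}^{M'}$ is faithful and does not kill nonzero endomorphisms of a progenerator — equivalently, because $\Pi_{\fs_M}$ is a direct summand (up to the relevant subquotient structure) of $J_{\overline{P}\cap M'}^{M'}(\Pi_{\fs_{M'}})$ via the unit/counit of the second-adjointness adjunction, so $\mr{pr}_{\fs_M}\circ J_{\overline P\cap M'}^{M'}\circ I_{P\cap M'}^{M'} \cong \mathrm{id}$ on $\Rep(M)^{\fs_M}$; this identity forces $I_{P\cap M'}^{M'}$ to be faithful on morphisms. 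Transitivity of the $\lambda$'s (part~(iii)) is then immediate from transitivity of normalized parabolic induction: $I_{P'\cap M''}^{M''}\circ I_{P\cap M'}^{M'} = I_{P\cap M''}^{M''}$ gives $\lambda_{M'M''}\circ\lambda_{MM'} = \lambda_{MM''}$ after applying $\End$ and $(-)^\op$.

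For part~(i) I would argue via uniqueness of adjoints, exactly as in the proof of Lemma~\ref{lem:T}. On the $p$-adic side, Bernstein's second adjointness says $I_{P\cap M'}^{M'}$ is left adjoint to $J_{\overline P\cap M'}^{M'}$, hence (after projecting to the Bernstein block) $I_{P\cap M'}^{M'} : \Rep(M)^{\fs_M}\to\Rep(M')^{\fs_{M'}}$ is left adjoint to $\mr{pr}_{\fs_M}\circ J_{\overline P\cap M'}^{M'}$. On the module side, $\ind_{\lambda_{MM'}(\mc H^M)}^{\mc H^{M'}}$ is by general nonsense the left adjoint of $\Res_{\lambda_{MM'}(\mc H^M)}^{\mc H^{M'}}$. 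So it suffices to check that $\Phi_M\circ(\mr{pr}_{\fs_M}\circ J_{\overline P\cap M'}^{M'}) \cong \Res_{\lambda_{MM'}(\mc H^M)}^{\mc H^{M'}}\circ\Phi_{M'}$, i.e.\ part~(ii); then uniqueness of left adjoints yields part~(i). Part~(ii) in turn is a direct computation: for $V\in\Rep(M')^{\fs_{M'}}$,
\[
\Res_{\lambda_{MM'}(\mc H^M)}^{\mc H^{M'}}\Phi_{M'}(V) = \Hom_{M'}(\Pi_{\fs_{M'}},V) = \Hom_{M'}(I_{P\cap M'}^{M'}\Pi_{\fs_M},V),
\]
viewed as an $\mc H^M$-module via precomposition with $I_{P\cap M'}^{M'}(-)$, and by second adjointness $\Hom_{M'}(I_{P\cap M'}^{M'}\Pi_{\fs_M},V)\cong\Hom_M(\Pi_{\fs_M}, J_{\overline P\cap M'}^{M'}V)\cong\Hom_M(\Pi_{\fs_M},\mr{pr}_{\fs_M}J_{\overline P\cap M'}^{M'}V) = \Phi_M(\mr{pr}_{\fs_M}J_{\overline P\cap M'}^{M'}V)$, naturally and $\mc H^M$-equivariantly. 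The main obstacle, and the point needing the most care, is checking that this chain of natural isomorphisms is genuinely compatible with the $\mc H^M = \End_M(\Pi_{\fs_M})^\op$-module structures — i.e.\ that the second-adjointness isomorphism intertwines the action of $\lambda_{MM'}(\mc H^M)$ on the left side with the tautological action on $\Phi_M(-)$ on the right — which amounts to unwinding the unit and counit of Bernstein's second-adjointness adjunction and checking naturality in the progenerator variable; this is routine but not entirely formal, since one must confirm the modular-character normalizations match and that $\mr{pr}_{\fs_M}$ interacts correctly with the adjunction (it does, because $\mr{pr}_{\fs_M}$ is just projection onto a direct factor of the category).
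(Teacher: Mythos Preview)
Your proposal is correct and follows essentially the same route as the paper: define $\lambda_{MM'}$ by functoriality of $I_{P\cap M'}^{M'}$, obtain part~(iii) from transitivity of parabolic induction, verify part~(ii) by the second-adjointness computation $\Hom_{M'}(I_{P\cap M'}^{M'}\Pi_{\fs_M},V')\cong\Hom_M(\Pi_{\fs_M},J_{\overline P\cap M'}^{M'}V')$, and then deduce part~(i) by uniqueness of left adjoints, exactly as in Lemma~\ref{lem:T}. One small caveat: your aside that $\mr{pr}_{\fs_M}\circ J_{\overline P\cap M'}^{M'}\circ I_{P\cap M'}^{M'}\cong\mathrm{id}$ is not generally true (the geometric lemma gives extra pieces), but this is irrelevant to the argument since faithfulness of $I_{P\cap M'}^{M'}$ already suffices for injectivity of $\lambda_{MM'}$.
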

\begin{proof}
The functoriality of normalized parabolic induction gives natural injections
\[
\lambda_{M M'} : \mc H^M \to \mc H^{M'} \quad \text{for} \quad P \subset P' \subset G. 
\]
By naturality the $\lambda_{M M'}$ satisfy Condition \ref{cond:Morita}.iii. 
By Bernstein's second adjointness theorem, for $V' \in \Rep (M')^{\fs_{M'}}$:
\begin{align*}
\Phi_M (J^{M'}_{\overline P \cap M'} V') \; & = \Hom_M ( I_{P_L \cap M}^M \Pi_{\fs_L},
J^{M'}_{\overline P \cap M'} V' ) \\
& \cong \Hom_{M'} ( I_{P \cap M'}^{M'} I_{P_L \cap M}^M \Pi_{\fs_L}, V') \\
& \cong \Hom_{M'} ( I_{P_L \cap M'}^{M'} \Pi_{\fs_L}, V') \; = \; \Phi_{M'} (V')
\end{align*}
as $\mc H^M$-modules (via $\lambda_{M M'}$). This establishes the first commutative
diagram in Condition \ref{cond:Morita}. As in the proof of Lemma \ref{lem:T}, the
second commutative diagram follows from that by invoking the uniqueness of left adjoints.
\end{proof}

In the remainder of this section we assume that $G$ is:
\begin{itemize}
\item either a symplectic group, not necessarily split, 
\item or a special orthogonal group, not necessarily split, 
\item or an inner form of $\GL_n (F)$. 
\end{itemize}
Besides the discussion of inner forms of $\GL_n (F)$ in the previous section, we point
out that types for Bernstein components of symplectic or special orthogonal groups
have been constructed in \cite{MiSt}. However, as far as we know the Hecke algebras
associated to these types are in only few cases known explicitly.

For the groups listed above, Heiermann has subjected $(\mc H^G )^\op = 
\End_G (I_{P_L}^G \Pi_{\fs_L} )$ to a deep study. In \cite{Hei} he proved that it is an 
extended affine Hecke aĺgebra with positive parameters. The constructions in \cite[\S 5]{Hei} 
are such that every $\End_M (I_{P_L \cap M}^M \Pi_{\fs_L} )$ arises as a parabolic subalgebra. 
For Condition \ref{cond:Hecke}.iii see \cite[\S 3]{Hei2}. It served as a step towards
Theorem \ref{thm:4.7} for these groups \cite[Th\'eor\`eme 5]{Hei2}.

By \cite[Proposition 1.15]{Hei}
the groups $W_Q \Gamma_Q$ are always contained in $W(\tilde R_Q)$ where $\tilde R_Q \subset
\Q R_Q$ is a larger root system. In view of Remark \ref{rem:Gamma}, Condition 
\ref{cond:Hecke}.iv holds. 

In fact, a more precise description of the root data and the groups $\Gamma_M$ is available. 
By \cite[1.13]{Hei} the root datum underlying the affine Hecke algebra
$\End_G (I_{P_L}^G \Pi_{\fs_L} )$ is a tensor product of root data of four types: $\GL_n$,
Sp$_{2n}$, SO$_{2n+1}$ and SO$_{2n}$. The groups $\Gamma_M$ are described in \cite[1.15]{Hei},
but unfortunately some elements were overlooked, for the complete picture we refer to \cite{Gol}. 
The only nontrivial $\Gamma_M$ come from the type $D$ factors, it can happen that for a root
datum of type $(\mr{SO}_{2n})^e$ we have (extended) Weyl groups
\begin{equation}\label{eq:6.1}
W_M \cong W(D_n)^e ,\qquad W_M \Gamma_M \cong W(D_{ne}) \cap W(B_n)^e .
\end{equation}
Then $|\Gamma_M| = 2^{e-1}$. In the above setting, Theorem \ref{thm:4.2} says:

\begin{thm}\label{thm:6.2}
Let $G$ be a symplectic group or a special orthogonal group over $F$ (not necessarily split),
or an inner form of $GL_n (F)$. Let $\fs$ be any inertial equivalence class for $G$. 

Then $\mc S (G)^\fs$ is Morita equivalent with the Schwartz completion of an extended affine
Hecke algebra. The underlying root datum is a tensor product of root data of type 
$GL_n$, $Sp_{2n}$, $SO_{2n+1}$ and $SO_{2n}$, and the group $\Gamma$ is a direct product
of groups $\Gamma_M$ as in \eqref{eq:6.1}. Furthermore $C_r^* (G)^\fs$ is Morita equivalent 
with the $C^*$-completion of that extended affine Hecke algebra.
\end{thm}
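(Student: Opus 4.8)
The plan is to obtain Theorem \ref{thm:6.2} as an instance of Theorem \ref{thm:4.2}: it suffices to verify Conditions \ref{cond:Morita}, \ref{cond:Hecke} and \ref{cond:Gamma} for the groups in question and then to read off the shape of $\mc R$ and $\Gamma$ from Heiermann's work. Condition \ref{cond:Morita} is already settled by Lemma \ref{lem:6.1}, with $\mc H^M = \End_M(I_{P_L \cap M}^M \Pi_{\fs_L})^\op$ and the $\lambda_{M M'}$ coming from functoriality of normalized parabolic induction. Here it is $(\mc H^G)^\op$, not $\mc H^G$, that will be an extended affine Hecke algebra, so the application of Theorem \ref{thm:4.2} runs through the ``opposite'' version of Section \ref{sec:comparison} (Lemma \ref{lem:2.4} in place of Proposition \ref{prop:2.3} on the Hecke side); this is exactly the situation allowed for in Condition \ref{cond:Hecke}.

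Next I would check Condition \ref{cond:Hecke}. Part (i) is the core input of \cite{Hei}: for $G$ symplectic, special orthogonal, or an inner form of $\GL_n(F)$, $(\mc H^G)^\op = \End_G(I_{P_L}^G \Pi_{\fs_L})$ is an extended affine Hecke algebra with positive parameters (for inner forms of $\GL_n(F)$ the group $\Gamma$ is trivial and the whole Bernstein block is already covered by Section \ref{sec:types}). Part (ii) holds because the constructions of \cite[\S 5]{Hei} are uniform in the Levi subgroup: each $\End_M(I_{P_L \cap M}^M \Pi_{\fs_L})$ occurs there as a parabolic subalgebra of $\End_G(I_{P_L}^G \Pi_{\fs_L})$, and the inclusions $\lambda_{M M'}$ of Lemma \ref{lem:6.1} are precisely the corresponding parabolic inclusions. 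The positivity in part (iii) follows from \cite[\S 3]{Hei2} together with the normalization procedure described on page \pageref{cond:Hecke}, that is, replacing $P_L$ by a $W(G,L)$-conjugate so that $\mathrm{d}\Phi_L^{-1}$ carries the positive coroots $R^{\vee +}$ into $R^+(G,L)$. For part (iv) and for Condition \ref{cond:Gamma} I would use \cite[Proposition 1.15]{Hei}: the relevant groups $W_Q\Gamma_Q$ lie inside $W(\tilde R_Q)$ for a larger root system $\tilde R_Q \subset \Q R_Q$, so Remark \ref{rem:Gamma} both realizes each $\mc H^M$ in the form $\mc H^Q \rtimes \Gamma_M$ and supplies Condition \ref{cond:Gamma}. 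This last point is where I expect the only genuine care to be needed: the description of the groups $\Gamma_M$ in \cite[1.15]{Hei} is incomplete, so one must work with the corrected list in \cite{Gol} and check that the elements omitted there still lie in the parabolic subgroup of $W(\tilde R_Q)$, so that the hypothesis of Remark \ref{rem:Gamma} really holds.

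Once the three Conditions are in place, Theorem \ref{thm:4.2} gives the Morita equivalences $\mc S(G)^\fs \sim_M \mc S(\mc R,q) \rtimes \Gamma$ and $C_r^*(G)^\fs \sim_M C_r^*(\mc R,q) \rtimes \Gamma$, and it remains only to make $\mc R$ and $\Gamma$ explicit. For this I would quote \cite[1.13]{Hei}, which identifies the root datum underlying $\End_G(I_{P_L}^G \Pi_{\fs_L})$ as a tensor product of root data of types $\GL_n$, $\mathrm{Sp}_{2n}$, $\mathrm{SO}_{2n+1}$ and $\mathrm{SO}_{2n}$, and \cite[1.15]{Hei} corrected by \cite{Gol}, according to which $\Gamma$ is a direct product of the groups $\Gamma_M$, the nontrivial factors arising only from the type $D$ blocks and having there the form recorded in \eqref{eq:6.1}. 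Assembling these statements yields Theorem \ref{thm:6.2}.
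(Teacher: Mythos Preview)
Your proposal is correct and follows essentially the same route as the paper: the paper's argument for Theorem \ref{thm:6.2} is precisely the discussion preceding its statement, which verifies Conditions \ref{cond:Morita} (via Lemma \ref{lem:6.1}), \ref{cond:Hecke} (via \cite{Hei} and \cite[\S 3]{Hei2}), and \ref{cond:Gamma} (via \cite[Proposition 1.15]{Hei} and Remark \ref{rem:Gamma}), and then invokes Theorem \ref{thm:4.2}, reading off the shape of $\mc R$ from \cite[1.13]{Hei} and of $\Gamma$ from \cite[1.15]{Hei} as corrected by \cite{Gol}. Your emphasis on the ``opposite algebra'' point and on the need to confirm that the $\Gamma_M$ elements added by \cite{Gol} still satisfy the hypothesis of Remark \ref{rem:Gamma} is apt, though the paper treats the latter as immediate.
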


Theorem \ref{thm:6.2} was one of the motivations for the author to write a paper about
the K-theory of $C^*$-completions of (extended) affine Hecke algebras \cite{SolK}. 
It enables us to show that the K-groups of the reduced $C^*$-algebras of the above
groups are torsion-free.

\begin{thm}\label{thm:6.3}
Let $G$ be as in Theorem \ref{thm:6.2}. Then $K_* (C_r^* (G))$ is a free abelian group.
It is countably infinite (unless $G=1$).
\end{thm}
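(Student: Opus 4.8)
The plan is to combine the Bernstein decomposition of $C_r^* (G)$, the Morita equivalences of Theorem \ref{thm:6.2}, and the K-theory computations of \cite{SolK}. First I would use that $C_r^* (G) = \bigoplus_{\fs \in \mf B (G)} C_r^* (G)^\fs$ as a $C^*$-algebra direct sum (see \eqref{eq:3.2}), and that $\mf B (G)$ is countably infinite unless $G = 1$, by \cite[Corollaire 3.9]{BeDe}. Since $C_r^* (G)$ is the direct limit of the finite partial sums of this decomposition and topological K-theory is continuous (it commutes with inductive limits of $C^*$-algebras), there is a natural isomorphism
\[
K_* (C_r^* (G)) \;\cong\; \bigoplus_{\fs \in \mf B (G)} K_* (C_r^* (G)^\fs) .
\]
Thus it suffices to prove that each summand $K_* (C_r^* (G)^\fs)$ is a free abelian group of finite, strictly positive rank; then the total group is free abelian of countably infinite rank, while for $G = 1$ there is a single block and $K_* (C_r^* (G)) \cong \Z$.

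Next I would apply Theorem \ref{thm:6.2}: for every $\fs$ there is a Morita equivalence $C_r^* (G)^\fs \sim_M C_r^* (\mc R,q) \rtimes \Gamma$, where $\mc R$ is a tensor product of root data of types $\GL_n$, $\mathrm{Sp}_{2n}$, $\mathrm{SO}_{2n+1}$, $\mathrm{SO}_{2n}$ and $\Gamma$ is a finite group of the shape recorded in \eqref{eq:6.1}. A Morita equivalence of $C^*$-algebras induces an isomorphism on topological K-theory, so
\[
K_* (C_r^* (G)^\fs) \;\cong\; K_* (C_r^* (\mc R,q) \rtimes \Gamma) .
\]
The right-hand side is exactly the kind of object treated in \cite{SolK}: there the author shows that $K_* (C_r^* (\mc R,q) \rtimes \Gamma)$ does not depend on the positive parameter function $q$, so it may be computed for $q = 1$, where $C_r^* (\mc R,1) \rtimes \Gamma = C(T_\un) \rtimes W\Gamma$ as in \eqref{eq:2.2}, and that for the root data and groups occurring here the answer is a free abelian group of finite rank. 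It is nonzero: by Theorem \ref{thm:2.1} the algebra $C_r^* (\mc R,q) \rtimes \Gamma$ is (isomorphic to sections of a matrix algebra bundle over compact tori, invariant under a finite groupoid, hence) a nonzero unital finite type I $C^*$-algebra with a full matrix-algebra quotient $M_n$, so the image of $[1]$ in $K_0(M_n) \cong \Z$ is $n > 0$ and $K_0 (C_r^* (\mc R,q) \rtimes \Gamma) \neq 0$. Assembling the summands then finishes the proof.

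The main obstacle I expect is the appeal to \cite{SolK}: one has to check that the specific extended affine Hecke algebras arising here — notably those with the nontrivial groups $\Gamma_M$ of \eqref{eq:6.1}, the ``mixed type $B$/$D$'' groups $W(D_{ne}) \cap W(B_n)^e$ — are genuinely within the scope of the computations of \cite{SolK}, and that for all of them the K-groups come out torsion-free, and not merely finitely generated. One should also record that the crossed products involved carry no nontrivial $2$-cocycle, as is the case for the groups listed in Section \ref{sec:progen}, so that no twisted group algebras complicate the K-theory. Granting this input, the remaining steps — continuity of K-theory under the Banach direct sum and its invariance under Morita equivalence, together with the fact that a countable direct sum of free abelian groups of finite positive rank is free abelian of countably infinite rank — are routine.
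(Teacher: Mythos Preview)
Your approach is essentially the same as the paper's: Bernstein decomposition, continuity of K-theory, Morita invariance via Theorem \ref{thm:6.2}, and then appeal to \cite{SolK}. The one point where the paper is more explicit is precisely the obstacle you flagged: rather than hoping that the full extended affine Hecke algebras $C_r^*(\mc R,q)\rtimes\Gamma$ with $\mc R$ a tensor product and $\Gamma$ as in \eqref{eq:6.1} are directly covered by \cite{SolK}, the paper first invokes the K\"unneth theorem for topological K-theory (checked to apply in \cite[(62)]{SolK}) to reduce to the individual tensor factors. After that reduction one only needs the four basic cases --- root data of type $\GL_n$, $\mathrm{Sp}_{2n}$, $\mathrm{SO}_{2n+1}$ with trivial $\Gamma$, and $(\mathrm{SO}_{2n})^e$ with the group from \eqref{eq:6.1} --- and the paper cites the specific computations in \cite{SolK} (Theorem 3.2, Theorem 3.3, (128), Proposition 3.5) showing each of these has free abelian K-theory of finite rank. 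So your outline is correct; the missing ingredient that turns your ``main obstacle'' into a proof is the K\"unneth step.
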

\begin{proof}
Recall the Bernstein decomposition from \eqref{eq:3.2}: 
\[
C_r^* (G) \cong \prod\nolimits_{\fs \in \mf B (G)} C_r^* (G)^\fs .
\]
Since topological K-theory is a continuous functor on the category of Banach algebras,
it commutes with direct sums. This reduces the theorem to one factor $C_r^* (G)^\fs$.
By Morita invariance and Theorem \ref{thm:6.2}, it suffices to show that the K-theory of
the $C^*$-completion of an extended affine Hecke algebra as in Theorem \ref{thm:6.2}
is a finitely generated free abelian group. It was checked in \cite[(62)]{SolK} that
the K\"unneth theorem for topological K-theory \cite{Scho} applies to such algebras.
Thus we only need to prove the result when the underlying root datum is of type
$\GL_n$, Sp$_{2n}$ or SO$_{2n+1}$ and $\Gamma$ is trivial, and when the root datum
is of type $(\mr{SO}_{2n})^e$ and $\Gamma$ is as in \eqref{eq:6.1}. 
These K-groups were computed in \cite{SolK}, see respectively Theorem 3.2, Theorem 3.3, 
(128), and Proposition 3.5. They are free abelian and have finite rank.
\end{proof}

\end{document}